%% file: main.tex
\documentclass[letterpaper, oneside, reqno]{amsart}

\pdfoutput=1

\usepackage[margin=1in]{geometry}
\input{preamble}
\togglefalse{conf}

\usepackage{graphicx} 

\begin{document}

\author{Ewan Davies}
\address{Department of Computer Science, Colorado State University, USA}
\email{\href{mailto:ewan.davies@colostate.edu}{ewan.davies@colostate.edu}}

\author{Holden Lee}
\address{Department of Applied Mathematics and Statistics, Johns Hopkins University, USA}
\email{\href{mailto:hlee283@jhu.edu}{hlee283@jhu.edu}}

\author{Juspreet Singh Sandhu}
\address{Department of Computer Science, Colorado State University, USA}
\email{\href{mailto:jsinghsa@ucsc.edu}{js.sandhu@colostate.edu}}

\author{Jonathan Shi}
\address{Chipletics Inc, Redmond WA, USA}
\email{\href{mailto:jshi@cs.cornell.edu}{jshi@cs.cornell.edu}}

\title[Weak Poincar\'e Inequalities via Approximate Stochastic Localization]{\!\!\!\!\!\!\!Weak Poincar\'e Inequalities via Approximate Stochastic Localization:\!\!\!\!\!\! \\ Application to Sampling the Sherrington--Kirkpatrick Model}

\begin{abstract}
\small
\noindent We develop a new method for proving a weak functional inequality by first proving it for a sufficiently regular sequence of distributions approximating the stochastic localization (SL) process, and then transferring it to the desired distribution via regularity of the SL process and conductance arguments. We use this strategy to prove a weak Poincar\'e inequality (WPI) holds for the Gibbs measure of the Sherrington--Kirkpatrick model when $\beta < \rc2$. A prior result \ifconf{}{of the authors }\cite{DLSS26} proves the ASL process for the SK model satisfies the required regularity conditions. \\

\noindent A consequence of the WPI is that a much simpler algorithm---Glauber dynamics with a warm-start---efficiently samples the Gibbs measure of the SK model at $\beta < 1/2$. This is a significant structural step towards resolution of the conjecture that Glauber dynamics mixes fast in the replica-symmetric regime for the Sherrington--Kirkpatrick model~\cite[Open Problem 15]{bandeira2025randomstrasse101}. 

\end{abstract}

\vspace*{-0.5em}
\maketitle

\thispagestyle{empty}
\vspace{-5mm}

{
  \hypersetup{linkcolor=Red}
  \setcounter{tocdepth}{1}
  \tableofcontents
}

%
%
%
%
%
%

\newpage 
\pagenumbering{arabic}

\input{body}

\newpage

\addtocontents{toc}{\protect\setcounter{tocdepth}{-1}}

\ifconf{}{
\section*{Acknowledgements}

HL and JSS are grateful to the organizers of the Rocky Mountain Summer Workshop on Combinatorics, Probability and Algorithms (2025), where the initial stages of this work were conducted.
This work and the workshop were supported by NSF grant 2309707.}

\addtocontents{toc}{\protect\setcounter{tocdepth}{1}}


\addtocontents{toc}{\protect\setcounter{tocdepth}{-1}}
\renewcommand{\baselinestretch}{1.05}\normalsize
{
    \small
    \bibliographystyle{alpha_beta}
    \bibliography{main.bib}
}
\renewcommand{\baselinestretch}{1.0}\normalsize

\addtocontents{toc}{\protect\setcounter{tocdepth}{1}}

\newpage

\appendix
\normalsize
\input{appendix}

\end{document}

%% file: preamble.tex
\usepackage{tikz}
\usepackage{amsmath,amssymb,mathtools}
\usepackage[foot]{amsaddr}

\usepackage{placeins}

\usepackage{amssymb} 
\usepackage{stmaryrd}
\usepackage{graphicx}
\usepackage[colorlinks=true, pdfstartview=FitV, linkcolor=blue, citecolor=Green, urlcolor=WildStrawberry, linktoc=page]{hyperref} 

\usepackage{array}
\usepackage{ragged2e}

\usepackage{bm}

\usepackage{dsfont}
\usepackage[T1]{fontenc}
\usepackage[utf8]{inputenc}

\DeclareFontFamily{OT1}{rsfs}{}
\DeclareFontShape{OT1}{rsfs}{n}{it}{<-> rsfs10}{}
\DeclareMathAlphabet{\mathscr}{OT1}{rsfs}{n}{it}
\usepackage{mathrsfs}
\usepackage{MnSymbol}
\usepackage{enumitem}
\usepackage[dvipsnames]{xcolor}

\usepackage[normalem]{ulem}

\usepackage{booktabs} 

\usepackage{comment}

\usepackage{braket}

\usepackage{etoolbox}
\newtoggle{conf}

\newcommand{\ifconf}[2]{\iftoggle{conf}{#1}{#2}}

\definecolor{darkgreen}{rgb}{0,0.5,0}
\definecolor{darkblue}{rgb}{0,0,0.7}
\definecolor{darkred}{rgb}{0.9,0.1,0.1}

\newcommand{\blu}[1]{{\color{blue}{#1}}}

\newlength{\bibitemsep}
\let\oldthebibliography\thebibliography
\renewcommand\thebibliography[1]{%
  \oldthebibliography{#1}%
  \setlength{\parskip}{\bibitemsep}%
  \setlength{\itemsep}{-7pt}%
}

\newtheoremstyle{break}%
{}{}%
{\itshape}{}%
{\bfseries}{.\vphantom{$p_{p_{p_p}}$}}%
{\newline}
{\thmname{#1}\thmnumber{ #2}\thmnote{\ \,\textmd{(#3)}}}

\theoremstyle{break}

\newtheorem{proposition}{Proposition}
\newtheorem{theorem}[proposition]{Theorem}
\newtheorem*{theorem*}{Theorem}
\newtheorem{lemma}[proposition]{Lemma}
\newtheorem{corollary}[proposition]{Corollary}

\newtheoremstyle{remarkbreak}%
{6pt}{6pt}
{\normalfont}{}
{\itshape}{.}
{ }
{\thmname{#1}\thmnumber{ #2}\thmnote{\ \,\textnormal{(#3)}}}
\theoremstyle{remarkbreak}
\newtheorem{remark}[proposition]{Remark}
\newtheorem*{remark*}{Remark}

\newtheoremstyle{definitionbreak}%
{8pt}{8pt}%
{\normalfont}{}%
{\bfseries}{.}%
{\newline}%
{\thmname{#1}\thmnumber{ #2}\thmnote{\ \,\textmd{(#3)}}}
\theoremstyle{definitionbreak}
\newtheorem{definition}[proposition]{Definition}

\newtheorem{assumption}[proposition]{Assumption}

\newtheorem*{lemma*}{Lemma}

\newcommand{\vocab}[1]{\emph{#1}}

\numberwithin{equation}{section}
\numberwithin{proposition}{section}
\numberwithin{figure}{section}
\numberwithin{table}{section}

\newcommand{\Z}{\mathbb{Z}}
\newcommand{\N}{\mathbb{N}}

\newcommand{\R}{\mathbb{R}}

\newcommand{\calN}{\mathcal N}

\renewcommand{\P}{\mathop{{}\mathbb{P}}}

\newcommand{\Cov}{\mathop{{}\boldsymbol{\mathrm{Cov}}}}
\newcommand{\E}{\mathop{{}\mathbb{E}}}

\newcommand{\sF}{\mathscr{F}}
\newcommand{\Err}{\operatorname{Err}}

\renewcommand{\le}{\leqslant}
\renewcommand{\ge}{\geqslant}
\renewcommand{\leq}{\leqslant}

\renewcommand{\subset}{\subseteq}
\renewcommand{\bar}{\overline}

\renewcommand{\tilde}{\widetilde}
\newcommand{\td}{\widetilde}

\renewcommand{\hat}{\widehat}

\newcommand{\1}{\mathbf{1}}

\newcommand{\subeq}{\subseteq}

\newcommand{\al}{\alpha}
\newcommand{\be}{\beta}
\newcommand{\ga}{\gamma}
\newcommand{\de}{\delta}

\newcommand{\lm}{\lambda}
\newcommand{\La}{\Lambda}
\newcommand{\ph}{\varphi}

\newcommand{\ep}{\varepsilon}
\newcommand{\eps}{\varepsilon}
\newcommand{\si}{\sigma}

\newcommand{\om}{\omega}
\newcommand{\Om}{\Omega}

\newcommand{\Te}{\Theta}
\newcommand{\rh}{\varrho}
\newcommand{\rp}{c_{\mathrm{P}}}
\newcommand{\rls}{c_{\mathrm{LS}}}

\newcommand{\mg}[0]{m}
\newcommand{\ub}[2]{\underbrace{#1}_{#2}}

\newcommand{\pl}{\partial}
\newcommand{\dd}[2]{\frac{d #1}{d #2}}

\newcommand{\ddd}[1]{\frac{d}{d #1}}

\DeclareMathOperator{\sign}{sign}

\newcommand{\KL}{\operatorname{KL}}
\newcommand{\TV}{\operatorname{TV}}
\newcommand{\GOE}{\operatorname{GOE}}
\newcommand{\op}{\mathrm{op}}

\newenvironment{e*}{\begin{equation*}}{\end{equation*}\ignorespacesafterend}
\newcommand{\norm}[1]{\left\lVert{#1}\right\rVert}

\newcommand{\FT}{\calF_{\,\mathrm{TAP}}}

\newcommand{\xra}[1]{\xrightarrow{#1}}

\newcommand{\an}[1]{\left\langle#1\right\rangle}

\newcommand{\sT}{\mathsf{T}}

\newcommand{\Lip}{\mathsf{Lip}}
\newcommand{\ot}{\otimes}

\newcommand{\ab}[1]{\left| {#1} \right|}
\newcommand{\ba}[1]{\left[ {#1} \right]}
\newcommand{\bc}[1]{\left\{ {#1} \right\}}
\newcommand{\pa}[1]{\left( {#1} \right)}
\newcommand{\ve}[1]{\left\Vert {#1}\right\Vert}

\newcommand{\fc}[2]{\frac{#1}{#2}}
\newcommand{\rc}[1]{\frac{1}{#1}}
\newcommand{\sfc}[2]{\sqrt{\frac{#1}{#2}}}
\newcommand{\pf}[2]{\pa{\frac{#1}{#2}}}
\newcommand{\prc}[1]{\pa{\frac{1}{#1}}}

\newcommand{\sumo}[2]{\sum_{#1=1}^{#2}}

\newcommand{\gd}[0]{\nabla}
\newcommand{\bs}[0]{\setminus}

\newcommand{\Id}[0]{I}

\DeclareMathOperator{\im}{Im}

\DeclareMathOperator{\osc}{osc}

\renewcommand{\norm}[1]{\left\lVert{#1}\right\rVert}

\newcommand{\sop}{_\mathsf{op}}
\newcommand{\opnorm}[1]{\ensuremath{\left\lVert #1 \right\rVert\sop}}

\newcommand{\lipnorm}[1]{\ensuremath{\left\lVert {#1} \right\rVert_{\Lip}}}

\newcommand{\poly}{\mathsf{poly}}

\newcommand{\Var}{\mathop{{}\boldsymbol{\mathrm{Var}}}}
\newcommand{\Ent}{\mathop{{}\boldsymbol{\mathrm{Ent}}}}

\newcommand{\opl}{\oplus}

\renewcommand{\le}{\leqslant}
\renewcommand{\leq}{\leqslant}
\renewcommand{\ge}{\geqslant}

\usepackage{prettyref}
\newcommand{\savehyperref}[2]{\texorpdfstring{\hyperref[#1]{#2}}{#2}}

\protected\def\verythinspace{%
  \ifmmode
    \mskip0.5\thinmuskip
  \else
    \ifhmode
      \kern0.083em
    \fi
  \fi
}
\newrefformat{eq}{\savehyperref{#1}{\textup{(\ref*{#1})}}}
\newrefformat{e}{\savehyperref{#1}{\textup{(\ref*{#1})}}}
\newrefformat{ineq}{\savehyperref{#1}{\textup{(\ref*{#1})}}}
\newrefformat{eqn}{\savehyperref{#1}{\textup{(\ref*{#1})}}}
\newrefformat{l}{\savehyperref{#1}{Lemma~\ref*{#1}}}
\newrefformat{lem}{\savehyperref{#1}{Lemma~\ref*{#1}}}
\newrefformat{def}{\savehyperref{#1}{Definition~\ref*{#1}}}
\newrefformat{d}{\savehyperref{#1}{Definition~\ref*{#1}}}
\newrefformat{t}{\savehyperref{#1}{Theorem~\ref*{#1}}}
\newrefformat{thm}{\savehyperref{#1}{Theorem~\ref*{#1}}}
\newrefformat{cor}{\savehyperref{#1}{Corollary~\ref*{#1}}}
\newrefformat{c}{\savehyperref{#1}{Corollary~\ref*{#1}}}
\newrefformat{cha}{\savehyperref{#1}{Chapter~\ref*{#1}}}
\newrefformat{sec}{\savehyperref{#1}{\S\verythinspace\ref*{#1}}}
\newrefformat{s}{\savehyperref{#1}{\S\verythinspace\ref*{#1}}}
\newrefformat{subsec}{\savehyperref{#1}{\S\verythinspace\ref*{#1}}}
\newrefformat{app}{\savehyperref{#1}{\S\verythinspace\ref*{#1}}}
\newrefformat{tab}{\savehyperref{#1}{Table~\ref*{#1}}}
\newrefformat{fig}{\savehyperref{#1}{Figure~\ref*{#1}}}
\newrefformat{hyp}{\savehyperref{#1}{Hypothesis~\ref*{#1}}}
\newrefformat{alg}{\savehyperref{#1}{Algorithm~\ref*{#1}}}
\newrefformat{a}{\savehyperref{#1}{Assumption~\ref*{#1}}}
\newrefformat{rem}{\savehyperref{#1}{Remark~\ref*{#1}}}
\newrefformat{item}{\savehyperref{#1}{Item~\ref*{#1}}}
\newrefformat{step}{\savehyperref{#1}{step~\ref*{#1}}}
\newrefformat{conj}{\savehyperref{#1}{Conjecture~\ref*{#1}}}
\newrefformat{fact}{\savehyperref{#1}{Fact~\ref*{#1}}}
\newrefformat{p}{\savehyperref{#1}{Proposition~\ref*{#1}}}
\newrefformat{prop}{\savehyperref{#1}{Proposition~\ref*{#1}}}
\newrefformat{prob}{\savehyperref{#1}{Problem~\ref*{#1}}}
\newrefformat{claim}{\savehyperref{#1}{Claim~\ref*{#1}}}
\newrefformat{clm}{\savehyperref{#1}{Claim~\ref*{#1}}}
\newrefformat{relax}{\savehyperref{#1}{Relaxation~\ref*{#1}}}
\newrefformat{rem}{\savehyperref{#1}{Remark~\ref*{#1}}}
\newrefformat{red}{\savehyperref{#1}{Reduction~\ref*{#1}}}
\newrefformat{part}{\savehyperref{#1}{Part~\ref*{#1}}}
\newrefformat{ex}{\savehyperref{#1}{Exercise~\ref*{#1}}}
\newrefformat{property}{\savehyperref{#1}{Property~\ref*{#1}}}
\newrefformat{type}{\savehyperref{#1}{Type~\ref*{#1}}}
\newrefformat{eg}{\savehyperref{#1}{Example~\ref*{#1}}}
\newrefformat{obs}{\savehyperref{#1}{Observation~\ref*{#1}}}
\newrefformat{que}{\savehyperref{#1}{Question~\ref*{#1}}}
\newrefformat{cond}{\savehyperref{#1}{Condition~\ref*{#1}}}
\newrefformat{ass}{\savehyperref{#1}{Assumption~\ref*{#1}}}
\newrefformat{not}{\savehyperref{#1}{Notation~\ref*{#1}}}
\newrefformat{cond}{\savehyperref{#1}{Condition~\ref*{#1}}}
\newrefformat{f}{\savehyperref{#1}{Figure~\ref*{#1}}}

\newlist{thmenum}{enumerate}{1}
\setlist[thmenum]{
    label=\textup{(\roman*)},
    ref=\theproposition\textup{(\roman*)}
}

\newrefformat{pitem}{\savehyperref{#1}{Proposition~\ref*{#1}}}
\newrefformat{titem}{\savehyperref{#1}{Theorem~\ref*{#1}}}
\newrefformat{litem}{\savehyperref{#1}{Lemma~\ref*{#1}}}
\newrefformat{ditem}{\savehyperref{#1}{Definition~\ref*{#1}}}
\newcommand{\Sref}[1]{\hyperref[#1]{\S\ref*{#1}}}

\let\pref=\prettyref
\let\Cref=\prettyref

\renewcommand{\eps}{\varepsilon}

\newcommand{\calA}{\mathcal A}
\newcommand{\calB}{\mathcal B}

\newcommand{\calD}{\mathcal D}
\newcommand{\calE}{\mathcal E}
\newcommand{\calF}{\mathcal F}

\newcommand{\calH}{\mathcal H}

\renewcommand{\calN}{\mathcal N}

\newcommand{\calR}{\mathcal R}
\newcommand{\calS}{\mathcal S}

\newcommand{\calW}{\mathcal W}

\newcommand{\Law}{\mathcal L}

\newcommand{\sL}{\mathscr L}

\newcommand{\sE}{\mathscr E}

\newcommand{\bbS}{\mathbb S}

\newcommand{\one}[0]{\mathds{1}}
\renewcommand{\set}[2]{\left\{{#1}:{#2}\right\}}

\renewcommand{\R}{\mathbb R}

\renewcommand{\N}{\mathbb N}
\renewcommand{\Z}{\mathbb Z}

\newcommand{\lr}[0]{\leftrightarrow}

\newcommand{\iy}{\infty}

\newcommand{\ball}[2]{B_{#1}(#2)}

\newcommand{\ppart}[1]{%
  \par\addvspace{\medskipamount}%
  \noindent\textit{#1.}\hspace{0.5em}\ignorespaces%
}

\renewcommand{\lr}[0]{\leftrightarrow}

\newcommand{\scr}[1]{\mathscr{#1}}

\newcommand{\Kprox}[1]{K^{\textup{prox}}_{#1}}
\newcommand{\prox}[0]{^{\textup{prox}}}

\newcommand{\loc}[0]{^{\textup{local}}}
\newcommand{\CM}[0]{_{\textup{CM}}}

\renewcommand{\Kprox}[1]{K^{\textup{prox}}_{#1}}
\renewcommand{\prox}[0]{^{\textup{prox}}}

\renewcommand{\loc}[0]{^{\textup{local}}}

\usepackage{algorithm}
\usepackage{algpseudocode}
\algnewcommand\algorithmicinput{\textbf{Input: }}
\algnewcommand\INPUT{\State\algorithmicinput}
\algnewcommand\algorithmicinitialize{\textbf{Initialize: }}
\algnewcommand\INIT{\State\algorithmicinitialize}
\algnewcommand\algorithmicrun{\textbf{Run: }}
\algnewcommand\RUN{\State\algorithmicrun}
\algnewcommand\algorithmicupdate{\textbf{Update: }}
\algnewcommand\UPDATE{\State\algorithmicupdate}
\algnewcommand\algorithmicset{\textbf{Set: }}
\algnewcommand\SET{\State\algorithmicset}
\algnewcommand\algorithmicquery{\textbf{Query: }}
\algnewcommand\QUERY{\State\algorithmicquery}
\algnewcommand\algorithmicoutput{\textbf{Output: }}
\algnewcommand\OUTPUT{\State\algorithmicoutput}

\makeatletter
\newcommand\appendix@section[1]{%
  \refstepcounter{section}%
  \orig@section*{\@Alph\c@section.\texorpdfstring{\,\,\,\;}{}#1}
}
\let\orig@section\section
\g@addto@macro\appendix{\let\section\appendix@section}
\makeatother

\renewcommand{\paragraph}[1]{\medskip\noindent{\bf #1{.}}}

\makeatletter
\newcommand{\saveequation}[2]{
  #2 \label{#1}
  \protected@write\@mainaux{}{\string\SAVEEQUATION{#1}{\unexpanded{\unexpanded{#2}}}}%
}
\newcommand{\savetagequation}[3]{
  #3 \label{#1} \tag{#2}
  \protected@write\@mainaux{}{\string\SAVEEQUATION{#1}{\unexpanded{\unexpanded{#3}}}}%
}
\newcommand{\SAVEEQUATION}[2]{%
  \global\@namedef{SAVEDEQUATION@#1}{#2}%
}
\newcommand{\repeatequation}[1]{%
  \ifcsname SAVEDEQUATION@#1\endcsname
    \@nameuse{SAVEDEQUATION@#1}\tag{\ref{#1}}%
  \else
    ?? \notag
  \fi
}
\makeatother

\setlist[itemize]{topsep=-4pt, partopsep=2pt}

\usepackage{setspace}

\usepackage{thmtools}
\declaretheoremstyle[%
  spaceabove=-2pt,%
  spacebelow=6pt,%
  headfont=\normalfont\itshape,%
  postheadspace=1em,%
  qed=\qedsymbol%
]{mystyle} 
\declaretheorem[name={Proof},style=mystyle,unnumbered,
]{prf}



%% file: body.tex
\section{Introduction}

Let $\mu$ be a probability measure on $\Om\subeq r\cdot \mathbb{S}^{n-1}\subeq \R^n$, for instance, $\Om = \{\pm 1\}^n$ or $\sqrt n\cdot \mathbb{S}^{n-1}$. Consider a Markov chain or process on $\Om$ with $\mu$ as its stationary distribution, for instance, Glauber dynamics or Langevin on the sphere, respectively. 
A standard problem is to prove functional inequalities for $\mu$, which would imply a number of useful properties such as concentration of Lipschitz functions and fast mixing of the associated dynamics. 
Simulating these dynamics would then give an efficient sampling algorithm.

Stochastic localization (SL) \cite{eldan2013thin} is a measure-valued stochastic process which almost surely converges to a point mass $\de_y$, where $y$ is a draw from the target measure $\mu$. In the framework of sampling algorithms, this has been used in two ways: first, as a framework for proving functional inequalities \cite{chen2022localization}, and second, as a sampling algorithm itself through a computational approximation; this is the framework of algorithmic stochastic localization (ASL) \cite{EMS25}. While the former gives stronger results---namely, efficient sampling from running the natural Markov chain or process---it also requires stronger conditions. When $\mu$ is a Gibbs distribution for a spin glass model, these conditions can be very challenging to show. 
The appeal of ASL has been that it gives an efficient sampling algorithm under weaker conditions.

This motivates the following question: 
\begin{center}
    \emph{Can we show functional inequalities under weaker conditions than required in the SL framework,\\ namely those that suffice for ASL?}
\end{center} 
We show an affirmative answer to this question for weak Poincar\'e inequalities (WPI) under natural conditions where an ASL process can track the SL process up to $O(1)$ error. In particular, this will apply for us to prove a weak Poincar\'e inequality for the Gibbs measure of the the Sherrington-Kirkpatrick (SK) model up to $\be<\rc2$, defined by 
\[
\mu_{\be A}(\si) \propto 
e^{\frac{\beta}{2}\an{\si,  A\si}}, \quad \si\in \{\pm 1\}^n, \quad A\sim \GOE(n).
\]
For the SK model, polynomial-time sampling with negligible total variation (TV) distance is known up to $\beta < 1/2$ \cite{DLSS26}, but functional inequalities were previously known only up to $\be \approx 0.295$ \cite{anari2024trickle}.

\subsection{Main result}
To state our results in generality, we consider the linear-tilt stochastic localization (SL) process associated with $\mu$ and an ``algorithmic'' stochastic localization (ASL) process that approximates it. Note that unlike prior works, we will only rely on the \emph{existence} of an ASL process with good properties, and not need to use it as an algorithm. Hence our ASL can be better thought of as ``approximate'' stochastic localization. We define the SL process and an ASL process by the following SDEs:
\begin{align}
    \savetagequation{e:SL}{SL}{dy_t &= \mg(y_t) dt +  dB_t&y_0&=0}\\
    \savetagequation{e:ASL}{ASL}{d\hat y_t &= \hat \mg(\hat y_t) dt+dB_t}&\hat y_0&=0,
\end{align}
where the magnetization $m$ is 
\[
\mg(y) := \an{\si}_{\mu_y} = \E_{\mu_y}\si , \quad \text{where} \quad \mu_y(\si) \propto \mu(\si) e^{\an{y,\si}}
\]
and $\hat m$ is an approximation of $m$ that is uniformly bounded. 
Let $p_t$, $\hat p_t$ denote the distribution of $y_t$, $\hat y_t$ respectively at time $t$. The $\mu_{y_t}$ are the random measures given by SL, and almost surely $\mu_{y_t}\rightarrow\de_\si$ for some $\si$ as $t\to \iy$; this $\si$ will be distributed as $\mu$.

We make the following assumptions, which are known to be satisfied with probability $\ge 1-\de$ for the SK model for $\be<\rc 2$ \cite{DLSS26}, and used to show efficient sampling with an ASL-based algorithm.
\begin{assumption}[Magnetization error]
\label{a:m-error}
For $t\in [0,T_1]$, for some uniformly bounded $\ep(t)$, 
\[
\E \| \mg (y_t) - \hat \mg (y_t) \|^2 \le \ep(t)^2.
\]
\end{assumption}
\begin{assumption}[Lipschitz drift for ASL]
\label{a:Lipschitz-drift}
For all $y,z\in \R^n$,
\[
\|\hat \mg (y) - \hat \mg (z)\| \le L
\|y-z\|.
\]
\end{assumption}
Notably, this is an assumption on the approximate, not the ideal, SL process. Not only can this be easier to prove, but also it can hold in cases where the true magnetization is not Lipschitz.
\begin{assumption}[Functional inequality for localized distribution]
\label{a:localized-fi}
    For $T=T_0<T_1$, with probability $1-\de'(n)$ over the draw of $y_T\sim p_{T}$, $\mu_{y_T}$ satisfies a $(c_T\loc,\de(n))$-weak Poincar\'e inequality (see \pref{d:wpi}) for Glauber dynamics (when $\mu$ is a distribution on $\{\pm 1\}^n$) or Langevin dynamics (when $\mu$ has a density on $r\cdot \bbS^n$). 
\end{assumption}
For the SK model with $\be<\rc 2$, this holds with $\de'(n) = \de(n)= e^{-c(\be)n}$ 
for large enough $n$ and appropriate constant $c(\be)$; see \pref{t:local-wpi}.

Our main theorem is the following.
\begin{theorem}[Weak Poincar\'e inequality from Approximate Stochastic Localization]
\label{t:main}
Let $\mu$ be a measure on $\{\pm 1\}^n$ or $r\cdot \bbS^{n-1}$.
    Under \pref{a:m-error}, \pref{a:Lipschitz-drift}, and \pref{a:localized-fi}, for any $\ep>0$, $\mu$ satisfies a $(c_{T_0}\loc e^{-O(1/\ep)},e^{O(1/\ep)}\cdot $ $(\de(n)+\de'(n)) + \ep)$-weak Poincar\'e inequality, where the constants depend on the constants in the assumptions. 
\end{theorem}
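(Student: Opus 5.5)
The proof has two parts: decompose the Dirichlet form along the localization, then control the resulting variance of the drift by a coupling between SL and ASL combined with a conductance argument.

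\textbf{Step 1 (variance decomposition at time $T=T_0$).} Write the SL mixture identity $\mu=\E_{y_T\sim p_T}\mu_{y_T}$. For any test function $f$,
\[
\Var_\mu f=\E_{y_T}\Var_{\mu_{y_T}}f+\Var_{y_T}\bigl(\E_{\mu_{y_T}}f\bigr).
\]
Since the Dirichlet form (Glauber or Langevin) is linear in the measure, $\E_{y_T}\calE_{\mu_{y_T}}(f)=\calE_\mu(f)$. On the good event of \pref{a:localized-fi}, which has probability $1-\de'(n)$, each fibre satisfies
\[
\Var_{\mu_{y_T}}f\le (c\loc_T)^{-1}\calE_{\mu_{y_T}}(f)+\de(n)\osc(f)^2.
\]
On the bad event we bound the fibre variance trivially by $\osc(f)^2$. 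This gives
\[
\E_{y_T}\Var_{\mu_{y_T}}f\le (c\loc_T)^{-1}\calE_\mu(f)+(\de+\de')\osc(f)^2.
\]

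\textbf{Step 2 (the ``between-fibre'' term).} Put $g(y):=\E_{\mu_y}f$. It remains to bound $\Var_{p_T}(g)$ by a multiple of $\calE_\mu(f)$ plus $\ep\,\osc(f)^2$. This is a weak Poincar\'e statement for the law $p_T$ with respect to a ``cross-fibre'' Dirichlet form. I would get it by a conductance argument. The key object is a transition kernel $K$ on $y$-space, defined as follows: draw $\si\sim\mu_y$, then draw $y'\sim$ the posterior of $y_T$ given $\si$, which is Gaussian $N(T\si,T I)$ reweighted by $p_T$. This kernel is reversible with respect to $p_T$, and its Dirichlet form is dominated by the within-fibre variances. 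That is, $\calE_K(g)\le \E\Var_{\mu_{y_T}}f$ by Jensen, which Step 1 already controls.

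The task therefore reduces to a weak Poincar\'e inequality for $K$ on $p_T$: for sets $S$ with $p_T(S)\ge\ep$, the conductance of $K$ out of $S$ must be at least $e^{-O(1/\ep)}$. The standard conversion from weak conductance to WPI then yields constants of the form $e^{-O(1/\ep)}$ with additive error $\ep$.

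\textbf{Step 3 (conductance via ASL regularity; the main obstacle).} To lower-bound the conductance, I compare $p_T$ with $\hat p_T$, the law of the ASL process started at $0$. Girsanov together with \pref{a:m-error} gives $\KL(p_T\|\hat p_T)\le \rc2\int_0^T\ep(t)^2dt=O(1)$. By \pref{a:Lipschitz-drift}, the ASL flow is $e^{LT}$-Lipschitz in its Brownian driving path. Gaussian isoperimetry on Wiener space then shows that $\hat p_T$ has an $O(1)$-expansion property: any set of $\hat p_T$-mass $\ge\ep/2$ has an $O(e^{LT}\sqrt{\log(1/\ep)})$-neighbourhood of substantial measure.

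The KL bound transfers this to $p_T$, but only with mass loss. A set of $p_T$-mass $\ep$ may have $\hat p_T$-mass only $e^{-O(1/\ep)}$, via the data-processing / Donsker--Varadhan bound $\hat p(S)\ge \exp(-(\KL+\log 2)/p(S))$. This is where the $e^{-O(1/\ep)}$ loss comes from.

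Finally, the kernel $K$ moves $y$ by Gaussian steps of size $O(\sqrt{Tn})$ around $T\si$. Sets that are close in the ASL-induced metric overlap under the kernel. The reason is that two nearby $y$'s have nearby tilts, and the corresponding Gaussians $N(T\si,TI)$ overlap when the displacement is $O(\sqrt T)$ in the relevant directions. So for $p_T(S)\in[\ep,1/2]$, the flow $\int_S K(y,S^c)\,dp_T\ge e^{-O(1/\ep)}$.

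Making this last geometric step rigorous is the hardest part, because $\si\in\{\pm1\}^n$ makes the posterior kernel non-smooth. I would first try replacing $K$ by the simpler Gaussian-channel kernel $y\mapsto y+\sqrt{s}\,\xi$ with Metropolis correction, using $O(1)$ bounded drift, and would argue that its Dirichlet form is still dominated by $\E\Var_{\mu_{y_T}}f$ plus $\calE_\mu(f)$. Combining Steps 1–3 gives the claimed $(c\loc_{T_0}e^{-O(1/\ep)},\,e^{O(1/\ep)}(\de+\de')+\ep)$-weak Poincar\'e inequality for $\mu$.
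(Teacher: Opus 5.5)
Your Steps 1--2 are correct and are exactly the paper's decomposition (Theorem~\ref{t:wpi-decomp}, Corollary~\ref{c:wpi-gd-ps}). Your $K$ is the proximal sampler $\Kprox{T_0,\iy}$; note that the law of $y_T$ given $\si$ is exactly $\calN(T\si,TI)$, with no reweighting by $p_T$. Your Jensen bound $\sE_K(g,g)\le\E\Var_{\mu_{y_T}}f$ is \eqref{e:wpi-decomp-2} with $\lm\to0^+$. (For Glauber the Dirichlet form is concave, not linear, in the measure, but this still gives $\E_{y_T}\sE_{\mu_{y_T}}(f,f)\le\sE_\mu(f,f)$, the direction you need.)

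The genuine gap is Step 3, which carries all the content. There are two failures.
\begin{itemize}
\item Comparing $p_T$ with $\hat p_T$ at the \emph{same} time gives no boundary control. $\KL(p_T\|\hat p_T)=O(1)$ prevents $p_T$ from being much larger than $\hat p_T$, but it allows $p_T$ to be tiny or zero on regions of large $\hat p_T$-mass. So $p_T$ can be disconnected exactly along $\partial S$, and no expansion property of $\hat p_T$ transfers to it.
\item Your overlap claim needs regularity the assumptions deliberately omit. $K(y,\cdot)$ is a non-local move routed through $\si\sim\mu_y$. Hence $K(y_1,\cdot)\approx K(y_2,\cdot)$ for $\ve{y_1-y_2}=O(1)$ requires $\mu_{y_1}\approx\mu_{y_2}$, i.e.\ $\an{v,\Cov(\mu_y)v}=O(1)$ in worst-case directions $v$. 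That is operator-norm covariance control, equivalently a Lipschitz \emph{true} magnetization $m$, whereas only $\hat m$ is assumed Lipschitz. Your Metropolis fallback hits the same wall: dominating a local Dirichlet form of $g(y)=\E_{\mu_y}f$ goes through $\ve{\gd g(y)}^2=\ve{\Cov_{\mu_y}(\si,f)}^2\le\opnorm{\Cov(\mu_y)}\Var_{\mu_y}f$.
\end{itemize}

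The missing idea is to compare with ASL at a \emph{later} time $T_1>T_0$, and let the Gaussian noising $K_{T_1\to T_0}$, not the tilts, supply the overlap (Theorem~\ref{t:wpi-pT}). Set $h:=K_{T_1\to T_0}(\cdot,S)$ and use $p_{T_0}=p_{T_1}K_{T_1\to T_0}$. Either $\{s/4\le h\le1-s/4\}$ has non-negligible $p_{T_1}$-mass, or $A_1=\{h<s/4\}$ and $B_1=\{h>1-s/4\}$ are both large. In the second case:
\begin{itemize}
\item restrict $A_1$ and $B_1$ to where $\dd{p_{T_1}}{\hat p_{T_1}}\ge s/4$ (Lemma~\ref{l:p-subset-hat-p});
\item couple these pieces under $\hat p_{T_1}$ at distance $\bar d=O(\sqrt{\ln(1/\hat s)/\hat c_{T_1}})$ using its LSI (Lemma~\ref{l:couple-distance}), which yields a sub-coupling of $p_{T_1}$ of mass $e^{-O(1/s)}$;
\item on coupled pairs the noised Gaussians overlap, which forces $\max\{h(x),1-h(y)\}\ge e^{-O(1/s)}$.
\end{itemize}
The paper turns this into boundary measure for $p_t$, i.e.\ a Langevin WPI for every $t\in[T_0,\frac{T_0+T_1}2]$. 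It then transfers this to $\Kprox{T_0,\iy}$ via the heat-flow identities of \cite{chen2022improved} and Lemma~\ref{l:var-to-wpi} (this is Lemma~\ref{l:prox-wpi}).

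Your own conductance target also becomes reachable with this idea. Since $\Kprox{T_0,\iy}=K_{T_0\to T_1}\Kprox{T_1,\iy}K_{T_1\to T_0}$, its flow out of $S$ equals $\E_{p_{T_1}}[h(1-h)]+\sE_{\Kprox{T_1,\iy}}(h,h)\ge\E_{p_{T_1}}[h(1-h)]$. The same two cases bound this below by $e^{-O(1/s)}$. You would still need an $s$-conductance Cheeger inequality for a reversible kernel to finish this route.
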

\begin{remark*}
    Examining the proof shows that the dependence of the $e^{O(1/\ep)}$ term on the bounds in \pref{a:m-error} and \pref{a:Lipschitz-drift} is $e^{O((E_{T_1}+1) e^{O(L)}/\ep)}$, where $E_T$ is defined in \eqref{e:Et}.
\end{remark*}
This result holds under significantly weaker assumptions compared to what is required in the SL framework, and hence provides a plausible strategy to proving functional inequalities for spin glass models, where due to significant challenges even in showing static properties of the distribution, few tight results are known. 
As our main application, we combine this with the results of \cite{DLSS26} to show the following.
\begin{corollary}[Weak Poincar\'e inequality for SK model for $\be<\rc 2$]
\label{c:sk}
For every $\beta<1/2$ and $\delta>0$, there exist constants
$C=C(\beta,\delta)$ and $N=N(\beta,\delta)$ such that for all
$n\ge N$, with probability at least $1-\delta$ over $A \sim \GOE(n)$, for every
$\epsilon\in(0,1)$, $\mu_{\beta A}$ satisfies a
\(\left(\frac1n e^{-C/\epsilon},\epsilon\right)\)-weak Poincar\'e inequality.
\end{corollary}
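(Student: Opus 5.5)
The corollary is a direct specialization of \pref{t:main}, so the plan is to verify its three assumptions for $\mu=\mu_{\be A}$ on an event of probability at least $1-\de$ over $A\sim\GOE(n)$, and then absorb the error terms into $\ep$.

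\textbf{Step 1: verify the assumptions.} Fix $\be<\rc2$ and $\de>0$. From \cite{DLSS26} I would import the ASL construction for the SK model. With probability at least $1-\de/2$ over $A$, for $n\ge N$, it gives a uniformly bounded drift $\hat m$ with the following properties.
\begin{itemize}
\item The Lipschitz constant $L=L(\be)$ in \pref{a:Lipschitz-drift} is independent of $n$.
\item The magnetization error in \pref{a:m-error} satisfies $\ep(t)\le C_1(\be)$ on $[0,T_1]$, so that $E_{T_1}=O_{\be,\de}(1)$.
\end{itemize}
For \pref{a:localized-fi} I would invoke \pref{t:local-wpi}. With probability at least $1-\de/2$ over $A$, for $n\ge N$, it gives $\de'(n)=\de(n)=e^{-c(\be)n}$ and $c^{\textup{local}}_{T_0}\ge c_0(\be)/n$. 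The factor $1/n$ comes from the Glauber normalization of the Dirichlet form. A union bound then gives all three assumptions simultaneously with probability at least $1-\de$.

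\textbf{Step 2: apply \pref{t:main} with a rescaled parameter.} For a given $\ep\in(0,1)$, apply \pref{t:main} with $\ep/2$ in place of $\ep$. By the Remark, the implicit constants are $e^{O((E_{T_1}+1)e^{O(L)}/\ep)}$, which is $e^{C_2/\ep}$ with $C_2=C_2(\be,\de)$. This yields a WPI with the two parameters
\[
\frac{c_0}{n}e^{-C_2/\ep}
\quad\text{and}\quad
e^{C_2/\ep}\cdot 2e^{-c(\be)n}+\frac{\ep}{2}.
\]

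\textbf{Step 3: absorb the second term into $\ep$.} The obstacle is that $e^{C_2/\ep}e^{-cn}\le\ep/2$ must hold for \emph{all} $\ep\in(0,1)$, uniformly in $n\ge N$. This fails when $\ep\lesssim C_2/(cn)$. I would handle this by splitting into two regimes.
\begin{itemize}
\item \textbf{Large $\ep$.} If $\ep\ge\ep_n:=2C_2/(cn)$, then $e^{C_2/\ep-cn}\le e^{-cn/2}\le \ep/4$, provided $n$ is large; since $\ep\ge\ep_n$, it suffices to check $e^{-cn/2}\le \ep_n/4$, which holds for $n\ge N$. The second parameter is then at most $\ep$.
\item \textbf{Small $\ep$.} If $\ep<\ep_n$, I would use that a WPI with parameter $(\rho,\ep')$ implies one with $(\rho',\ep)$ for any $\rho'\le\rho$, whenever $\ep'\le\ep$. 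If that does not reach small $\ep$, I would instead fall back on the trivial spectral gap of Glauber dynamics on $\{\pm1\}^n$ for SK. With bounded $\|\be A\|_{\op}$, the crude gap is $\ge e^{-O(n)}$, because every single-site flip probability is $\ge e^{-O(\be\|A\|_{\op}\sqrt n)}$-ish, and canonical paths give a gap of order $e^{-O(n)}$. A genuine Poincar\'e inequality with constant $e^{-C'n}$ is a WPI with any $\ep\ge0$. For $\ep<\ep_n$ we have $e^{-C/\ep}\le e^{-Ccn/(2C_2)}$, which is at most $e^{-C'n}/n$ once $C$ is large.
\end{itemize}

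\textbf{Step 4: choose $C$.} Finally take $C\ge C_2+\log(1/c_0)+O(1)$ large enough to cover both regimes. I expect the small-$\ep$ regime, namely the crude global Poincar\'e bound of order $e^{-O(n)}$ (holding with high probability over $A$), to be the main technical point.
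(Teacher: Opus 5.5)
Your proposal is correct and matches the paper's proof. Both work on a high-probability event where the three assumptions and $\opnorm{A}=O(1)$ hold, apply \pref{t:main} with $\ep/2$ when $\ep\gtrsim 1/n$ so that $e^{O(1/\ep)}e^{-cn}\le \ep/2$, fall back for $\ep\lesssim 1/n$ on a crude global Poincar\'e inequality with constant $\frac1n e^{-O(n)}$, and choose $C$ large enough to cover both regimes. The only difference is cosmetic: the paper gets the crude global bound directly from Holley--Stroock (the log-density of $\mu_{\be A}$ relative to uniform has oscillation $O(\be\opnorm{A}\,n)$), which is cleaner than your canonical-paths sketch.
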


A Poincar\'e inequality shows fast mixing from any initial distribution, while a weak Poincar\'e inequality shows fast mixing from a warm start. Therefore, turning this into an algorithmic result requires adding a first phase of the algorithm to obtain that warm start (\pref{d:ws}). 
Compared to \cite{DLSS26}, which provided a complex algorithm via ASL, Jarzynski's equality with rejection sampling, and the polarized walk, we use the weak Poincar\'e inequality to show that the following substantially simpler algorithm suffices for sampling, using Glauber dynamics (\pref{alg:Langevin}) after obtaining a warm start with Langevin dynamics. Note that the quantitative strength of the weak Poincar\'e inequality is not enough for simulated annealing to work \cite{huang2025weak}, so we proceed by establishing that a natural 
annealed distribution 
$\rh_T$ is a warm start to the SL distribution, a result of independent interest. Establishing this only relies on one additional assumption, that $\rh_T$ is related to the ASL distribution through Jarzynski's equality whose weights satisfy Lipschitz drift (\pref{ass:Lipschitz-je}).

\begin{algorithm}[ht]
\caption{SK Sampler: Glauber dynamics from a warm start}
\label{alg:Langevin}
\begin{algorithmic}[1]
\INPUT Matrix $A\sim \GOE(n)$, inverse temperature $\be<\rc 2$, SL time $T=T(\be)$
\OUTPUT Approximate sample from Gibbs measure $\mu_{\be A}$.
\State \textbf{Part 1 (Obtaining a warm start):}
To obtain $y_T$,
\begin{itemize}
    \item \blu{run Langevin dynamics for $\rh_T$ for time $T_{\textup{Langevin}} = \td O(1)$ with step size $\eta = \td\Te\prc{n}$ 
    starting from $y_0\sim \calN(0,TI_n)$,} 
    \[
y_{t+\eta} = y_{t} + \eta \pa{\hat m(y_t) - \fc{y_t}{T}} + \sqrt{2\eta}\, \xi_t, \quad \xi_t\sim \calN(0,I_n),
    \]
    where $\hat m(y)$ is the solution\footnotemark{} to $
    -\be A m - y + \tanh^{-1}(\mg)
    + \be^2 \pa{1-\rc n \ve{m}_2^2}m=0$, 
    \emph{or}
    \item run ASL for time $T$ with step size $\eta = \td \Te\prc n$.
\end{itemize}
\State Starting from $\sign(y_T)$, for the localized distribution $\mu_{\be A, y_T}$,
\begin{itemize}
    \item \blu{run Glauber dynamics for $O(n^2)$ steps,} \emph{or} 
    \item run the polarized walk for $O(n)$ steps.
\end{itemize}
\State \textbf{Part 2 (Sampling from warm start):} Run Glauber dynamics for $ne^{(1/\ep)^{O(1)}}$ steps.
\end{algorithmic}
\end{algorithm}
\footnotetext{The solution can be computed to arbitrary accuracy using mirror descent; see \cite[\S D]{DLSS26}.}

\begin{theorem}[Sampling from SK using warm start and Glauber dynamics]
\label{t:alg}
    For the SK model for $\be<1/2$, for any $\de>0$, with probability $\ge 1-\de$, for any $\ep>0$, \pref{alg:Langevin} (with Langevin and Glauber dynamics) samples from a distribution that is $\ep$ close to $\mu_{\be A}$ in time $O(n^2 + ne^{(1/\ep)^{O(1)}})$, where the constants depend on $\de, \be$.
\end{theorem}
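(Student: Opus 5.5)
The proof has two parts, matching the two phases of \pref{alg:Langevin}. Part 1 shows that the law of the state at the start of Part 2 is a warm start for $\mu_{\be A}$. Part 2 then combines this warmness with the weak Poincar\'e inequality of \pref{c:sk}, which holds with probability $\ge 1-\de/2$ over $A$.

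\textbf{Part 1: warm start.} Let $\nu_0$ be the law of the output of Part 1. I would show that, except on a set of $\mu$-mass $\ep/4$, the density $d\nu_0/d\mu$ is bounded by a constant $M=M(\be,\de,\ep)$. The route goes through the SL joint law: $\mu(\si)=\E_{y_T\sim p_T}\mu_{y_T}(\si)$. I would do this in three steps.
\begin{itemize}
\item Show that the annealed distribution $\rh_T$ (or $\hat p_T$ for the ASL branch) is a warm start for $p_T$, outside an $\ep$-small set. For the ASL branch, Girsanov together with \pref{a:m-error} and \pref{a:Lipschitz-drift} bounds $\KL(\hat p_T\|p_T)\le \tfrac12\int_0^T\E\|m-\hat m\|^2$ plus a propagation term of order $e^{O(LT)}$. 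A bounded KL converts into density ratios bounded by $e^{O(1/\ep)}$ off a set of mass $\ep$. For $\rh_T$, I would use \pref{ass:Lipschitz-je} with Jarzynski's equality, so that $\rh_T$ is $\hat p_T$ reweighted by weights that are bounded on typical paths.
\item Show that Langevin for $\rh_T$ mixes in time $\td O(1)$ with step $\td\Te(1/n)$. Its potential is $\|y\|^2/2T-\hat\Phi(y)$ with $\nabla\hat\Phi=\hat m$. For $T$ small enough relative to $L$ this is strongly log-concave, so standard discretized-Langevin bounds give TV error $\ep/8$.
\item Given $y_T$, run Glauber for $\mu_{\be A,y_T}$ from $\sign(y_T)$ for $O(n^2)$ steps. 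By \pref{t:local-wpi} this localized measure satisfies a genuine Poincar\'e/MLSI with constant $\Omega(1/n)$, with only $e^{-cn}$ error. The initial condition has density at most $e^{O(n)}$ relative to $\mu_{y_T}$, since the external field has norm $O(\sqrt n)$. So $O(n^2)$ steps give TV error $\ep/8$.
\end{itemize}
Mixing these estimates: the law of the Part 1 output is close in TV to a mixture $\int \mu_y\,d\hat\rho(y)$ with $d\hat\rho/dp_T\le M$ off a small set. Hence it is $M$-warm for $\mu$, up to TV error $\ep/4$.

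\textbf{Part 2: sampling from the warm start.} Let $f=d\nu_0/d\mu$, truncated at level $M$; the discarded mass is at most $\ep/4$ in TV. Apply the WPI with parameter $\ep'=\ep^2/(16M^2)$ and rate $c=\frac1n e^{-C/\ep'}$. The standard WPI-to-mixing argument gives
\[
\Var_\mu(P_t f)\le e^{-ct}\Var_\mu(f)+\ep'\osc(f)^2\le e^{-ct}M^2+\ep'M^2 .
\]
Taking $t=\frac{n}{c'}e^{C/\ep'}\log(M/\ep)$ then yields $\TV\le \tfrac12\sqrt{\Var}\le \ep/4$. Since $M=e^{(1/\ep)^{O(1)}}$, this gives the claimed $ne^{(1/\ep)^{O(1)}}$ steps. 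Adding the runtimes gives $O(n^2+ne^{(1/\ep)^{O(1)}})$. The failure probabilities over $A$ sum to at most $\de$.

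\textbf{Main obstacle.} The hardest step is the warmness of $\rh_T$ relative to $p_T$. Girsanov only controls KL, and that must be converted into a pointwise density bound off a small set. This conversion is exactly where the $e^{O(1/\ep)}$ loss enters. I would handle it with Markov's inequality applied to the log-likelihood ratio, using the Lipschitz Jarzynski weights to control the reweighting.
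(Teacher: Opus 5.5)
Your two-phase skeleton and the WPI-to-mixing computation in Part~2 match the paper. The warm-start phase, however, has a genuine gap, and that gap also breaks your runtime count. Girsanov with \pref{a:m-error} takes its expectation over the \emph{true} process $y_t\sim p_t$, so it gives $\KL(p_T\|\hat p_T)\le E_T$ (\pref{l:KL}), not $\KL(\hat p_T\|p_T)$. That direction controls $dp_T/d\hat p_T$ on most of $p_T$. It says nothing about mass that $\hat p_T$ (or $\rh_T$) places where $p_T$ is small, which is exactly what a warm start must exclude. Your ``propagation term'' does not repair this. Coupling $y_t,\hat y_t$ through the same Brownian motion controls $\E\|y_t-\hat y_t\|^2$, but turning that into a bound on $\E\|m(\hat y_t)-\hat m(\hat y_t)\|^2$ requires the true magnetization $m$ to be Lipschitz, i.e.\ a uniform bound on $\opnorm{\Cov(\mu_y)}$. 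That is precisely what is unavailable for SK; only $\hat m$ is Lipschitz. From the available direction you only learn that an $e^{-O(E_{t_1})}$ fraction of $\hat p_{t_1}$ has bounded density ratio to $p_{t_1}$ (\pref{l:rev-KL-constant-portion}).

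The missing idea is the bootstrap of \pref{s:ws}:
transfer this constant portion to $\rh_{t_1}$ via the Jarzynski mutual warm start (\pref{l:ws-je});
use the LSI for $\rh_{t_1}$ and the small-distance transport map (\pref{l:couple-distance-2}) to pair the remaining mass with the good portion at distance $O(\sqrt{\log(1/\ep)})$;
noise down to $t_0<t_1$ with $K_{t_1\to t_0}$, so that paired Gaussians have density ratio $e^{O(\log(1/\ep))}$ off an $\ep$-small set (\pref{l:constant-portion-noise-ws});
and return to $\rh_{t_0}$ by the reverse-time Jarzynski identity (\pref{l:je-reverse}).
This yields a $((1/\ep)^{O(1)},\ep)$-warm start (\pref{t:annealed-ws}). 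Polynomial warmness is indispensable here. In your Part~2 you take $\ep'=\ep^2/(16M^2)$ with rate $\frac1n e^{-C/\ep'}$. With $M=e^{\Theta(1/\ep)}$, which is what your KL-to-density conversion produces, $1/\ep'\ge e^{\Omega(1/\ep)}$ and the number of Glauber steps is doubly exponential in $1/\ep$. So the claimed $ne^{(1/\ep)^{O(1)}}$ does not follow even from your own bounds.

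Your Step 2 also fails at the time the algorithm actually uses. \pref{t:local-wpi} and \pref{t:loc-conc} need $T\ge T(\be)$, a large constant, while your strong-convexity argument needs $T<1/L$. These are incompatible in general, and for SK the target is genuinely not log-concave at large $T$: for $T>1$, $p_T=\E_{\si\sim\mu}\calN(T\si,TI_n)$ has bimodal coordinate marginals, and $\rh_T$ approximates it. The paper instead proves a dimension-free LSI for $\rh_T$ at any fixed $T$ (\pref{t:rho-ls}, \pref{c:lsi-anneal-sk}), by writing $\rh_T$ as the endpoint pushforward of a Cameron--Martin-Lipschitz tilt of Wiener measure. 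It then applies the LSI-based discretization bound (\pref{t:ld-disc}) from $\calN(0,TI_n)$, using $\chi^2(\calN(0,TI_n)\|\rh_T)\le e^{2nT}2^n$, to reach constant $\chi^2$ in $\td O(n)$ steps. Markov's inequality then gives an $(O(1/\ep),\ep)$-warm start for every $\ep$ at once.

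A smaller point concerns Step~3. \pref{t:local-wpi} gives only a WPI with error $e^{-Cn}$, not a genuine Poincar\'e inequality, and $T(\be)$ grows with $C$. So the initial density bound must not depend on $T$. That is why one starts at $\si_0=\sign(y_T)$: then $\langle y_T,\si_0-\si\rangle\ge0$, and the ratio is at most $e^{3\be n}2^n$ using only $\opnorm{A}\le3$. Your justification that the field has norm $O(\sqrt n)$ is wrong, since $\|y_T\|\approx T\sqrt n$. A $T$-dependent bound would make the choice of $C$ and $T$ circular.
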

Note that the warm start portion of \pref{alg:Langevin} is independent of $\ep$, hence obtaining a more accurate sample only requires running Glauber dynamics for a longer time in part 2. 
We note that the dependence on $\ep$ is worse than in \cite{DLSS26}, which has runtime $O(\poly(n) e^{O(1/\ep)})$. Although we do not analyze it here, we expect that the method in \cite{DLSS26} (ASL and polarized walk) can also give a warm start.

\subsection{Discussion} Prior work on spherical spin glass models used uniform bounds on $D_ym(y) =\Cov(\mu_y)$ (which amounts to \pref{a:Lipschitz-drift} on the true magnetization) to prove functional inequalities via SL, with this being weakened to \emph{high-probability} bounds to prove a \emph{weak} Poincar\'e inequality \cite{huang2025weak}. This is far from what is currently known for the SK model for $0.295<\be<0.5$, namely second moment bounds on $\Cov(\mu_y)$ \cite{DLSS26}; such bounds were ``integrated'' via Gr\"onwall's inequality to show \pref{a:m-error}.

The parameters of our weak Poincar\'e inequality are substantially weaker than what is known for the spherical $p$-spin model at high temperature via SL \cite{huang2025weak}. A $(n^{-O(1)}, o(n^{-1}))$-WPI would allow using simulated annealing to obtain negligible guarantees in TV distance (using $O(n)$ temperatures); however, taking $\ep=O\prc{n}$ in \pref{t:main}  already gives a trivial weak Poincar\'e constant of $\exp(-O(n))$. 

Our weak Poincar\'e inequality allows showing mixing from a warm start in $L^\iy$ divergence, up to $\ep$ in time $n\exp(O(1/\ep))$. We obtain a higher complexity for the end-to-end algorithm, as we only approximately obtain a warm start in $L^\iy$ divergence. We find it surprising that a weak Poincar\'e inequality can be proved under such minimal assumptions, and expect that stronger assumptions---left to future work---will be necessary to strengthen it.


We note that while \pref{alg:Langevin} for the SK model simplifies the former algorithm of \cite{DLSS26}, the proof of its correctness relies on the same assumptions. Hence, 
for the SK model, our result shortcuts the algorithm but not the proof.
The bulk of \cite{DLSS26} is occupied by showing these assumptions (implied by the desiderata therein)\footnote{For the SK model, the proofs for \pref{a:m-error} and \pref{a:Lipschitz-drift} rely on two results in free probability and mathematical spin-glass theory established in \cite{DLSS26}---Lipschitz control over the diagonal sub-algebra of squared resolvents of deformed Wigner matrices with optimal error rates, and precise cavity estimates (stronger than overlap concentration) for the covariance matrix of the planted SK model with SL tilt.}. We find it remarkable that although the tools introduced there---ASL and Jarzynski's equality---are algorithmic in nature and intended as such, they can (1) function to prove a purely mathematical result, and (2) be absorbed into the analysis of an algorithm that does not rely on simulating these processes.

\subsection{Related work}
In the context of sampling, stochastic localization (originally developed in \cite{eldan2013thin}) has been used in two ways: first as a way of proving functional inequalities \cite{chen2022localization} and hence establishing rapid mixing of the standard Markov chain (e.g., Glauber dynamics), and second, algorithmically through approximating a SDE \cite{el2022sampling}. For spin glasses, this has been applied both to the SK model on the hypercube and spherical $p$-spin models.

Functional inequalities and efficient sampling algorithms for the SK model for $\be<1$ has been a longstanding open problem \cite{sompolinsky1981dynamic,mezard1988spin,bandeira2025randomstrasse101}. 
Using localization schemes, a modified log-Sobolev inequality has been proved up to $\be\approx 0.295$ \cite{eldan2022spectral,anari2022entropic,anari2024trickle}. Algorithmic stochastic localization has been used to give efficient sampling for the SK model for the entire regime $\be<1$, but with error measured under the weaker Wasserstein distance metric \cite{el2022sampling,celentano2024sudakov}. A recent result \ifconf{}{by the authors }\cite{DLSS26} combines ASL and Jarzynski's equality with rejection sampling to give efficient sampling with negligible TV distance for $\be<\rc2$. The analysis in  \cite{DLSS26} builds on the potential Hessian ascent (PHA) framework \ifconf{}{developed by Jekel and the last two authors }\cite{jekel2024pha, jekel2025pha2}. 

For spherical $p$-spin models, 
\cite{huang2024sampling} use ASL to sample up to the so-called SL threshold with negligible TV distance error. \cite{huang2025weak} use SL to show weak Poincar\'e inequalities up to the same threshold, thereby showing efficient sampling using annealed Langevin dynamics. However, in addition to \pref{a:localized-fi}, they require exponential tails on the operator norm of the covariance under the localization process.
Due to the difficulty of working on the discrete hypercube, it is not known how to obtain operator norm bounds of the right order for the SK model.
In contrast, we only need \pref{a:m-error} and \pref{a:Lipschitz-drift}, whose proofs in \cite{DLSS26} only suffice to imply second-tracial-moment bounds on the covariance.

Our main question is similar to a question posited by \cite{huang2025weak}: Is there a general reduction from a sampling guarantee for algorithmic stochastic localization to one for simulated annealing? We show that an ASL guarantee essentially implies a WPI---just not of the strength required for vanilla simulated annealing.

Given the difficulty of proving rapid mixing, various works have consider weaker notions of mixing.
\cite{liu2024locally} show that locally stationary distributions with respect to Glauber dynamics are also locally stationary with respect to restricted Gaussian dynamics (RGD). \cite{rajaraman2025markov} relate the behavior or RGD to approximate message passing (AMP). We note that their use of localization schemes to relate the behavior of Glauber dynamics and RGD bears resemblance to our decomposition relating Glauber dynamics with the proximal sampler; their RGD can be thought of as a rank-1 version of the proximal sampler.

We rely on a decomposition theorem for the weak Poincar\'e inequality, following
a line of work on decomposition theorems, including for the spectral gap or Poincar\'e inequality \cite{madras2002markov,andrieu2018uniform,ge2018simulated},  restricted spectral gap \cite{garg2025restricted}, and $s$-conductance \cite{zhou2025polynomial}. \cite{gao2026weak} give related comparison results for the weak Poincar\'e inequality. Decomposition theorems have been used to analyze sampling from multimodal distributions \cite{ge2018simulated}, as well as data augmentation and Gibbs samplers including hybrid (Metropolis-within-Gibbs) variants \cite{qin2025spectral}. They enable local-to-global results for Markov chains \cite{oppenheim2018local} and appear in another guise in localization schemes \cite{chen2022localization}. 
\cite{qin2025spectral} gives a unifying framework for many of these decomposition theorems. 

For distributions on $\R^n$, the proximal sampler was introduced in \cite{liang2022proximal}, with improved analyses given in \cite{chen2022improved,fan2023improved}. 
The (A)SL process is a reparameterization of a standard denoising diffusion model~\cite{Mon23}, and these models have been independently discovered by the machine learning community for use in generative modeling. A long line of work has established efficient sampling under access to an approximate score function~\cite{chen2022sampling,chen2023improved,benton2023nearly}. Jarzynski's equality \cite{Jar97} is an identity in statistical mechanics based on the Feynman-Kac formula \cite{kac1949distributions}, which has recently seen wide application in sampling and in machine learning \cite{vargas2023transport,albergo2024nets}, with theoretical guarantees given in~\cite{guo2025complexity}.

\subsection{Technical overview and organization of paper} 
We first give preliminaries in \pref{s:prelim}. 

In \pref{s:decomp}, we present the keystone of the proof, a decomposition theorem for the weak Poincar\'e inequality (WPI) which reduces the task of proving a WPI for Glauber to proving a WPI (1) for the time-$T_0$ localized distribution ($X\mid Y$) and (2) for the time-$T_0$ proximal sampler ($Y$). The proximal sampler is the sampler that iteratively alternates between sampling $X\mid Y$ and then the posterior $Y\mid X$, where $X$ is a Gaussian noised observation of $Y$ given by the SL process. 
The result follows as a corollary (\pref{c:wpi-gd-ps}) of a more general decomposition theorem for a joint distribution on $(X,Y)$ (\pref{t:wpi-decomp}), where a WPI for conditional distributions $X\mid Y$ and for the alternating Gibbs chain on $Y$ implies a WPI for the joint chain; projecting onto $X$ then gives the WPI for the marginal distribution. 

In \pref{s:wpi-SL}, we prove a WPI for the time-$T_0$ proximal sampler (\pref{l:prox-wpi}) by first showing it for Langevin for the SL distribution $p_{T_0}$ via an isoperimetric argument, namely $s$-conductance (\pref{t:wpi-pT}). The idea of the proof is given in \pref{f:1}. First, choose $T_1>T_0$; regularity of ASL gives a log-Sobolev inequality for $\hat p_{T_1}$. Drift error gives bounded KL divergence to $p_{T_1}$, which allows $p_{T_1}$ to have smaller density, but not too much larger density. Unfortunately, regions of smaller density can ``disconnect'' $p_{T_1}$ and inhibit proving a functional inequality for $p_{T_1}$. Instead, we note that by construction of the SL process, $p_{T_0}$ for $T_0<T_1$ is a Gaussian noising of $p_{T_1}$, and this smoothed-out distribution will satisfy a WPI. Formally, we argue that given any two not-too-small sets under $p_{T_1}$, we can couple a large portion of their mass under $p_{T_1}$ such that the distance between coupled points is small (\pref{l:couple-distance}). The conductance of a set under $p_{T_0}$ then cannot be too small, by considering its conductance under mixture-of-gaussians with bounded separation under these coupled points. A conductance profile gives a WPI by Cheeger's inequality; we then apply the argument of \cite{chen2022improved} to show that mixing for Langevin implies mixing for the proximal sampler. We conclude in \pref{s:proof} with a proof of the main theorem (\pref{t:main}) by combining \pref{s:wpi-SL} with the decomposition result for WPI (\pref{c:wpi-gd-ps}).

\begin{figure}[h!]
\centering
\begin{tikzpicture}[x=0.5pt,y=0.5pt]

\def\leftX{80}
\def\rightX{640}
\def\baseY{40}    
\def\W{720}
\def\H{250}

\draw[line width=3pt, black, line cap=round] (\leftX,\baseY) -- (\rightX,\baseY);

\draw[line width=2pt, blue!70!black]
  plot[smooth, samples=200, domain=\leftX:\rightX]
    ({\x},
     {%
       \baseY
       + 160*exp(-((\x-230)/60)^2)   
       + 160*exp(-((\x-470)/60)^2)   
       + 50*exp(-((\x-350)/80)^2)    
     } );

\draw[line width=2pt, red!70!black]
  plot[smooth, samples=200, domain=\leftX:\rightX]
    ({\x},
     {%
       \baseY
       + 200*exp(-((\x-245)/45)^2)   
       + 200*exp(-((\x-455)/45)^2)   
       - 0*exp(-((\x-350)/45)^2)    
       + 10*(1-exp(-((\x-350)/45)^2))                            
     } );

\draw[line width=2pt, orange!80!black, dashed, dash pattern=on 6pt off 6pt]
  plot[smooth, samples=200, domain=\leftX:\rightX]
    ({\x},
     {%
       \baseY
       + 180*exp(-((\x-260)/60)^2)   
       + 180*exp(-((\x-440)/60)^2)   
       - 0*exp(-((\x-350)/90)^2)    
       + 10                            
     } );

\node[blue!70!black,font=\large] at (350,120) {$\hat{p}_{T_1}$};
\node[red!70!black,font=\large] at (350,65) {$p_{T_1}$};
\node[orange!80!black,font=\large] at (410,235) {$p_{T_0}$};

\draw[line width=2pt, red!70!black, line cap=round]
  (245,20) -- (455,20);
\filldraw[red!70!black] (245,20) circle (3pt);
\filldraw[red!70!black] (455,20) circle (3pt);

\useasboundingbox (0,0) rectangle (\W,\H);

\end{tikzpicture}
\caption{
Proving a WPI for the SL distribution $p_{T_0}$ (orange, dashed) by ``scaffolding'' with the ASL process $\hat p_{T_1}$ (blue, solid).
Small drift error between $\hat p_{T_1}$ and $p_{T_1}$ (red, solid) means that that they are close together, but the density of $p_{T_1}$ can still go to zero where $\hat p_{T_1}$ is non-zero.
Gaussian noise and the small-distance coupling (red line segment) lift the corresponding areas of $p_{T_0}$ away from zero.
}
\label{f:1}
\end{figure}

To apply this to the SK model, we need to prove a WPI for the time-$T_0$ localized distribution. In \pref{s:wpi-local}, we show that the method in \cite{DLSS26}, written for a different Markov chain called the polarized walk, can be adapted for Glauber dynamics. We show the WPI holds whenever the mass is highly concentrated in a neighborhood, which holds with high probability for the localized distribution. This allows us to prove our main theorem for the SK model (\pref{c:sk}).

The remaining sections turn the weak Poincar\'e inequality for the SK model into an algorithmic result by establishing a warm start that allows efficient sampling, which may be of independent interest.

To motivate \pref{s:rho-t-lsi}, note that the ASL distribution satisfies a LSI and approximates the SL distribution; however its density is not explicit. \cite{DLSS26} define explicit distributions $\rh_t$ in terms of the TAP free energy which is a reweighting of the ASL distribution (via Jarzynski's equality), which also well-approximate the SL distribution. We will refer to $\rh_t$ as annealed distributions. 
We show that under the general \pref{a:Lipschitz-je} (in the special case of a constant initial distribution $y_0 = 0$)---that $\rh_t$ is a reweighting with weights defined by a ODE with Lipschitz drift---which holds for the SK model, $\rh_t$ also satisfies a log-Sobolev inequality. To show this, we first prove a perturbation result for finite dimensional Gaussian space (\pref{lem:lipschitz-lsi-measure-space}), and then extend it to the infinite-dimensional path space of Brownian motion via a limiting argument that uses the decomposition of the reweighed measure into a finite-dimensional cylindrical part and an infinite-dimensional Gaussian tail (\pref{t:lsi-CM-pert}). Then, using the fact that the end-point map to evaluate $\rh_T$ is Lipschitz, we ``transport'' this path-space LSI and show it holds for $\rh_T$ in standard Eucliden space (\pref{thm:lsi-rhT-lipschitz}). Hence, we can sample $\rh_t$ directly, and this is a suitable candidate for a warm start.

To obtain a warm start to the SK measure, we first obtain a warm start for the ASL distribution $\hat p_{T_0}$ and then approximately sample from the localized measure. In \pref{s:ws}, we show that $\rh_{T_0}$ is a warm start for $\hat p_{T_0}$ through a similar scaffolding approach as in \pref{s:wpi-SL}, thinking of these as approximate Gaussian noisings of $\rh_{T_1}$ and $\hat p_{T_1}$ for some $T_1>T_0$. 

Finally, in \pref{s:alg}, we put everything together to prove our main algorithmic result (\pref{t:alg}).

\section{Preliminaries}
\label{s:prelim}

\subsection{Markov processes, functional inequalities, and isoperimetry}

We assume knowledge of Markov chains and processes (see e.g., \cite{levin2026markov} and \cite{kallenberg1997foundations}). 

\begin{definition}
    Given a Markov chain on $\Om$ defined by a Markov kernel $P$ with stationary distribution $\mu$, define the Dirichlet form by $\sE(f,g) = \an{f,(I - P)g}$. 
    Given a Markov process on $\Om$ defined by a generator $\sL$ with stationary distribution $\mu$, define the Dirichlet form by $\sE(f,g) = -\an{f,\sL g}$.
\end{definition}
Note that any Markov chain can be associated with a Markov process with the same Dirichlet form by letting $\sL = P-I$; this corresponds to making transitions according to the discrete chain according to a Poisson clock with rate 1. We will always assume that the Markov chain/process is ergodic and reversible. 

We use Glauber dynamics as the canonical Markov chain on $\{\pm 1\}^n$ and Langevin dynamics as the canonical Markov process on $\R^n$ with a specified stationary measure $\mu$. 
Define Glauber dynamics with stationary distribution $\mu$ on $\{\pm 1\}^n$ as the Markov chain where at each step, if the current sample is $x$, we choose a coordinate $i\in [n]$ uniformly at random, and resample $x_i$ according to 
\[\mu(X_i|X_{[n]\bs \{i\}}=x_{[n]\bs \{i\}}).\]
The Dirichlet form of Glauber dynamics is 
\[\sE_\mu(f,g) = \rc{2n} \sum_{x\in \{\pm 1\}^n} \sumo in \fc{\mu(x)\mu(x^{\opl i})}{\mu(x) + \mu(x^{\opl i})}(f(x^{\opl i})-f(x))(g(x^{\opl i})-g(x)),\]
where $x^{\opl i}$ denotes $x$ with the $i$th coordinate flipped.
Langevin diffusion with stationary distribution $\mu$ is defined by the SDE 
\[dX_t = \gd \ln \mu(X_t)\,dt + \sqrt 2\, dB_t,\] with Dirichlet form $\sE_\mu(f,g) = \int_{\R^n} \an{\gd f,\gd g}\,d\mu$.

We now provide some background on 
functional inequalities.

\begin{definition}[Log-Sobolev inequality]
Consider a Markov chain or process on $\Om$ with stationary distribution $\pi$ and Dirichlet form $\mathscr E$. 
    For a function $f:\Om\to\R$ and measure $\mu$ on $\Om$ we write 
    \[\Ent_\mu(f) := \E_\mu f\log f - \E_\mu f \E_\mu \log f.\]
    We say that the Markov chain or process satisfies a \vocab{modified log-Sobolev inequality (MLSI)}\footnote{On $\R^n$, there is no distinction between the LSI and the modified LSI, and we will drop the modifier.} with constant $c$ if for all $f:\Om \to \R_{\ge 0}$ with $\Ent_\mu(f)>0$,  
    \[
\Ent_\mu(f) \le \rc{2c}\sE(f,\log f).
    \]
\end{definition}
For Langevin dynamics on $\R^n$, this has the equivalent form $\Ent_\mu(f^2)\le \fc{2}{c}\sE(f,f)$.

\begin{definition}[Weak Poincar\'e inequality]
\label{d:wpi}
Consider a Markov chain or process on $\Om$ with stationary distribution $\pi$ and Dirichlet form $\mathscr E$. 
    We say that the Markov chain or process satisfies a \vocab{$(c,\de)$-weak Poincar\'e inequality (WPI)} with error function $\mathrm{Err}$ if for all $f$,
    \[
\Var_\pi(f) \le \rc{c}\sE(f,f) + \de \Err(f).
    \]
    A Poincar\'e inequality (PI) with constant $c$ is a $(c,0)$-WPI. 
\end{definition}
For background on weak Poincar\'e inequalities, we refer to \cite[\S7.5]{bakry2013analysis} for Markov processes and \cite{andrieu2026weak} for Markov chains. 

We say a distribution $\mu$ satisfies a Poincaré or modified log-Sobolev inequality with constant $c$ if the canonical Markov chain or process (Glauber dynamics for $\{\pm 1\}^n$ or Langevin dynamics for $\R^n$ or $r\bbS^n$) satisfies a Poincaré or modified log-Sobolev inequality with constant $c$. We call the maximal $c$ for which this holds \emph{the} Poincaré or modified log-Sobolev constant, and denote it by $\rp(\mu)$ or $\rls(\mu)$. A MLSI implies a PI with the same constant. Here smaller is worse; we note that some texts use the reciprocal convention ($C=\rc c$ as the Poincaré or modified log-Sobolev constant). 

If not specified, we take $\Err(f) = \osc(f)^2$ where $\osc(f):=\sup f - \inf f$. Weak Poincar\'e inequalities allow mixing from a warm start.
\begin{lemma}[{\cite[Theorem 4.6]{huang2025weak}}]
\label{l:wpi-converge}
    Suppose that for a Markov process, $\pi$ satisfies a $(c,\de)$-weak Poincar\'e inequality with error function $\mathrm{Err}$. 
    Let $\La_T(s) = \fc{2c}{e^{2c T}-1} e^{2c s}$. 
    Then for all $\nu_0$,
    \[
\chi^2(\nu_T\|\pi) \le e^{-2c T} \chi^2(\nu_0\|\pi) + \de \E_{s\sim \La_T} \ba{\Err\pa{\dd{\nu_s}{\pi}}}
\le e^{-2c T} \chi^2(\nu_0\|\pi) + \de 
\sup_{s\in [0,T]}\ba{ \Err\pa{\dd{\nu_s}{\pi}}}
    \]
\end{lemma}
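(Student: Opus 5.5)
The plan is the standard semigroup argument: differentiate $\chi^2(\nu_s\|\pi)$ along the dynamics, bound the derivative using the weak Poincar\'e inequality, and integrate the resulting linear differential inequality with an integrating factor. Write $f_s = \dd{\nu_s}{\pi}$ and $P_s$ for the semigroup. By reversibility $f_s = P_s f_0$, and $\E_\pi f_s = 1$, so $\chi^2(\nu_s\|\pi) = \Var_\pi(f_s)$. Differentiating gives
\[
\ddd s \Var_\pi(f_s) = 2\an{f_s, \sL f_s}_\pi = -2\sE(f_s,f_s).
\]
We first argue for nice $f_0$, say bounded and in the domain of $\sL$. The general case then follows by approximation, or trivially when $\chi^2(\nu_0\|\pi)=\iy$.

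Next, apply the $(c,\de)$-weak Poincar\'e inequality to $f_s$, in the form $\sE(f_s,f_s) \ge c\Var_\pi(f_s) - c\de\Err(f_s)$. Setting $u(s) = \Var_\pi(f_s)$, this yields
\[
u'(s) \le -2c\, u(s) + 2c\de\,\Err(f_s).
\]
Multiplying by $e^{2cs}$ and integrating over $[0,T]$ gives
\[
u(T) \le e^{-2cT}u(0) + 2c\de\int_0^T e^{-2c(T-s)}\Err(f_s)\,ds.
\]
To match the statement, rewrite the integral as $\de\,\E_{s\sim\La_T}[\Err(f_s)]$ times the total mass factor $\int_0^T 2c\,e^{-2c(T-s)}\,ds = 1-e^{-2cT} \le 1$. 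Indeed $\La_T$ is exactly the normalized density $\propto e^{2cs}$ on $[0,T]$, since $\int_0^T e^{2cs}\,ds = (e^{2cT}-1)/(2c)$. Hence
\[
2c\int_0^T e^{-2c(T-s)}\Err(f_s)\,ds = (1-e^{-2cT})\,\E_{s\sim \La_T}\Err(f_s) \le \E_{s\sim \La_T}\Err(f_s).
\]
This uses $\Err\ge 0$. The second inequality in the statement is then immediate, since an average over $[0,T]$ is at most the supremum there.

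The only delicate step is justifying the differentiation, since $f_s$ must be regular enough to lie in the domain of the Dirichlet form. For a finite-state chain such as Glauber dynamics this is automatic. For diffusions we use that $P_s$ maps $L^2(\pi)$ into the domain of $\sL$ for $s>0$, so $u$ is differentiable on $(0,T]$ and continuous at $0$. We also note the measurability of $s\mapsto\Err(f_s)$: for $\Err=\osc^2$ it is nonincreasing in $s$ by the contraction property of $P_s$ on $L^\iy$, which is convenient but not required.
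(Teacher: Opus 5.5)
Your proof is correct. You differentiate $\Var_\pi(f_s)$ to get $-2\sE(f_s,f_s)$, insert the weak Poincar\'e inequality, and integrate with the factor $e^{2cs}$; this is the standard argument behind the cited result \cite[Theorem 4.6]{huang2025weak}, which the paper imports without reproving. Your computation also yields the slightly sharper factor $(1-e^{-2cT})$ in front of $\de\,\E_{s\sim\La_T}[\Err(\dd{\nu_s}{\pi})]$, which you correctly discard using $\Err\ge 0$.
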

\begin{remark*}
    For a Markov chain, the same bound (on the right side) holds for $T\in \N$ with $e^{2c T}$ replaced by $(1-c)^{-2T}$.
\end{remark*}
For $\Err = \osc^2$, monotonicity gives that the sup can be bounded by $\Err\pa{\dd{\nu_0}{\pi}}$.

A useful consequence of functional inequalities is concentration; we will just need the result for the log-Sobolev inequality.
\begin{lemma}[{Lipschitz functions are sub-gaussian under log-Sobolev inequality, \cite[\S 5.4.2]{bakry2013analysis}}]
\label{l:conc-lsi}
    Suppose $\mu$ on $\R^n$ satisfies a log-Sobolev inequality with constant $c$. Let $f$ be a $L$-Lipschitz function. 
    Then 
    \[\mathbb{E}_{\mu}\left[e^{t(f-\mathbb{E}_\mu f)}\right] \leq e^\frac{L^2t^2}{2c}\]
    and
    \[
\mu\pa{f\ge \E_\mu f + r} \le e^{-\fc{c r^2}{2L^2}}, \qquad 
\mu\pa{f\le \E_\mu f - r} \le e^{-\fc{c r^2}{2L^2}}.
    \]
\end{lemma}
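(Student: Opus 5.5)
The statement to prove is \pref{l:conc-lsi}: under a log-Sobolev inequality with constant $c$, $L$-Lipschitz functions are sub-gaussian. I would use the Herbst argument.

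\textbf{Reductions.} By approximation (truncation and mollification), it suffices to treat bounded, smooth $f$ with $\|\gd f\|\le L$ pointwise. By translation we may also assume $\E_\mu f=0$.

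\textbf{Step 1: a differential inequality for the log-Laplace transform.} Set $H(t)=\E_\mu e^{tf}$ for $t\in\R$. Apply the LSI in the form $\Ent_\mu(g^2)\le \fc{2}{c}\E_\mu\|\gd g\|^2$ to $g=e^{tf/2}$. We have
\[
\|\gd g\|^2=\fc{t^2}{4}\|\gd f\|^2e^{tf}\le \fc{t^2L^2}{4}e^{tf},
\]
and
\[
\Ent_\mu(e^{tf})=tH'(t)-H(t)\log H(t).
\]
Combining these gives
\[
tH'-H\log H\le \fc{t^2L^2}{2c}H .
\]

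\textbf{Step 2: integrate.} Let $K(t)=\rc t\log H(t)$ for $t\neq 0$. Dividing the inequality by $t^2H$ gives $K'(t)\le \fc{L^2}{2c}$ for $t>0$. As $t\to0^+$, $K(t)\to H'(0)/H(0)=\E_\mu f=0$. Integrating then yields $\log H(t)\le \fc{L^2t^2}{2c}$ for $t\ge 0$. For $t<0$, apply the same argument to $-f$, which is also $L$-Lipschitz. This proves the moment generating function bound.

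\textbf{Step 3: tail bounds.} By Chernoff,
\[
\mu(f\ge r)\le e^{-tr+L^2t^2/(2c)}.
\]
Choosing $t=cr/L^2$ gives the bound $e^{-cr^2/(2L^2)}$. The lower tail follows by applying this to $-f$.

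\textbf{Main obstacle.} The only delicate point is the approximation in the reductions: general Lipschitz $f$ need not be smooth or bounded, so $H$ might not be differentiable a priori. For this I would truncate $f$ to $\max(-N,\min(f,N))$ and mollify. Both operations preserve the Lipschitz constant, and I would then pass to the limit using monotone/dominated convergence together with Fatou's lemma.
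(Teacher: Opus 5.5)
Your proposal is correct and is exactly the Herbst argument of \cite[\S 5.4.2]{bakry2013analysis}, which the paper cites without reproducing; your constants match the paper's normalization $\Ent_\mu(g^2)\le \frac{2}{c}\E_\mu\|\nabla g\|^2$. The one detail to make explicit in the limiting step is that $f\in L^1(\mu)$, so that $\E_\mu f_N\to\E_\mu f$ by dominated convergence. This is either implicit in the statement (which writes $\E_\mu f$), or it follows from the uniform sub-Gaussian tails of the truncations $f_N$: these keep $\E_\mu f_N$ within $O(L/\sqrt{c})$ of a median of $f$, which is also a median of every $f_N$ with $N$ large.
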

Functional inequalities are related to isoperimetry. We will need the relationship between $s$-conductance and the weak Poincar\'e inequality for distributions on $\R^n$.
The $s$-conductance is defined to be the minimum ratio of the boundary measure to the measure of a set, over sets with measure in $(s,\rc 2]$.
\begin{definition}
    Let $\mu$ be a probability measure on $\R^n$. 
    Given a measurable set $S$, define the $\ep$-neighborhood of $S$ to be
    \[
S_\ep := \set{x\in \R^n}{d(x,S)\le \ep}, \qquad 
\pl_\ep S = S_\ep\bs S.
    \]
    Define the (outer) boundary measure by 
    \[\mu^+(S):= 
    \liminf_{h\to 0^+} \fc{\mu(\pl_h S)}{h}.
    \]
    Define the \vocab{$s$-conductance} of $\mu$ to be 
    \begin{align*}
    \Phi_s(\mu)
    := \inf_{S:\mu(S)\in (s,\rc 2]} \fc{\mu^+(S)}{\mu(S)}. 
    \end{align*}
    Define the \vocab{conductance} of $\mu$ to be $\Phi(\mu) := \Phi_0(\mu)$. 
\end{definition}

\begin{theorem}[Cheeger's inequality for weak Poincar\'e inequality, {consequence of \cite[Theorem 4.1]{rockner2001weak}}] 
\label{t:cheeger}
Let $\mu$ be a probability measure on $\R^n$ and $s \in [0,1/2)$ with $\Phi_{s/2}(\mu)>0$.
Then $\mu$ satisfies a $(\rc 4\Phi_{s/2}^{2}(\mu),s)$-weak Poincar\'e inequality.
\end{theorem}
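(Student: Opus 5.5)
Since the statement is attributed to \cite[Theorem 4.1]{rockner2001weak}, the plan is to derive it from the classical co-area/level-set argument, which is exactly how R\"ockner--Wang obtain weak Poincar\'e inequalities from isoperimetric profiles. Fix $f$ bounded and locally Lipschitz (the general case follows by approximation), and write $\Phi := \Phi_{s/2}(\mu)$. Since both $\Var_\mu(f)$ and $\osc(f)$ are invariant under adding constants, we may subtract a median $m$ of $f$ and split $g = f - m$ into $g_+ = \max(g,0)$ and $g_- = \max(-g,0)$. Each of these satisfies $\mu(g_\pm > 0) \le \rc 2$. Moreover $\Var_\mu(f) \le \E_\mu g^2 = \E_\mu g_+^2 + \E_\mu g_-^2$, and $\sE(f,f) = \E_\mu|\gd g_+|^2 + \E_\mu|\gd g_-|^2$. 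It therefore suffices to prove, for a nonnegative $h$ with $\mu(h>0)\le \rc2$,
\[
\E_\mu h^2 \le \fc{4}{\Phi^2}\E_\mu|\gd h|^2 + \fc{s}{2}\,\|h\|_\iy^2 ,
\]
since $\|g_\pm\|_\iy \le \osc(f)$ and the two error terms then add to at most $s\,\osc(f)^2$.

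To prove this, apply the co-area formula to $u = h^2$ with the level sets $S_t = \{u > t\}$. We have $\E_\mu |\gd u| \ge \int_0^\iy \mu^+(S_t)\,dt$. Let $t_0 = \inf\{t : \mu(S_t) \le s/2\}$. For $t > t_0$ we have $\mu(S_t) \le s/2$. For $t< t_0$ we have $\mu(S_t)\in(s/2,\rc2]$, so $\mu^+(S_t) \ge \Phi\,\mu(S_t)$. This gives
\[
\E_\mu u = \int_0^{t_0}\mu(S_t)\,dt + \int_{t_0}^{\|u\|_\iy}\mu(S_t)\,dt \le \rc\Phi \E_\mu|\gd u| + \fc s2 \|h\|_\iy^2 .
\]
Cauchy--Schwarz then gives $\E_\mu|\gd u| = 2\E_\mu h|\gd h| \le 2(\E_\mu h^2)^{1/2}(\E_\mu|\gd h|^2)^{1/2}$. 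Writing $a = (\E h^2)^{1/2}$ and $b = (\E|\gd h|^2)^{1/2}$, we obtain $a^2 \le \fc{2}{\Phi}ab + \fc s2\|h\|_\iy^2$. By AM--GM, $\fc2\Phi ab \le \fc{a^2}{2} + \fc{2b^2}{\Phi^2}$, and hence $a^2 \le \fc{4}{\Phi^2}b^2 + s\|h\|_\iy^2$.

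The resulting constant in the error term is $s\|h\|_\iy^2$ rather than $\fc s2\|h\|_\iy^2$, so summing over $g_\pm$ naively gives $2s\,\osc(f)^2$. This bookkeeping is the main point needing care. The fix is to note that $\|g_+\|_\iy + \|g_-\|_\iy = \osc(f)$, so $\|g_+\|_\iy^2 + \|g_-\|_\iy^2 \le \osc(f)^2$. This yields total error $s\,\osc(f)^2$ and Dirichlet coefficient $\fc{4}{\Phi^2}$, which is exactly a $(\rc4\Phi_{s/2}^2, s)$-WPI.

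The remaining technical points are justifying the co-area inequality with the outer boundary measure $\mu^+$ defined through $\liminf$, and the density of locally Lipschitz functions in the domain of the Dirichlet form. Both are standard and are handled in \cite{rockner2001weak}. The co-area step is the one I expect to require the most care, because $\mu^+$ is a Minkowski-type content. I would handle it by the standard estimate $\mu(\{u>t\}_h\setminus\{u>t\})$ integrated in $t$, bounded by $h\,\E|\gd u|$ up to $o(h)$, followed by Fatou.
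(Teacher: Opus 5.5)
Your proof is correct. The paper gives no argument of its own beyond deferring to \cite[Theorem 4.1]{rockner2001weak}, and your route is the standard Cheeger-type argument for such results: median splitting, then the co-area/level-set estimate thresholded at mass $s/2$, then Cauchy--Schwarz and AM--GM, with the bookkeeping $\|g_+\|_\iy^2+\|g_-\|_\iy^2\le\osc(f)^2$; it recovers exactly the parameters $(\rc4\Phi_{s/2}^2,s)$. The one step to tighten is $\sE(f,f)=\E_\mu|\gd g_+|^2+\E_\mu|\gd g_-|^2$, which double-counts on $\{g=0\}\cap\{\gd f\neq0\}$ if $\mu$ charges that set; for a general $\mu$, use instead $\E_\mu|\gd(g_\pm^2)|\le 2(\E_\mu g_\pm^2)^{1/2}(\E_\mu[|\gd f|^2\one_{\{\pm g>0\}}])^{1/2}$, whose two gradient terms sum to at most $\sE(f,f)$ (for $\mu$ with a density, as in the application to $p_{T_0}$, the issue does not arise).
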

Note that \cite{rockner2001weak} actually show it suffices to take the infimum over smooth domains $S$.
The conductance profile is exactly known for Gaussians.
\begin{theorem}[Gaussian isoperimetric inequality, \cite{sudakov1978extremal,borell1975brunn}]
\label{t:g-isop}
    Let $\ga_{\si^2}$ be the the measure of the Gaussian $\calN(0,\si^2I)$. Then for any $A$, 
    \[
\ga_{\si^2}^+(A)
\ge 
\rc{\si}\ph (\Phi^{-1}(\ga_{\si^2}(A))),
    \]
    where $\ph(t) = \rc{\sqrt{2\pi}} e^{-\fc{t^2}2}$ and $\Phi(t) = \int_{-\iy}^t \ph(s)ds$ are the pdf and cdf of the standard Gaussian.
\end{theorem}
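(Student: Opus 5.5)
We prove the statement for $\si=1$ and then rescale. The map $x\mapsto \si x$ pushes $\ga_1$ to $\ga_{\si^2}$ and sends $\pl_h A$ to $\pl_{\si h}(\si A)$. Hence $\ga_{\si^2}(\pl_h(\si A)) = \ga_1(\pl_{h/\si}A)$, so $\ga_{\si^2}^+(\si A) = \rc\si \ga_1^+(A)$, which produces the factor $\rc\si$. Throughout, write $I := \ph\circ\Phi^{-1}$ on $[0,1]$, the Gaussian isoperimetric profile, with $I(0)=I(1)=0$. The key fact about $I$ is the ODE $I\,I'' = -1$.

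\textbf{Step 1 (Bobkov's functional inequality).} For smooth (or Lipschitz) $f:\R^n\to[0,1]$ we show
\[
I\pa{\E_{\ga_1} f} \le \E_{\ga_1}\sqrt{I(f)^2+\ve{\gd f}^2}.
\]
We use the Ornstein--Uhlenbeck semigroup $P_s$ and interpolate with
\[
\Psi(s) = P_s\Big(\sqrt{I(P_{t-s}f)^2 + \ve{\gd P_{t-s}f}^2}\Big), \qquad s\in[0,t].
\]
Differentiating in $s$ gives a $\Gamma_2$-type expression. We show $\Psi'(s)\ge 0$ using $I I''=-1$, the commutation $\gd P_s = e^{-s}P_s\gd$, and Cauchy--Schwarz on the Hessian term, following Bakry--Ledoux. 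Letting $t\to\iy$ gives $P_t f\to \E f$ and $\gd P_t f\to 0$, which yields the inequality.

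This sign computation is the main obstacle. If it becomes messy, the fallback is Bobkov's original route: prove the two-point inequality on $\{\pm1\}$, tensorize, and pass to the Gaussian via the central limit theorem.

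\textbf{Step 2 (from functions to sets).} Fix $A$ and assume $\ga_1^+(A)<\iy$, since otherwise there is nothing to prove. First take $A$ closed. Put $f_h(x) = \max(0, 1-d(x,A)/h)$. This function is $\rc h$-Lipschitz, equals $1$ on $A$, and equals $0$ off $A_h$, so $0<f_h<1$ only on $\pl_h A$. Since $I(0)=I(1)=0$, $I\le \rc{\sqrt{2\pi}}$, and $\sqrt{a^2+b^2}\le a+b$, Step 1 gives
\[
I(\E f_h) \le \fc{\ga_1(\pl_h A)}{h} + \rc{\sqrt{2\pi}}\,\ga_1(\pl_h A).
\]
As $h\to0^+$ we have $\E f_h\to \ga_1(A)$, because $\bigcap_h A_h = A$ for closed $A$. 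Also $\ga_1(\pl_hA)\to 0$, and $I$ is continuous. Taking $\liminf$ therefore gives $I(\ga_1(A))\le \ga_1^+(A)$.

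For general measurable $A$, reduce to the closed case. This suffices for the application through \pref{t:cheeger}, where smooth domains are enough. Alternatively, one can note that $\ga_1^+(A)<\iy$ forces $\ga_1(\bar A\bs A)=0$, and then apply the closed case to $\bar A$, whose $h$-neighbourhoods coincide with those of $A$.
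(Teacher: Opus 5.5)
Your proof is correct, and it takes a genuinely different route from the one behind the paper's citation. The paper does not prove this statement; it cites Sudakov--Tsirelson and Borell, whose classical arguments obtain Gaussian isoperimetry as the Poincar\'e limit of L\'evy's spherical isoperimetric inequality: project the uniform measure on $\sqrt N\,\mathbb{S}^{N-1}$ onto $n$ coordinates and let $N\to\infty$. That route gives the enlargement form $\gamma_1(A_h)\ge\Phi(\Phi^{-1}(\gamma_1(A))+h)$, with half-spaces as extremizers. The boundary-measure form stated here then follows at once: subtract $\gamma_1(A)$, divide by $h$, and let $h\to0^+$.

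You instead prove Bobkov's functional inequality by Bakry--Ledoux interpolation and then specialize to $f_h$. The sign check you flag as the main obstacle does go through. With $g=P_{t-s}f$ and $K^2=I(g)^2+|\nabla g|^2$, one gets $\Psi'(s)=P_s\big(K^{-3}[K^2(I'(g)^2|\nabla g|^2+\|\nabla^2 g\|_{\mathrm{HS}}^2)-|I(g)I'(g)\nabla g+\nabla^2 g\,\nabla g|^2]\big)$. This uses the fact that the $-|\nabla g|^2$ produced by $II''=-1$ is exactly cancelled by the curvature term in $\Gamma_2(g)=\|\nabla^2 g\|_{\mathrm{HS}}^2+|\nabla g|^2$. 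The bracket is then nonnegative by $|\nabla^2 g\,\nabla g|\le\|\nabla^2 g\|_{\mathrm{HS}}|\nabla g|$ and AM--GM. This exact cancellation is also why you can drop the time-dependent factor $c(s)$ that would be needed at curvature $0$.

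Your set-level step is sound as well. Every point of $A$ is a maximum of $f_h$ and every point of $\{d(\cdot,A)\ge h\}$ is a minimum, so $\nabla f_h=0$ a.e.\ off $\{0<f_h<1\}\subseteq\partial_hA$; your bound uses this implicitly. The reduction to $\bar A$ is valid because $A_h=(\bar A)_h$ and $\gamma_1(\bar A\setminus A)\le\gamma_1(\partial_hA)\to0$ whenever $\gamma_1^+(A)<\infty$. The remaining technicalities are routine: work with $\epsilon+(1-2\epsilon)f$ so that $I'(g)$ stays bounded and $K>0$, then mollify to pass from smooth to Lipschitz $f$.

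Each approach buys something different. Yours is a self-contained, dimension-free semigroup proof, and it yields the stronger functional form. The cited approach gives the enlargement inequality directly, which is the form the paper itself uses in infinite dimensions (Theorem~\ref{t:borell}). Recovering it from your boundary-measure inequality would need an extra integration (comparison) argument for $h\mapsto\Phi^{-1}(\gamma_1(A_h))$.
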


\subsection{Stochastic localization and the proximal sampler}

We focus on the linear tilt localization \cite{el2022sampling,chen2022localization} for a probability measure $\mu$ on\footnote{It is not necessary to restrict to $r\cdot \bbS^{n-1}$, but convenient in our presentation. In the general case, $\mu_t(\si) \propto \mu(\si) e^{-\fc{\ve{\si}^2}{2t} + \an{y_t,\si}}$; the quadratic term is constant when $\mu$ is supported on $r\cdot \bbS^{n-1}$.}  $\Om\subeq r\cdot \bbS^{n-1}$, which is a measure-valued stochastic process $(\mu_t)_{t\ge 0}$ defined by
\begin{align*}
\repeatequation{e:SL}\\
\mu_t(\si) &= \mu_{y_t}(\si) \propto \mu(\si) e^{\an{y_t,\si}}
\end{align*}
where $m(y) = \an{\si}_{\mu_y} = \E_{\mu_y}\si$. 
For any set $A\subeq \Om$, $\mu_t(A)$ is a martingale. The measure localizes in the sense that almost surely, $\mu_{y_t}\rightarrow \de_\si$ for some $\si$ as $t\to \iy$, and this $\si$ is distributed as $\mu$. Thus, this is a (non-algorithmic) process that samples from $\mu$. 
Stochastic localization is equivalent to the diffusion process
\begin{align*}
    dx_t &= dB_t,& x_0&\sim\mu,
\end{align*}
under the substitution $y_t = tx_{1/t}$ \cite{Mon23}; note that the time is reversed.
We hence have for $t_0<t_1$ that 
\begin{align}
\label{e:SL-time-change}
y_{t_0} = t_0x_{1/t_0} 
\stackrel d= t_0\pa{x_{1/t_1} + \sqrt{\rc{t_0}-\rc{t_1}}\,\xi} = \fc{t_0}{t_1} y_{t_1} + \sqrt{t_0\pa{1-\fc{t_0}{t_1}}}\,\xi, \quad \xi\sim\calN(0,I_n).
\end{align}
This motivates the proximal sampler. Note that in contrast to the literature, for convenience we define it in the SL rather than diffusion model coordinates.
\begin{definition}\label{d:ps}
    Given $t_0<t_1$, we define the following Markov kernels. 
\begin{thmenum}
    \item\label{ditem:ps-forward} $K_{t_0\to t_1}(x_0,\cdot)$: Given $x_0$, run \eqref{e:SL} starting from $y_{t_0}=x_0$, and let $x_1 = y_{t_1}$ be the output.
    \item\label{ditem:ps-backward} $K_{t_1\to t_0}(x_1,\cdot)$: Given $x_1$, draw $\xi\sim \calN(0,\Id_n)$ and let $x_0 = \fc{t_0}{t_1}x_1 +\sqrt{t_0\pa{1-\fc{t_0}{t_1}}}\xi$ be the output.
\end{thmenum}
We define the $(t_0,t_1)$-proximal sampler as the Markov chain with kernel $\Kprox{t_0,t_1}:=K_{t_0\to t_1}K_{t_1\to t_0}$.
\end{definition}
Note that \pref{ditem:ps-backward} is the reverse of the SL process by \eqref{e:SL-time-change}, in that the following define the same joint distribution:
\begin{align*}
    x_0&\sim p_{t_0}, & x_1 &\sim K_{t_0\to t_1}(x_0,\cdot),\\
    x_1&\sim p_{t_1}, & x_0 &\sim K_{t_1\to t_0}(x_1,\cdot).
\end{align*}
Hence, these kernels are adjoint with respect to the measures $ p_{t_0}$ and $ p_{t_1}$. $K_{t_0\to t_1}(x_0,\cdot)$ can be thought of as the posterior distribution of $x_1$ where $x_0$ is a noisy Gaussian observation.

Noting that SL converges in the sense that $\fc{y_t}{t}$ converges to a point in $\{\pm 1\}^n$, we define the above for $t_1=\iy$ by
\begin{enumerate}
    \item $K_{t_0\to \iy}(x_0,\cdot)$: Given $x_0$, run \eqref{e:SL} starting from $y_{t_0}=x_0$, and let $\si = \lim_{t\to \iy} \fc{y_t}{t}$ be the output.
    \item $K_{\iy\to t_0}(\si,\cdot)$: Given $\si$, draw $\xi\sim \calN(0,\Id_n)$ and let $y_{t_0} = t_0 \si + \sqrt{t_0}\xi$.
\end{enumerate}

We note that \cite{huang2025weak} provide a way to prove weak Poincar\'e inequalities via a measure decomposition given by stochastic localization.
A measure decomposition of $\pi$ on a measurable space $(\Om,\sF)$ is a decomposition 
$
\pi = \E_{z\sim \rh} \pi_z,
$
i.e., $\pi(A) = \E_{z\sim \rh} \pi_z(A)$ for any $A\in \sF$. More formally, $\pi_{z}(\cdot)$ is a probability kernel, and $\pi$ is the result of the kernel applied to $\rh$.

\begin{lemma}[{\cite[Lemma 4.11, Lemma A.6]{huang2025weak}}]
\label{l:wpi-convergence}
Let $\pi$ be a distribution over $\Om = \{\pm 1\}^n$ or $r\cdot \bbS^{n-1}$ and $\pi = \E_{z\sim \rh}\pi_z$ be a measure decomposition of $\pi$ such that
\begin{enumerate}
    \item For all functions $f$, $\E_{z\sim \rh} \Var_{\pi_z}[f] \ge C\Var_\pi[f]$.
    \item With probability $1-\eta$ over $z\sim \rh$, $\pi_z$ satisfies a $(c,\de)$-weak Poincar\'e inequality (with Glauber or Langevin dynamics).
\end{enumerate}
Then $\pi$ satisfies a $(c\, C, \fc{\de+\eta}{C})$-weak Poincar\'e inequality.
\end{lemma}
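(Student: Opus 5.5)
Fix $f$ and abbreviate $\sE_z := \sE_{\pi_z}$. The plan is to start from the inequality in hypothesis (1) and bound $\Var_{\pi_z}[f]$ separately for each $z$. We split the $z$'s into a good set $G$ and its complement. The good set is where $\pi_z$ satisfies the $(c,\de)$-WPI, and it has $\rh(G)\ge 1-\eta$ by hypothesis (2). On $G$ we use the WPI for $\pi_z$:
\[
\Var_{\pi_z}[f]\le \rc c\,\sE_{z}(f,f) + \de \osc_{\pi_z}(f)^2 \le \rc c\,\sE_{z}(f,f) + \de\osc(f)^2 .
\]
Here we use that the oscillation over the support of $\pi_z$ is at most the global oscillation of $f$. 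For $z\notin G$ we use the trivial bound $\Var_{\pi_z}[f]\le \osc(f)^2$; one could even use $\tfrac14\osc(f)^2$.

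Averaging over $z$ and using $\Var_{\pi_z}\ge 0$ together with $\sE_z\ge0$ gives
\[
C\Var_\pi[f]\le \E_{z\sim\rh}\Var_{\pi_z}[f] \le \rc c\,\E_{z\sim \rh}\sE_{z}(f,f) + (\de+\eta)\osc(f)^2 .
\]

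The key remaining step is to show that the Dirichlet forms average correctly:
\[
\E_{z\sim\rh}\sE_{\pi_z}(f,f)\le \sE_\pi(f,f).
\]

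For Langevin dynamics this is in fact an equality. Since $\sE_{\pi_z}(f,f)=\int|\gd f|^2\,d\pi_z$ and $\pi=\E_z\pi_z$, averaging over $z$ gives exactly $\int|\gd f|^2\,d\pi$. On the sphere one uses the spherical gradient, and the same computation applies.

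For Glauber dynamics, the form is a sum over pairs $(x,x^{\opl i})$ with weight $\fc{\pi(x)\pi(x^{\opl i})}{\pi(x)+\pi(x^{\opl i})}$, which is half the harmonic mean of $\pi(x)$ and $\pi(x^{\opl i})$. The map $(a,b)\mapsto \fc{ab}{a+b}$ is jointly concave and positively homogeneous on $\R_{>0}^2$. By Jensen's inequality, $\E_z \fc{\pi_z(x)\pi_z(x^{\opl i})}{\pi_z(x)+\pi_z(x^{\opl i})}\le \fc{\pi(x)\pi(x^{\opl i})}{\pi(x)+\pi(x^{\opl i})}$. Multiplying by the nonnegative factor $(f(x^{\opl i})-f(x))^2$ and summing over pairs gives the claim.

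Dividing by $C$ then yields
\[
\Var_\pi[f]\le \rc{cC}\sE_\pi(f,f) + \fc{\de+\eta}{C}\osc(f)^2,
\]
which is the claimed $(cC,\fc{\de+\eta}{C})$-WPI.

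The main point needing care is the Glauber concavity step, together with measurability of $z\mapsto$ (whether $\pi_z$ satisfies a WPI). If measurability fails, one can replace $G$ by a measurable subset of measure at least $1-\eta$, as the hypothesis implicitly provides.
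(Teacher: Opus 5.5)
Your proof is correct and is essentially the standard argument behind the cited lemmas of \cite{huang2025weak}, which the paper invokes without reproving. That argument applies the WPI on the good set of $z$, bounds $\Var_{\pi_z}[f]\le \osc(f)^2$ on the bad set, and uses $\E_{z}\sE_{\pi_z}(f,f)\le \sE_\pi(f,f)$ (an equality for Langevin by linearity, and for Glauber a consequence of concavity of $(a,b)\mapsto ab/(a+b)$, the same comparison the paper uses via \cite{lee2024convergence} and the needle decomposition). The only small refinement is for when some $\pi_z$ lacks full support: there you should use the concave extension of $ab/(a+b)$ to the closed orthant, e.g.\ $ab/(a+b)=\min_{t\in[0,1]}\{t^2a+(1-t)^2b\}$, which equals $0$ on the boundary.
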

However, our assumptions are too weak to use this lemma. With only second moment control of $\Cov(\mu_t)$, by employing early stopping when the approximate variance conservation in the SL process is violated, 
$C$ would depend on $\eta$ as $e^{-\Theta(1/\eta)}$, which makes the WPI trivial.
We do, however, use this to show a WPI for the \emph{localized} measure, through \pref{t:sl-wpi}. 


\subsection{Consequences of assumptions}
We note some immediate consequences of the assumptions. In fact, everything we need from \Cref{a:m-error} and \Cref{a:Lipschitz-drift} is encapsulated in the following two consequences.

\begin{lemma}[KL bound between ideal and approximate process]
\label{l:KL}
    Given \Cref{a:m-error}, 
    for $t\in [0,T_1]$, 
    \[
\KL(p_t \| \hat p_t) \le \rc2 \int_0^t \ep(s)^2\,ds.
    \]
\end{lemma}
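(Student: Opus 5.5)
The plan is to apply Girsanov's theorem to the path measures of \eqref{e:SL} and \eqref{e:ASL} on $[0,t]$, and then use the data processing inequality to pass to the time-$t$ marginals.

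Let $\mathbb P$ and $\hat{\mathbb P}$ denote the laws on $C([0,t],\R^n)$ of the solutions $(y_s)_{s\le t}$ of \eqref{e:SL} and $(\hat y_s)_{s\le t}$ of \eqref{e:ASL}. Both processes start at $0$ and have the same unit diffusion coefficient; they differ only in their drifts, $\mg$ and $\hat\mg$. Girsanov's theorem therefore gives the chain rule for KL between diffusions:
\[
\KL(\mathbb P\|\hat{\mathbb P}) = \rc2\, \E_{\mathbb P}\int_0^t \|\mg(y_s)-\hat\mg(y_s)\|^2\,ds .
\]
The expectation here is under the \emph{ideal} path measure, since $\mathbb P$ is the first argument of the KL divergence. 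Exchanging expectation and integral by Tonelli's theorem and applying \pref{a:m-error} at each $s\in[0,t]\subeq[0,T_1]$ bounds the right-hand side by $\rc2\int_0^t\ep(s)^2\,ds$. This matches the expectation in \pref{a:m-error}, which is also taken over the ideal process $y_s$.

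Projecting both path measures onto the time-$t$ coordinate gives $p_t$ and $\hat p_t$. By the data processing inequality,
\[
\KL(p_t\|\hat p_t)\le \KL(\mathbb P\|\hat{\mathbb P}),
\]
which yields the claim.

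The only delicate step is justifying the Girsanov identity rigorously, since Novikov's condition need not hold directly. The drifts are bounded: $\|\mg\|\le r$ because $\mu$ is supported on $r\cdot\bbS^{n-1}$, and $\hat\mg$ is uniformly bounded by assumption. Consequently the difference $\mg-\hat\mg$ is bounded on $[0,t]$, so Novikov's condition holds and the stochastic exponential is a true martingale. Both SDEs have unique strong solutions: $\hat\mg$ is Lipschitz by \pref{a:Lipschitz-drift}, and $\mg$ is smooth with derivative $\Cov(\mu_y)$, whose norm is at most $r^2$. Hence the identity holds as stated.

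If one prefers to avoid any such integrability check, the identity can instead be obtained by localization, stopping at the first exit time from a large ball and letting the radius tend to infinity. Lower semicontinuity of KL then gives the inequality, which is all that is needed here.
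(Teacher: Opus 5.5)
Your argument is correct and matches the paper's, which simply invokes Girsanov's theorem for the two path measures (citing \cite[Corollary 3.7]{DLSS26}) and passes to the time-$t$ marginals; your expectation taken under the ideal process is exactly what Assumption~\ref{a:m-error} controls. Your Novikov check via the uniform boundedness of $\mg$ and $\hat\mg$ supplies the details the citation leaves implicit. Bounded drifts already give well-posedness in law, so the appeal to Assumption~\ref{a:Lipschitz-drift} is not actually needed.
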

This follows from Girsanov's Theorem; see e.g., \cite[Corollary 3.7]{DLSS26}.
We define $E_t$ to be this bound:
\begin{align}
\label{e:Et}
    E_t &= \rc 2 \int_0^t \ep(s)^2\,ds.
\end{align}
Note that a reverse KL bound between $\hat p_t$ and $p_t$ allows us to use $\hat p_t$ as a proposal distribution to rejection sample from $p_t$, if an estimate of $\dd{p_t}{\hat p_t}$ were known; this is the approach taken in \cite{DLSS26}.

\begin{lemma}[Log-Sobolev inequality for time-$t$ ASL distribution]
\label{l:LSI-time-t}
Given \Cref{a:Lipschitz-drift}, $\hat p_t$ satisfies a log-Sobolev inequality with constant 
$\hat c_t = \fc{2L}{e^{2Lt}-1}$.
\end{lemma}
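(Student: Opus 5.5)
The plan is to realize $\hat p_t$ as the pushforward of Gaussian path measure under a Lipschitz map, and then transfer the Gaussian log-Sobolev inequality through that map. Write the ASL SDE with $\hat y_0 = 0$ as the flow $\hat y_t = \Psi_t(B)$, where $B=(B_s)_{s\in[0,t]}$ is standard Brownian motion. The drift $\hat m$ is $L$-Lipschitz by \pref{a:Lipschitz-drift}, so the equation $\hat y_s = \int_0^s \hat m(\hat y_r)\,dr + B_s$ has a unique strong solution. Since the noise is additive, the map $B\mapsto \hat y_t$ is defined pathwise, with no stochastic integral involved. Approximating by finite-dimensional discretizations is an alternative; I would instead work directly with Cameron--Martin perturbations.

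The main step is to bound the sensitivity of $\hat y_t$ to a Cameron--Martin perturbation. Take $h$ with $h(0)=0$ and $\dot h\in L^2[0,t]$, and let $\hat y^h$ be the solution driven by $B+h$. Set $\Delta_s = \hat y^h_s - \hat y_s$. Then
\[
\Delta_s = \int_0^s (\hat m(\hat y^h_r)-\hat m(\hat y_r))\,dr + h(s).
\]
Since $\|\hat m(\hat y^h_r)-\hat m(\hat y_r)\|\le L\|\Delta_r\|$, we have $\frac{d}{ds}\Delta_s = A_s\Delta_s + \dot h(s)$ for some matrices $\|A_s\|\le L$, taken in the a.e./averaged-Jacobian sense. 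By Duhamel's formula and Gr\"onwall,
\[
\|\Delta_t\| \le \int_0^t e^{L(t-r)}\|\dot h(r)\|\,dr \le \Big(\int_0^t e^{2L(t-r)}dr\Big)^{1/2}\|\dot h\|_{L^2} = \Big(\frac{e^{2Lt}-1}{2L}\Big)^{1/2}\|h\|_{H}.
\]
So $\Psi_t$ is Lipschitz in the Cameron--Martin directions, with constant $\kappa_t=\sqrt{(e^{2Lt}-1)/(2L)}$.

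Next, Wiener measure satisfies the Gross log-Sobolev inequality with constant $1$ in the convention $\Ent(F^2)\le 2\,\E\|DF\|_H^2$, where $D$ is the Malliavin derivative. Take a smooth $f$ on $\R^n$ and set $F=f\circ\Psi_t$. The perturbation bound gives $\|DF\|_H \le \kappa_t\,\|\nabla f(\hat y_t)\|$, by chain rule and duality: $\langle DF,h\rangle_H = \langle \nabla f(\hat y_t), \partial_h \Psi_t\rangle$. Because $\Ent_{\hat p_t}(f^2)=\Ent(F^2)$, this yields
\[
\Ent_{\hat p_t}(f^2)\le 2\kappa_t^2\,\E_{\hat p_t}\|\nabla f\|^2 = \frac{2}{\hat c_t}\sE(f,f), \qquad \hat c_t = \frac{2L}{e^{2Lt}-1},
\]
which is the claimed constant.

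The expected obstacle is rigor: establishing that $F$ lies in $\mathbb{D}^{1,2}$ when $\hat m$ is merely Lipschitz. I would first handle $C^1$ drifts via mollification, $\hat m_\eta = \hat m * \phi_\eta$, which is still $L$-Lipschitz and converges uniformly. For such drifts the flow is Fr\'echet differentiable along $H$ with the bound above. I would then pass to the limit $\eta\to0$: $\hat y_t^{\eta}\to \hat y_t$ almost surely by Gr\"onwall, and the LSI with a fixed constant is stable under weak convergence, applied to bounded Lipschitz test functions $f$.

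A cleaner alternative would avoid Malliavin calculus altogether. Discretize the SDE by Euler steps, each of which is a Lipschitz map of finitely many Gaussians, and check that the discrete Gr\"onwall bound gives the same constant in the limit. The computation is the same either way.
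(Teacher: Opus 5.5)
Your argument is correct, but it takes a different route from the paper's. The paper obtains the lemma by specializing \pref{l:lsi-Lip-SDE} with $\alpha_0=\infty$, since $\hat y_0=0$. That proof is a recursion on LSI constants along an Euler scheme. Each step $X\mapsto(\mathrm{Id}+hf_t)(X)+\sqrt h\,Z$ first multiplies the inverse LSI constant by $(1+hL)^2$ (Lipschitz pushforward) and then adds $h$ (convolution with $\calN(0,hI_n)$). Unrolling this and letting $h\to0$ gives $\int_0^t e^{2L(t-s)}\,ds=(e^{2Lt}-1)/(2L)$.

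You instead treat the whole solution map $B\mapsto\hat y_t$ as one Cameron--Martin-Lipschitz map and push Gross's inequality forward. Your weighted Cauchy--Schwarz step $\int_0^t e^{L(t-r)}\|\dot h(r)\|\,dr\le(\int_0^t e^{2L(t-r)}dr)^{1/2}\|h\|_{\calH}$ is the continuous-time analogue of that recursion, which is why the constants agree exactly. This sharper weighting is essential. The cruder bound $\sqrt t\,e^{Lt}$ of \pref{lem:cm-lipscitz-one}, which goes through $\sup_s\|h_s\|$, would only give the constant $e^{-2Lt}/t$, which is strictly weaker than the one claimed.

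What each route buys:
\begin{itemize}
\item The paper's argument uses only two elementary finite-dimensional facts about LSIs. It is stated directly for a general LSI initial law and time-dependent $L(t)$.
\item Yours needs Malliavin-calculus bookkeeping (membership in $\calD^{1,2}$ and a chain rule, which your mollification step handles). In exchange it is a one-shot argument and mirrors the path-space machinery the paper itself uses for $\rh_T$ in \pref{s:rho-t-lsi}.
\item Yours also extends to an LSI initial law. Work on $\R^n\times E$ with the norm \eqref{e:norm-y-h}; the same Cauchy--Schwarz step then reproduces the formula of \pref{l:lsi-Lip-SDE}.
\end{itemize}

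One small fix concerns the limit $\eta\to0$, and equally your Euler alternative. Take test functions $f\in C_b^1(\R^n)$, so that $f^2\log f^2$ and $\|\nabla f\|^2$ are bounded and continuous and both sides of the inequality pass to the weak limit. For merely Lipschitz $f$ the gradient term does not pass through weak convergence directly. General $f$ then follows by density. The paper's discretization proof relies on the same weak-limit step.
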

\begin{prf}
    This follows from the more general \pref{l:lsi-Lip-SDE}, noting that because $p_0$ is a point mass, $\al_0=\iy$.
\end{prf}

\subsection{Jarzynski's equality}
To obtain an explicit approximating distribution as a warm start, we use Jarzynski's equality. Note this is only needed for algorithmizing the WPI, in \pref{s:rho-t-lsi} onwards.
\begin{theorem}[Jarzynski's equality~\cite{vaikuntanathan2008escorted,vargas2023transport,albergo2024nets}]\label{t:je}
Let $\rh_t \propto e^{-U_t}$ be a sequence of probability densities where $U_t$ is twice differentiable with respect to $(x,t) \in [0,T]\times \R^n$.
Let $b_t$ be continuously differentiable, and suppose that uniformly for $t\in [0,T]$, $\gd U_t$ and $b_t$ are Lipschitz in $x$. Consider the coupled SDE-ODE system
    \begin{align*}
        dx_t & = \ba{-\rc2\gd U_t(x_t) + b_t(x_t)}dt + dB_t, &x_0&\sim \rh_0\\
        dw_t &= \ba{\gd \cdot b_t(x_t) - \an{\gd U_t(x_t), b_t(x_t)} - \pl_t U_t(x_t)}dt, & w_0&=0.
    \end{align*}
    Then $\fc{Z_t}{Z_0} = \fc{\int_{\R^n} e^{-U_t(x)}dx}{\int_{\R^n} e^{-U_0(x)}dx} = \E[e^{w_t}]
    $
    and for any bounded measurable function $h$,
    \[
\int_{\R^n} h(x)\rh_t(x) dx = \fc{\E[e^{w_t}h(x_t)]}{\E[e^{w_t}]}.
    \]
    Therefore, letting $\hat p_t$ be the distribution of $x_t$, we have
    \[
\dd{\rh_t}{\hat p_t}(x) = \fc{\E[e^{w_t}|x_t=x]}{\E[e^{w_t}]}.
    \]
\end{theorem}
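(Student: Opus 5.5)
The proof is a Feynman--Kac argument. Set
\[
V_t := \gd\cdot b_t - \an{\gd U_t, b_t} - \pl_t U_t ,
\]
so that $dw_t = V_t(x_t)\,dt$. Then $\E[e^{w_t}h(x_t)]$ is the pairing of $h$ with the Feynman--Kac-weighted law of $x_t$. The plan is to show that this weighted law is exactly the unnormalized measure $e^{-U_t}\,dx/Z_0$. Both claimed identities then follow: taking $h\equiv 1$ gives $Z_t/Z_0 = \E[e^{w_t}]$, and dividing the two identities gives the formula for $\int h\,\rh_t$. The Radon--Nikodym formula then follows by disintegrating $\E[e^{w_t}h(x_t)] = \E\bigl[\E[e^{w_t}\mid x_t]\,h(x_t)\bigr]$ over all bounded $h$.

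\textbf{Step 1 (the algebraic identity).} Write $\sL_t f = \tfrac12\Delta f + \an{-\tfrac12\gd U_t + b_t, \gd f}$ for the generator of $x_t$. The weighted density $q_t$ should satisfy
\[
\pl_t q = \sL_t^* q + V_t q, \qquad \sL_t^* q = \tfrac12 \Delta q + \tfrac12\gd\cdot(q\gd U_t) - \gd\cdot(b_t q).
\]
We check that $q_t = e^{-U_t}/Z_0$ solves this.
\begin{itemize}
\item Since $\gd q = -q\gd U_t$, the first two terms combine to $\tfrac12\gd\cdot(\gd q + q\gd U_t) = 0$.
\item The transport term is $-\gd\cdot(b_t q) = -(\gd\cdot b_t)q + \an{\gd U_t, b_t}q$.
\item Adding $V_t q$ leaves $-\pl_t U_t\, q = \pl_t q$.
\end{itemize}
So the identity holds, and the initial condition $q_0 = \rh_0$ matches $x_0\sim\rh_0$, $w_0=0$.

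\textbf{Step 2 (rigorous transfer by duality).} To avoid proving uniqueness for the forward PDE directly, fix a smooth compactly supported $h$ and a horizon $t$. Let $u(s,x)$ solve the backward equation
\[
\pl_s u + \sL_s u + V_s u = 0, \qquad u(t,\cdot) = h .
\]
By It\^o's formula, $M_s = e^{w_s}u(s,x_s)$ is a local martingale: the $ds$ terms are $e^{w_s}(\pl_s u + \sL_s u + V_s u)\,ds = 0$. Hence
\[
\E[e^{w_t}h(x_t)] = \E[u(0,x_0)] = \int u(0,\cdot)\,\rh_0 .
\]
Next we show $\tfrac{d}{ds}\int u(s,x)\,e^{-U_s(x)}\,dx = 0$. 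Integrating by parts, the derivative equals $\int u\,(\sL_s^* + V_s + \pl_s U_s)e^{-U_s}$, which vanishes by Step 1. Therefore $\int h\,e^{-U_t} = \int u(0,\cdot)\,e^{-U_0}$, and dividing by $Z_0$ gives
\[
\E[e^{w_t}h(x_t)] = \tfrac{Z_t}{Z_0}\int h\,\rh_t .
\]
Finally, extend from compactly supported smooth $h$ to bounded measurable $h$ by monotone class arguments.

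\textbf{Main obstacle: analytic justification.} The algebra is routine; the difficulty is in the analysis, for three reasons.
\begin{itemize}
\item $V_t$ may grow quadratically in $x$, because $\an{\gd U_t, b_t}$ is a product of two linearly growing functions. So $e^{w_t}$ need not be obviously integrable, and $M_s$ need only be a local martingale.
\item Existence and regularity of $u$ must be established.
\item The boundary terms in the integration by parts must be shown to vanish.
\end{itemize}
My first attempt would be to localize. Stop at the exit time $\tau_R$ of a ball $B_R$. Work with the Dirichlet problem on $B_R$, where the coefficients are smooth and bounded so classical parabolic theory gives $u$. Alternatively, truncate $U$ and $b$ outside $B_R$ while preserving the Lipschitz bounds. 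Then pass $R\to\iy$, using the Lipschitz bounds on $\gd U_t$ and $b_t$ to get Gaussian moment bounds on $x_t$. Dominated or monotone convergence applies once $h\ge0$ (Fatou in one direction), and the other direction is obtained from the $h\equiv1$ case combined with finiteness of $Z_t$. If uniform integrability of $e^{w_t}$ fails in this generality, I would settle for the identity under an additional growth assumption, which holds in our applications.
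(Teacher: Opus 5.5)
The paper does not prove this theorem; it imports it from the cited references. Your Step 1 is correct and is the core of the standard argument: with $V_t=\nabla\cdot b_t-\langle\nabla U_t,b_t\rangle-\partial_tU_t$, the function $e^{-U_t}$ solves $\partial_tq=\sL_t^*q+V_tq$, and the final disintegration step is fine.

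\textbf{The gap: the localization only gives one inequality.} The trouble is not integrability. If you kill at $\tau_R$ and solve the Dirichlet problem on $B_R$, then for $h\ge0$ your Step 2 computation does not give $0$ but
\[
\frac{d}{ds}\int_{B_R}u\,e^{-U_s}\,dx=-\frac12\int_{\partial B_R}e^{-U_s}\,\partial_nu\,dS\ \ge 0,
\]
since $u\ge0$ vanishes on $\partial B_R$. Hence $\mathbb{E}[e^{w_t}h(x_t)\mathbf 1_{\tau_R>t}]\le Z_0^{-1}\int_{B_R}h\,e^{-U_t}$, and letting $R\to\infty$ gives $\mathbb{E}[e^{w_t}h(x_t)]\le Z_0^{-1}\int h\,e^{-U_t}$. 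Truncating the coefficients, or noting that a nonnegative local martingale is a supermartingale, gives the same one-sided bound via Fatou.

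With $h\equiv1$ this already yields $\mathbb{E}[e^{w_t}]\le Z_t/Z_0<\infty$, so integrability comes for free. What is missing is the reverse inequality, i.e.\ that no weighted mass escapes to infinity. Your remedy, ``the other direction from the $h\equiv1$ case combined with finiteness of $Z_t$'', is circular. The $h\equiv1$ case is itself known only as $\le$, and finiteness of $Z_t$ says nothing about the defect $Z_t/Z_0-\mathbb{E}[e^{w_t}]$. The $h$/$(1-h)$ trick upgrades everything to equality only once equality for $h\equiv1$ is known. Retreating to an extra growth assumption would prove a weaker statement than the one given.

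\textbf{The missing idea: time reversal (Crooks).} Let $y$ solve
\[
dy_r=\bigl[-\tfrac12\nabla U_{t-r}-b_{t-r}\bigr](y_r)\,dr+dW_r,\qquad y_0\sim\varrho_t.
\]
A Girsanov computation comparing path measures shows that the law of the reversed path $(y_{t-s})_{0\le s\le t}$ has density $\frac{Z_0}{Z_t}e^{w_t}$ with respect to the law of $(x_s)_{0\le s\le t}$. The term $-\int_0^t\partial_sU_s(x_s)\,ds$ in $w_t$ comes from writing
\[
\int_0^t\langle\nabla U_s(x_s),\circ\,dx_s\rangle=U_t(x_t)-U_0(x_0)-\int_0^t\partial_sU_s(x_s)\,ds,
\]
and the endpoint terms cancel against $\log(\varrho_t(x_t)/\varrho_0(x_0))$, leaving $-\log(Z_t/Z_0)$. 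Made rigorous on $\{\tau_R>t\}$, where the coefficients may be modified to be bounded, this gives
\[
\mathbb{E}\bigl[e^{w_t}h(x_t)\mathbf 1_{\tau_R>t}\bigr]=\frac1{Z_0}\int_{B_R}h(z)\,e^{-U_t(z)}\,\mathbb{P}_z\bigl(y_r\in B_R\text{ for all }r\in[0,t]\bigr)\,dz .
\]
So the time-integrated boundary flux in your PDE computation is exactly the exit probability of this reverse diffusion. Since $\nabla U_t$ and $b_t$ are Lipschitz uniformly in $t$, the reverse diffusion does not explode. The probability therefore increases to $1$, and monotone convergence gives equality. This is where the Lipschitz hypotheses are genuinely used; no growth condition on $V_t$ or $\partial_tU_t$ is needed.

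A smaller point: ``classical parabolic theory'' on $B_R$ needs H\"older coefficients, while $\nabla\cdot b_t$ and $\partial_tU_t$ are only continuous here. Mollify $U$ and $b$ first.
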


\subsection{Gaussian analysis, Cameron--Martin spaces, and Sobolev domains}
\label{s:prelim-gaussian}

We collect elementary definitions and facts from Gaussian analysis and Malliavin calculus on path space; these are used in \pref{s:rho-t-lsi} to prove a log-Sobolev inequality for the annealed distribution $\rh_T$, and in the auxiliary lemmata of \S\,\ref{a:wiener}. For further background, see \cite[\S2--3]{hairer2016advanced} and \cite{tubaro2025introduction}. 
\begin{definition}[Gaussian measure on a Banach space]
\label{d:gaussian-banach}
    Let $(E,\norm{\cdot}_E)$ be a separable Banach space with Borel $\si$-algebra $\calB(E)$ and topological dual $E^*$. A probability measure $\ga$ on $(E,\calB(E))$ is a \vocab{centered Gaussian measure} if $\ell_\#\ga$ is a centered Gaussian distribution on $\R$ for every $\ell \in E^*$.
\end{definition}
We work concretely on the classical Wiener space over a finite time horizon.
\begin{definition}[Wiener space and Cameron--Martin space]
\label{d:wiener-space}
    Fix $0<T<\iy$. Let 
    \[
    E := C_0([0,T],\R^n) = \set{B\in C([0,T],\R^n)}{B_0 = 0}
    \]
    with the supremum norm, and let $\ga$ be the \vocab{Wiener measure} on $(E,\calB(E))$, i.e., the law of standard Brownian motion on $[0,T]$ started at $0$. The \vocab{Cameron--Martin space} of $\ga$ is
    \[
    \calH := H_0^1([0,T],\R^n) = \set{h\in E}{h \text{ absolutely continuous},\ \norm{h}\CM < \iy}, 
    \qquad 
    \norm{h}\CM^2 := \int_0^T \norm{\dot h_t}_2^2\,dt,
    \]
    a separable Hilbert space with inner product $\an{g,h}_\calH = \int_0^T \an{\dot g_t, \dot h_t}\,dt$. We write $i:\calH\hookrightarrow E$ for the inclusion and $i^*:E^*\to\calH$ for its adjoint (identifying $\calH$ with its dual).
\end{definition}

The triple $(E,\calH,\ga)$ is the prototypical example of an abstract Wiener space, and everything below extends to that generality \cite[\S8.1]{tubaro2025introduction}. The inclusion $i$ is continuous with dense image---indeed $\norm{h}_E \le \sqrt T \norm h\CM$ by Cauchy--Schwarz---but $\ga(\calH) = 0$ \cite[\S8.1.3]{tubaro2025introduction}.
Unlike in finite dimensions, $\ga$ should not be understood through a
Lebesgue density on $E$: there is no locally finite translation-invariant
Lebesgue measure on an infinite-dimensional Banach space. The
Cameron--Martin space replaces this missing density-based calculus by
identifying the directions along which translations of $\ga$ remain
absolutely continuous, and hence the directions in which gradients and
Sobolev norms are defined. Thus, $\calH$ is a $\ga$-null set of ``smooth directions'' which nevertheless governs the measure $\ga$ in the two senses recorded in \pref{l:paley-wiener} and \pref{t:cameron-martin} below.

\begin{lemma}[Paley--Wiener integrals, {\cite[\S2]{hairer2016advanced}, \cite[\S8.1.2]{tubaro2025introduction}}]
\label{l:paley-wiener}
    For $h\in \calH$, define $X_h : E\to \R$ by the It\^o integral
    \[
    X_h(B) := \int_0^T \an{\dot h_t, dB_t}.
    \]
    Then the following hold.
    \begin{enumerate}
        \item The map $h\mapsto X_h$ is a linear isometry from $\calH$ into $L^2(\ga)$: each $X_h$ is a centered Gaussian random variable, and $\E_\ga[X_g X_h] = \an{g,h}_\calH$. In particular, if $\{e_i\}_{i\in\N}$ is an orthonormal basis of $\calH$, then $\{X_i := X_{e_i}\}_{i\in\N}$ are i.i.d.\ $\calN(0,1)$.
        \item Every $\ell\in E^*$ agrees $\ga$-a.s.\ with a Paley--Wiener integral: $\ell(B) = X_{i^*\ell}(B)$ as elements of $L^2(\ga)$. For the coordinate evaluation functionals this is the identity $\E_\ga[X_h(B)\, B_t] = h_t$, which follows from the It\^o isometry; the general case follows by continuous linear extension.
    \end{enumerate}
\end{lemma}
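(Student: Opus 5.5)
The plan is to treat $X_h$ as a Wiener integral of the deterministic integrand $\dot h\in L^2([0,T],\R^n)$. Everything in part (1) then follows from the It\^o isometry and closure of Gaussians under $L^2$ limits. Part (2) follows by computing $i^*$ on evaluation functionals and extending by density.

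\emph{Part (1).}
\begin{itemize}
\item For a step function $\dot h=\sum_k v_k\one_{(t_k,t_{k+1}]}$, the integral $X_h=\sum_k\an{v_k,B_{t_{k+1}}-B_{t_k}}$ is a linear combination of independent Gaussian increments. Hence it is a centered Gaussian, and $(X_{h_1},\dots,X_{h_m})$ is jointly Gaussian for any finite family of such $h$.
\item For general $h\in\calH$, approximate $\dot h$ in $L^2$ by step functions. The It\^o isometry $\E[X_h^2]=\int_0^T\ve{\dot h_t}^2dt=\norm{h}\CM^2$ makes the approximations Cauchy in $L^2(\ga)$.
\item $L^2$ limits of jointly centered Gaussian vectors are jointly centered Gaussian, which can be seen by convergence of characteristic functions.
\item Polarizing the isometry gives $\E_\ga[X_gX_h]=\an{g,h}_\calH$. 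Linearity of $h\mapsto X_h$ is inherited from the It\^o integral.
\item For an orthonormal basis $\{e_i\}$, the family $\{X_{e_i}\}$ is jointly Gaussian with identity covariance, so its members are i.i.d.\ $\calN(0,1)$.
\end{itemize}

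\emph{Part (2), evaluation functionals.} Fix $t\in[0,T]$, $v\in\R^n$, and let $\ell(B)=\an{v,B_t}$. We identify $i^*\ell$ via $\an{i^*\ell,h}_\calH=\ell(ih)=\an{v,h_t}=\int_0^T\an{v\one_{[0,t]}(s),\dot h_s}ds$. This gives $i^*\ell(s)=v\min(s,t)$, with derivative $v\one_{[0,t]}$. Therefore
\[
X_{i^*\ell}=\int_0^t\an{v,dB_s}=\an{v,B_t}=\ell(B)
\]
almost surely, which is consistent with $\E_\ga[X_h B_t]=h_t$. By linearity the identity extends to all finite linear combinations of evaluation functionals.

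\emph{Part (2), general $\ell\in E^*$.} By the Riesz representation theorem, $\ell(B)=\int_{[0,T]}\an{B_s,d\nu(s)}$ for a finite $\R^n$-valued measure $\nu$. Take Riemann--Stieltjes-type sums $\ell_k=\sum_j\an{\nu(I_j^k),B_{s_j^k}}$ over partitions with mesh tending to $0$.
\begin{itemize}
\item By continuity of each path, $\ell_k(B)\to\ell(B)$ for every $B\in E$.
\item The sums are uniformly bounded by $\ve{\nu}_{TV}\ve{B}_E$, which lies in $L^2(\ga)$. Dominated convergence therefore gives $\ell_k(B)\to\ell(B)$ in $L^2(\ga)$.
\item By the evaluation case and part (1), $\norm{i^*\ell_k-i^*\ell_j}\CM=\norm{\ell_k(B)-\ell_j(B)}_{L^2(\ga)}$. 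So $(i^*\ell_k)_k$ is Cauchy in $\calH$ and converges to some $g$, and then $X_{i^*\ell_k}\to X_g$ in $L^2(\ga)$.
\item We check $g=i^*\ell$ by pairing with any $h\in\calH$: $\an{g,h}_\calH=\lim_k\ell_k(ih)=\ell(ih)=\an{i^*\ell,h}_\calH$, where the middle equality is pointwise convergence at the continuous path $ih$.
\end{itemize}
Hence $\ell(B)=\lim_k\ell_k(B)=\lim_k X_{i^*\ell_k}=X_{i^*\ell}$ in $L^2(\ga)$.

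The main obstacle is this last extension. One has to be careful that convergence of $i^*\ell_k$ in $\calH$ is obtained from the isometry, since a priori only weak convergence is available, and that the limit is identified correctly as $i^*\ell$.
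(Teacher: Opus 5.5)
Your proposal is correct and follows the route the paper indicates; the paper only sketches this argument and defers to the cited references. Both use the It\^o isometry for part (1), the evaluation-functional identity $\E_\ga[X_hB_t]=h_t$ (equivalent to your explicit $i^*\ell(s)=v\min(s,t)$), and extension to general $\ell\in E^*$ via the Riesz representation. Your Riemann-sum step gets norm convergence of $i^*\ell_k$ in $\calH$ from the isometry, since $\ell_k\to\ell$ only weak-$*$ in $E^*$ and not in norm; this correctly fills in what the paper calls ``continuous linear extension.''
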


Throughout \pref{s:rho-t-lsi} and \S\,\ref{a:wiener} we fix an orthonormal basis $\{e_i\}_{i\in\N}$ of $\calH$ and write 
\[
H_k := \operatorname{span}(e_1,\ldots,e_k)\cong \R^k,
\]
$H_k^\perp$ for its orthogonal complement in $\calH$, and $P_{H_k}, P_{H_k^\perp}$ for the corresponding orthogonal projections. We also write $\calF_k := \si(X_1,\ldots,X_k)$ for the $\si$-algebra generated by the first $k$ Gaussian coordinates (where $X_i$ is defined in \pref{l:paley-wiener}); conditioning on $\calF_k$ is the ``revelation'' of $k$ Gaussians used in the cylindrical approximation of \pref{t:lsi-CM-pert}.

\begin{theorem}[Cameron--Martin, {\cite[\S8.1.3]{tubaro2025introduction}}]
\label{t:cameron-martin}
    For $h\in E$, let $\ga_h := \ga(\cdot - h)$ denote the shifted measure. If $h\in \calH$, then $\ga_h$ and $\ga$ are mutually absolutely continuous, with density
    \[
    \dd{\ga_h}{\ga}(B) = \exp\pa{X_h(B) - \rc 2\norm{h}\CM^2}.
    \]
    If $h\notin\calH$, then $\ga_h$ and $\ga$ are mutually singular.
\end{theorem}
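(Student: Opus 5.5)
The plan is to handle the two halves separately. For $h\in\calH$ we identify the law under the proposed density by computing characteristic functionals. For $h\notin\calH$ we reduce to a product of one-dimensional Gaussians and apply Kakutani's dichotomy.

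\emph{Case $h\in\calH$.} Define the probability measure $\tilde\ga$ by $d\tilde\ga = \exp\pa{X_h - \rc2\norm{h}\CM^2}\,d\ga$. It has total mass $1$ because $X_h\sim\calN(0,\norm{h}\CM^2)$ by \pref{l:paley-wiener}. Fix $\ell\in E^*$. By \pref{l:paley-wiener}(2), $\ell(B)=X_{i^*\ell}(B)$ $\ga$-a.s. Hence $(\ell(B),X_h(B))$ is a centered Gaussian vector under $\ga$, with covariance matrix
\[
\begin{pmatrix} \norm{i^*\ell}\CM^2 & \an{i^*\ell,h}_\calH\\ \an{i^*\ell,h}_\calH & \norm{h}\CM^2\end{pmatrix}, \qquad \an{i^*\ell,h}_\calH=\ell(h).
\]
A two-dimensional Gaussian computation then gives
\[
\E_{\tilde\ga}\ba{e^{\mathrm{i}\,\ell(B)}} = \exp\pa{\mathrm{i}\,\ell(h)-\rc2\norm{i^*\ell}\CM^2}.
\]
The right-hand side equals $\E_\ga\ba{e^{\mathrm{i}\,\ell(B+h)}}$, the characteristic functional of $\ga_h$. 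On a separable Banach space the Borel $\si$-algebra is generated by $E^*$, so characteristic functionals determine Borel probability measures. Therefore $\tilde\ga=\ga_h$. The density is strictly positive, so $\ga\ll\ga_h$ as well, which gives mutual absolute continuity.

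\emph{Case $h\notin\calH$.} I expect this step to be the main obstacle: producing infinitely many independent coordinates in which $h$ has divergent energy. Consider the linear functional $\phi: i^*E^*\to\R$, $\phi(i^*\ell)=\ell(h)$. It is well defined: if $i^*\ell=0$, then $\ell=0$ $\ga$-a.s., and since $\ga$ has full support this forces $\ell=0$. Suppose $\phi$ were bounded in the $\calH$-norm. Since $i^*E^*$ is dense in $\calH$, the Riesz representation theorem would give $h'\in\calH$ with $\ell(h)=\ell(h')$ for all $\ell\in E^*$. Hahn--Banach would then give $h=h'\in\calH$, a contradiction. So $\phi$ is unbounded. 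Running Gram--Schmidt inside $i^*E^*$ (a dense subspace) then yields $\ell_1,\ell_2,\dots\in E^*$ such that the $i^*\ell_k$ are orthonormal in $\calH$ and $\sum_k \ell_k(h)^2=\iy$. If $\phi$ had square-summable coefficients along some orthonormal basis drawn from $i^*E^*$, it would be bounded on its span; one needs a little care to pass from this to all of $i^*E^*$. If that proves awkward, the fallback is to pick $\ell_k$ greedily with $\phi(i^*\ell_k)\ge1$ after orthogonalizing against the previous ones.

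With the $\ell_k$ in hand, \pref{l:paley-wiener}(1) shows that under $\ga$ the variables $\ell_k(B)$ are i.i.d.\ $\calN(0,1)$. Under $\ga_h$ they are independent $\calN(\ell_k(h),1)$. The Hellinger affinity of the two product laws is
\[
\prod_k \exp\pa{-\ell_k(h)^2/8}=0,
\]
so by Kakutani's theorem the laws of the sequence $(\ell_k(B))_k$ are mutually singular. This singularity lifts to $\ga$ and $\ga_h$ on $E$, because the separating event is measurable with respect to $\si(\ell_k:k\in\N)$.
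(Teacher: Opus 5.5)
The paper does not prove this statement; it quotes it from \cite{tubaro2025introduction}, so there is no in-paper argument to compare against. Your proof is a correct, self-contained alternative. The $h\in\calH$ half is the standard argument and is complete as written: you identify $e^{X_h-\frac12\norm{h}_\calH^2}\,d\ga$ with $\ga_h$ through characteristic functionals, using that $(\ell(B),X_h(B))$ is jointly Gaussian with cross-covariance $\ell(h)$. Positivity of the density then gives $\ga\ll\ga_h$.

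In the $h\notin\calH$ half, your first option is not merely delicate; it is false in general. Since $\phi$ is unbounded on the dense subspace $i^*E^*$, its kernel $\{i^*\ell:\ell(h)=0\}$ is again dense in $\calH$. Gram--Schmidt applied to a countable dense subset of that kernel therefore produces an orthonormal basis of $\calH$ inside $i^*E^*$ with $\ell_k(h)=0$ for every $k$. Along that basis, $(\ell_k(B))_k$ has the same law under $\ga$ and $\ga_h$.

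So commit to the greedy fallback, which closes in one line. Given orthonormal $u_1,\dots,u_k\in i^*E^*$, pick $\ell$ with $\norm{i^*\ell}_\calH=1$ and $|\ell(h)|\ge 1+\sum_{j\le k}|\phi(u_j)|$. Set $v:=i^*\ell-\sum_{j\le k}\an{i^*\ell,u_j}_\calH u_j\in i^*E^*$. Then $\norm{v}_\calH\le 1$ and $|\phi(v)|\ge 1$, so $u_{k+1}:=v/\norm{v}_\calH$ satisfies $|\phi(u_{k+1})|\ge 1$. With this sequence, the Hellinger/Kakutani step and the lift to $E$ via $\si(\ell_k:k\in\N)$ go through as you describe.

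The orthogonalization you flagged as the main obstacle can also be avoided altogether. Unboundedness of $\phi$ directly gives $\ell_n\in E^*$ with $\norm{i^*\ell_n}_\calH\le 1$ and $\ell_n(h)\ge n$. Let $A_n:=\{|\ell_n|\le n/2\}$. Chebyshev gives $\ga(A_n^c)\le 4/n^2$, and since $\ell_n(B+h)=\ell_n(B)+\ell_n(h)$, also $\ga_h(A_n)\le 4/n^2$. By Borel--Cantelli, $A:=\liminf_n A_n$ has $\ga(A)=1$ and $\ga_h(A)=0$. This needs neither independence nor Kakutani. Your route gives slightly more, namely an explicit product structure with vanishing Hellinger affinity, i.e.\ the dichotomy in quantitative form, but it pays for that with the orthogonalization step.
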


In particular, translation by any $h\in \calH$ preserves $\ga$-null sets, so for a function $V$ defined only $\ga$-a.s.\ the quantity $V(B+h)$ is well-defined for $\ga$-a.e.\ $B$, and the following definition makes sense.

\begin{definition}[Cameron--Martin Lipschitz functions]
\label{d:cm-lipschitz}
    A $\ga$-measurable, $\ga$-a.s.\ finite function $V: E\to\R$ is \vocab{$K$-Lipschitz in the Cameron--Martin norm} ($K$-CM-Lipschitz) if for $\ga$-a.e.\ $B\in E$ and every $h\in \calH$,
    \[
    |V(B+h) - V(B)| \le K \norm{h}\CM.
    \]
    Similarly, a map $\Phi: E\to F$ into a normed space $F$ (for us, $F = E$ with $\norm{\cdot}_E$ or $F=\R^n$ with $\norm{\cdot}_2$) is $K$-CM-Lipschitz if $\norm{\Phi(B+h)-\Phi(B)}_F \le K\norm h\CM$ for $\ga$-a.e.\ $B$ and all $h\in\calH$. We write $\lipnorm{\Phi}$ for the smallest such $K$.
\end{definition}

Note that a CM-Lipschitz function need not be continuous on $E$, as the CM norm is much stronger than the norm of $E$; conversely, an $\ell$-Lipschitz function on $(E, \norm\cdot_E)$ is $\ell\sqrt T$-CM-Lipschitz. Although $\ga(\calH)=0$, CM-regularity controls $\ga$-measurable sets of positive measure through the following isoperimetric inequality of Borell, the infinite-dimensional counterpart of \pref{t:g-isop}.

\begin{theorem}[Borell's inequality, {\cite[Theorem 3.53]{hairer2026advanced}}]
\label{t:borell}
    Let $A\in\calB(E)$ and $\al\in\R$ be such that $\ga(A) \ge \Phi(\al)$, where $\Phi$ is the standard Gaussian cdf as in \pref{t:g-isop}. Then for every $\ep>0$,
    \[
    \ga(A + B_\ep) \ge \Phi(\al+\ep), \qquad \text{where } B_\ep := \set{h\in\calH}{\norm h\CM\le\ep}
    \]
    and $A + B_\ep := \set{a+h}{a\in A,\ h\in B_\ep}$.
\end{theorem}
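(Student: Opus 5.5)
The plan is to reduce Borell's inequality to the finite-dimensional Gaussian isoperimetric inequality (\pref{t:g-isop}) by conditioning on finitely many Gaussian coordinates. The point is that translations in $H_k\subeq \calH$ are admissible moves in $B_\ep$, so enlarging a finite-dimensional ``section'' of $A$ produces points of $A+B_\ep$.

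\emph{Step 1 (finite dimensions).} Let $\ga_k=\calN(0,I_k)$ and let $G\subeq\R^k$ be measurable. I would show
\[
\ga_k(G^\ep)\ge \Phi\pa{\Phi^{-1}(\ga_k(G))+\ep},
\]
where $G^\ep$ is the Euclidean $\ep$-neighborhood. Set $g(r):=\Phi^{-1}(\ga_k(G^r))$. Applying \pref{t:g-isop} with $\si=1$ to $G^r$, and using $(G^r)^h\subeq G^{r+h}$, gives lower Dini derivative $D^+g(r)\ge 1$. Since $g$ is nondecreasing, integrating yields $g(\ep)\ge g(0)+\ep$. It may be simpler to first take $G$ closed, or a finite union of balls, and then approximate from inside.

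\emph{Step 2 (cylindrical reduction).} Write $B=\sum_i X_i e_i$ with $X_i$ i.i.d.\ $\calN(0,1)$ as in \pref{l:paley-wiener}, so $\ga$-a.s.\ $B=P_kB+R_k$. Here $P_kB:=\sum_{i\le k}X_ie_i$ is identified with $x\in\R^k$, and $R_k$ is independent of $\calF_k$. Let $Y_k:=\ga(A\mid\calF_k)$, viewed as a function $y_k(x)$. Fix $\eta>0$ and set $G:=\set{x\in\R^k}{y_k(x)\ge 1-\eta}$. Take $x\in G^\ep$, and choose $x'\in G$ with $h:=x-x'\in H_k$ and $\norm{h}\CM=|x-x'|\le\ep$. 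Then with conditional probability at least $1-\eta$ over $R_k$ we have $x'+R_k\in A$, hence $x+R_k=(x'+R_k)+h\in A+B_\ep$. Therefore
\[
\ga(A+B_\ep)\ge(1-\eta)\,\ga_k(G^\ep)\ge(1-\eta)\,\Phi\pa{\Phi^{-1}(\ga_k(G))+\ep}.
\]
If $A+B_\ep$ fails to be Borel, this should be read as a bound on inner measure; alternatively, $A+B_\ep$ is analytic and hence universally measurable.

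\emph{Step 3 (limit).} The main point to check is that $\ga_k(G)\to$ something at least $\ga(A)$. The $X_i$ generate $\calB(E)$ modulo $\ga$-null sets, because coordinate evaluations $B_t$ are $L^2$-limits of Paley--Wiener integrals. So by martingale convergence $Y_k\to\1_A$ a.s. Consequently
\[
\P(Y_k<1-\eta)\le\ga(A^c)+\P(A,\,Y_k<1-\eta)\to\ga(A^c),
\]
so $\liminf_k\ga_k(G)\ge\ga(A)\ge\Phi(\al)$. Letting $k\to\iy$ and using continuity and monotonicity of $\Phi$ gives $\ga(A+B_\ep)\ge(1-\eta)\Phi(\al+\ep)$. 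Letting $\eta\to0$ completes the proof.

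I expect the main obstacle to be the measure-theoretic bookkeeping: that $\ga$-a.e.\ translations by $H_k$ are consistent with the $\ga$-a.s.\ definition of the coordinates, and the measurability of $A+B_\ep$. The concavity issue, which blocks the naive approach of applying the finite-dimensional inequality to each fiber and averaging, is avoided by thresholding at $1-\eta$.
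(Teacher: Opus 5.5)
Your proof is correct. The paper gives no proof of this statement; it quotes it from Hairer's notes, and it enters only through Lemma~\ref{lem:infinite-dim-gaussian-ui}, so the comparison is with the usual literature argument. That argument typically reduces to closed $A$ by inner regularity, over-approximates $A$ by cylinder sets, and uses compactness of the Cameron--Martin ball $B_\ep$ in $E$ to show that the enlarged cylinders decrease to $A+B_\ep$. Your route approximates from the inside instead. Thresholding the conditional probability at $1-\eta$ and invoking L\'evy's upward theorem handles an arbitrary Borel $A$ directly and needs neither compactness nor regularity. The only cost is a factor $1-\eta$, which vanishes in the limit. Your argument also reuses machinery the paper already builds: $\calF_k$, $R_k$, and $(\pi_k,R_k)_\#\ga=\ga_k\otimes\ga_{>k}$ from Lemma~\ref{lem:wiener-lipschitz-reweigh-tail-decomposition}. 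It would therefore make the paper self-contained modulo Theorem~\ref{t:g-isop}. Step~1 is also fine: monotonicity of $g$ rules out downward jumps of $g(r)-r$, which is exactly the extra hypothesis needed to integrate a lower Dini-derivative bound.

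When writing it up, make two points explicit. First, $y_k$ must be the everywhere-defined Fubini version $y_k(x)=\ga_{>k}\{r: h_x+r\in A\}$ with $h_x=\sum_{i\le k}x_ie_i$, not an arbitrary version of $\ga(A\mid\calF_k)$. Changing $G$ on a $\ga_k$-null set can add a whole $\ep$-ball to its enlargement, and your step at $x'\in G$ needs the section bound at that exact point. This version still satisfies $y_k(\pi_k)=\ga(A\mid\calF_k)$ a.s., so Step~3 is unaffected. Second, for non-closed $G$ the distance $d(x,G)$ need not be attained. Use $G+\bar B(0,\ep)$ instead: it contains the open $\ep$-neighborhood of $G$, whose $\ga_k$-measure is $\lim_{\ep'\uparrow\ep}\ga_k(G^{\ep'})\ge\Phi(\Phi^{-1}(\ga_k(G))+\ep)$.

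The obstacle you anticipated about $\ga$-a.e.\ translations does not arise. Everything happens on $\R^k\times E$ under the continuous map $(x,r)\mapsto h_x+r$, which pushes $\ga_k\otimes\ga_{>k}$ forward to $\ga$. Since $A+B_\ep$ is analytic, its preimage under this map is universally measurable, and Fubini applies to the completed product measure.
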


Combined with \pref{d:cm-lipschitz}, Borell's inequality yields sub-Gaussian concentration of CM-Lipschitz functions around their medians, and in particular finiteness of all their exponential moments; we prove the precise statement we need in \pref{lem:infinite-dim-gaussian-ui}. This is the path-space substitute for \pref{l:conc-lsi}.

We now turn to differential structure. Since $\ga$-generic paths are nowhere differentiable, derivatives on $(E,\ga)$ are taken only along Cameron--Martin directions; this is the starting point of Malliavin calculus \cite[\S3]{hairer2016advanced}, \cite[\S5, \S8.1.4]{tubaro2025introduction}.

\begin{definition}[Cylindrical functions and Malliavin derivative]
\label{d:malliavin}
    We write $C_b^1(E)$ for the class of bounded cylindrical functions\footnote{This notation is specific to the cylindrical
    path-space calculus and should not be confused with Fr\'echet
    $C^1$ functions on the Banach space $E$.}
    \[
        F(B)=f(X_{h_1}(B),\ldots,X_{h_k}(B)),
    \]
    where $k\in\N$, $h_1,\ldots,h_k\in\calH$, and
    $f\in C_b^1(\R^k)$.

    We say that $F$ is \vocab{smooth cylindrical} if the same representation
    holds with $f\in C_b^\infty(\R^k)$.

    For $F\in C_b^1(E)$, the \vocab{Malliavin derivative}
    $D_HF:E\to\calH$ is
    \[
        D_H F(B)
        :=
        \sum_{i=1}^k
        \pl_i f\pa{X_{h_1}(B),\ldots,X_{h_k}(B)}\, h_i .
    \]
    For $h\in\calH$, the directional derivative is
    \[
        D_hF
        :=
        \an{D_HF,h}_\calH
        =
        \ddd\ep\bigg|_{\ep=0} F(B+\ep h),
    \]
    consistently with \pref{t:cameron-martin}.
\end{definition}
Note that for maps $F:E\to\R^n$, the derivative is defined coordinate-wise; then $D_HF(B):\calH\to\R^n$ is a bounded linear operator and $\norm{D_HF(B)}\CM$ denotes its operator norm. The identity $D_h F = \ddd\ep|_{\ep=0}F(B+\ep h)$ shows in particular that if $F \in C_b^1(E)$ is $K$-CM-Lipschitz, then $\norm{D_H F}\CM \le K$ pointwise. The same conclusion holds without the cylindrical structure by a Rademacher-type theorem on Wiener space.

The operator $D_H$ is used through its closure, i.e., on Sobolev domains defined by completion under the graph norm.
Since we tilt the base measure, we state the definition for a general reference measure.
Recall that an operator \(T:\mathcal A\subset L^2(\nu)\to L^2(\nu;\calH)\)
is \vocab{closable} if the closure of its graph is again the graph of an operator.
Equivalently, it suffices to test sequences converging to zero: whenever
\(F_j\in\mathcal A\), \(F_j\to0\) in \(L^2(\nu)\), and \(TF_j\to G\) in \(L^2(\nu;\calH)\), one must have \(G=0\). This
criterion guarantees that the derivative of a graph-norm limit is independent of the approximating sequence.

\begin{definition}[Path-space Sobolev domains]
\label{d:sobolev-domains}
Let $\nu$ be a probability measure on $(E,\calB(E))$ for which
\[
    D_H:C_b^1(E)\subset L^2(\nu)\to L^2(\nu;\calH)
\]
is closable. The \vocab{Sobolev domain} $\calD^{1,2}(\nu)$ is the domain of the closure
of $D_H$, equipped with the graph norm
\[
    \|F\|_{\calD^{1,2}(\nu)}^2
    :=
    \int_E F^2\,d\nu
    +
    \int_E \|D_HF\|_{\rm CM}^2\,d\nu .
\]
Equivalently, $F\in\calD^{1,2}(\nu)$ if there exist
$F_j\in C_b^1(E)$ and $G\in L^2(\nu;\calH)$ such that
\[
    F_j\to F \quad\text{in }L^2(\nu),
    \qquad
    D_HF_j\to G \quad\text{in }L^2(\nu;\calH).
\]
By closability, $G$ is independent of the approximating sequence, and we
write $D_HF:=G$.
\end{definition}

\begin{lemma}[CM-Lipschitz functions are Malliavin differentiable, {consequence of \cite[Theorem 4]{ambrosio2009metric}}]
\label{l:cm-lip-malliavin}
Let \(1<p<\infty\). Let \(V:E\to\R\) be \(K\)-CM-Lipschitz and \(V\in L^p(\ga)\). Then \(V\in\calD^{1,p}(\ga)\) (defined in the special case that $p=2$ in \pref{d:sobolev-domains}, $p > 1$ is similar) and
\[
    \norm{D_HV}_{\CM}\le K
    \qquad \ga\text{-a.s.}
\]
\end{lemma}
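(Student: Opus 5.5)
Throughout, $\norm{\cdot}\CM$ denotes the CM norm on $\calH$, so the claim to be shown is $\norm{D_HV}\CM\le K$ $\ga$-a.s.

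The approach is finite-dimensional approximation through conditional expectations with respect to the filtration $\calF_k=\si(X_1,\dots,X_k)$, combined with the fact that the finite-dimensional Sobolev space for the standard Gaussian contains Lipschitz functions. Set $V_k:=\E_\ga[V\mid\calF_k]$. By \pref{l:paley-wiener}, $B$ splits as $\sum_{i\le k}X_i e_i$ plus an independent Gaussian part living in the directions of $H_k^\perp$. Hence $V_k(B)=v_k(X_1,\dots,X_k)$, where
\[
v_k(x)=\E\bigl[V(\textstyle\sum_{i\le k}x_ie_i+B^\perp)\bigr].
\]
Shifting the first $k$ coordinates by $a\in\R^k$ is the same as shifting $B$ by the element $\sum a_ie_i\in H_k$. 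This shift has CM norm $|a|$. The CM-Lipschitz hypothesis then gives $|v_k(x+a)-v_k(x)|\le K|a|$ after averaging. One needs the a.e. statement to hold for all shifts simultaneously; this is handled by Cameron--Martin quasi-invariance (\pref{t:cameron-martin}) and a countable dense set of $a$, followed by continuity. So $v_k$ is $K$-Lipschitz on $\R^k$. By Rademacher, $v_k$ is differentiable a.e. with $|\nabla v_k|\le K$. Next, mollify $v_k$ and truncate it (composing with a bounded smooth cutoff at large levels). This gives $f\in C_b^1(\R^k)$ converging to $v_k$ in $L^p(\ga_k)$ with gradients converging in $L^p$ and bounded by $K$. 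Consequently $V_k\in\calD^{1,p}(\ga)$ with $D_HV_k=\sum_i\partial_iv_k(X)e_i$ and $\norm{D_HV_k}\CM\le K$.

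Next, pass to the limit. Since $V\in L^p(\ga)$ and $\calF_k\uparrow\calB(E)$ mod null sets (the $X_i$ generate the Borel $\si$-algebra up to $\ga$-null sets by \pref{l:paley-wiener}(2)), martingale convergence gives $V_k\to V$ in $L^p(\ga)$. The derivatives $D_HV_k$ are uniformly bounded by $K$ in $L^\iy(\ga;\calH)$, so a subsequence converges weakly in $L^p(\ga;\calH)$ to some $G$. Mazur's lemma then upgrades this to strong convergence of convex combinations. Each convex combination is itself in $\calD^{1,p}$ with derivative bounded by $K$, since that set is convex. Closability of $D_H$ on $L^p(\ga)$ is standard: it follows from Gaussian integration by parts, $\E[D_hF]=\E[FX_h]$. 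Hence $V\in\calD^{1,p}(\ga)$ with $D_HV=G$. Moreover, the set $\{G':\norm{G'}\CM\le K\text{ a.s.}\}$ is convex and strongly closed, hence weakly closed, so $\norm{G}\CM\le K$ a.s.

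The main obstacle is the first step: deducing genuine Lipschitzness of $v_k$ from an almost-sure, all-$h$ hypothesis without any continuity of $V$. In particular one must justify Fubini when evaluating $V$ on the shifted paths $\sum x_ie_i+B^\perp$ for a.e. $x$, rather than only on $\ga$-typical $B$. I would handle this exactly as indicated above: a countable dense set of shifts together with quasi-invariance. Alternatively, one can appeal directly to \cite{ambrosio2009metric}, as the paper does.
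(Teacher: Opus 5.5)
Your proof is correct, but it takes a genuinely different route from the paper. The paper does not prove this lemma; it imports it from \cite[Theorem 4]{ambrosio2009metric}, a Rademacher-type theorem on Wiener space, whereas you prove it directly.

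Your argument reuses the cylindrical device the paper itself uses in the proof of \pref{t:lsi-CM-pert}. You condition on $\mathcal{F}_k$ and use the product decomposition $(\pi_k,R_k)_\#\gamma=\gamma_k\otimes\gamma_{>k}$ to see that $V_k=\widehat V_k(X_1,\dots,X_k)$ with $\widehat V_k$ $K$-Lipschitz. You then complete it with:
\begin{enumerate}
\item finite-dimensional Rademacher plus mollification and truncation;
\item $L^p$ martingale convergence;
\item weak compactness with Mazur, or equivalently weak closedness of the convex, strongly closed graph of $D_H$ and of the set $\{G:\|G\|_{\mathcal{H}}\le K\ \gamma\text{-a.s.}\}$.
\end{enumerate}
This gives a self-contained proof using only tools already in the paper. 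It also shows exactly where $1<p<\infty$ is used: reflexivity of $L^p(\gamma;\mathcal{H})$ and closability in $L^p$. The citation is shorter and covers more general settings.

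One wording issue in your first step: ``followed by continuity'' should not mean continuity of $v_k$, which is not known a priori. There are two cases.
\begin{itemize}
\item With the hypothesis as in \pref{d:cm-lipschitz}, one null set works for all $h$. Fubini over the product decomposition then gives, for $\gamma_k$-a.e.\ $a$, the bound $|v_k(a+b)-v_k(a)|\le K|b|$ for every $b\in\mathbb{R}^k$. Since such $a$ are dense, this already forces $v_k$ to be finite everywhere and $K$-Lipschitz, without quasi-invariance.
\item If the null set may depend on $h$, a countable dense set of shifts only gives almost-everywhere bounds. You then need a regularization step: for example, mollify $v_k$, note the mollifications are genuinely $K$-Lipschitz, and pass to an a.e.\ limit to obtain a Lipschitz version.
\end{itemize}
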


\begin{remark*}
    We use \pref{l:cm-lip-malliavin} only through maps whose CM-Lipschitz constants we compute explicitly by Gr\"onwall estimates (\pref{lem:cm-lipscitz-one}, \pref{lem:cm-lipschitz-two}); for these, membership in the Sobolev domain and the derivative bound can also be verified directly along the cylindrical approximations in the proof of \pref{t:lsi-CM-pert}.
\end{remark*}

For \(\nu=\gamma\), closability is the standard Malliavin Sobolev construction.
We also use tilted measures \(d\nu^V=Z_V^{-1}e^Vd\gamma\), where \(V\) is finite and CM-Lipschitz; their closability is established in \pref{l:closability-cm-tilts}.

The standard closability statement is usually stated for smooth cylindrical functions; the same closed operator is obtained from the cylindrical $C_b^1$ core by finite-dimensional mollification in the Gaussian
coordinates.
For the classical Sobolev theory on $\R^n$ underlying $\calW^{1,2}(\mu)$---weak derivatives, approximation by smooth functions, and mollification---we refer to \cite[\S8--9]{brezis2011functional}; the properties of mollifiers used are collected in \pref{lem:mollifiers}.

We record the chain rule, which we use to transfer derivative bounds through the It\^o map.

\begin{lemma}[Chain rule, {\cite[\S5.1]{tubaro2025introduction}}]
\label{l:malliavin-chain-rule}
Let $\nu$ be a path-space reference measure for which
$\calD^{1,2}(\nu)$ is defined as in \pref{d:sobolev-domains}. Let
$\Phi=(\Phi^{(1)},\ldots,\Phi^{(n)}):E\to\R^n$ with
$\Phi^{(j)}\in\calD^{1,2}(\nu)$ for each $j$, and let
$f\in C_b^1(\R^n)$. Then $f\circ\Phi\in\calD^{1,2}(\nu)$ and
\[
    D_H(f\circ\Phi)(B)
    =
    D_H\Phi(B)^\sT\,\nabla f(\Phi(B)),
    \qquad \nu\text{-a.s.}
\]
Here $D_H\Phi(B):\calH\to\R^n$ denotes the operator whose $j$th component
is $D_H\Phi^{(j)}(B)$.
\end{lemma}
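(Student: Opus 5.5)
The plan is the standard approximation argument: prove the identity for cylindrical $\Phi$ by the classical finite-dimensional chain rule, then pass to the limit using the closedness of $D_H$ built into \pref{d:sobolev-domains}.

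\emph{Step 1 (cylindrical case).} Suppose each $\Phi^{(j)}\in C_b^1(E)$. Enlarging the list of directions if necessary, we may write all components over a common family: $\Phi^{(j)}(B)=g_j(X_{h_1}(B),\ldots,X_{h_k}(B))$ with the same $h_1,\ldots,h_k\in\calH$ and $g_j\in C_b^1(\R^k)$. Then $f\circ\Phi = (f\circ g)(X_{h_1},\ldots,X_{h_k})$ with $g=(g_1,\ldots,g_n)$. The function $f\circ g$ is bounded, and its derivative $Dg^\sT\nabla f(g)$ is bounded and continuous, so $f\circ g\in C_b^1(\R^k)$ and $f\circ\Phi$ is itself cylindrical. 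The formula in \pref{d:malliavin} together with the ordinary chain rule on $\R^k$ gives
\[
D_H(f\circ\Phi)=\sum_i \sum_j \pl_j f(\Phi)\,\pl_i g_j\, h_i = D_H\Phi^\sT\nabla f(\Phi)
\]
pointwise.

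\emph{Step 2 (approximation).} For general $\Phi^{(j)}\in\calD^{1,2}(\nu)$, choose cylindrical $F_m^{(j)}\in C_b^1(E)$ with $F_m^{(j)}\to\Phi^{(j)}$ in $L^2(\nu)$ and $D_HF_m^{(j)}\to D_H\Phi^{(j)}$ in $L^2(\nu;\calH)$. Passing to a subsequence, we may also assume $F_m\to\Phi$ $\nu$-a.s. By Step 1, $f\circ F_m\in C_b^1(E)$ with $D_H(f\circ F_m)=D_HF_m^\sT\nabla f(F_m)$.

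For the functions themselves, $|f\circ F_m-f\circ\Phi|\le\|\nabla f\|_\infty|F_m-\Phi|$, so $f\circ F_m\to f\circ\Phi$ in $L^2(\nu)$. For the derivatives, split
\[
D_HF_m^\sT\nabla f(F_m)-D_H\Phi^\sT\nabla f(\Phi) = (D_HF_m-D_H\Phi)^\sT\nabla f(F_m) + D_H\Phi^\sT\bigl(\nabla f(F_m)-\nabla f(\Phi)\bigr).
\]
The first term tends to zero in $L^2(\nu;\calH)$ because $\nabla f$ is bounded.

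\emph{Step 3 (the remaining term).} The second term is where the argument needs care, and I expect it to be the main (if mild) obstacle. We only know $D_H\Phi\in L^2$, not that it is bounded, so a uniform estimate is unavailable. Instead, use that $\nabla f$ is continuous: along the a.s.-convergent subsequence the integrand tends to $0$ pointwise. It is also dominated by $2\|\nabla f\|_\infty\|D_H\Phi\|$, which lies in $L^2(\nu)$. Dominated convergence then gives convergence to $0$ in $L^2(\nu;\calH)$.

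Thus $f\circ F_m\to f\circ\Phi$ in $L^2(\nu)$ and $D_H(f\circ F_m)\to D_H\Phi^\sT\nabla f(\Phi)$ in $L^2(\nu;\calH)$. By the equivalent characterization in \pref{d:sobolev-domains}, $f\circ\Phi\in\calD^{1,2}(\nu)$. By the assumed closability, its derivative equals this limit independently of the approximating sequence, which is the claimed formula $\nu$-a.s.
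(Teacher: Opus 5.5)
Your proof is correct and is the standard argument behind this result: the finite-dimensional chain rule on cylindrical functions, then passage to the limit using closability of $D_H$, with dominated convergence for the $\nabla f(F_m)-\nabla f(\Phi)$ term. The paper does not prove the lemma itself and defers to \cite[\S5.1]{tubaro2025introduction}. Your argument uses nothing about $\nu$ beyond closability of $D_H$ on $C_b^1(E)$ in $L^2(\nu)$, so it also covers the tilted measures $\nu^V$ where the paper actually applies the lemma, not only Wiener measure $\gamma$.
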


Moreover, when $\Psi$ is CM-Lipschitz, the chain rule (\pref{l:malliavin-chain-rule}) bounds the path-space Dirichlet form of $f\circ\Psi$ by $\lipnorm{\Psi}^2$ times the Euclidean Dirichlet form of $f$, so a LSI for $\nu$ on path space transfers to a LSI for $\Psi_\#\nu$ on $\R^n$ with constant degraded by $\lipnorm{\Psi}^2$; this is carried out in \pref{prop:lsi-rhT-bounded}.
 
The path-space Sobolev domain above should be distinguished from the ordinary Euclidean weighted Sobolev domain used after pushing forward to the endpoint law.

\begin{definition}[Euclidean weighted Sobolev domain]
\label{d:euclidean-sobolev-domain}
For a probability measure $\mu$ on $\R^n$, the Euclidean Sobolev domain
$\calW^{1,2}(\mu)$ is the closure of $C_b^1(\R^n)$ under the graph norm
\[
    \norm{f}_{\calW^{1,2}(\mu)}^2
    :=
    \int_{\R^n} f^2\,d\mu
    +
    \int_{\R^n} \norm{\nabla f}_2^2\,d\mu .
\]
\end{definition}

Finally, we state a critical fact about entropy on Gaussian spaces that drives \pref{s:rho-t-lsi}. This is the Gaussian log-Sobolev inequality of Gross, which holds with a dimension-free constant for any centered Gaussian measure on a separable Banach space, with the Dirichlet form measured in the Cameron--Martin norm; it is the infinite-dimensional counterpart of the fact that $\calN(0,\Id_n)$ satisfies a LSI with constant $1$.

\begin{theorem}[Gaussian log-Sobolev inequality, {\cite[(1)]{capitaine1997martingale}, \cite{3f89ed44-5628-3112-93f5-de6a2b2ab43a}}]
\label{t:gross-lsi}
    Let $\ga$ be a centered Gaussian measure on a separable Banach space $E$ with Cameron--Martin space $\calH$. Then for every $F\in\calD^{1,2}(\ga)$,
    \[
    \Ent_\ga[F^2] \le 2\int_E \norm{D_H F}\CM^2 \,d\ga.
    \]
\end{theorem}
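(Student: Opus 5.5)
The plan is the standard one: reduce to the finite-dimensional Gaussian LSI on cylindrical functions, then extend to all of $\calD^{1,2}(\ga)$ by closure in the graph norm.

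\emph{Reduction to finite dimensions.} Take $F\in C_b^1(E)$, $F=f(X_{h_1},\ldots,X_{h_k})$. By Gram--Schmidt in $\calH$ we may assume $h_1,\ldots,h_k$ are orthonormal; this only changes $f$ by an invertible linear change of variables. Then \pref{l:paley-wiener} shows that $(X_{h_1},\ldots,X_{h_k})\sim\calN(0,\Id_k)$ under $\ga$. By \pref{d:malliavin},
\[
\norm{D_HF}\CM^2=\Big\|\sum_i \pl_i f\, h_i\Big\|_\calH^2=\norm{\gd f}_2^2,
\]
evaluated at $(X_{h_1},\ldots,X_{h_k})$. So for cylindrical $F$ the claim is exactly the statement $\Ent_{\ga_k}[f^2]\le 2\E_{\ga_k}\norm{\gd f}^2$ for the standard Gaussian $\ga_k$ on $\R^k$.

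\emph{Finite-dimensional LSI.} I would prove this with the Ornstein--Uhlenbeck semigroup $P_t$, which has invariant measure $\ga_k$. Set $g=f^2+\eta$ with $\eta>0$, so that $g$ is positive and bounded; at the end let $\eta\to0$.
\begin{itemize}
\item Since $P_tg\to\E g$ as $t\to\iy$, we have $\Ent(g)=\int_0^\iy \E\big[\norm{\gd P_tg}^2/P_tg\big]\,dt$.
\item The commutation relation $\gd P_t g=e^{-t}P_t\gd g$ holds.
\item Cauchy--Schwarz for the Mehler kernel gives $\norm{P_t\gd g}^2\le P_tg\cdot P_t\big(\norm{\gd g}^2/g\big)$.
\item Combining these and using invariance of $\ga_k$ under $P_t$ gives $\Ent(g)\le \int_0^\iy e^{-2t}\,dt\,\E\big[\norm{\gd g}^2/g\big]=\rc2\E\big[\norm{\gd g}^2/g\big]$.
\end{itemize}
Since $\norm{\gd g}^2/g\le 4\norm{\gd f}^2$, this is at most $2\E\norm{\gd f}^2$. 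Letting $\eta\to0$ gives the finite-dimensional LSI.

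\emph{Closure.} For general $F\in\calD^{1,2}(\ga)$, choose $F_j\in C_b^1(E)$ with $F_j\to F$ in $L^2(\ga)$ and $D_HF_j\to D_HF$ in $L^2(\ga;\calH)$. The right-hand sides then converge. For the left-hand sides, $F_j^2\to F^2$ in $L^1(\ga)$. Here I use the variational formula
\[
\Ent_\ga[G]=\sup\set{\E_\ga[Gg]}{\E_\ga e^{g}\le 1}
\]
for $G\ge0$ in $L^1$. With $g$ restricted to bounded test functions, this is a supremum of $L^1$-continuous functionals, so it is lower semicontinuous in $L^1$, and $\Ent_\ga[F^2]\le\liminf_j\Ent_\ga[F_j^2]$ follows. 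I expect this limit passage to be the only delicate point, since entropy is not continuous in $L^1$. Lower semicontinuity via the variational formula, or alternatively Fatou along an a.s.-convergent subsequence combined with $\E F_j^2\to\E F^2$, suffices because only an upper bound is needed.
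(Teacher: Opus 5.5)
Your proposal is correct. The paper gives no proof of this statement; it only cites Gross and Capitaine--Hsu--Ledoux, so the relevant comparison is with the cited argument.

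Capitaine--Hsu--Ledoux work directly on path space rather than reducing to $\R^k$. They represent $G_t=\E_\ga[F^2\mid\mathcal{F}_t]$ through the Clark--Ocone formula, apply It\^o's formula to $G_t\log G_t$, and finish with a conditional Cauchy--Schwarz step. That argument is tied to the Brownian filtration, which is what lets it extend to path spaces over manifolds.

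Your route is different. You reduce to cylindrical functions, prove the finite-dimensional inequality by the Bakry--\'Emery semigroup argument, and close using lower semicontinuity of entropy in $L^1$. The semigroup step is sound for $g=f^2+\eta$ with $f\in C_b^1$: the de Bruijn identity, the commutation $\gd P_t=e^{-t}P_t\gd$, and the Cauchy--Schwarz bound all apply, and $\eta\to0$ is harmless because $f$ is bounded.

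What your route buys is that it works verbatim for an arbitrary centered Gaussian measure on a separable Banach space, matching the stated generality. It is also the same cylindrical-approximation-plus-closure template the paper uses for the tilted measures in \pref{t:lsi-CM-pert}. Your variational-formula justification of lower semicontinuity (a supremum over bounded test functions $g$ of $L^1$-continuous functionals) supplies a detail the paper leaves implicit there.

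One small imprecision. If the $h_i$ are linearly dependent, Gram--Schmidt does not give an invertible change of variables. Instead it gives $F=\tilde f(X_{e_1},\ldots,X_{e_m})$ with $\tilde f=f\circ A$ for a linear map $A:\R^m\to\R^k$, where $m\le k$. The identity $\norm{D_HF}\CM=\norm{\gd\tilde f}_2$ still holds by the chain rule, so nothing breaks.
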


We apply \pref{t:gross-lsi} to the ``tail'' Gaussian measures $\ga_{>k} = (R_k)_\#\ga$ obtained by projecting out the first $k$ coordinates, which are centered Gaussian measures on $E$ with Cameron--Martin space $H_k^\perp$; see \pref{lem:gamma-tail-gaussian}. 

\input{decomp}

\section{Weak Poincaré inequality from Approximate Stochastic Localization}
\label{s:wpi-SL}
We obtain a weak Poincar\'e inequality for the time-$t$ SL distribution, for both Langevin and the proximal sampler. We first show a $s$-conductance bound for Langevin, then obtain a WPI by Cheeger's inequality, and then transfer this to the proximal sampler.
Finally, we combine this with \pref{c:wpi-gd-ps} to prove \pref{t:main} in \pref{s:proof}.

\subsection{Coupling with small distance}
Under a log-Sobolev inequality, large subsets cannot be too far away from each other.
\begin{lemma}[Distance between sets under LSI]
\label{l:set-separation-lsi}
    Let $\mu$ be a measure on $\R^n$ satisfying a log-Sobolev inequality with constant $\ge c$. Let $A, B$ be disjoint subsets with $\mu(A)=a$ and $\mu(B)=b$. Then 
    $d(A,B)\le \sfc{2}{c} \pa{\sqrt{\ln \prc a}+ \sqrt{\ln \prc b}}$. 
\end{lemma}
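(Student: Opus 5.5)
We will use sub-Gaussian concentration of the distance function, \pref{l:conc-lsi}, applied to the $1$-Lipschitz function $f(x) = d(x,A)$. We may assume $a,b>0$. The idea is that $f$ vanishes on $A$, which has mass $a$, so its mean cannot be much larger than $\sqrt{(2/c)\ln(1/a)}$. On the other hand, $f\ge d(A,B)$ on $B$, which has mass $b$, so the mean cannot be much smaller than $d(A,B)-\sqrt{(2/c)\ln(1/b)}$. Combining the two estimates gives the bound.

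\textbf{Step 1 (the set $A$ forces a small mean).} Since $f=0$ on $A$, we have $\mu(f \le \E_\mu f - \E_\mu f)\ge a$. If $\E_\mu f > 0$, the lower-tail bound of \pref{l:conc-lsi} with $L=1$ and $r=\E_\mu f$ gives
\[
a\le e^{-c(\E_\mu f)^2/2}, \qquad\text{so}\qquad \E_\mu f \le \sqrt{\tfrac{2}{c}\ln\prc a}.
\]
To apply the tail bound with a non-strict inequality we use $\mu(f\le \E f - r)$ with $r$ slightly smaller than $\E f$ and let $r\uparrow \E f$. Integrability of $f$ under $\mu$ follows from the LSI, which gives finite exponential moments of Lipschitz functions.

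\textbf{Step 2 (the set $B$ forces a large mean).} On $B$ we have $f\ge d(A,B)=:D$. If $D>\E_\mu f$, the upper-tail bound with $r=D-\E_\mu f$ gives $b\le \mu(f\ge \E_\mu f + r)\le e^{-cr^2/2}$. Hence
\[
D \le \E_\mu f + \sqrt{\tfrac{2}{c}\ln\prc b}.
\]
If instead $D\le \E_\mu f$, this inequality holds trivially.

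\textbf{Step 3 (combine).} Adding the two estimates yields
\[
d(A,B)\le \sqrt{\tfrac2c}\pa{\sqrt{\ln \prc a}+\sqrt{\ln\prc b}},
\]
as claimed. The disjointness hypothesis is not actually used; if $A$ and $B$ intersect, then $d(A,B)=0$ and the bound is trivial.

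\textbf{Main obstacle.} The only technical point is the justification that $d(\cdot,A)$ is a legitimate test function in \pref{l:conc-lsi}: it is $1$-Lipschitz but not smooth, and we need it to be integrable. Both are standard. The concentration bound extends to Lipschitz functions by approximating them with smooth functions, and integrability follows from the Herbst argument, applied first to the truncations $\min(f,N)$ and then letting $N\to\infty$.
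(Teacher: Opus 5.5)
Your proof is correct and is essentially the paper's argument. Both apply \pref{l:conc-lsi} to the $1$-Lipschitz function $d(\cdot,A)$: the mass $a$ of its zero set bounds its mean by $\sqrt{(2/c)\ln(1/a)}$, and the mass $b$ of $\{d(\cdot,A)\ge d(A,B)\}$ bounds $d(A,B)$ minus that mean by $\sqrt{(2/c)\ln(1/b)}$. Your extra care with edge cases (the case $d(A,B)\le \E_\mu f$, and integrability) is harmless. The limit $r\uparrow \E_\mu f$ is not even needed, since \pref{l:conc-lsi} is already stated with non-strict inequalities.
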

To show this, we use the fact that the distance function is Lipschitz, and concentration of Lipschitz functions.
\begin{prf}
    Consider the 1-Lipschitz function $d(A,x)$. 
    Let $\bar d = \E_\mu d(A,x)$. 
    By \pref{l:conc-lsi}, 
    \[
a \le  \mu(d(A,x) = 0) \le e^{-\fc{c \bar d^2}2} 
\implies 
\bar d \le \sfc{2\ln \prc a}{c}. 
    \]
    We then have
    \[
    b \le 
\mu(d(A,x) \ge d(A,B)) \le 
e^{-\fc{c (d(A,B)-\bar d)^2}{2}} \implies 
d(A,B) \le \sfc{2}{c} \pa{\sqrt{\ln \prc a}+ \sqrt{\ln \prc b}}.\qedhere
    \]
\end{prf}
A greedy matching then shows we can couple most of the mass of $A$ and $B$.
\begin{lemma}[Coupling sets with small distance]
\label{l:couple-distance}
    Keep the assumptions of \Cref{l:set-separation-lsi}. Suppose $\ep\le a\le b$, and assume moreover that $\mu$ is non-atomic. Then for any $d> \sfc{2}{c} \pa{\sqrt{\ln \prc {\ep}}+ \sqrt{\ln \prc {\ep + b-a}}}$, there exists a coupling $(X,Y)$ of $\mu$ with itself such that
    \[
\P(X\in A, \, Y\in B, \,d(X,Y)\le d) \ge a-\ep.
    \]
\end{lemma}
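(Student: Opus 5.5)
The plan is a max-flow/min-cut (Hall-type) argument: the only way to fail to match most of $A$ into $B$ at distance $\le d$ is a large piece of $A$ that is far from a large piece of $B$, and \pref{l:set-separation-lsi} rules this out.

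\emph{Step 1 (reduction to a sub-coupling).} Consider the closed relation $R=\set{(x,y)}{d(x,y)\le d}$. It suffices to find a sub-probability measure $\pi$ on $A\times B$ with the following properties:
\begin{itemize}
\item its first marginal is $\le \mu|_A$ and its second marginal is $\le \mu|_B$;
\item $\pi$ is supported on $R$;
\item its total mass is $\ge a-\ep$.
\end{itemize}
Given such $\pi$, the leftover marginals $\mu-\pi_1$ and $\mu-\pi_2$ have equal mass $1-\pi(A\times B)$. Coupling them arbitrarily, for instance by the normalized product measure, and adding $\pi$ gives a coupling $(X,Y)$ of $\mu$ with itself. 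This coupling satisfies $\P(X\in A,\,Y\in B,\,d(X,Y)\le d)\ge a-\ep$.

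\emph{Step 2 (duality).} By the partial-transport version of Strassen's theorem (equivalently Kellerer's duality for closed relations, or max-flow/min-cut), the maximal mass of such a $\pi$ equals
\[
a-\sup_{S\subeq A}\bigl(\mu(S)-\mu(B\cap S_d)\bigr).
\]
Here $S_d$ is the $d$-neighborhood of $S$. I would cite this directly. Alternatively, one can prove it by discretizing $\mu$ into finitely many small cells, applying Hall's theorem (the deficiency version of Hall's theorem for bipartite flows), and passing to a weak limit. The limit remains supported on the closed set $R$, up to an arbitrarily small enlargement of $d$; this is absorbed because the hypothesis on $d$ is a strict inequality. Non-atomicity of $\mu$ is used here, so that masses can be split arbitrarily in the discretization.

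\emph{Step 3 (bounding the deficiency).} Suppose for contradiction that some $S\subeq A$ has $\mu(S)-\mu(B\cap S_d)>\ep$. Two consequences follow.
\begin{itemize}
\item Since $\mu(B\cap S_d)\ge 0$, we get $\mu(S)>\ep$.
\item Set $B'=B\setminus S_d$. Then
\[
\mu(B')=b-\mu(B\cap S_d)>b-\mu(S)+\ep\ge b-a+\ep,
\]
using $\mu(S)\le a$.
\end{itemize}
The sets $S\subeq A$ and $B'\subeq B$ are disjoint, and $d(S,B')\ge d$ by construction. Now apply \pref{l:set-separation-lsi} to $S$ and $B'$. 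Its bound $\sqrt{2/c}\bigl(\sqrt{\ln(1/\mu(S))}+\sqrt{\ln(1/\mu(B'))}\bigr)$ is decreasing in the masses. Hence
\[
d(S,B')<\sfc{2}{c}\pa{\sqrt{\ln \prc{\ep}}+\sqrt{\ln\prc{\ep+b-a}}}<d,
\]
which is a contradiction. Therefore the supremum in Step 2 is at most $\ep$, and the maximal sub-coupling has mass $\ge a-\ep$.

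\emph{Main obstacle.} The expected difficulty is Step 2: justifying the measure-theoretic duality, including attainment of the maximum and measurability of the matching, in the continuous setting. I would handle it by the discretization-plus-Hall route with a compactness (tightness) argument, using the slack in the strict inequality on $d$.
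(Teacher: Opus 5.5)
Your proof is correct, but it follows a different route from the paper's.

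The paper uses no duality theorem; it builds the coupling greedily. It cuts a large box into cubes of side $\de$. At each step it applies \pref{l:set-separation-lsi} to the currently unmatched parts $A_t,B_t$ to find two cubes within distance about $\sqrt{2/c}\,(\sqrt{\ln(1/(\ep-\de))}+\sqrt{\ln(1/(\ep+b-a-2\de))})$. It then matches as much mass as possible between them, using non-atomicity to carve out pieces of equal mass, and stops once $\mu(A_t)\le\ep-\de$. Since equal masses are removed from both sides, $\mu(B_t)-\mu(A_t)$ stays at least $b-a-\de$, so the lemma keeps applying. The cube diameters cost $2\sqrt n\,\de$, which the strict inequality on $d$ absorbs. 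The greedy procedure never gets stuck because the LSI separation bound holds for \emph{every} pair of sets with enough mass, a hereditary condition much stronger than Hall's.

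Your Step~3 uses exactly this input, but only to verify the Hall/Strassen deficiency condition; the construction itself is then handed to duality. Your version is shorter and more conceptual: it shows the threshold on $d$ is exactly a Hall-deficiency bound. It also does not really need non-atomicity, since sub-couplings may split mass fractionally. The paper's version is fully elementary, with no compactness or weak-limit step.

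If you cite Step~2 rather than re-derive it, here is a clean way to handle $a<b$. Add to the $A$-side an isolated dummy atom of mass $b-a$ that is related to every point, and apply Strassen's theorem for closed relations to the normalized measures. Strassen's criterion involves only closed $F$, and in $\R^n$ the relational neighborhood of a closed $F$ is exactly $F_d$. Taking $S=A\cap F$ then reduces the criterion to your Step~3, since $(A\cap F)_d\subseteq F_d$.
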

\begin{prf}
    Let $0<\de<\ep$. Let $K=[-N\de, N\de]^n$ be a large enough box so that $\mu(K)\ge 1-\de$. Now divide it into boxes of side length $\de$. 
    We inductively construct the coupling. 
    Let $A_0=A\cap K$, $B_0=B\cap K$. We will maintain $\mu(B_t)-\mu(A_t) = \mu(B_0)-\mu(A_0) \ge b-a-\de$.
    At step $t$, while $\mu(A_t)>\ep - \de$, we have 
    by \Cref{l:set-separation-lsi}, there exist points $x\in A_t$, $y\in B_t$ such that $d(x,y) \le \sfc{2}{c} \pa{\sqrt{\ln \prc{\ep-\de}} + \sqrt{\ln \prc{\ep + b-a -2\de}}}$, and $x$, $y$ are in the boxes $K_x$, $K_y$ of side length $\delta$, respectively, which contain nonzero mass in $A_t$ and $B_t$, respectively. Consider 2 cases.
    \begin{enumerate}
        \item If $\mu(A_t \cap K_x)\le \mu(B_t\cap K_y)$, then choose a measurable subset $K_y'\subset B_t \cap K_y$ with $\mu(K_y') = \mu(A_t \cap K_x)$, couple the points in $K_y'$ and $A_t \cap K_x$, let $A_{t+1} = A_t \bs K_x$, and let $B_{t+1} = B_t \bs K_y'$. 
        \item If $\mu(A_t \cap K_x)\ge \mu(B_t\cap K_y)$, then choose a measurable subset $K_x'\subset A_t \cap K_x$, and do the reverse of the above.
    \end{enumerate}
    Note this process must terminate with $\mu(A_t)\le \ep-\de$, since one box is emptied each time. Arbitrarily couple the rest of $\mu$. We have that the coupling satisfies 
    \[
\P\pa{X\in A, \, Y\in B, \, d(X,Y)\le 
\sfc{2}{c} \pa{\sqrt{\ln \prc{\ep-\de}} + \sqrt{\ln \prc{\ep + b-a -2\de}}} + 
2 \sqrt{n}\,\de
} \ge a-\ep.
    \]
    Since $d$ was specified by a strict inequality, choose $\de>0$
small enough that the constructed coupling has good-pair distance at most
$d$ and failure probability at most $\ep$.
\end{prf}

\subsection{Langevin diffusion}


The following lemma will tell us that given a set with nontrivial measure under $p_t$, we can find a not-too-small subset, measured under $\hat p_t$, where the density under $\hat p_t$ is not too much smaller.
\begin{lemma}
\label{l:p-subset-hat-p}
    Suppose that $p, \hat p$ are probability measures such that $\KL(p\|\hat p)\le E$. Let $0<a\le 1$ and suppose that $p(A)\ge a$. Define
    \[
        A^\sharp := A \cap \bc{x:\dd p{\hat p}(x)\ge \fc a2}.
    \]
    Then
    \[
        \hat p(A^\sharp) \ge \fc{a}{2e^{\fc{2(E+1)}{a}}}.
    \]
\end{lemma}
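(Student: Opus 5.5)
The plan is to show first that $A^\sharp$ still carries a constant fraction of the $p$-mass of $A$. Then we convert this into a lower bound on its $\hat p$-mass using the data-processing inequality for KL, applied to the indicator of $A^\sharp$.

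\emph{Step 1 (the $p$-mass of $A^\sharp$).} Since $\KL(p\|\hat p)\le E<\iy$, we have $p\ll\hat p$, so the density $\dd p{\hat p}$ is well defined $\hat p$-a.e. On the complement $A\bs A^\sharp$ the density is below $\fc a2$, hence
\[
p(A\bs A^\sharp)=\int_{A\bs A^\sharp}\dd p{\hat p}\,d\hat p\le \fc a2\,\hat p(A\bs A^\sharp)\le \fc a2 .
\]
Therefore $P:=p(A^\sharp)\ge a-\fc a2=\fc a2$.

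\emph{Step 2 (data processing).} Set $Q:=\hat p(A^\sharp)$. Pushing $p$ and $\hat p$ forward under $x\mapsto \one_{A^\sharp}(x)$ gives two Bernoulli laws, and data processing yields
\[
E\ge \KL(p\|\hat p)\ge P\ln\fc PQ+(1-P)\ln\fc{1-P}{1-Q}.
\]
We lower bound the second term by dropping $-(1-P)\ln(1-Q)\ge 0$ and using $(1-P)\ln(1-P)\ge -\rc e\ge -1$. This gives $P\ln\fc PQ\le E+1$, i.e.\ $Q\ge P\,e^{-(E+1)/P}$.

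\emph{Step 3 (monotonicity).} The function $P\mapsto P e^{-(E+1)/P}$ is increasing on $(0,1]$. Combined with $P\ge \fc a2$ from Step 1, this gives
\[
\hat p(A^\sharp)\ge \fc a2 e^{-\fc{2(E+1)}{a}},
\]
which is the claim.

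There is no real obstacle here. The only points needing care are absolute continuity, so that the density and $A^\sharp$ make sense, and handling the degenerate case $Q=0$. That case is excluded because $P>0$ together with $Q=0$ would force $\KL(p\|\hat p)=\iy$.
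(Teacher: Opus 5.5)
Your proposal is correct and follows essentially the same argument as the paper. The paper also bounds $p(A\setminus A^\sharp)\le \frac a2$, applies data processing to the partition $\{A^\sharp,(A^\sharp)^c\}$ with the bound $x\ln(x/y)\ge -1$ (which matches your step of dropping the $\ln(1-Q)$ term and using $x\ln x\ge -1/e$), and concludes by monotonicity of $b\mapsto b\,e^{-(E+1)/b}$.
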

\begin{prf}
Let $r=\dd{p}{\hat p}$, which exists $p$-a.s. since $\KL(p\|\hat p)<\iy$.  By definition of $A^\sharp$,
\begin{align*}
    p(A\bs A^\sharp)
    = \int_{A\bs A^\sharp} r\,d\hat p
    \le \fc a2 \hat p(A\bs A^\sharp)
    \le \fc a2.
\end{align*}
Thus, if $b:=p(A^\sharp)$, then $b\ge p(A)-a/2\ge a/2$.  By data processing for the partition $\{A^\sharp,(A^\sharp)^c\}$,
\begin{align*}
    E &\ge \KL(p\|\hat p) \\
    &\ge b\ln\fc{b}{\hat p(A^\sharp)}
    +(1-b)\ln\fc{1-b}{1-\hat p(A^\sharp)} \\
    &\ge b\ln\fc{b}{\hat p(A^\sharp)}-1,
\end{align*}
where the last line uses $x\ln(x/y)\ge -1$ for $x,y\in[0,1]$. Hence
\[
    \hat p(A^\sharp)\ge b\exp\pa{-\fc{E+1}{b}}.
\]
Since $b\mapsto b\exp(-(E+1)/b)$ is increasing on $(0,1]$ and $b\ge a/2$, this gives
\[
    \hat p(A^\sharp)\ge \fc a2\exp\pa{-\fc{2(E+1)}a}.\qedhere
\]
\end{prf}
We are now ready to prove a WPI for $p_{T_0}$.
\begin{theorem}[$s$-conductance and WPI for  $p_{T_0}$]
\label{t:wpi-pT}
    Let $s\le \rc 2$. 
    Suppose \pref{a:m-error} and \pref{a:Lipschitz-drift} hold.
    We have 
    \begin{align*}
        \Phi_s(p_{T_0})  =\exp\pa{
-O\pa{\fc{E_{T_1}+1}{s} \cdot \fc{T_0}{T_1(T_1-T_0)\hat c_{T_1}}}
}
    \end{align*}
    and $p_{T_0}$ satisfies a $\pa{\exp\pa{
-O\pa{\fc{E_{T_1}+1}{s} \cdot \fc{T_0}{T_1(T_1-T_0)\hat c_{T_1}}}
}, s}$-WPI.
\end{theorem}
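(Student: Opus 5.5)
The plan is to use the representation \eqref{e:SL-time-change}: $p_{T_0}$ is the law of $\fc{T_0}{T_1}Y+\si\xi$ with $Y\sim p_{T_1}$, $\xi\sim\calN(0,I_n)$ and $\si^2=T_0(1-T_0/T_1)$. Thus $p_{T_0}=\E_{y\sim p_{T_1}}\ga_y$, where $\ga_y:=\calN(\fc{T_0}{T_1}y,\si^2 I_n)$. By Fatou (the liminf defining $\mu^+$ is superadditive over mixtures), $p_{T_0}^+(S)\ge \E_{y\sim p_{T_1}}\ga_y^+(S)$ for any $S$.

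Fix $S$ with $p_{T_0}(S)\in(s,\rc2]$. Set $f(y)=\ga_y(S)$, so that $\E_{p_{T_1}}f=p_{T_0}(S)$. Fix a small constant $\eta$ (say $\eta=s/8$ or a fixed constant like $1/10$) and split $\R^n$ into $A=\{f\ge 1-\eta\}$, $B=\{f\le\eta\}$ and $M=\{\eta<f<1-\eta\}$. I will split into two cases.

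\emph{Case 1: $p_{T_1}(M)\ge s/4$.} Here \pref{t:g-isop} gives, for every $y\in M$, $\ga_y^+(S)\ge \si^{-1}\ph(\Phi^{-1}(\eta))$. Hence $p_{T_0}^+(S)\gtrsim s\,\si^{-1}\ph(\Phi^{-1}(\eta))$, which is far better than the claimed bound.

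\emph{Case 2: $p_{T_1}(M)<s/4$.} Then $p_{T_0}(S)\ge s$ forces $p_{T_1}(A)\gtrsim s$, and $p_{T_0}(S)\le\rc2$ forces $p_{T_1}(B)\gtrsim 1/4$. Apply \pref{l:p-subset-hat-p} with \pref{l:KL} ($E=E_{T_1}$) to both sets. This yields $A^\sharp,B^\sharp$ with $\hat p_{T_1}$-mass at least $a:=\Om(s)e^{-O((E_{T_1}+1)/s)}$, on which $\dd{p_{T_1}}{\hat p_{T_1}}\gtrsim s$. By \pref{l:LSI-time-t}, $\hat p_{T_1}$ satisfies an LSI with constant $\hat c_{T_1}$. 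So \pref{l:couple-distance}, applied with $\ep=a/2$, gives a coupling $(X,X')$ of $\hat p_{T_1}$ with itself such that $X\in A^\sharp$, $X'\in B^\sharp$ and $\|X-X'\|\le d$ with probability $\ge a/2$, where $d^2=O\pa{\fc{E_{T_1}+1}{s\,\hat c_{T_1}}}$ (the $\ln(1/a)$ terms are dominated by $(E_{T_1}+1)/s$).

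The key local estimate, which I expect to be the main obstacle, is a two-Gaussian isoperimetry statement. Take $y\in A$ and $y'\in B$ with $\|y-y'\|\le d$. The centers of $\ga_y$ and $\ga_{y'}$ are then at distance $r\le \fc{T_0}{T_1}d$, and
\[
\ga_y^+(S)+\ga_{y'}^+(S)\ \ge\ \si^{-1}\exp\pa{-O(1+r^2/\si^2)}.
\]
I would prove this by writing $\ga_{y'}$ as a density $\exp(\an{u,z}/\si^2-\ldots)$ against $\ga_y$. Restricting to a slab where the likelihood ratio is within $e^{O(1+r^2/\si^2)}$, $S$ must carry $\ga_y$-mass $\ge 1-\eta-o(1)$ there but $\ga_{y'}$-mass $\le\eta$. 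Gaussian isoperimetry (\pref{t:g-isop}) applied to $\ga_y$ restricted to sets of mass bounded away from $0$ and $1$ then gives the boundary. An alternative is a direct Lipschitz/TV argument: the map $y\mapsto \ga_y(S)$ has gradient bounded by $\si^{-1}$ times the boundary density along the segment. For this I would integrate $\ga_{y_\la}^+(S)$ over $\la\in[0,1]$, which uses the mixture over the segment rather than just the endpoints.

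Plugging in,
\[
\fc{r^2}{\si^2}\le \fc{T_0^2d^2}{T_1^2\,T_0(1-T_0/T_1)}=\fc{T_0\,d^2}{T_1(T_1-T_0)},
\]
which gives exactly the stated exponent. To sum up, note that the coupled pairs carry $\hat p_{T_1}$-mass $\ge a/2$. Their $p_{T_1}$-density ratio is $\gtrsim s$ at both endpoints, so
\[
p_{T_0}^+(S)\gtrsim s\cdot a\cdot \si^{-1}\exp\pa{-O\pa{\fc{E_{T_1}+1}{s}\cdot\fc{T_0}{T_1(T_1-T_0)\hat c_{T_1}}}}.
\]
The polynomial prefactors and the $e^{-O((E_{T_1}+1)/s)}$ factor from $a$ are absorbed into the exponential, since $\fc{T_0}{T_1(T_1-T_0)\hat c_{T_1}}$ is bounded below for fixed parameters (or else I keep them as a separate factor). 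Dividing by $p_{T_0}(S)\le\rc2$ gives the bound on $\Phi_s(p_{T_0})$. The WPI then follows from \pref{t:cheeger} applied with $2s$ in place of $s$ (absorbing the constant into the $O$), which squares the conductance and only changes constants in the exponent.
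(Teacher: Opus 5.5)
Your proposal is correct and follows the paper's proof essentially step for step. The shared steps are the mixture representation $p_{T_0}=\E_{y\sim p_{T_1}}\ga_y$ with Fatou, and a two-case split on the $p_{T_1}$-mass where $\ga_y(S)$ is intermediate. You should commit to $\eta=s/8$: a fixed $\eta=1/10$ does not force mass onto $A$ in Case 2 when $s<\eta$. The rest is \pref{l:p-subset-hat-p}, the LSI coupling of \pref{l:couple-distance} transferred to a sub-coupling of $p_{T_1}$, and Cheeger.

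The only real difference is the two-Gaussian estimate you flag as the main obstacle. The paper gets it in one line from $\TV(\ga_y,\ga_{y'})=1-2\Phi(-r/(2\si))$, i.e.\ $(1-\ga_y(S))+\ga_{y'}(S)\ge 2\Phi(-r/(2\si))$, combined with \pref{t:g-isop} and the concavity bound $\ph(\Phi^{-1}(u))\ge\sqrt{2/\pi}\,\min\{u,1-u\}$. Your slab/likelihood-ratio route, which lower-bounds $\ga_{y'}(S)\ge e^{-O(1+r^2/\si^2)}$, also works. The segment-integration alternative does not, since the intermediate Gaussians are not part of the mixture.
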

Here, $E_t$ is given in \pref{l:KL} and $\hat c_T$ is given in \pref{l:LSI-time-t}.
\begin{prf}
    Let $S \subseteq \R^n$ be a subset with $p_{T_0}(S)= s$.
    We would like to show $S$ has large boundary measure. 
    Note that
    \begin{align*}
        p_{T_0}(S) = 
        p_{T_1}K_{T_1\to T_0}(S) = 
        \int_{\R^n} K_{T_1\to T_0}(x,S) p_{T_1}(dx)
    \end{align*}
    Let 
    \begin{align*}
        A_1 :&= \set{x}{K_{T_1\to T_0}(x,S)<\fc s4}\\
        B_1 :&= \set{x}{K_{T_1\to T_0}(x,S)> 1-\fc s4}.
    \end{align*}
    We consider 2 cases, based on whether $A_1\cup B_1$ or $(A_1\cup B_1)^c$ is large under $p_{T_1}$. If $(A_1\cup B_1)^c$ is large, then considering these $x$ gives significant expansion of $S$ since $K_{T_1\to T_0}(x,S)$ is bounded away from 1. If $A_1\cup B_1$ is large, then we show $A_1,B_1$ both must have significant mass, and use a coupling with small distance given by \pref{l:couple-distance} to show good expansion after Gaussian noising.

    \ppart{Case 1: $p_{T_1}((A_1\cup B_1)^c)>\fc s4$}
    Let $\si = \si_{T_0,T_1} = \sqrt{T_0\pa{1-\fc{T_0}{T_1}}}$. Note that $K_{T_1\to T_0}(x,\cdot)$ is Gaussian with standard deviation $\si$. 
    Let $f(x) = \ph(\Phi^{-1}(x))$.
    In this case, we have by Fatou's Lemma and the Gaussian isoperimetric inequality (\pref{t:g-isop}) that 
    \begin{align}
    \label{e:mu-plus-kernel}
        p_{T_0}^+(S) = 
        \liminf_{h\to 0^+} \rc{h}
        p_{T_0}(\pl_h S)
        &= \liminf_{h\to 0^+} \int_{\R^n} \rc{h} K_{T_1\to T_0}(x,\pl_h S) p_{T_1}(dx)\\
        \nonumber
        &\ge_{\textup{Fatou}} \int_{\R^n} \liminf_{h\to 0^+} \rc{h} K_{T_1\to T_0}(x,\pl_h S) p_{T_1}(dx)\\
        \nonumber
        &\ge_{\textup{\pref{t:g-isop}}} \int_{(A_1\cup B_1)^c} \rc{\si} 
        \ph\pa{\Phi^{-1}\pa{\fc s4}}
        p_{T_1}(dx)\\
        \nonumber 
        &\ge p_{T_1}((A_1\cup B_1)^c) \rc{\si}
        \ph\pa{\Phi^{-1}\pa{\fc s4}}
        > \fc{s}{4\si}\cdot \fc s4, 
    \end{align}
    where in the last inequality the fact that for $u\ge 1$, 
    \begin{align}
    \Phi(-u)\le \rc{\sqrt{2\pi}u}e^{-\rc 2 u^2} \le \ph(u)
    \label{e:Phi-tail-ub}
    \end{align}
    by \cite[Proposition 2.1.2]{vershynin2018high}, so for $x\le \rc 8$, taking $u=-\Phi^{-1}(x)$ gives $x\le \ph(\Phi^{-1}(x))$.

\ppart{Case 2: $p_{T_1}((A_1\cup B_1)^c)\le \fc s4$}
    We first record that both $A_1$ and $B_1$ have $p_{T_1}$-mass at least $s/2$.  For $B_1$,
    \begin{align*}
        s = p_{T_0}(S) &= \int_{\R^n} K_{T_1\to T_0}(x,S) p_{T_1}(dx)\\
        &\le p_{T_1}(A_1) \fc s4 +
        \ub{p_{T_1}((A_1\cup B_1)^c )}{\le s/4} \pa{1-\fc s4} + 
        p_{T_1}(B_1)
        \le \fc s2 + p_{T_1}(B_1),
    \end{align*}
    so $p_{T_1}(B_1)\ge s/2$.  For $A_1$, since
    \[
        s = p_{T_0}(S) \ge \pa{1-\fc s4}p_{T_1}(B_1),
    \]
    we have $p_{T_1}(B_1)\le s/(1-s/4)$, and therefore, using $s\le 1/2$,
    \begin{align*}
        p_{T_1}(A_1)
        &=1-p_{T_1}(B_1)-p_{T_1}((A_1\cup B_1)^c)\\
        &\ge 1-\fc{s}{1-s/4}-\fc s4
        \ge \fc s2.
    \end{align*}

    Apply \pref{l:p-subset-hat-p} with $a=s/2$, $p=p_{T_1}$, $\hat p=\hat p_{T_1}$, and the KL bound from \pref{l:KL}.  Let
    \[
        \hat s := \fc{s}{4}e^{-4(E_{T_1}+1)/s}.
    \]
    Then the sets
    \[
        A^\sharp := A_1\cap\bc{x:\dd{p_{T_1}}{\hat p_{T_1}}(x)\ge \fc s4},
        \qquad
        B^\sharp := B_1\cap\bc{x:\dd{p_{T_1}}{\hat p_{T_1}}(x)\ge \fc s4}
    \]
    satisfy $\hat p_{T_1}(A^\sharp),\hat p_{T_1}(B^\sharp)\ge \hat s$.  Since $\hat p_{T_1}$ has no atoms, replace them by measurable subsets $\hat A\subeq A^\sharp$ and $\hat B\subeq B^\sharp$ with $\hat p_{T_1}(\hat A)=\hat p_{T_1}(\hat B)=\hat s$.

     By \Cref{l:couple-distance} with $\ep=\fc{\hat s}2$, there exists a coupling $\hat \P$ of $\hat p_{T_1}$ with itself such that
    \[
\hat \P \Bigg(X\in \hat A ,\, Y\in \hat B,\, d(X,Y)\le \ub{4\sfc{\ln \pf{2}{\hat s}}{\hat c_{T_1}}}{=:\bar d}\Bigg)\ge \fc{\hat s}2.
    \]
    Let $G$ denote the event inside the probability above.  On $\hat A\cup\hat B$ we have $\dd{p_{T_1}}{\hat p_{T_1}}\ge \fc{s}{4}$.  Hence the sub-probability measure $\fc{s}{4}\hat\P|_G$ has both marginals dominated by $p_{T_1}$.  Extending this sub-coupling by any coupling of the two residual marginals gives a coupling $\P$ of $p_{T_1}$ with itself such that
    \begin{align}\label{e:couple-prob}
\P \pa{X\in \hat A ,\, Y\in \hat B,\, d(X,Y)\le \bar d}\ge \fc{\hat s}2 \cdot \fc s4 = \fc{s^2}{32} e^{-4(E_{T_1}+1)/s}.
    \end{align}
Now given $x_1,y_1$ with $\|x_1-y_1\|\le \bar d$, we have that we can couple the distributions
\[
x_0 \sim 
K_{T_1\to T_0}(x_1,\cdot) = 
\calN\pa{\fc{T_0}{T_1} x_1, T_0 \pa{1-\fc{T_0}{T_1}}\Id_n}, \qquad 
y_0 \sim 
K_{T_1\to T_0}(y_1,\cdot) = 
\calN\pa{\fc{T_0}{T_1} y_1, T_0 \pa{1-\fc{T_0}{T_1}}\Id_n}
\]
with overlap $\ge 2\Phi\pf{-\fc{T_0}{2T_1}\bar d}{\sqrt{T_0 \pa{1-\fc{T_0}{T_1}}}} = 2\Phi\pa{-\fc d2\sfc{T_0}{T_1(T_1-T_0)}}$, where $\Phi$ is the cdf of the standard normal. 
By \cite[Proposition 2.1.2]{{vershynin2018high}},  for $x\ge 1$, 
\begin{align}
\label{e:cdf-lb}
    \Phi(-x) \ge\fc{x}{x^2+1}\rc{\sqrt{2\pi}} e^{-\rc 2 x^2}  \ge \rc{6x}e^{-\rc 2x^2}.
    \end{align}
Hence 
\begin{align}
\nonumber
\Phi\pa{-\fc{\bar d}2\sfc{T_0}{T_1(T_1-T_0)}}
&\ge \rc{6} \pa{2\sqrt{\fc{\ln \pf{2}{\hat s}}{\hat c_{T_1}}\fc{T_0}{T_1(T_1-T_0)}}}^{-1}
\exp\pa{
- 2 \fc{\ln \pf{2}{\hat s}}{\hat c_{T_1}}\fc{T_0}{T_1(T_1-T_0)} 
}\\
& =\exp\pa{
-O\pa{\fc{E_{T_1}+1}{s} \cdot \fc{T_0}{T_1(T_1-T_0)\hat c_{T_1}}}
}.
\label{e:Phi-calc}
\end{align} 
    Hence 
    \begin{align*}
\ab{K_{T_1\to T_0}(x_1,S) - K_{T_1\to T_0}(y_1,S)}
\le \TV(K_{T_1\to T_0}(x_1,\cdot), K_{T_1\to T_0}(y_1,\cdot))
= 1-2\Phi\pa{-\fc{\bar d}2\sfc{T_0}{T_1(T_1-T_0)}}
    \end{align*}
    and 
    \begin{align}
    \label{e:max-AB}
\max\bc{K_{T_1\to T_0}(x,S), 
 1-K_{T_1\to T_0}(y,S)} &\ge \Phi\pa{-\fc {\bar d}2\sfc{T_0}{T_1(T_1-T_0)}}.
    \end{align}
    Hence following \eqref{e:mu-plus-kernel}, 
        \begin{align*}
        \liminf_{h\to 0^+} \rc{h}  p_{T_0}(\pl_h S) &\ge  
        \int_{\set{(x,y)\in A_1\times B_1}{d(x,y)\le \bar d}} \liminf_{h\to 0^+} \rc{h} \pa{K_{T_1\to T_0}(x,\pl_hS) + K_{T_1\to T_0}(y,\pl_h S)} d\P(x,y) \\
        &\ge \int_{\set{(x,y)\in A_1\times B_1}{d(x,y)\le \bar d}} \rc{\si}\sfc{2}{\pi} 
        \max\bc{K_{T_1\to T_0}(x,S), 
 1-K_{T_1\to T_0}(y,S)} d\P(x,y)\\
 &\ge_{\eqref{e:couple-prob}, \eqref{e:max-AB}} \fc{s^2}{32} e^{-\fc{4(E_{T_1}+1)}s}\cdot  \rc{\si}\sfc{2}{\pi} \Phi\pa{-\fc d2\sfc{T_0}{T_1(T_1-T_0)}}\\
 &=_{\eqref{e:Phi-calc}} \exp\pa{
-O\pa{\fc{E_{T_1}+1}{s} \cdot \fc{T_0}{T_1(T_1-T_0)\hat c_{T_1}}}
}
    \end{align*}
    Combining the 2 cases gives the inequality for $\Phi_s$. The weak Poincar\'e inequality follows from Cheeger's inequality, \pref{t:cheeger}.
\end{prf}

\subsection{From Langevin to the proximal sampler}

We now transfer the WPI from Langevin dynamics to the proximal sampler.
\begin{lemma}[WPI for proximal sampler]
\label{l:prox-wpi}
    Let $0<t_0<t_1 \le \iy$.
    If $p_t$ satisfies a $(c_t,\de)$-WPI for $t\in [t_0,t_1]$, then the proximal sampler $\Kprox{t_0,t_1}$ satisfies a 
    $( \rc2[1-e^{-2\int_{t_0}^{t_1}c_{t}dt}], \de)$-WPI. In particular, under \pref{a:m-error} and \pref{a:Lipschitz-drift}, for all $\ep>0$, $\Kprox{T_0,\iy}$ satisfies a 
    $\pa{\exp\pa{
-O\pa{\fc{E_{T_1}+1}{\ep(T_1-T_0)\hat{c}_{T_1}} }
}, \ep}$-WPI.
\end{lemma}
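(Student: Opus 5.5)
The plan is to follow the argument of \cite{chen2022improved}: express the Dirichlet form of the proximal sampler as a variance decay along the forward (SL) semigroup, and control that decay using the WPI for Langevin at each intermediate time. The kernel $\Kprox{t_0,t_1}=K_{t_0\to t_1}K_{t_1\to t_0}$ is reversible with respect to $p_{t_0}$, since the two kernels are adjoint. For $f$ with $\E_{p_{t_0}}f=0$, this gives
\[
\sE_{\mathrm{prox}}(f,f)=\Var_{p_{t_0}}(f)-\ve{K_{t_1\to t_0}^*f}^2_{L^2(p_{t_1})}=\Var_{p_{t_0}}(f)-\Var_{p_{t_1}}(g_{t_1}),
\]
where $g_t:=K_{t\to t_0}^* f$ is the function $x\mapsto \E[f(y_{t_0})\mid y_t=x]$ on the support of $p_t$. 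It therefore suffices to show
\[
\Var_{p_{t_1}}(g_{t_1})\le e^{-2\int_{t_0}^{t_1}c_t\,dt}\Var_{p_{t_0}}(f)+(\text{error}),
\]
and then to rearrange.

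The key step is a differential inequality in $t$. The backward map $K_{t\to t_0}$ is Gaussian convolution with rescaling, so it has the form of a heat-flow semigroup run backwards in SL time. Differentiating $\Var_{p_t}(g_t)$ in $t$, I expect to obtain $\fc{d}{dt}\Var_{p_t}(g_t)=-\int\ve{\gd g_t}^2dp_t$, up to a time-change normalisation. In diffusion coordinates $x_s=y_{1/s}\cdot s$ this is the standard identity for the Ornstein--Uhlenbeck/heat semigroup, and the SL parametrisation is chosen so that the constant works out. Applying the $(c_t,\de)$-WPI for $p_t$ gives
\[
\fc{d}{dt}\Var_{p_t}(g_t)\le -2c_t\Var_{p_t}(g_t)+2c_t\de\osc(g_t)^2 .
\]
Since $g_t$ is a conditional expectation of $f$, we have $\osc(g_t)\le\osc(f)$. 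Grönwall then yields
\[
\Var_{p_{t_1}}(g_{t_1})\le e^{-2C}\Var(f)+(1-e^{-2C})\de\osc(f)^2,\qquad C=\int_{t_0}^{t_1}c_t\,dt .
\]
Rearranging, $(1-e^{-2C})\Var(f)\le\sE_{\mathrm{prox}}(f,f)+(1-e^{-2C})\de\osc(f)^2$. This is a $(1-e^{-2C},\de)$-WPI, which is stronger than the claimed constant $\rc2[1-e^{-2C}]$; the factor $\rc2$ gives slack to absorb normalisation (for instance a factor of $2$ in the time derivative, or continuous-time versus discrete-time conventions). For $t_1=\iy$ I would take the limit $t_1\to\iy$, using that $g_{t}$ converges to $K_{\iy\to t_0}^*f$ by martingale convergence.

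For the "in particular" part, apply \pref{t:wpi-pT} with $s=\ep$ at each time $t\in[T_0,T_0']$, for some $T_0'<T_1$ (say the midpoint). This gives $c_t\ge \exp(-O(\fc{E_{T_1}+1}{\ep}\cdot\fc{t}{T_1(T_1-t)\hat c_{T_1}}))$. Using $t/T_1\le 1$ and $T_1-t\ge (T_1-T_0)/2$, this is uniformly $\exp(-O(\fc{E_{T_1}+1}{\ep(T_1-T_0)\hat c_{T_1}}))$. Integrating over an interval of length $\Theta(T_1-T_0)$, noting $1-e^{-x}\ge x/2$ for small $x$, and absorbing polynomial factors into the exponent gives the stated constant. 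Finally, $\Kprox{T_0,\iy}$ has Dirichlet form at least that of $\Kprox{T_0,T_0'}$, because the extra forward step only contracts $\ve{K^*f}$; equivalently, the monotonicity in $t_1$ follows from the same derivative computation.

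The main obstacle is justifying the derivative identity for $\Var_{p_t}(g_t)$ in SL coordinates with the correct constant, and its regularity. This needs smoothness of $g_t$, which holds since it is a Gaussian smoothing of a bounded function, and the exchange of derivative and integral. I would carry this out by passing to diffusion coordinates, where it is the classical computation in \cite{chen2022improved}.
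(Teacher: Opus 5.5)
Your route is valid and different from the paper's, though one intermediate constant is wrong; the lemma still follows once it is fixed.

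\textbf{Comparison of routes.} The paper works in diffusion coordinates and chains two contractions, a forward heat-flow step and a backward step (\cite[Lemmas 12 and 15]{chen2022improved}). Each step shrinks variance by $e^{-C}$, where $C=\int_{t_0}^{t_1}c_t\,dt$. Together they give $\Var_{p_{t_0}}(\Kprox{t_0,t_1}g)\le e^{-2C}\Var_{p_{t_0}}(g)+(1-e^{-2C})\de\osc(g)^2$. The paper then converts this into a WPI via \pref{l:var-to-wpi}, losing the factor $\rc2$ through $I-P\succeq\rc2(I-P^2)$.

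You instead use the exact two-block Gibbs identity $\sE_{\mathrm{prox}}(f,f)=\Var_{p_{t_0}}(f)-\Var_{p_{t_1}}(g_{t_1})$, with $g_{t_1}=\E[f(y_{t_0})\mid y_{t_1}]$. This needs only one contraction (the paper's ``backward step'') and no spectral argument. For the ``in particular'' part, you restrict to $[T_0,\fc{T_0+T_1}{2}]$ and use that $\Var_{p_t}(g_t)$ is non-increasing in $t$. This is equivalent to the paper's lower bound on $\int_{T_0}^{\iy}c_t\,dt$.

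\textbf{The factor-$2$ error.} In SL time the identity holds with constant exactly one:
\[
\fc{d}{dt}\Var_{p_t}(g_t)=-\E_{p_t}\ve{\gd g_t}^2 .
\]
To see this, write $g_t(tx)=(Q_{s_0-s}\tilde f)(x)$ with $s=1/t$, $s_0=1/t_0$, $\tilde f(x)=f(t_0x)$ and $Q$ the heat semigroup. Then $g_{1/s}(y_{1/s})$ is a martingale in $s$ whose quadratic-variation rate is $\ve{\gd Q_{s_0-s}\tilde f(x_s)}^2$. This is also the rate the paper uses in each of its two steps. So the WPI gives only
\[
\fc{d}{dt}\Var_{p_t}(g_t)\le -c_t\pa{\Var_{p_t}(g_t)-\de\osc(f)^2},
\]
not the rate $2c_t$ you wrote.

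Your intermediate claim of a $(1-e^{-2C},\de)$-WPI is in fact false. Take $\mu$ a point mass; then $p_t$ is Gaussian with covariance $tI_n$, so $c_t=1/t$ works with $\de=0$. For linear $f$ one gets $\Var(g_{t_1})=\fc{t_0}{t_1}\Var(f)=e^{-C}\Var(f)$ exactly.

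The corrected argument yields a $(1-e^{-C},\de)$-WPI. Since
\[
\rc2(1-e^{-2C})=(1-e^{-C})\cdot\fc{1+e^{-C}}{2}\le 1-e^{-C},
\]
the stated lemma still follows. Your appeal to slack is therefore justified, but you should make this comparison explicit rather than attribute it to an unspecified normalisation. With the fix, your route gives a better constant than the paper's, and it is tight in the Gaussian example above.
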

\begin{prf}
In order to apply the results of \cite{chen2022improved}, we let $q_t$ be the distribution of $\fc{y_t}{t}$ when $y_t\sim p_t$.
Suppose that $p_t$ satisfies a $(c_t,\de)$-WPI. Then $q_t$ satisfies a $(t^2c_t,\de_t)$-WPI. Let $M_c(y) =cy$ denote multiplication by $c$, and let $K'_{s\to s'}$ be the kernel such that $M_{1/t^{\prime*}} K^{*}_{t\to t'}\mu =K^{\prime *}_{1/t\to 1/t'} M_{1/t*}\mu$ for any $t,t'$.

Let $s_i=1/t_i$. 
Note $K'_{s_1\to s_0}(x_1,\cdot)$ is heat flow (forward diffusion) given by $\calN(x_1,(s_0-s_1)I_n)$, and $K'_{s_0\to s_1}$ is the backward diffusion process.

\ppart{Forward step}
By \cite[Lemma 12]{chen2022improved}, for $s\ge s_1$, for $\nu_{s} := K^{\prime *}_{s_1\to s}\nu_{s_1}$, 
\begin{align*}
    \ddd s \chi^2(\nu_s\|q_s) &= -\E\ba{\ve{\gd \fc{\nu_s}{q_s}}^2}
\end{align*}
Equivalently, for $g_s := K^{\prime}_{s_1\to s}g$,
\begin{align*}
    \ddd s\Var_{q_s}(g_s) &= 
    -\sE_{q_s}(g_s,g_s)\le  -\ba{(1/s)^2c_{1/s} [(\Var_{q_s}(g_s) - \de \osc(g_s)^2)\vee 0]}.
\end{align*}
Noting that $\osc(g_s)$ is non-increasing, we have by Gr\"onwall that
\begin{align*}
    \Var_{q_{s_0}}(g_{s_0}) & \le 
    \de \osc(g_s)^2 + 
    e^{-\int_{s_1}^{s_0}(1/s)^2 c_{1/s}ds}
    \pa{\Var_{q_{s_1}}(g) - \de \osc(g)^2}.
\end{align*}

\ppart{Backward step} By \cite[Lemma 15]{chen2022improved} and using entirely analogous arguments, letting $g_s:= K'_{s_0\to s}g$,
\begin{align*}
    \Var_{q_{s_1}}(g_{s_1}) & \le 
    \de \osc(g)^2 + 
    e^{-\int_{s_1}^{s_0}(1/s)^2 c_{1/s}ds}
    \pa{\Var_{q_{s_0}}(g) - \de \osc(g)^2}.
\end{align*}
Combining the backward and forward steps,
\begin{align*}
    \Var_{q_{s_0}}(K'_{s_1\to s_0} K'_{s_0\to s_1} g)
    &\le \de \osc(g)^2 + e^{-2\int_{s_1}^{s_0}(1/s)^2 c_{1/s}ds}
    \pa{\Var_{q_{s_0}}(g) - \de \osc(g)^2}
\end{align*}
Note that $\int_{s_1}^{s_0}(1/s)^2 c_{1/s}\,ds = \int_{t_0}^{t_1} c_t\,dt.$
The result then follows from \pref{l:var-to-wpi} below and noting that the same inequalities hold for $\Kprox{t_1,t_0}$ by bijectivity. 

In our case, plugging in the bound from \pref{t:wpi-pT}, we have
\begin{align*}
    \int_{T_0}^{\iy} c_t\,dt
    &\ge \int_{T_0}^{\fc{T_0+T_1}2}
    \exp\pa{
-O\pa{\fc{E_{T_1}+1}{\ep} \cdot \fc{t}{T_1(T_1-t)\hat{c}_{T_1}}}
} \,dt   
= \exp\pa{
-O\pa{\fc{E_{T_1}+1}{\ep(T_1-T_0)\hat{c}_{T_1}} }
},
\end{align*}
and note that 
$\rc2[1-e^{-2\int_{t_0}^{t_1}c_{t}dt}] = \Om(\int_{t_0}^{t_1}c_{t}dt \wedge 1)$.
\end{prf}

\begin{lemma}
\label{l:var-to-wpi}
    Let $P$ be a reversible Markov kernel with stationary distribution $\mu$, and suppose $\Var_\mu(Pg) \le (1-c) \Var_\mu(g) + c \de \osc(g)^2$. Then $P$ satisfies a $\pa{\fc c2, \de}$-WPI.
\end{lemma}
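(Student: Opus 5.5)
The plan is to take a general $f$ and find an intermediate quantity that the hypothesis, applied to $g=f$, controls. Reversibility supplies the natural candidate. Since $P$ is self-adjoint in $L^2(\mu)$ and has norm at most $1$ there, we write
\[
\sE(f,f)=\an{f,(I-P)f}_\mu .
\]
We may assume $\E_\mu f=0$, because $\Var$, $\osc$ and $\sE$ are all invariant under adding constants. We then compare $\sE(f,f)$ with
\[
\Var_\mu(f)-\Var_\mu(Pf)=\an{f,(I-P^2)f}_\mu ,
\]
where we use that $\E_\mu Pf=\E_\mu f=0$.

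The key step is the operator inequality $I-P^2=(I-P)(I+P)\preceq 2(I-P)$. By the spectral theorem for the self-adjoint operator $P$, whose spectrum lies in $[-1,1]$, this reduces to the scalar inequality $1-\lambda^2\le 2(1-\lambda)$, i.e. $(1-\lambda)^2\ge 0$. Hence
\[
\Var_\mu(f)-\Var_\mu(Pf)\le 2\,\sE(f,f).
\]

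Now combine this with the hypothesis $\Var_\mu(Pf)\le (1-c)\Var_\mu(f)+c\de\osc(f)^2$. This gives $\Var_\mu(f)-\Var_\mu(Pf)\ge c\Var_\mu(f)-c\de\osc(f)^2$. Putting the two displays together,
\[
c\Var_\mu(f)\le 2\sE(f,f)+c\de\osc(f)^2 .
\]
Dividing by $c$ yields $\Var_\mu(f)\le \frac{2}{c}\sE(f,f)+\de\osc(f)^2$, which is exactly a $(\frac c2,\de)$-WPI with $\Err=\osc^2$.

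I do not expect a real obstacle. The only point needing care is the spectral inequality, which uses reversibility essentially: without self-adjointness we would only have $\Var(Pf)$ expressed through $\an{f,P^*Pf}$, which cannot be compared with $\sE$ in the same way. It is also worth checking that $\osc(f)$ is finite; if it is infinite the inequality is trivial. Finally, for unbounded $f$ the $L^2$ spectral calculus still applies, so no approximation argument is needed.
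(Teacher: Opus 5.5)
Your proposal is correct and follows essentially the same route as the paper: center $f$, write $\Var_\mu(f)-\Var_\mu(Pf)=\an{f,(I-P)(I+P)f}_\mu$, and use that the spectrum of $I+P$ lies in $[0,2]$ to get $\sE_\mu(f,f)\ge \rc2\ba{\Var_\mu(f)-\Var_\mu(Pf)}$, then apply the hypothesis and rearrange. Your appeal to the spectral theorem for the self-adjoint $P$, rather than the paper's eigenbasis expansion, is slightly more careful on general state spaces.
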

\begin{prf}
    We have
    \begin{align}
\nonumber
\sE_\mu(g,g)
&= \an{(g-\E_\mu g),(\Id-P)(g-\E_\mu g)}_\mu \\
&\ge \rc 2 \an{(g-\E_\mu g),(\Id-P)(\Id+P)(g-\E_\mu g)}
\label{e:I2-P2}
\\
\nonumber
&= \rc 2 \ba{\Var_\mu(g) - \Var_\mu(Pg)}
\ge \fc{c}{2}\Var_\mu(g)-\fc{c}2\de \osc(g)^2. 
    \end{align}
where \eqref{e:I2-P2} follows from expanding $g$ in the eigenbasis of $P$ and noting that $\Id+P$ has all eigenvalues in $[0,2]$. Rearranging gives the result.
\end{prf}


\subsection{Proof of main theorem}
\label{s:proof}


We can now prove the main theorem using \pref{c:wpi-gd-ps}, which upgrades a WPI for the proximal sampler to a WPI for the target distribution, given WPIs for the localized distributions.

\begin{prf}[Proof of \pref{t:main}]
By \pref{a:localized-fi}, with probability $1-\de'(n)$ over the draw of $y_T\sim p_{T}$, $\mu_{y_T}$ satisfies a $(c_T\loc,\de(n))$-weak Poincar\'e inequality.
    By \pref{l:prox-wpi}, $\Kprox{T_0,\iy}$ satisfies a 
    $\pa{c\prox_{T_0}(\ep), \ep}$-WPI, where $c\prox_{T_0}(\ep) = \exp\pa{
-O\pa{\fc{E_{T_1}+1}{\ep(T_1-T_0)\hat c_{T_1}} }
}$.
By \pref{l:LSI-time-t}, $\hat c_{T_1} = e^{-O(LT_1)}$, so 
\[
c\prox_{T_0}(\ep) = 
\exp\pa{-O\pa{\fc{(E_{T_1}+1) e^{O(L T_1)}}{\ep(T_1-T_0)} }
} = \exp\pa{-O\prc{\ep}}
\]
keeping just the dependence on $\ep$. 
By \pref{c:wpi-gd-ps}, $\mu$ satisfies a 
\[\pa{\Om(c_{T_0}\loc c_{T_0}\prox(\ep)), O(c\prox_{T_0}(\ep))(\de(n)+\de'(n)) + \ep}\textup{-WPI.}\qedhere\]
\end{prf}

\section{Weak Poincar\'e inequality for the localized distribution on the SK model}
\label{s:wpi-local}

We prove a weak Poincar\'e inequality for the localized distribution for the SK model after large enough constant time. In the following, we let $\mu_{A,y}$ denote the Ising model 
\[
\mu_{A,y}(\si)\propto e^{\rc 2 \an{\si, A\si} + \an{y,\si}},\quad \si\in \{\pm 1\}^n.
\]
\begin{theorem}[WPI for localized distribution for SK]
\label{t:wpi-localized}
\label{t:local-wpi}
Fix $\ep, C>0$. 
    Consider the stochastic localization process for the Gibbs distribution of the SK model with inverse temperature $\be<\rc 2$. 
    There exist constants $a(\be)$, $T(\beta)$, and $c(\be)$, such that 
    with probability $\ge 1-e^{-a(\be)n}$ over the SK model, for $T\ge T(\beta)$, with probability $\ge 1-e^{-Cn}$ over $y_T$, 
    $\mu_{\be A, y_T}$ satisfies a $\pa{\fc{c(\be)}{n},  e^{-C n}}$-WPI.
\end{theorem}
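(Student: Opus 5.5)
The plan is to show that, for large constant $T$, the localized measure $\mu_{\be A,y_T}$ is, with overwhelming probability, concentrated on a Hamming ball of radius $\rho n$ around a ``center'' $\hat\si$, with $\rho$ small. Inside this ball Glauber dynamics is well behaved, and we would then apply the measure-decomposition lemma \pref{l:wpi-convergence} on the localized measure itself. The overview describes exactly this route: adapt the polarized-walk argument of \cite{DLSS26} to Glauber dynamics.

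\textbf{Step 1 (concentration).} Since $y_T = T\si^* + \sqrt T\,\xi$ with $\si^*\sim\mu_{\be A}$ (by \eqref{e:SL-time-change} at $t_1=\iy$), the field $y_T$ has magnitude of order $T$ in the direction $\si^*$. We would take $\hat\si=\sign(y_T)$ and bound the mass of $\{\si : d_H(\si,\hat\si)\ge \rho n\}$. The key inequality is the energy bound $\frac{\be}{2}|\an{\si,A\si}-\an{\si',A\si'}|\le 2\be\ve{A}_{\op}\,\ve{\si-\si'}_2\sqrt n$, together with $\ve{A}_{\op}\le 2+o(1)$ with probability $1-e^{-a(\be)n}$. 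The tilt term $\an{y_T,\si}$ penalizes each disagreement with $\sign(y_T)$ by $2|y_{T,i}|$. A union bound over sets of flipped coordinates, combined with Gaussian tail bounds on $\#\{i:|y_{T,i}|<T/2\}$ holding with probability $1-e^{-Cn}$, gives $\mu_{\be A,y_T}(d_H(\si,\hat\si)\ge\rho n)\le e^{-C'n}$ once $T\ge T(\be)$. Here $C'$ grows with $T$, so choosing $T(\be)$ large also handles the prescribed $C$.

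\textbf{Step 2 (Glauber on the ball).} On the ball $\mathcal B=\{d_H(\si,\hat\si)\le\rho n\}$, view $\mu_{\be A,y_T}$ as a product measure $\bigotimes_i e^{y_{T,i}\si_i}$ perturbed by the quadratic interaction. The cleanest route is a Dobrushin/spectral-independence argument. Each coordinate's conditional law has field $y_{T,i}+\be(A\si)_i$. For the ``good'' coordinates with $|y_{T,i}|\gtrsim T$, this field is dominated by the tilt when $T\gg \be\max_i|(A\si)_i|$ on typical configurations. The influence matrix then has norm at most $\be\ve{A}_{\op}\cdot\sup\operatorname{sech}^2(\cdot)$, which is $<1$ because $\be<\rc2$ and $\ve A_{\op}\le 2+o(1)$; note that $\operatorname{sech}^2\le1$ already suffices here. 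Standard results (spectral independence / the Hessian criterion, via approximate tensorization) then give a Poincar\'e constant $c(\be)/n$ for Glauber dynamics restricted to $\mathcal B$, or more simply for the measure $\mu|_{\mathcal B}$ with boundary moves rejected.

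\textbf{Step 3 (from restricted PI to WPI).} Write $\mu=\mu(\mathcal B)\mu|_{\mathcal B}+\mu(\mathcal B^c)\mu|_{\mathcal B^c}$. Then $\Var_\mu f\le \Var_{\mu|_\mathcal B}f+O(\mu(\mathcal B^c))\osc(f)^2$. The restricted Dirichlet form is at most $\sE_\mu$ up to factors $1+O(\mu(\mathcal B^c))$. This yields a $(c(\be)/n,e^{-Cn})$-WPI.

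\textbf{Main obstacle.} Step 2 is the delicate part. Coordinates with small $|y_{T,i}|$, roughly an $e^{-\Om(T)}$ fraction, are not pinned, and $\be\ve A_{\op}\le 2\be<1$ is exactly the place where $\be<\rc2$ enters. I would first try the Eldan--Koehler--Zeitouni style bound $\ve{\be A}_{\op}<1$, after shifting $A$ to be PSD with spread at most $4\be<2$ and adding a diagonal correction. This gives a PI for the whole Ising model with external field uniformly in $y$, with no localization needed, but only when $\lambda_{\max}-\lambda_{\min}<1$, that is, for $\be<\rc4$. So for $\rc4\le\be<\rc2$ the ball concentration of Step 1 is genuinely needed: within $\mathcal B$, the effective interaction among the $O(e^{-\Om(T)}n)$ free coordinates is a principal submatrix of $\be A$. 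Its norm must be shown to be small, either by random-matrix bounds on small principal submatrices or by reusing the cavity estimates of \cite{DLSS26}.
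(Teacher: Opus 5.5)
Your Steps 1 and 3 are sound. Step 1 is essentially \pref{t:loc-conc}, which the paper imports from \cite{DLSS26}, and the restriction estimate in Step 3 is correct. The gap is Step 2, which carries all of the difficulty.

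\textbf{Step 2 as written is false.} The bound you assert, that the influence matrix has norm at most $\be\opnorm{A}\sup\operatorname{sech}^2$, does not hold.
\begin{itemize}
\item Dobrushin-type influence matrices, including their spectral-norm versions, have entries $\tanh(\be|A_{ij}|)\approx\be|A_{ij}|$. The entrywise absolute value of a GOE matrix has operator norm of order $\sqrt n$, not $2$.
\item Multiplying by $\operatorname{sech}^2$ of a field of size $T$ still leaves row sums of order $\sqrt n\,e^{-\Omega(T)}$, which is unbounded for constant $T$.
\item The local field $y_{T,i}+\be(A\si)_i$ is not uniformly large over $\mathcal B$ anyway: for adversarial $\si\in\mathcal B$, $|(A(\si-\hat\si))_i|$ can be of order $\sqrt{\rho n}$, so ``typical configurations'' gives no uniform bound.
\item Spectral independence instead needs operator-norm covariance bounds under all pinnings or tilts, which is exactly what is unknown.
\end{itemize}
A sanity check: you say $\operatorname{sech}^2\le 1$ already suffices, so your argument would give rapid mixing of Glauber for the full SK model, with arbitrary field and no localization, for all $\be<\rc2$. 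That is open. As you note yourself, the known criterion (spread $<1$) only reaches $\be<\rc4$.

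\textbf{The fallback lacks a mechanism.} You do name the right spectral input. The paper uses $\opnorm{(\be A+\ga I)_{S\times S}}\le \ga+\be c_1\sqrt{h(4\ep)}<1$ for all $|S|\le 4\ep n$; note that $S$ ranges over all potential flip sets in the ball, not only the low-field coordinates. But you give no way to turn this into a (weak) Poincar\'e inequality for Glauber, and that conversion is the content of the theorem. The paper does it as follows.
\begin{enumerate}
\item Apply the EKZ needle decomposition (\pref{t:needle}) to the PSD representative $\be A+(\be+\rc2)I$. This writes $\mu_{\be A,y_T}$ as a mixture of rank-one models $\mu_{uu^\top,v}$ with $uu^\top\preceq \be A+(\be+\rc2)I$. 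The decomposition preserves $\E f$ and satisfies $\E_\nu\sE_{\mu_{uu^\top,v}}(f,f)\le \sE_{\mu_{\be A,y_T}}(f,f)$.
\item By Markov's inequality, most needles are concentrated on the ball $\Om_0$.
\item For each such needle, run SL with driving matrix $(uu^\top)^{1/2}$. This reaches a product measure, with Poincar\'e constant $\rc n$, at time $1$. Optional stopping keeps the SL measures on $\Om_0$ throughout.
\item Bound their covariance via \pref{l:cov-rank-1}. There, Hubbard--Stratonovich reduces the rank-one model on the ball to a one-dimensional mixture of product measures. The mixing variable has $O(1)$ variance precisely because of the submatrix bound.
\item \pref{t:sl-wpi} converts the covariance bound into a WPI for each needle, and the Dirichlet-form comparison integrates it back.
\end{enumerate}
The rank-one reduction is what makes the ball-restricted covariance bound tractable. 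To complete your route you would need an analogous covariance (or spectral-independence) bound for the full-rank, ball-restricted measure, uniformly along a localization path. Your proposal does not supply one.
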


\begin{theorem}[Concentration for localized distribution, {\cite[Lemma 7.43]{DLSS26}}]
\label{t:loc-conc}
Given a measure $\mu$ on $\{\pm 1\}^n$, consider the SL process for $\mu$, \eqref{e:SL}.
    Given $\ep,C>0$ with $\ep < 1/2$, 
there exists $T_0$ such that for any fixed $T\ge T_0$, with probability $\ge 1-e^{-C n}$,
    $\mu_{y_T}(\ball{\ep n}{\sign(y_T)}) \ge 1-e^{-C n}$.
\end{theorem}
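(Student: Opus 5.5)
The plan is to exploit the planted (posterior) description of stochastic localization. By the time-change \eqref{e:SL-time-change} and the kernel $K_{\iy\to T}$ of \pref{d:ps}, the joint law of $(\si,y_T)$ is $\si\sim\mu$ and $y_T=T\si+\sqrt T\,\xi$ with $\xi\sim\calN(0,\Id_n)$ independent of $\si$. Moreover, $\mu_{y_T}$ is exactly the conditional law of $\si$ given $y_T$: by Bayes' rule the posterior density is proportional to $\mu(\si)e^{\an{y_T,\si}-\ve{\si}^2/(2T)}$, and the quadratic term is constant on $\{\pm1\}^n$. Writing $D:=\ball{\ep n}{\sign(y_T)}^c$ for the set of configurations at Hamming distance more than $\ep n$ from $\sign(y_T)$, the tower property gives
\[
\E_{y_T}\ba{\mu_{y_T}(D)}=\P\pa{d_H(\si,\sign(y_T))>\ep n}.
\]

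The first step is to bound the right-hand side uniformly in $\mu$. Coordinatewise, $\sign(y_{T,i})\ne\si_i$ exactly when $\si_i\sqrt T\,\xi_i<-T$, i.e.\ $\si_i\xi_i<-\sqrt T$. Conditionally on $\si$, these events are independent across $i$, each with probability $q:=\Phi(-\sqrt T)\le e^{-T/2}$. Hence, conditionally on $\si$ (and therefore unconditionally), $d_H(\si,\sign(y_T))\sim\mathrm{Bin}(n,q)$. A Chernoff bound gives
\[
\P(\mathrm{Bin}(n,q)>\ep n)\le\binom{n}{\lceil\ep n\rceil}q^{\ep n}\le\pa{\fc{e q}{\ep}}^{\ep n}\le \pa{\fc{e}{\ep}}^{\ep n}e^{-T\ep n/2}.
\]
Choose $T_0$ so large that this is at most $e^{-2Cn}$ for all $T\ge T_0$; for instance $T_0=\fc{4C}{\ep}+2\ln(e/\ep)$ works. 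This choice depends only on $\ep$ and $C$, not on $\mu$ or $n$.

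The second step is Markov's inequality:
\[
\P_{y_T}\pa{\mu_{y_T}(D)>e^{-Cn}}\le e^{Cn}\,\E\ba{\mu_{y_T}(D)}\le e^{-Cn}.
\]
So with probability at least $1-e^{-Cn}$ we have $\mu_{y_T}(\ball{\ep n}{\sign(y_T)})\ge1-e^{-Cn}$, which is the claim.

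The only point requiring care is the identification of $\mu_{y_T}$ with the posterior $\mathrm{Law}(\si\mid y_T)$ under the planted model. This is the standard equivalence between stochastic localization and Gaussian observation of $\si$, recorded before \pref{d:ps} together with the adjointness of $K_{\iy\to T}$ and $K_{T\to\iy}$. Once that is in place the argument is just a binomial tail bound, and I expect no genuine obstacle. The hypothesis $\ep<1/2$ only ensures that the ball is a nontrivial set; the bound itself does not use it.
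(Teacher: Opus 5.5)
Your proof is correct and is essentially the argument behind \cite[Lemma 7.43]{DLSS26}, which the paper simply cites: the posterior representation gives $d_H(\si,\sign(y_T))\sim\mathrm{Bin}(n,\Phi(-\sqrt T))$ uniformly in $\mu$, and a Chernoff-type tail then controls $\E\,\mu_{y_T}(D)$; the cited lemma uses the KL form $e^{-nD_{\KL}(\ep\|\Phi(-\sqrt T))}$, but your cruder $\binom{n}{\lceil\ep n\rceil}q^{\ep n}$ works equally well, and Markov's inequality splits the exponent between the two failure probabilities, consistent with the $\fc n2$ in the quoted bound. One harmless slip: the Gaussian likelihood contributes $-\fc T2\ve{\si}^2$ rather than $-\ve{\si}^2/(2T)$ to the exponent, but this is constant on $\{\pm1\}^n$, so nothing changes.
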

\begin{prf}
    The probability given in \cite[Lemma 7.43]{DLSS26} is $1-2e^{-\fc n2D_{\KL}(\ep\|\Phi(-\sqrt T))}$. Choosing $T$ large enough makes this $\ge 1-e^{-Cn}$.
\end{prf}

We use a combined form of the EKZ needle decomposition. 

\begin{theorem}[Needle decomposition for Ising models, combined form of
{\cite[Theorem 7.24]{DLSS26}}, {\cite[Theorem 12]{eldan2022spectral}},
and {\cite[Theorem 29]{anari2022entropic}}]
\label{t:needle}
    Given a test function $\ph:\{\pm 1\}^n\to \R^m$ and an Ising model $\mu_{A,h}$ where $A$ is PSD, there exists a measure $\nu_{\ph}$ on $\R^{2n}$ such that 
    \[
\mu_{A,h} = \int_{\R^{2n}} \mu_{uu^{\top} , v}
d\nu_\ph (u,v)
    \]
    where $\nu_\ph$-a.s., 
    $uu^{\top} \preceq A$, and $\E_{\mu_{A,h}}\ph = \E_{\mu_{uu^{\top} , v}} \ph$. 
    Moreover, for $x,y$ adjacent vertices of $\{\pm 1\}^n$, 
    \[
        \E_{\nu_\ph} \ba{\fc{\mu_{uu^\top, v}(x)\mu_{uu^\top,v}(y)}{\mu_{uu^\top, v}(x)+\mu_{uu^\top,v}(y)}} 
    \le \fc{\mu_{A, h}(x)\mu_{A, h}(y)}{\mu_{A, h}(x)+\mu_{A, h}(y)}\,.
    \]
    Therefore, 
    \begin{align}
    \label{e:needle-dir-form}
\E_{\nu_\ph}[\sE_{\mu_{uu^\top, v}}(f,f)] \le \sE_{\mu_{A,h}}(f,f).
    \end{align}
\end{theorem}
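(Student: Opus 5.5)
The plan is to build the decomposition from a stochastic localization process of Eldan--Koehler--Zeitouni type, driven by a matrix-valued control. That is, I would reprove the cited results in one combined form rather than treat them as black boxes.

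\emph{The process.} Start from $\mu_0=\mu_{A,h}$ and write $\mu_t=\mu_{A_t,h_t}$. Evolve
\[
dh_t = C_t\,dW_t,\qquad dA_t=-C_t^2\,dt ,
\]
where $W_t$ is a standard Brownian motion and $C_t$ is a symmetric PSD control adapted to the process. It\^o's formula shows that, with this quadratic correction, $\mu_t(x)\propto e^{\frac12\langle x,A_tx\rangle+\langle h_t,x\rangle}$ is a martingale for every $x$:
\[
d\mu_t(x)=\mu_t(x)\langle x-m_t, C_t\,dW_t\rangle ,
\]
where $m_t$ is the mean of $\mu_t$. The control is chosen in two steps.
\begin{itemize}
\item First, $C_t$ is supported in the range of $A_t$ and satisfies $C_t^2\preceq A_t$ in the appropriate sense, so that $A_t$ stays PSD and decreases in Loewner order. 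Concretely, take $C_t$ to be the projection onto a subspace of the range of $A_t$, scaled as in EKZ.
\item Second, this subspace is restricted to be orthogonal to the $m$ directions $\Cov_{\mu_t}(x,\ph)\in\R^{n\times m}$, i.e.\ $C_t\Cov_{\mu_t}(x,\ph_j)=0$ for each $j$. This kills the martingale term of $\E_{\mu_t}\ph$, since $d\,\E_{\mu_t}\ph_j=\langle \Cov_{\mu_t}(x,\ph_j),C_t\,dW_t\rangle=0$.
\end{itemize}
Hence $\E_{\mu_t}\ph=\E_{\mu_{A,h}}\ph$ for all $t$.

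\emph{Termination.} As long as $\operatorname{rank}A_t>m+1$, the allowed subspace has dimension at least $2$, so we can keep decreasing $A_t$. I would run the process until $A_t$ collapses to rank at most $m+1$ within the allowed dynamics. To get literally rank one, as in the statement with $m$ scalar constraints, I expect to need the more refined choice in \cite[Theorem 7.24]{DLSS26}. There, the constraint is imposed only along the remaining direction, and the last step uses a one-dimensional needle argument: an intermediate-value choice of a rank-one $uu^\top\preceq A_\tau$ together with an adjusted field $v$ matching $\E\ph$. I expect this termination step, getting rank one while preserving $\E\ph$ exactly and keeping $uu^\top\preceq A$, to be the main obstacle. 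My first attempt would be the EKZ stopping-time argument (the rank drops at a finite stopping time a.s.), then induction on rank, with the constraint handled by choosing $C_t$ in the orthogonal complement as above. The law of the terminal $(u,v)$ defines $\nu_\ph$, and the mixture identity $\mu_{A,h}=\int\mu_{uu^\top,v}\,d\nu_\ph$ follows from the martingale property of $\mu_t(x)$ and optional stopping, which is legitimate since everything is bounded on the finite cube.

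\emph{Dirichlet form comparison.} The map $F(a,b)=\frac{ab}{a+b}$ on $\R_{>0}^2$ is concave, being half the harmonic mean. Since $(\mu_t(x),\mu_t(y))$ is a bounded vector martingale, Jensen's inequality gives
\[
\E\,F(\mu_\tau(x),\mu_\tau(y))\le F(\mu_0(x),\mu_0(y)),
\]
which is exactly the claimed inequality. Multiplying by $(f(x)-f(y))^2/(2n)$ and summing over adjacent pairs $\{x,y\}$, using the formula for the Glauber Dirichlet form from \pref{s:prelim}, yields \eqref{e:needle-dir-form}.
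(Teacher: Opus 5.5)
Your Dirichlet-form step is the same as the paper's. $ab/(a+b)$ is concave, so the edge conductance of the bounded martingale $(\mu_t(x),\mu_t(y))$ is a supermartingale, and summing over edges gives \eqref{e:needle-dir-form}.

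The gap is in the decomposition, at exactly the termination step you flag, and your proposed way around it cannot work. Replacing the stopped component $\mu_{A_\tau,h_\tau}$ by a different rank-one model $\mu_{uu^\top,v}$, chosen by an intermediate-value argument, breaks the exact identity $\mu_{A,h}=\int\mu_{uu^\top,v}\,d\nu_\varphi$. It also breaks the supermartingale bound, which is a statement about the actual terminal values of the martingale $\mu_t(x)$.

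What you actually have is a dimension count, and yours is off by one. The subspace $\operatorname{range}(A_t)\cap\{\Cov_{\mu_t}(x,\varphi_j)\}_{j\le m}^{\perp}$ has dimension at least $\operatorname{rank}A_t-m$. So the constrained dynamics can run until $\operatorname{rank}A_t\le m$, not $m+1$. For scalar $\varphi$ this is already rank one, with no refined final step. Scalar $\varphi$ is also all the paper uses: in the proof of Theorem~\ref{t:wpi-localized} one only needs $\E_{\mu_{uu^\top,v}}f=\E_{\mu_{A,h}}f$ to kill the between-component variance.

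For general vector-valued $\varphi$ the rank-one conclusion is false, so no refined choice can exist. Take $\varphi(x)=(\mathbf 1_{\{x=z\}})_{z\in\{\pm1\}^n}$. Then the constraint forces $\mu_{uu^\top,v}=\mu_{A,h}$ for $\nu_\varphi$-a.e.\ $(u,v)$. For $n=3$, $h=0$, $A=(1+a)I-a\mathbf 1\mathbf 1^\top$ with $0<a<\tfrac12$, this would need $u_iu_j=-a$ for all $i\neq j$, hence $(u_1u_2u_3)^2=-a^3$, which is impossible. So the statement has to be read with $m=1$ (or with rank-$m$ components), and that is the case you should prove.

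Two further repairs are needed.

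\emph{The drift.} With $dh_t=C_t\,dW_t$, only the unnormalized weights $e^{\frac12\langle x,A_tx\rangle+\langle h_t,x\rangle}$ are martingales. The normalized measure satisfies $d\mu_t(x)=\mu_t(x)\langle C_t(x-m_t),dW_t-C_tm_t\,dt\rangle$. You need $dh_t=C_t\,dW_t+C_t^2m_t\,dt$, or equivalently define the process directly by $d\mu_t(x)=\mu_t(x)\langle x-m_t,C_t\,dW_t\rangle$. Otherwise neither optional stopping nor Jensen applies.

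\emph{PSD versus rank drop.} Imposing $C_t^2\preceq A_t$ rules out a finite-time rank drop. It gives $\frac{d}{dt}\langle w,A_tw\rangle\ge-\langle w,A_tw\rangle$, hence $\ker A_t=\ker A$ for all finite $t$. Instead take $C_t$ to be the orthogonal projection onto $\operatorname{range}(A_t)\cap\Cov_{\mu_t}(x,\varphi)^{\perp}$. Then $A_t$ stays PSD, because $C_t$ annihilates $\ker A_t$ and the positive eigenvalues move continuously. Moreover $\frac{d}{dt}\operatorname{tr}A_t=-\operatorname{rank}C_t\le-1$ while $\operatorname{rank}A_t\ge2$. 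So rank at most one is reached at a stopping time $\tau\le\operatorname{tr}A$, where $A_\tau=uu^\top\preceq A$.

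With these changes your argument is the EKZ localization proof that the paper invokes by citation.
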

\begin{prf}
The decomposition, expectation preservation, and Loewner domination are
\cite[Theorem 7.24]{DLSS26}. The conductance comparison is the supermartingale
property from the EKZ localization proof, stated for the full hypercube in
\cite[Theorem 12]{eldan2022spectral} and \cite[Theorem 29]{anari2022entropic}.
The displayed Dirichlet-form comparison follows by summing the conductance
comparison over hypercube edges.
\end{prf}

\begin{theorem}[Hubbard--Stratonovich transform, \cite{hubbard1959calculation,koehler2022sampling}]
\label{t:hs}
Let $A=XX^\top$ with $X\in \R^{m\times n}$, and let $\Omega$ be any finite subset of $\set{v\in \R^n}{\ve{v}=R}$. 
Then we have the following decomposition of $\mu_A$ into ``product measures'' restricted to $\Om$, where $\lm_{H}$ denotes Lebesgue measure on the hyperplane $H$:
    \begin{align*}
\mu_{A,h}|_{\Omega}(\si) 
&=\int_{\im(X)} p(u) 
\mu_{X^{\top} u + h}|_{\Om}(\si) 
d\lambda_{\im(X)}(u) \\
\text{where }p(u) &\propto \exp\pa{-\fc 12 \ve{u}^2}
\sum_{\si\in \Omega} \exp\pa{\an{
X^\top u+h, \si}
}.
\end{align*}
\end{theorem}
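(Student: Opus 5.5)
The plan is to prove \pref{t:hs} by a direct Gaussian integral computation followed by a renormalization. No earlier result of the paper is needed. Throughout, $\mu_{A,h}|_\Om(\si)\propto e^{\rc2\an{\si,A\si}+\an{h,\si}}$ for $\si\in\Om$, and $\mu_{w}|_\Om(\si)\propto e^{\an{w,\si}}$ for $\si\in\Om$.

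\textbf{Step 1 (Gaussian identity).} Let $r=\dim\im(X)$. Fix $\si\in\Om$ and write $\an{X^\top u,\si}=\an{u,X\si}$. Since $X\si\in\im(X)$, completing the square inside the $r$-dimensional space $\im(X)$ gives
\[
\int_{\im(X)} e^{-\rc2\ve{u}^2+\an{u,X\si}}\,d\lm_{\im(X)}(u)=(2\pi)^{r/2}e^{\rc2\ve{X\si}^2}=(2\pi)^{r/2}e^{\rc2\an{\si,A\si}}.
\]
This uses that the shift $u\mapsto u+X\si$ preserves $\lm_{\im(X)}$ precisely because $X\si\in\im(X)$. It is the only place where integrating over $\im(X)$ rather than all of $\R^m$ matters, and it is the step I would check most carefully. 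Multiplying by $e^{\an{h,\si}}$ then gives, for every $\si\in\Om$,
\[
\mu_{A,h}|_\Om(\si)\propto\int_{\im(X)}e^{-\rc2\ve{u}^2+\an{X^\top u+h,\si}}\,d\lm_{\im(X)}(u),
\]
with a proportionality constant that does not depend on $\si$.

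\textbf{Step 2 (factorization).} Define
\[
Z(u)=\sum_{\si'\in\Om}e^{\an{X^\top u+h,\si'}}.
\]
Inside the integral, factor the integrand as
\[
e^{-\rc2\ve u^2+\an{X^\top u+h,\si}}=\pa{e^{-\rc2\ve u^2}Z(u)}\cdot\fc{e^{\an{X^\top u+h,\si}}}{Z(u)}=\pa{e^{-\rc2\ve u^2}Z(u)}\,\mu_{X^\top u+h}|_\Om(\si).
\]
The first factor is exactly the unnormalized density $p(u)$ from the statement. So the integral representation holds up to one global constant, namely the normalizer of $p$ divided by the normalizer of $\mu_{A,h}|_\Om$.

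\textbf{Step 3 (normalization and integrability).} To identify the global constant, sum both sides over $\si\in\Om$. The left side sums to $1$, and each $\mu_{X^\top u+h}|_\Om$ sums to $1$, so the right side equals $\int p\,d\lm_{\im(X)}$. Hence the constant is $1$ exactly when $p$ is normalized to be a probability density, which is how the statement defines $p$.

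It remains to check that $p$ is integrable, which also justifies exchanging the finite sum over $\Om$ with the integral. Each $\si'\in\Om$ satisfies $\ve{\si'}=R$, so $\an{X^\top u,\si'}\le \ve{X}\sop R\ve u$. Therefore $Z(u)\le|\Om|e^{R(\ve X\sop\ve u+\ve h)}$, which is dominated by the Gaussian factor $e^{-\rc2\ve u^2}$. The hypothesis $\ve{\si'}=R$ is used only for this bound; finiteness of $\Om$ alone would also suffice. I expect no real obstacle here: the only subtle point is the restriction of Lebesgue measure to $\im(X)$ in Step 1.
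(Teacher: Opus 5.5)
Your argument is correct: it is the standard Gaussian-integral-plus-renormalization proof from the cited sources, and the paper itself gives no proof of this statement, only the citation. One point to make explicit is that your identity $\ve{X\si}^2=\an{\si,A\si}$ requires $A=X^\top X$ rather than the literal $A=XX^\top$; the statement is only dimensionally consistent under that reading, which is also the convention in the paper's rank-one application ($A=xx^\top$ with $X=x^\top$), so you should say that you are adopting it.
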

The following is a consequence of \pref{l:wpi-convergence}.
\begin{theorem}[{Weak Poincar\'e inequality from covariance bound under localization, consequence of {\cite[Lemmas 6.8 and A.6]{huang2025weak}}}]
\label{t:sl-wpi}
    Let $\nu$ be a measure on $\{\pm 1\}^n$ 
    and $J$ be a PSD $n\times n$ matrix. 
    Consider stochastic localization with $\mu_0=\nu$ and driving matrix $C_t=J^{1/2}$, so that 
    \[
\dd{\mu_{t}}{\nu}(x) \propto \exp\pa{-\fc{t}2 \an{x,Jx}+\an{v_t,x}}
    \]
    Suppose that the following hold.
    \begin{enumerate}
    \item With probability $\ge 1-\eta_1$, for all $t\in [0,1]$, 
    \[
\opnorm{\Cov(\mu_{t})}\le \al. 
    \]
    \item 
    With probability $\ge 1-\eta_2$, $\mu_1$ satisfies a $(\rp,\de)$-WPI.
    \end{enumerate}
    Then $\mu$ satisfies a $\pa{\rp e^{-\opnorm{J}^{1/2}\al}, e^{\opnorm{J}^{1/2}\al}(\de + \eta_1 + \eta_2)}$-WPI.  
\end{theorem}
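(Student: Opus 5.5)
Theorem~\ref{t:sl-wpi} is an application of \pref{l:wpi-convergence} to the measure decomposition $\nu = \E[\mu_1]$ induced by the localization with driving matrix $J^{1/2}$. Since $\mu_t(A)$ is a martingale for every set $A$, the decomposition $\nu = \E \mu_1$ holds. Hypothesis (2) of \pref{l:wpi-convergence} is then exactly hypothesis (2) here, with $\eta=\eta_2$ and the same $(\rp,\de)$. So the whole task is hypothesis (1): a variance-conservation bound $\E\Var_{\mu_1}(f)\ge C\Var_\nu(f)$ with $C=e^{-\opnorm{J}^{1/2}\al}$. We only get this up to an additive error, which we absorb into the weak term.

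To establish approximate variance conservation, fix $f$ and follow the standard computation (\cite[Lemma 6.8]{huang2025weak}). By Itô's formula applied to $\Var_{\mu_t}(f)=\mu_t(f^2)-\mu_t(f)^2$, and using $d\mu_t(x)=\mu_t(x)\an{x-m_t,J^{1/2}dW_t}$, we get
\[
d\,\E\Var_{\mu_t}(f) = -\E\ve{J^{1/2}\Cov_{\mu_t}(x,f)}^2\,dt .
\]
Cauchy--Schwarz in the direction of $J^{1/2}\Cov(x,f)$ gives
\[
\ve{J^{1/2}\Cov_{\mu_t}(x,f)}^2\le \opnorm{J^{1/2}\Cov(\mu_t)J^{1/2}}\Var_{\mu_t}(f)\le \opnorm{J}\,\opnorm{\Cov(\mu_t)}\Var_{\mu_t}(f).
\]
On the good event of hypothesis (1), this is at most $\opnorm J\al\Var_{\mu_t}(f)$. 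To handle the bad event, we stop the process at $\tau=\inf\{t:\opnorm{\Cov(\mu_t)}>\al\}$; as in \cite{huang2025weak}, it is cleanest to freeze the drift after $\tau$. For the stopped process, Grönwall on the pathwise supermartingale $e^{\opnorm{J}\al t}\Var_{\mu_{t\wedge\tau}}(f)$ gives $\E\Var_{\mu_{1\wedge\tau}}(f)\ge e^{-\kappa}\Var_\nu(f)$, where $\kappa$ is the constant $\opnorm{J}^{1/2}\al$ appearing in the statement. The exact exponent will be fixed by the normalization of $J$ in Lemma~6.8; the paper's $\opnorm{J}^{1/2}$ suggests their convention uses $C_t=J^{1/2}$ with time horizon $1$.

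Next we compare the stopped and unstopped processes. The two agree on $\{\tau>1\}$, an event of probability at least $1-\eta_1$. On the complement we cannot control $\Var_{\mu_1}$, but we do not need to: the WPI for $\mu_1$ is only used on the good event. Concretely, write
\[
\E[\sE_{\mu_1}(f,f)]\ \ge\ \E\bigl[\one_{\tau>1,\,\mathrm{WPI}}\,\rp(\Var_{\mu_1}f-\de\osc(f)^2)\bigr],
\]
and bound $\E[\one_{\mathrm{bad}}\Var_{\mu_{1\wedge\tau}}f]\le(\eta_1+\eta_2)\osc(f)^2$. The left side is at most $\sE_\nu(f,f)$, because the Glauber Dirichlet form is concave in the measure: the edge weights $\frac{\mu(x)\mu(y)}{\mu(x)+\mu(y)}$ form a concave function, and $\mu_t$ is a martingale. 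This concavity is the content of Lemma~A.6.

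Combining, we obtain
\[
e^{-\kappa}\Var_\nu f\le \rp^{-1}\sE_\nu(f,f)+(\de+\eta_1+\eta_2)\osc(f)^2,
\]
which rearranges to the stated $(\rp e^{-\kappa},e^{\kappa}(\de+\eta_1+\eta_2))$-WPI. The main obstacle is the stopping-time bookkeeping: hypothesis (1) holds only with probability $1-\eta_1$, so the Grönwall bound must be run on the stopped process. We then need to check that replacing $\mu_1$ by $\mu_{1\wedge\tau}$ in the Dirichlet-form concavity step is legitimate. It is, since both the stopped measure-valued process and the frozen-drift process remain martingales in $\mu$.
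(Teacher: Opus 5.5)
Your argument is correct and follows the route the paper takes. The paper gives no proof beyond citing \cite[Lemma 6.8]{huang2025weak} for variance conservation of the localization stopped when the covariance bound fails, and \pref{l:wpi-convergence} for passing from the decomposition plus the WPI for $\mu_1$ (via $\E\,\sE_{\mu_1}(f,f)\le\sE_\nu(f,f)$) to a WPI for $\nu$; you have simply unpacked these two steps.

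Two small corrections. First, $e^{\opnorm{J}\al t}\Var_{\mu_{t\wedge\tau}}(f)$ is a \emph{sub}martingale, not a supermartingale; that is the direction that gives $\E\Var_{\mu_{1\wedge\tau}}(f)\ge e^{-\kappa}\Var_\nu(f)$. Second, your computation yields $\kappa=\opnorm{J}\al$, and you should state it as such rather than identify it with $\opnorm{J}^{1/2}\al$. Your exponent implies the displayed one only when $\opnorm{J}\le 1$. This does not matter for the paper's application, where $\opnorm{J}\le 2$ and the bound used is $e^{2C_2(\be)}$.
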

Note that while \cite[Lemma 6.8]{huang2025weak} considers measures on $\sqrt n\cdot \bbS^{n-1}$, the exact statement follows for $\{\pm 1\}^n$ by their Lemma A.6 (\pref{l:wpi-convergence}). 
\begin{lemma}[Covariance bound for rank-1 Ising models on wedges]
\label{l:cov-rank-1}
    Let $A\sim \GOE(n)$ and $\be<\rc2$. There are constants $\ep(\be)$, $c(\be)$, $C(\be)$ such that with probability $\ge 1-e^{-c(\be)n}$, for any $x$ such that $xx^{\top}\preceq \be A + (\be+\rc 2)I$ and any $x_0\in \{\pm 1\}^n$, 
    \[
\Cov(\mu_{xx^\top, v}|_{\ball{\ep(\be)n}{x_0}}) \preceq C(\be) \cdot I_n.
    \]
\end{lemma}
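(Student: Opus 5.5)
Applying the Hubbard--Stratonovich transform (\pref{t:hs}) with the rank-one matrix $X = x^\top$ (so $m=1$) and $\Om = \ball{\ep n}{x_0}\cap\{\pm1\}^n$ decomposes $\mu_{xx^\top,v}|_{\Om}$ into product measures. Write $\mu_{xx^\top,v}|_\Om = \int_\R p(u)\,\pi_{ux+v}|_\Om\,du$, where $\pi_h$ is the product measure with external field $h$. The law of total covariance then gives
\[
\Cov(\mu_{xx^\top,v}|_\Om) = \E_{u\sim p}\Cov(\pi_{ux+v}|_\Om) + \Cov_{u\sim p}\bigl(\E_{\pi_{ux+v}|_\Om}\si\bigr).
\]
The second term is a covariance along a one-parameter curve, so I would bound it crudely by $\Var_p(u)\cdot \sup_u\ve{\pl_u \E_{\pi_{ux+v}|_\Om}\si}^2$ projected appropriately. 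A cleaner route is to use the exponential-family identity $\pl_u \E\si = \Cov(\pi_{ux+v}|_\Om)x$, which gives a bound of the form $\opnorm{\Cov(\pi|_\Om)}^2\ve{x}^2\Var_p(u)$ in operator norm.

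The main work is therefore two estimates.

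\textbf{(a) Restricted product measures.} For any field $h$, I want $\opnorm{\Cov(\pi_h|_{\ball{\ep n}{x_0}})}\le C_0$. Conditioning a product measure on a Hamming ball around $x_0$ is conditioning on the linear statistic $\an{\si,x_0}\ge n-2\ep n$. I would show that this preserves $O(1)$ covariance, either via log-concavity or an ultra-log-concavity argument for the distribution of $\an{\si,x_0}$, or by writing the conditioned measure as a mixture of product measures indexed by a tilt along $x_0$ (a second, one-dimensional HS/Laplace step) and bounding the mixing variance by a direct large-deviation computation. Since $x_0$ is a fixed direction, the extra covariance is rank one in direction $x_0/\sqrt n$ with variance $O(1/n)$ in the tilt, contributing $O(1)$ overall.

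\textbf{(b) Variance of the HS variable.} The density $p(u)\propto e^{-u^2/2}Z_\Om(ux+v)$ is the law of the HS field, and
\[
-\tfrac{d^2}{du^2}\ln p = 1 - x^\top\Cov(\pi_{ux+v}|_\Om)x .
\]
Here I would use the hypothesis $xx^\top\preceq \be A + (\be+\rc2)I$ together with $\opnorm{A}\le 2+o(1)$ (probability $1-e^{-cn}$). This gives $\ve{x}^2\le 3\be+\rc2+o(1)<2$ for $\be<\rc2$, but this is not below $1$, so naive log-concavity fails when combined with the product covariance bound $\le 1$.

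This is the main obstacle. On the ball, the product covariance restricted to the direction $x$ must be shown to be at most $1-\eta$ after normalizing by $\ve{x}^2$. Equivalently, I need $\ve{x}^2\,x^\top\Cov x/\ve{x}^2 \le 1-\eta$, which I would get by choosing $\ep(\be)$ small. The idea is that restricting to a small Hamming ball forces most coordinates to be frozen near $x_0$, so each coordinate variance is small, on the order of the fraction $\ep$ of free coordinates. Hence $\opnorm{\Cov(\pi|_\Om)}=O(\ep\log(1/\ep))$ in the relevant directions, which makes $p$ strongly log-concave with constant $1-O(\ve{x}^2\ep\log(1/\ep))\ge\rc2$. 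Brascamp--Lieb then gives $\Var_p(u)\le 2$.

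Combining (a), (b) and the total-covariance decomposition yields $\Cov\preceq C(\be)I$. A union bound over the GOE event supplies the stated probability $1-e^{-c(\be)n}$, uniformly in $x$, $v$ and $x_0$, since the only randomness used is the norm bound on $A$.
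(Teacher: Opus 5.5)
The skeleton matches the paper's. You use Hubbard--Stratonovich with the rank-one factor $x^\top$ and the law of total covariance. The between term is handled by $\pl_u \E_{\pi_{ux+v}|_\Om}\si=\Cov(\pi_{ux+v}|_\Om)\,x$ times $\Var(p)$, and the within term by an $O(1)$ covariance bound for product measures restricted to a Hamming ball (your (a); the paper cites \cite[Lemma 7.21]{DLSS26}).

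The gap is in (b). The claim that restricting to $\ball{\ep n}{x_0}$ makes each coordinate variance $O(\ep)$, or makes $x^\top\Cov(\pi_h|_\Om)x=O(\ep\log(1/\ep))\ve{x}^2$, is false. The ball only forces $\sum_i \P(\si_i\neq x_{0,i})\le \ep n$, which is a trace bound, not a per-coordinate bound. Take $h_i=Kx_{0,i}$ with $K$ large off a set $S$ with $|S|=\ep n/2$, and $h_i=0$ on $S$. Then $\pi_h|_\Om$ is essentially uniform on $S$, so $\Cov\approx\mathrm{diag}(\1_S)$ and $x^\top\Cov x\approx\ve{x_S}^2$. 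Knowing only $\opnorm{A}\le 2+o(1)$, nothing prevents $\ve{x_S}^2$ from being as large as $3\be+\rc2>1$.

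For the same reason, your closing claim that only the norm bound on $A$ is used cannot be right: under that hypothesis alone the lemma is false. Take $A=2\,\1_S\1_S^\top/|S|$ and $x=\sqrt{(3\be+\rc2)/|S|}\,\1_S$, so that $xx^\top\preceq \be A+(\be+\rc2)I$ holds. Take $v$ freezing the coordinates outside $S$ at $x_0$. Then $\mu_{xx^\top,v}|_\Om$ is a Curie--Weiss model on $S$ at inverse temperature $3\be+\rc2$. For $\be>\rc6$ this is in the low-temperature phase, with variance of order $\ep n$ in the direction $\1_S/\sqrt{|S|}$.

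The missing ingredient is the second high-probability GOE event in the paper's proof. By a union bound over subsets \cite[Lemma 7.23]{DLSS26}, simultaneously for all $|S|\le 4\ep n$,
\[
\ve{x_S}^2\le \opnorm{\pa{\be A+\pa{\be+\rc2}I}_{S\times S}}\le \be+\rc2+\be c_1\sqrt{h(4\ep)}<1
\]
once $\ep=\ep(\be)$ is small. This is exactly where $\be<\rc2$ enters; your argument uses $\be<\rc2$ only to get $\ve{x}^2<2$, which is not the operative condition.

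This event rules out the Curie--Weiss block above. The paper feeds it, together with $\opnorm{\be A+(\be+\rc2)I}\le 2$, into the proof of \cite[Lemma 7.42]{DLSS26} to obtain $\Var(p)\le V(\be)$. With the event available, your log-concavity route might be repairable. One would control the few coordinates with non-negligible flip probability by the submatrix bound (possibly at a somewhat larger scale than $4\ep n$) and the rest by the trace bound. That still requires handling the non-product structure of $\pi_h|_\Om$. As written, step (b) does not go through.
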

This follows from the proof of \cite[Lemma 7.42]{DLSS26} using \Cref{t:hs}.
\begin{prf}
First note that by \cite[Corollary 7.3.2]{vershynin2018high}, 
\begin{align}\label{e:opnorm-A}
\P_{A\sim \GOE(n)}\ba{\opnorm{\be A + \pa{\be + \rc2}I_n}\le 2} \ge 1-e^{-\Om(n)}.
\end{align}
Under this event, $\ve{x}\le \sqrt2$.
Also with probability $1-e^{-\Om(n)}$, by \cite[Lemma 7.23]{DLSS26}, for some constant $c_1$, for $|S|\le 4\ep n$,
\begin{align}
\label{e:opnorm-A-submats}
\opnorm{(\be A+\ga I)_{S\times S}} \le \ga + \be c_1\sqrt{h(4\ep)} <1
\end{align}
where $h(\ep) = -\ep \log \ep - (1-\ep)\log (1-\ep)$, 
when $\ep$ is small enough.
    Suppose these events hold. Let $\Om = B_{\ep(\be)n}(x_0)$. The Hubbard--Stratonovich transform (\pref{t:hs}) gives
    \begin{align*}
        \mu_{xx^\top,v}|_{\Omega}(\si) 
&=\int_{\R} p(u) 
\mu_{ux + v}|_{\Om}(\si) 
du \\
\text{where }p(u) &\propto \exp\pa{-\fc 12 \ve{u}^2}
\sum_{\si\in \Omega} \exp\pa{\an{
xu+h, \si}
}.
    \end{align*}
By the proof of \cite[Lemma 7.42]{DLSS26}, using \eqref{e:opnorm-A} and \eqref{e:opnorm-A-submats}, $\Var(p)$ is bounded by a constant $V(\be)$. Note 
\[\ddd u \E_{\mu_{ux+v}}\si = \Var_{\mu_{ux+v}}\pa{\an{\si,x}}\le 2\ve{x}^2
\]
by \cite[Lemma 7.21]{DLSS26}, so $\E_{\mu_{ux+v}}\si$ is $2\ve{x}^2$-Lipschitz.
Therefore, $\E_{\mu_{ux+v}}\si$ has covariance at most $2\ve{x}^2V(\be)$. 
By covariance decomposition,
\begin{align}
\nonumber
    \Cov_{\mu_{xx^{\sT} ,v}|_{\Om}}(\si) &= 
    \Cov_p(\E_{\mu_{ux + v}|_{\Om}}\si)
    + 
    \E_p \Cov_{\mu_{ux + v}|_{\Om}}(\si)\\
    &\preceq \pa{2\ve{x}^2V(\be) + 2}I_n,
\nonumber
\end{align}
where the second covariance bound again follows by \cite[Lemma 7.21]{DLSS26}.
\end{prf}

\begin{prf}[Proof of \Cref{t:wpi-localized}]
    Consider the needle decomposition (\Cref{t:needle}) applied to $\mu_{\be A + (\be + \rc 2)I, y_T}$, $
\mu_{A,h} = \int_{\R^{2n}} \mu_{uu^{\top} , v}
d\nu_f (u,v)$.
Let $\Om_0 = \ball{\ep(\be) n}{\sign(y_T)}$. 
Given $C>0$, by \pref{t:loc-conc}, we can choose $T(\be)$ such that for $T\ge T(\be)$, with probability $\ge 1-e^{-Cn}$ over $y_T$, we have  $\mu_{\be A,y_T}(\Om_0) \ge 1-e^{-C n}$. 
    By Markov's inequality,
    with probability $\ge 1-e^{-\fc{Cn}2}$ over $(u,v)\sim \nu_f$, 
    \[
\mu_{uu^\top, v}(\Om_0) \ge 1-e^{-\fc{Cn}2}.
    \]
    Let $S$ be the subset of $(u,v)$ for which this holds. Then
    \begin{align} \label{e:var-needle-whp}
\Var_{\mu_{A,h}}(f) \le \int_S \Var_{\mu_{uu^\top, v}}(f)  \,d\nu_f(u,v) + e^{-\fc{Cn}2} \cdot \osc(f)^2.
    \end{align}
    Consider stochastic localization for $\mu_{uu^\top, v}$ with driving matrix $(uu^\top)^{1/2}$. 
    Let $\tau$ be a stopping time for the stochastic localization process defined by 
    \[
\tau = \inf\set{t}{(\mu_{uu^\top, v})_t(\Om_0^c) \ge e^{-\fc{Cn}4}}.
    \]
    By the martingale property of stochastic localization, by the optional stopping theorem,
    \[
e^{-\fc{Cn}2}
\ge
\mu_{uu^\top, v}(\Om_0^c)
= 
\E 
\ba{(\mu_{uu^\top, v})_{1\wedge \tau}(\Om_0^c)}
\ge e^{-\fc{C}4 n} \P[\tau \le 1].
    \]
    Therefore,
    $\P[\tau\le 1]\le e^{-\fc{Cn}{4}}$,
or equivalently,
\[
\P\ba{\forall t\in [0,1],\,
(\mu_{uu^\top, v})_t(\Om_0^c)<e^{-\fc{Cn}{4}}}
=
\P[\tau>1]
\ge
1-e^{-\fc{Cn}{4}}.
\]
    Under this event, we have by decomposition of covariance that
    \begin{align*}
\Cov((\mu_{uu^\top, v})_t) &= 
(\mu_{uu^\top, v})_t(\Om_0) \Cov((\mu_{uu^\top, v})_t|_{\Om_0}) + 
(\mu_{uu^\top, v})_t(\Om_0^c) \Cov((\mu_{uu^\top, v})_t|_{\Om_0^c}) + 
\Cov(\E[\si|\one_{\si\in \Om_0}]) \\
&\preceq  \Cov((\mu_{uu^\top, v})_t|_{\Om_0}) + 2
(\mu_{uu^\top, v})_t(\Om_0^c)\cdot 
n I_n\\
&\preceq  \ba{C + 2e^{-\fc{Cn}{4}} n} I_n
\preceq C_2(\be) I_n
    \end{align*}
by \Cref{l:cov-rank-1}. 
    Now, $(\mu_{uu^\top, v})_1$ is a product distribution and so has Poincar\'e constant $\rc n$ (with probability 1). 
    Noting \eqref{e:opnorm-A}, by \Cref{t:sl-wpi}, for $(u,v)\in S$, $\mu_{uu^\top ,v}$ satisfies a WPI with parameters
    \[
\pa{c_1:=\rc n e^{-2C_2(\be)} ,\de_1:= e^{2C_2(\be)} e^{-\fc{Cn}4}}.
    \]
    Continuing from \eqref{e:var-needle-whp}, we have that
    \begin{align*}
    \Var_{\mu_{A,h}}(f)
    &\le \int_S \ba{\rc{c_1} \sE_{\mu_{uu^\top, v}}(f,f)
    + \de_1 \osc(f)^2}
    \,d\nu_f(u,v) + e^{-\fc{Cn}2} \cdot \osc(f)^2\\
    &\le_{\eqref{e:var-needle-whp}}
    \rc{c_1} \sE_{\mu_{A,h}}(f,f) + \pa{\de_1 + e^{-\fc{Cn}2}} \osc(f)^2.
    \end{align*}
    The theorem follows, after adjusting constants as needed.
\end{prf}

We can now prove a WPI for the SK model.
\begin{prf}[Proof of \pref{c:sk}]
Fix $\be<1/2$ and $\de>0$. We work on an event of probability at
least $1-\de$, after increasing $n$ if necessary, on which the following
hold: $\opnorm A\le 3$; \pref{a:m-error} and
\pref{a:Lipschitz-drift} hold by \cite{DLSS26}; and, by
\pref{t:wpi-localized} with $T_0$ large enough, \pref{a:localized-fi}
holds with
\[
    c_{T_0}\loc=\frac{c(\be)}{n},
    \qquad
    \de(n)+\de'(n)\le e^{-c n},
\]
where all constants may depend on $\be$ and $\de$.

By \pref{t:main}, there are constants $C_1,C_2<\infty$ such that, for
every $\eta\in(0,1)$, $\mu_{\be A}$ satisfies a
\[
    \left(
        \frac1n e^{-C_1/\eta},
        e^{C_2/\eta}e^{-c n}+\eta
    \right)
\]
weak Poincar\'e inequality, where we have absorbed the constant
$c(\be)$ into $C_1$. On the other hand, on the event $\opnorm A\le 3$,
Holley--Stroock gives a Poincar\'e inequality, hence a weak Poincar\'e
inequality, with parameters
\(
    \left(\frac1n e^{-C_3 n},0\right)
\).

Choose a constant $K$ sufficiently large compared to $C_2/c$, and then
choose the hidden constant $C$ in the desired bound sufficiently large
compared to $C_1$ and $C_3K$. We claim that, for all sufficiently large
$n$, this gives the desired WPI for every $\ep\in(0,1)$.

If $\ep\ge K/n$, apply \pref{t:main} with $\eta=\ep/2$. By the choice
of $K$,
\(e^{2C_2/\ep}e^{-c n}\le \ep/2\) for all sufficiently large $n$, so $\mu_{\be A}$ satisfies an \(\left(\frac1n e^{-C/\ep},\ep\right)\)-weak Poincar\'e inequality.

If instead $\ep<K/n$, the Holley--Stroock bound gives the same
conclusion, since the choice of $C$ ensures \(\frac1n e^{-C_3 n}
    \ge
    \frac1n e^{-C/\ep}\).
Thus, on the same event of probability at least $1-\de$, for every
$\ep\in(0,1)$, $\mu_{\be A}$ satisfies a
\(\left(\frac1n e^{-O_{\be,\de}(1/\ep)},\ep\right)\)-weak Poincar\'e inequality.
\end{prf}

\section{Log-Sobolev inequality for the annealed distribution \texorpdfstring{$\rh_t$}{rho}}
\label{s:rho-t-lsi}

\Cref{l:LSI-time-t} shows that the $\hat p_t$ satisfy a log-Sobolev inequality. 
In this section, we further assume that there exists a sequence of distributions $\rh_t$ that approximate the ASL distribution $\hat p_t$, in the sense that their Radon-Nikodym derivative to the ASL distribution can be controlled, and we show that the $\rh_t$ also satisfy a log-Sobolev inequality. 
The relevance is when the $\rh_t$ has an explicit density, this provides an explicit distribution approximating the SL distribution $p_t$ which satisfies a LSI, so that we can sample from it directly. We call $\rh_t$ the annealed distributions. In \pref{s:ws}, we will quantify how close $\rh_t$ is to $p_t$, in the sense of being a warm start.

\begin{assumption}[Lipschitz drift for Jarzynski weights]
\label{a:Lipschitz-je}\label{ass:Lipschitz-je}
    With $y_t$ denoting the ASL tilt\footnote{Previously referred to as $\hat{y}_t$ in \pref{e:ASL}.}, assume that 
    \begin{align}
\savetagequation{e:JE}{JE}{dw_t &= \om(y_t) \, dt}
& y_0&\sim \hat p_0
    \end{align}
    where $\om$ is $L_\om$-Lipschitz, $\hat p_0$ satisfies a log-Sobolev inequality with constant $c_0$, and $\rh_t$ is a distribution such that
    \begin{align}
    \label{e:je-weights}
\dd{\rh_t}{\hat p_t}(x) \propto \E[e^{w_t} | y_t = x].
    \end{align}
\end{assumption}
We will be interested in the following particular case. Suppose that 
\begin{align}
\label{e:rh-t-F}
\rh_t(y)\propto e^{\calF(y) - \fc{\ve{y}^2}{2t}}\end{align}
where $\gd \calF(y) = \hat m(y)$; here $\rh_0$ is a point mass at 0. Applying Jarzynski's equality with $-U_t(y) = \calF(y) - \fc{\ve{y}^2}{2t}$ and $b_t(y) = \rc2 \pa{\hat m(y) + \fc{y}{t}}$ gives \eqref{e:JE} for 
\begin{align}
\label{e:om}
\om(y) = \rc 2 \ba{\gd \cdot \hat m(y) + \ve{\hat m(y)}^2} 
\end{align}
up to a constant not depending on $y$, which does not affect the result.\footnote{Technically, $\rh_t$ approaches a point mass as $t\to 0^+$, so Jarzynski's equality needs to be justified with a limiting argument; see \cite[\S3.2]{DLSS26}.} In particular, for the SK model, we take $\calF(y) = \FT(\hat\mg(y), y)$; see \cite[\S3.2]{DLSS26} for the calculations.

We will be able to use a LSI on the ``tilted'' path-space measure $\nu$, and then transport it along these rough paths to $\rh_T$ (which can be seen as a Lipschitz push-forward of $\nu$ in the path space).

First, we show that the $w_t$ appearing in the change-of-measure is Lipschitz in the Cameron-Martin norm in path space. Simple Gr\"onwall estimates with the It\^o integral yield the requisite Cameron-Martin Lipschitz constants. The result then follows from a path-space perturbation theorem (\pref{t:lsi-CM-pert}), which says that multiplicative perturbations to the Wiener measure that are log-Lipschitz in the Cameron-Martin sense cause the reweighed measure to also satisfy a LSI. The proof for finite-dimensional Gaussian space (\pref{lem:lipschitz-lsi-measure-space}) is elementary, but passage to the infinite-dimensional setting requires much more analytic control of the corresponding cylindrical approximations and their limiting behavior.

We will prove the following main theorem in \pref{s:rho-t-lsi-proof} after establishing a suite of analytical tools for regularity and LSIs in Wiener space.

\begin{theorem}[LSI for annealed distribution]
\label{t:rho-ls}
Assume \pref{a:Lipschitz-drift} and \pref{a:Lipschitz-je} in the special case that $\hat{p}_0 = \delta_0$.
Then $\rh_T$ satisfies a log-Sobolev inequality with constant $c_T^\rh$ depending only on the constants in the assumptions.
\end{theorem}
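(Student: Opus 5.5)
We realize $\rh_T$ as a Lipschitz push-forward of a tilted Wiener measure and transport a path-space log-Sobolev inequality to $\R^n$.

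\textbf{Path-space representation.} Since $\hat p_0=\de_0$, the ASL path $(y_t)_{t\le T}$ is the strong solution of $dy_t=\hat m(y_t)\,dt+dB_t$, $y_0=0$. It is therefore a measurable It\^o map $Y:E\to C([0,T],\R^n)$ of the driving Brownian path $B\sim\ga$. The Jarzynski weight $w_T=\int_0^T\om(y_t)\,dt$ is then a functional $W:=w_T\circ Y$ on $E$. By \eqref{e:je-weights}, for every bounded $h$,
\[
\int h\,d\rh_T=\fc{\E_\ga[e^{W}h(Y_T)]}{\E_\ga[e^{W}]}.
\]
Equivalently, $\rh_T=(\Psi)_\#\nu$, where $\Psi(B):=Y_T(B)$ is the endpoint map and $d\nu=Z^{-1}e^{W}\,d\ga$.

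\textbf{CM-Lipschitz estimates.} Shifting $B$ by $h\in\calH$ gives two paths $y,y^h$ satisfying
\[
\fc{d}{dt}(y^h_t-y_t)=\hat m(y^h_t)-\hat m(y_t)+\dot h_t .
\]
By \pref{a:Lipschitz-drift} and Gr\"onwall,
\[
\sup_{t\le T}\ve{y^h_t-y_t}\le e^{LT}\int_0^T\ve{\dot h_t}\,dt\le e^{LT}\sqrt T\,\ve{h}\CM .
\]
Hence $\Psi$ is $K_\Psi:=e^{LT}\sqrt T$-CM-Lipschitz. Because $\om$ is $L_\om$-Lipschitz,
\[
|W(B+h)-W(B)|\le L_\om T\sup_t\ve{y^h_t-y_t},
\]
so $W$ is $K_W:=L_\om T^{3/2}e^{LT}$-CM-Lipschitz. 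Since the ODE bound is pathwise, these estimates hold for $\ga$-a.e.\ $B$ and all $h$.

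\textbf{Perturbation LSI on path space.} Next we show that $\nu=Z^{-1}e^{W}\ga$ satisfies a LSI in the Cameron--Martin Dirichlet form, with constant depending only on $K_W$. This is \pref{t:lsi-CM-pert}, and proving it is the main obstacle.

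In finite dimensions, \pref{lem:lipschitz-lsi-measure-space} covers the analogous statement for a Gaussian tilted by $e^{V}$ with $V$ $K$-Lipschitz. One route is to write $e^{V}$ as a bounded perturbation plus a Lipschitz transport. A cleaner alternative is the Aida--Shigekawa-type estimate: by Holley--Stroock after truncation combined with Lipschitz concentration, one obtains a LSI constant of order $e^{-O(K^2)}$.

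To pass to infinite dimensions, condition on $\calF_k$ and approximate $W$ by $W_k=\E_\ga[W\mid\calF_k]$. Each $W_k$ is $K_W$-Lipschitz on $H_k\cong\R^k$, and $W_k\to W$ in $L^p(\ga)$ by martingale convergence. Two facts then control the limit. First, \pref{lem:infinite-dim-gaussian-ui} gives uniform integrability of $e^{pW_k}$. Second, closability, which is \pref{l:closability-cm-tilts}, ensures that the Dirichlet forms converge. Because the constant is dimension-free, the LSI for $\nu_k=Z_k^{-1}e^{W_k}\ga$, tested on cylindrical $F$, passes to the limit. The Gross LSI (\pref{t:gross-lsi}) for the tail measure $\ga_{>k}$ handles the directions on which $W_k$ does not depend.

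\textbf{Transport to $\R^n$.} Take $f\in C_b^1(\R^n)$ and apply the path-space LSI to $F=f\circ\Psi$. By \pref{l:cm-lip-malliavin} and the chain rule (\pref{l:malliavin-chain-rule}),
\[
\ve{D_HF}\CM\le K_\Psi\ve{\gd f(\Psi)}.
\]
Therefore
\[
\Ent_{\rh_T}(f^2)=\Ent_\nu(F^2)\le \fc{2}{c_\nu}\int\ve{D_HF}\CM^2\,d\nu\le\fc{2K_\Psi^2}{c_\nu}\int\ve{\gd f}^2\,d\rh_T .
\]
Density of $C_b^1$ in $\calW^{1,2}(\rh_T)$ extends this to all admissible $f$. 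This gives the LSI with
\[
c_T^\rh=\fc{c_\nu}{K_\Psi^2},
\]
which depends only on $L$, $L_\om$ and $T$ (this is \pref{prop:lsi-rhT-bounded}).
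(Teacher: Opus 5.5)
Your proposal is correct and follows the paper's route: the Jarzynski representation $\rh_T=\Psi_\#\nu$ with $d\nu\propto e^{W}d\ga$, the Gr\"onwall CM-Lipschitz bounds of \pref{lem:cm-lipscitz-one} and \pref{lem:cm-lipschitz-two}, the path-space perturbation LSI \pref{t:lsi-CM-pert} via cylindrical approximation and Gross tensorization, and the push-forward step of \pref{prop:lsi-rhT-bounded}. You compress one step. \pref{l:cm-lip-malliavin} only places $\Psi$ in $\calD^{1,p}(\ga)$, and it also needs $\Psi\in L^p(\ga)$; before using the chain rule under $\nu$ you must move this to $\calD^{1,2}(\nu)$ via H\"older and the exponential integrability of $e^{W}$, which is \pref{l:cm-lip-map-tilted}. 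Also, in the limit the Dirichlet forms converge because the densities $e^{W_k}/Z_k$ converge in $L^1(\ga)$, not because of closability.
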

We obtain a log-Sobolev inequality of the annealing distribution for SK as an immediate corollary.
\begin{corollary}[LSI for annealed distribution for SK]
\label{c:lsi-anneal-sk}
    For the SK model for $\be<\rc2$, with probability $1-\de$, $\rh_T(y)\propto e^{\FT(\hat\mg(y), y) - \fc{\ve{y}^2}{2T}}$ satisfies a log-Sobolev inequality with constant depending only on $T$ and $\de$.
\end{corollary}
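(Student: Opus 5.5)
This is a corollary of \pref{t:rho-ls}. The plan is to verify its hypotheses for the SK model with explicit constants, on an event of probability $1-\de$ over $A\sim\GOE(n)$. The hypotheses are \pref{a:Lipschitz-drift} and \pref{a:Lipschitz-je} with $\hat p_0=\de_0$.

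\emph{Step 1: Lipschitz drift.} Take $\hat m(y)$ to be the TAP fixed point from \pref{alg:Langevin}. By \cite{DLSS26}, with probability $\ge 1-\de/2$ the map $\hat m$ is $L$-Lipschitz with $L=L(\be,\de)$; this is \pref{a:Lipschitz-drift}. The TAP free energy satisfies the envelope identity $\gd_y \FT(\hat m(y),y)=\hat m(y)$, because $\hat m(y)$ is a stationary point of $\FT(\cdot,y)$. So $\calF(y):=\FT(\hat m(y),y)$ is exactly of the form \eqref{e:rh-t-F} with $\gd\calF=\hat m$, and
\[
\rh_T(y)\propto e^{\calF(y)-\ve{y}^2/(2T)}.
\]

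\emph{Step 2: Jarzynski structure.} Applying \pref{t:je} with $-U_t=\calF-\ve{y}^2/(2t)$ and $b_t=\rc2(\hat m+y/t)$ gives \eqref{e:JE} and \eqref{e:je-weights}. The resulting weight rate is $\om(y)=\rc2[\gd\cdot\hat m(y)+\ve{\hat m(y)}^2]$ as in \eqref{e:om}, with $\hat p_0=\de_0$; the limit $t\to0^+$ is handled as in \cite[\S3.2]{DLSS26}. Two facts remain to be checked.
\begin{itemize}
\item $\ve{\hat m}^2$ is Lipschitz. Since $\hat m\in[-1,1]^n$, we have $\ve{\hat m}\le\sqrt n$, which gives only a $\sqrt n$-dependent constant. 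One may need to normalize $\om$ by the scale at which \cite{DLSS26} state it; the constants there are dimension-free in the appropriate normalization.
\item The divergence $\gd\cdot\hat m=\operatorname{tr} D\hat m$ is Lipschitz in $y$.
\end{itemize}

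\emph{Main obstacle.} The hard step is the Lipschitz bound on $\gd\cdot\hat m$, i.e.\ $L_\om$-Lipschitzness of $\om$. By implicit differentiation, $D\hat m=(\operatorname{diag}(1-\hat m^2)^{-1}-\be A+\dots)^{-1}$, so $\operatorname{tr}D\hat m$ is a trace of a resolvent-type matrix. Its variation in $y$ is controlled by the Lipschitz estimate on the diagonal of squared resolvents of deformed Wigner matrices proved in \cite{DLSS26}. I would cite that estimate directly, as the paper indicates \pref{a:Lipschitz-je} ``holds for the SK model''. This takes a further probability $1-\de/2$ event and yields $L_\om=L_\om(\be,\de)$.

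\emph{Conclusion.} On the intersection of the two events, \pref{t:rho-ls} gives a log-Sobolev inequality for $\rh_T$. Its constant $c^\rh_T$ depends only on $T$, $L$, and $L_\om$, hence only on $T$ and $\de$ for fixed $\be$.
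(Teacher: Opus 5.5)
Your overall route is the paper's: verify \pref{a:Lipschitz-drift} and \pref{a:Lipschitz-je} (with $\hat p_0=\delta_0$) from \cite{DLSS26}, then apply Theorem~\ref{t:rho-ls}. But you handle incorrectly the one step that carries the content of the corollary, namely that $L_\omega$ is dimension-free.

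You split $\omega=\frac12(\nabla\cdot\hat m+\|\hat m\|^2)$ and try to make each summand Lipschitz separately. Neither summand is. The gradient $\nabla\|\hat m\|^2=2(D\hat m)^{\top}\hat m$ generically has norm of order $\sqrt n$, because $\hat m$ has order-one entries at constant time $T$. Since $\omega$ itself is $O(1)$-Lipschitz, $\operatorname{tr}D\hat m$ must then have a gradient of the same order $\sqrt n$ with the opposite sign. So your second bullet (a dimension-free Lipschitz bound on the divergence alone) is false.

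The ``normalize $\omega$'' fix in your first bullet is also unavailable. $w_T=\int_0^T\omega(y_t)\,dt$ is the log-weight that defines $\rho_T$ through \eqref{e:je-weights}, so rescaling $\omega$ changes the measure; only $y$-independent constants can be dropped. The constant in Theorem~\ref{t:rho-ls} degrades like $e^{-CK^2}$ with $K=L_\omega T^{3/2}e^{LT}$. A Lipschitz constant of order $\sqrt n$ would therefore give only an $e^{-\Theta(n)}$ log-Sobolev constant, not one depending only on $T$ and $\delta$.

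The missing idea is the cancellation between the two terms. For the exact magnetization on $\{\pm1\}^n$, $\nabla\cdot m(y)=\operatorname{tr}\Cov(\mu_y)=n-\|m(y)\|^2$, so $\omega\equiv n/2$ and the Jarzynski weights are trivial. For the TAP fixed point, $\omega-\frac n2=\frac12\bigl(\operatorname{tr}D\hat m-\sum_i(1-\hat m_i^2)\bigr)$ measures the defect in this identity. The squared-resolvent diagonal estimates of \cite{DLSS26} control the gradient of this combination at order one, not the gradient of $\operatorname{tr}D\hat m$ alone.

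The paper does not reprove this. It cites the proof of \cite[Corollary 6.14(d5)]{DLSS26}, which shows that $\omega$ as a whole is $O_{T,\delta}(1)$-Lipschitz on the same high-probability event where $\hat m$ is Lipschitz. Replace your two bullets with that single statement about $\omega$, and your argument closes and coincides with the paper's.
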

\begin{prf}
    For the SK model for $\be<\rc2$, with probability $1-\de$, \pref{a:Lipschitz-drift} holds, and \pref{a:Lipschitz-je} holds in the special case that $\hat{p}_0 = \delta_0$ by the proof of Corollary 6.14 in \cite{DLSS26}.
    More precisely, the proof of Corollary 6.14(d5) in \cite{DLSS26}
shows that, on the same high-probability event, the Jarzynski drift
\[
\omega(y)=\frac12\left(\nabla\cdot \hat m(y)+\|\hat m(y)\|^2\right)
\]
is \(O_{T,\delta}(1)\)-Lipschitz in \(y\). Thus the constant \(L_\omega\)
in \pref{a:Lipschitz-je} is dimension-free, and the constant in
\pref{t:rho-ls} depends only on \(T\) and \(\delta\).
\end{prf}

\subsection{Two Cameron-Martin Lipschitz estimates} We prove two key Lipschitz estimates. The first estimate shows that the Cameron--Martin norm for the ASL It\^o map is constant at finite constant time $T(\beta)$. 

\begin{lemma}[ASL It\^o map is Cameron-Martin Lipschitz]\label{lem:cm-lipscitz-one}
Fix $T>0$. 
Assume that $\{\hat{y}_t\}_{0 \le t \le T}$ is the strong solution to the ASL SDE $d\hat{y}_t = \hat{m}(\hat{y}_t)dt + dB_t$ with $\hat{y}_0 = 0$. Then, under~\pref{a:Lipschitz-drift}, for $0\le t\le T$, the map $\Phi: E \to E$ given as $\Phi_t(B) =\hat{y}_t$ satisfies the following Lipschitz bound in the Cameron-Martin norm,
    \[
        \lipnorm{\Phi_t}\le \sqrt{t}e^{Lt}\, . 
    \]
    Considering the dependence on the initial condition for $\hat y_0$, 
     with respect to the norm on $\R^n\times E$ 
     \begin{align}
     \label{e:norm-y-h}
     \ve{(y,h)} = \sqrt{c\ve{y}^2 + \ve{h}\CM^2},\end{align} 
    $\Phi_t: \R^n \times E \to E$ satisfies the following Lipschitz bound,
    \[
\lipnorm{\Phi_t} \le \sqrt{t+c^{-1}}\cdot e^{Lt}\, .
    \]
\end{lemma}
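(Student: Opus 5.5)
The plan is a pathwise Gr\"onwall comparison between the ASL solution driven by $B$ and the one driven by the shifted path $B+h$, with $h\in\calH$. Since the drift is Lipschitz (\pref{a:Lipschitz-drift}), the ASL SDE has a unique strong solution that is a pathwise functional of $B$. For $\ga$-a.e.\ $B$ and every $h\in\calH$, the process $\tilde y_t:=\Phi_t(B+h)$ solves the integral equation
\[
\tilde y_t = \int_0^t \hat m(\tilde y_s)\,ds + B_t + h_t .
\]
Because $h$ is absolutely continuous, there is no stochastic integral against $h$. This identity holds for all $h$ simultaneously off a single null set, since additive noise makes the solution map a deterministic continuous functional of the driving path (the ``Doss--Sussmann'' observation for additive noise). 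So the shift $B\mapsto B+h$ just adds $h$ to the input of a deterministic ODE-type fixed-point equation.

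Next, set $\Delta_t := \Phi_t(B+h)-\Phi_t(B)$. Then
\[
\Delta_t = \int_0^t \bigl(\hat m(\tilde y_s)-\hat m(\hat y_s)\bigr)\,ds + h_t,
\]
so $\norm{\Delta_t}\le L\int_0^t \norm{\Delta_s}\,ds + \norm{h_t}$. By Cauchy--Schwarz, $\norm{h_s}\le \sqrt{s}\,\norm{h}\CM \le \sqrt t\,\norm h\CM$ for $s\le t$, so $u(t):=\sup_{s\le t}\norm{h_s}$ is nondecreasing and bounded by $\sqrt t\norm h\CM$. Gr\"onwall's inequality with this nondecreasing forcing gives $\norm{\Delta_t}\le u(t)e^{Lt}\le \sqrt t e^{Lt}\norm h\CM$. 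This is the first claim. Taking the sup over $s\le t$ also gives the same bound in the $E$-norm, since the bound $\sqrt s e^{Ls}$ is increasing in $s$.

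For the initial-condition version, perturb both $y$ and $h$. The difference now satisfies
\[
\norm{\Delta_t}\le L\int_0^t\norm{\Delta_s}\,ds + \norm{y-y'} + \sqrt t\,\norm h\CM .
\]
By Cauchy--Schwarz in $\R^2$,
\[
\norm{y-y'}+\sqrt t\norm h\CM = c^{-1/2}\cdot\sqrt c\norm{y-y'} + \sqrt t\cdot\norm h\CM \le \sqrt{c^{-1}+t}\,\sqrt{c\norm{y-y'}^2+\norm h\CM^2},
\]
and the right-hand side is $\sqrt{t+c^{-1}}$ times the norm \eqref{e:norm-y-h}. Gr\"onwall then yields the factor $\sqrt{t+c^{-1}}\,e^{Lt}$.

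The estimates are elementary; I expect the main obstacle to be the measure-theoretic justification in the first step. One must show that $\Phi(B+h)$ really is the solution driven by $B+h$ for all $h\in\calH$ outside one $\ga$-null set, so that \pref{d:cm-lipschitz} applies. I would handle this by noting that, for Lipschitz drift and additive noise, the fixed-point map $B\mapsto y$ is well-defined and continuous on all of $C_0([0,T],\R^n)$ via Picard iteration. Hence $\Phi$ can be taken to be this everywhere-defined continuous version, and the identity holds for every $B$ and every $h$.
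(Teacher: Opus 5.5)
Your proposal is correct and follows the paper's proof: a pathwise Gr\"onwall bound on $\Phi_t(B+h)-\Phi_t(B)$ with forcing $\sup_{s\le t}\norm{h_s}\le\sqrt t\,\norm{h}\CM$, repeated with the extra $\norm{y-y'}$ term for the initial-condition version. You also spell out details the paper leaves implicit: that the solution map can be taken as an everywhere-defined continuous functional of the driving path, so the shift identity holds off a single null set, and the Cauchy--Schwarz step yielding $\sqrt{t+c^{-1}}$.
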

\begin{prf}
    Note by the definition of the It\^o integral that
    \[
        \hat{y}_t = B_t + \int_0^t \hat{m}(\hat{y}_s)\,ds\,.
    \]
    For any path $h \in \calH$, let
    \[
        \delta_h\Phi_t(B) := \Phi_t(B + h)\, ,
    \]
    and then observe that
    \allowdisplaybreaks
    \begin{align*}
        \norm{\delta_h\Phi_t(B) - \Phi_t(B)}_2 &= \norm{h_t + \int_0^t \left(\hat{m}(\delta_h
        \Phi_s(B))-\hat{m}(
        \Phi_s(B))\right)\,ds}_2 \\
        &\le \norm{h_t}_2 + \int_0^t \norm{\left(\hat{m}(\delta_h
        \Phi_s(B))-\hat{m}(
        \Phi_s(B))\right)}_2\,ds \\
        &\le_{\text{\pref{a:Lipschitz-drift}}} \norm{h_t}_2 + L\int_0^t \norm{\delta_h
        \Phi_s(B)- 
        \Phi_s(B)}_2\,ds \\
        &\le_{\text{Gr\"onwall}} e^{Lt}\sup_{0 \le s \le t}\norm{h_s}_2\,.
    \end{align*}
    Note that 
    \[
        \norm{h_s}_2 = \norm{\int_0^s \ddd u h_u \,du}_2 \le \sqrt{s} \sqrt{\int_0^s\norm{\ddd u h_u}_2^2du} =  \sqrt{s}\norm{h}_{\text{CM}}\,,
    \]
    and so 
    \[
         \norm{\Phi_{t}(B+h) - \Phi_{t}(B)}_2 \le \sqrt{t}e^{Lt}\norm{h}\CM\,.
    \]
    Including the initial condition, we have
    \[
\norm{\delta_h\Phi_t(B, y_0') - \Phi_t(B,y_0)}_2
\le e^{Lt} \pa{\norm{y_0'-y_0} + \sqrt{t}\norm{h}\CM }.
    \]
    Hence 
    \[
\fc{\norm{\delta_h\Phi_t(B, y_0') - \Phi_t(B,y_0)}_2}{\sqrt{c\norm{y_0'-y_0}^2 + \norm{h}\CM^2}} \le \sqrt{t+c^{-1}} \cdot e^{Lt}.\qedhere
    \]
\end{prf}

Having established the estimate above, we show the second key estimate which implies that the (as functions of $\hat{y}_t$) Lipschitz Jarzynski weights $w_t$ are also Lipschitz in the Cameron-Martin space when viewed as push-forwards of $\Phi$.

\begin{lemma}[Jarzynski functional is Cameron-Martin Lipschitz]\label{lem:cm-lipschitz-two}
    Assume \pref{a:Lipschitz-drift}.
    Let $W: E \to \R$ be defined as
    \[
        W(B) := \int_0^T \omega(\hat{y}_t)\,dt = \int_0^T \omega(\Phi_t(B))\,dt\,,
    \]
    and assume $\om:\R^n\to \R$ is $L_\om$-Lipschitz. 
    Then $W$ satisfies the following Lipschitz bound in the Cameron-Martin norm:
    \[
        \lipnorm{W} \le L_\omega T^{3/2}e^{LT}\,.
    \]
    Including the initial conditions, letting $W:\R^n\times E\to \R$, $W$ satisfies the following Lipschitz bound with respect to the norm \eqref{e:norm-y-h}, 
    \[
        \lipnorm{W} \le L_\omega T \sqrt{T+c^{-1}} \cdot e^{LT}\,.
    \]
\end{lemma}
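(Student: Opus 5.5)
The plan is to reduce everything to the pointwise-in-time estimate of \pref{lem:cm-lipscitz-one} and integrate it in time. Fix $B\in E$ (outside a $\ga$-null set on which the strong solution is defined) and $h\in\calH$. By definition of $W$ and the $L_\om$-Lipschitz property of $\om$,
\[
|W(B+h)-W(B)| \le \int_0^T |\om(\Phi_t(B+h))-\om(\Phi_t(B))|\,dt \le L_\om\int_0^T \norm{\Phi_t(B+h)-\Phi_t(B)}_2\,dt .
\]
Next we invoke \pref{lem:cm-lipscitz-one} for each $t\in[0,T]$, which gives $\norm{\Phi_t(B+h)-\Phi_t(B)}_2\le \sqrt t\, e^{Lt}\norm h\CM$. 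Using the crude bounds $\sqrt t\le\sqrt T$ and $e^{Lt}\le e^{LT}$ and integrating over $[0,T]$ yields $|W(B+h)-W(B)|\le L_\om T^{3/2}e^{LT}\norm h\CM$, i.e.\ $\lipnorm W\le L_\om T^{3/2}e^{LT}$. (One could get the sharper $\tfrac23$ factor from $\int_0^T\sqrt t\,dt$, but it is not needed.)

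For the version including initial conditions, the argument is the same. We use the second bound of \pref{lem:cm-lipscitz-one},
\[
\norm{\Phi_t(B+h,y_0')-\Phi_t(B,y_0)}_2\le \sqrt{t+c^{-1}}\,e^{Lt}\,\norm{(y_0'-y_0,h)},
\]
with the norm \eqref{e:norm-y-h}, and bound $\sqrt{t+c^{-1}}e^{Lt}\le\sqrt{T+c^{-1}}e^{LT}$ uniformly on $[0,T]$. Integrating in $t$ then gives the factor $T$, for a total of $L_\om T\sqrt{T+c^{-1}}e^{LT}$.

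The only point requiring care, which I expect to be the (mild) main obstacle, is measurability and the ``$\ga$-a.e.'' quantifier in \pref{d:cm-lipschitz}. We need the null set off which the strong solution $\Phi_t(B)$ exists to be invariant under CM shifts, and we need $t\mapsto\om(\Phi_t(B))$ to be integrable. Since \pref{a:Lipschitz-drift} makes $\hat m$ globally Lipschitz, the ODE $\hat y_t=B_t+\int_0^t\hat m(\hat y_s)\,ds$ can be solved pathwise for \emph{every} continuous $B$. Thus $\Phi_t$ is defined everywhere on $E$, continuous in $t$, and the shifted path $B+h$ is again in $E$, so no null-set issue arises. The integrand is then continuous on $[0,T]$, so the integral defining $W$ is finite and the triangle-inequality step above is justified.
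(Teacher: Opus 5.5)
Your proposal is correct and follows the paper's proof: bound $|W(B+h)-W(B)|$ by $L_\om\int_0^T\norm{\Phi_t(B+h)-\Phi_t(B)}_2\,dt$, apply \pref{lem:cm-lipscitz-one} pointwise in $t$ with the crude bounds $\sqrt{t}\le\sqrt{T}$ and $e^{Lt}\le e^{LT}$, and integrate, handling the initial-condition version the same way. Your extra remark is also correct: since $\hat m$ is globally Lipschitz, the integral equation can be solved pathwise for every continuous $B$, so $\Phi_t$ is defined everywhere and no null-set issue arises; the paper leaves this point implicit.
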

\begin{prf}
    For any $h \in \calH$, we again have that
    \allowdisplaybreaks
    \begin{align*}
        \left|W(B+h) - W(B)\right|&= \left|\int_0^T\left(\omega(\Phi_t(B+h)) - \omega(\Phi_t(B))\right)\,dt\right| \\
        &\le \int_0^T \left|\omega(\Phi_t(B+h)) - \omega(\Phi_t(B))\right|\,dt \\
        &\le_{\om \text{ Lipschitz}}
        L_\omega \int_0^T \norm{\Phi_t(B+h) - \Phi_t(B)}_2\,dt \\
        &\le_{\text{\pref{lem:cm-lipscitz-one}}} L_\omega \sqrt{T} e^{LT} \norm{h}_{\text{CM}}\int_0^T \,dt \le L_\omega T^{3/2}e^{LT} \norm{h}_{\text{CM}}\,.
    \end{align*}
    The result including the initial conditions follows similarly from the corresponding part of \pref{lem:cm-lipscitz-one}.
\end{prf}

\subsection{Path-space perturbation via cylindrical approximation}  We will use the following elementary fact about log-Lipschitz perturbations to the multivariate Gaussian measure $\gamma \sim \calN(0,\Id_n)$ preserving the LSI constant (up to Lipschitz-dependent factors).

\begin{lemma}[Finite-dimensional log-Lipschitz perturbation to Gaussians]\label{lem:lipschitz-lsi-measure-space}
    Let $\gamma \sim \calN(0,\Id_n)$ and $V: \R^n \to \R$ be $K$-Lipschitz in $\norm{\cdot}_2$-norm for some $K > 0$. Then, for $d\nu := \frac{e^V}{Z}d\gamma$, the probability measure $\nu$ satisfies a LSI with constant $c_\nu \ge \fc{e^{-K^2}}2$.
\end{lemma}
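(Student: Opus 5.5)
The plan is a dimension-free combination of Bakry--\'Emery and Holley--Stroock: replace $V$ by a regularization that is semiconcave, so that the tilted Gaussian stays strongly log-concave, and then pay for the bounded difference between the two potentials. Smoothing $V$ by a Gaussian convolution would not work here. It gives a Hessian bound, but the sup-norm error is of order $K\sqrt{s}\,\E\norm{\xi}\sim K\sqrt{sn}$, which depends on the dimension. Instead I will use the inf-convolution (Moreau envelope)
\[
V_\lm(x) := \inf_{y\in\R^n}\pa{V(y) + \fc{\norm{x-y}^2}{2\lm}}, \qquad \lm>0 .
\]

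\emph{Step 1: properties of $V_\lm$.} Taking $y=x$ gives $V_\lm\le V$. Since $V$ is $K$-Lipschitz, $V(y)+\norm{x-y}^2/(2\lm)\ge V(x) - K\norm{x-y} + \norm{x-y}^2/(2\lm)\ge V(x)-\lm K^2/2$, so
\[
V-\fc{\lm K^2}{2}\le V_\lm \le V .
\]
For semiconcavity, write $V_\lm(x) - \fc{\norm x^2}{2\lm} = \inf_y\pa{V(y)+\fc{\norm y^2}{2\lm} - \fc{\an{x,y}}{\lm}}$. This is an infimum of affine functions of $x$, hence concave. So $\gd^2 V_\lm\preceq \lm^{-1}\Id_n$ in the distributional/Alexandrov sense, and $V_\lm$ is finite and Lipschitz.

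\emph{Step 2: LSI for the regularized tilt.} Choose $\lm=2$. Then $d\tilde\nu\propto e^{V_2}d\ga$ has potential $U=\fc{\norm x^2}2 - V_2$ with $\gd^2U\succeq \pa{1-\fc12}\Id_n=\fc12\Id_n$. By Bakry--\'Emery, $\tilde\nu$ satisfies a LSI with constant $\fc12$ in the paper's normalization, where $\calN(0,\Id)$ has constant $1$. Since $U$ is only $C^{1,1}$-semiconvex rather than $C^2$, I would justify this either by citing Bakry--\'Emery for convex potentials in weak form, or by mollifying $U$ at a tiny scale. Mollification preserves the lower Hessian bound and perturbs $U$ uniformly by an amount that vanishes as the scale shrinks, because $V_2$ is Lipschitz and we mollify locally at a fixed point. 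One then passes to the limit in the LSI by dominated convergence.

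\emph{Step 3: Holley--Stroock.} We have $d\nu/d\tilde\nu\propto e^{V-V_2}$ with $0\le V-V_2\le K^2$, so the oscillation is at most $K^2$. Holley--Stroock multiplies the LSI constant by at least $e^{-\osc(V-V_2)}\ge e^{-K^2}$, giving $c_\nu\ge \fc{e^{-K^2}}2$.

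The main point of the argument is Step 1. The Moreau envelope keeps the sup-norm error at $\lm K^2/2$, with no dependence on $n$, while enforcing the one-sided Hessian bound that Bakry--\'Emery needs. The choice $\lm=2$ is what balances the constants to produce exactly $e^{-K^2}/2$. The only technical nuisance is the regularity caveat in Step 2, which is routine.
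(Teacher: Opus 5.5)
Your proof is correct and gives exactly the paper's constant $\frac12 e^{-K^2}$. The architecture matches the paper's. Split the Gaussian potential as $\frac14\|x\|^2+\frac14\|x\|^2$, replace $\frac14\|x\|^2-V$ by a convex function at sup-distance at most $K^2$, apply Bakry--\'Emery to the resulting $\frac12$-strongly log-concave measure, and pay $e^{-K^2}$ through Holley--Stroock.

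The genuine difference is the convexification device.
\begin{itemize}
\item The paper takes the convex envelope $h$ of $\frac{a_1}{2}\|x\|^2-V$, which is a convex \emph{minorant}. It bounds the gap by $K^2/(2a_1)$ using the convex-combination formula for $h$.
\item You take the Moreau envelope. Your identity shows that $\frac{\|x\|^2}{2\lambda}-V_\lambda(x)=\sup_y\big(\frac{\langle x,y\rangle}{\lambda}-V(y)-\frac{\|y\|^2}{2\lambda}\big)$ is, up to rescaling, the Legendre transform of $V+\frac{\|\cdot\|^2}{2\lambda}$. It is therefore a convex \emph{majorant} of $\frac{\|x\|^2}{2\lambda}-V$, with gap at most $\lambda K^2/2$.
\end{itemize}
At $\lambda=1/a_1=2$ the two gap bounds coincide. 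Both rest on the same elementary inequality $\frac{a}{2}r^2-Kr\ge -\frac{K^2}{2a}$.

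Your route is more explicit. Semiconcavity of $V_\lambda$ is a one-line observation, and $V_\lambda$ remains $K$-Lipschitz. That makes the approximation step needed to apply Bakry--\'Emery to a non-$C^2$ potential transparent. The paper needs this step too, since its convex envelope is generally not $C^2$, but leaves it implicit.

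The paper's route convexifies the target function directly, without introducing a regularized version of $V$. It instead relies on the Carath\'eodory-type description of the convex envelope.

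Your remark that Gaussian smoothing would incur a dimension-dependent error is apt. Avoiding that error is exactly what both envelope constructions achieve.
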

\begin{prf}
We first show the following claim: \emph{If $f$ is differentiable and
$a$-strongly convex, $g$ is $L$-Lipschitz, and $h$ is the convex envelope of
$f+g$, then
\[
0 \le f(x)+g(x)-h(x)\le \frac{L^2}{2a}.
\]
}
The lower bound follows from $h\le f+g$. For the upper bound, by translating
we may assume $x=0$. By the convex-envelope formula,
\[
h(0)=\inf_{\substack{w_i\ge 0,\ \sum_i w_i=1\\ \sum_i w_i x_i=0}}
\sum_i w_i [f(x_i)+g(x_i)] .
\]
For any such convex combination, strong convexity of $f$ and Lipschitzness of
$g$ give
\begin{align*}
\sum_i w_i [f(x_i)+g(x_i)]
&\ge
\sum_i w_i\left[
f(0)+\langle \nabla f(0),x_i\rangle+\frac a2\|x_i\|^2
+g(0)-L\|x_i\|
\right] \\
&=
f(0)+g(0)+
\sum_i w_i\left[\frac a2\|x_i\|^2-L\|x_i\|\right] \\
&\ge
f(0)+g(0)-\frac{L^2}{2a}.
\end{align*}
Taking the infimum over such convex combinations gives
\[
h(0)\ge f(0)+g(0)-\frac{L^2}{2a},
\]
and therefore
\[
f(0)+g(0)-h(0)\le \frac{L^2}{2a}.
\]
This proves the claim.
    
    Let $a_1+a_2=1$, $a_1,a_2>0$ and write the density of $\nu$ as 
    \[
\nu(x) \propto e^{-\fc{a_1}2\ve{x}^2 + V(x)}e^{- \fc{a_2}2\ve{x}^2 }.
    \]
    Let $h$ be the convex envelope of $\fc{a_1}{2}\ve{x}^2 - V(x)$. Consider
    \[
\td \nu(x) \propto e^{-h(x)}e^{- \fc{a_2}2\ve{x}^2 }
    \]
    The lemma gives that $\osc[\fc{a_1}2\ve{x}^2 - V(x) - h(x)] \le \sup_x  [\fc{a_1}2\ve{x}^2 - V(x) - h(x)] \le \fc{K^2}{2a_1}$. 
    Since $\td\nu$ is $a_2$-strongly convex, it satisfies a log-Sobolev inequality with constant $a_2$. 
    Thus by the Holley-Stroock perturbation lemma \cite{holley1987logarithmic}, $\nu$ satisfies a log-Sobolev inequality with constant $a_2e^{-\fc{K^2}{2a_1}}$. Taking $a_1=a_2=\rc 2$ gives the result.
 \end{prf}
With this finite-dimensional perturbation LSI in hand, we can now prove that Lipschitz perturbations to the Wiener measure in the Cameron-Martin space preserve LSIs up to dimension-free factors.

\begin{theorem}[LSI under Cameron-Martin Lipschitz perturbations in path-space]
\label{t:lsi-CM-pert}
    Let $\calH := H^1_0\left([0,T(\beta)], \R^n\right)$ be the Cameron-Martin space for the Wiener measure $\gamma$ on $C_0([0,T],\R^n)$. Assume that $V: E \to \R$ is $\gamma$-measurable, finite and $K$-Lipschitz in the $\norm{\cdot}\CM$-norm. Then, for $d\nu(B) := \frac{e^{V(B)}}{Z}d\gamma(B)$ for $B \in E$, the measure $\nu$ satisfies
    \[
        \Ent_\nu\left[f^2\right] \le \frac{4}{e^{-CK^2}} \int_E \norm{D_Hf}^2\CM d\nu\, ,
    \]
    for some absolute constant $C >0$ and any $f \in \calD^{1,2}(\nu)$.
\end{theorem}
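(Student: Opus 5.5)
The plan is to reduce to the finite-dimensional statement \pref{lem:lipschitz-lsi-measure-space} by cylindrical approximation. We then pass to the limit using martingale convergence in the Gaussian coordinates $X_i=X_{e_i}$, together with the tail Gaussian LSI (\pref{t:gross-lsi}) for the coordinates beyond $k$.

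\textbf{Step 1: conditional expectation of the tilt is Lipschitz.} Fix $k$ and write $B=(X_{\le k}, B_{>k})$, where $B_{>k}=R_kB$ is distributed as $\ga_{>k}$ (\pref{lem:gamma-tail-gaussian}) and is independent of $\calF_k$. Define
\[
V_k(x):=\log \E_{\ga_{>k}}\ba{e^{V(x,B_{>k})}}, \qquad x\in\R^k .
\]
Shifting $x$ by $u\in\R^k$ corresponds to a CM shift by $\sum_i u_ie_i$, which has CM norm $\|u\|$. Since $V$ is $K$-CM-Lipschitz, $|V(x+u,\cdot)-V(x,\cdot)|\le K\|u\|$ pointwise, so $V_k$ is $K$-Lipschitz on $\R^k$. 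Integrability of $e^{V}$ comes from Borell concentration (\pref{lem:infinite-dim-gaussian-ui}).

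\textbf{Step 2: two-scale entropy decomposition.} Disintegrate $\nu=\int \nu^{x}\,d\nu_k(x)$. The marginal $\nu_k\propto e^{V_k}d\ga_k$ on $\R^k$ is a $K$-Lipschitz tilt of the standard Gaussian, so \pref{lem:lipschitz-lsi-measure-space} gives it an LSI with constant $e^{-K^2}/2$. Each fiber $\nu^x\propto e^{V(x,\cdot)}d\ga_{>k}$ is a CM-Lipschitz tilt of the centered Gaussian $\ga_{>k}$ with Cameron--Martin space $H_k^\perp$.

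Rather than recurse, it is cleaner to prove the theorem directly for cylindrical $V$ and then approximate. If $V=V\circ P_k$ depends only on the first $k$ coordinates, then $\nu=\nu_k\otimes\ga_{>k}$. Tensorization combines the LSI for $\nu_k$ (constant $e^{-K^2}/2$) with Gross's LSI for $\ga_{>k}$ (constant $1$). This gives $\Ent_\nu[f^2]\le \frac{4}{e^{-K^2}}\int\|D_Hf\|\CM^2d\nu$ for cylindrical $f$, and hence for all $f\in\calD^{1,2}(\nu)$ by closability (\pref{l:closability-cm-tilts}).

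\textbf{Step 3: approximate a general $V$.} Set $\tilde V_k:=\E_\ga[V\mid\calF_k]$, a function of $X_{\le k}$. By the argument of Step 1 (averaging over the independent tail), $\tilde V_k$ is $K$-Lipschitz, uniformly in $k$. Martingale convergence gives $\tilde V_k\to V$ in $L^p(\ga)$ and $\ga$-a.s.

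Let $\nu^{(k)}\propto e^{\tilde V_k}d\ga$; Step 2 gives each $\nu^{(k)}$ the LSI with the same constant. For a fixed bounded cylindrical $f\in C_b^1(E)$, we need
\[
\Ent_{\nu^{(k)}}[f^2]\to\Ent_\nu[f^2],\qquad \int\|D_Hf\|^2 e^{\tilde V_k}\,d\ga\to\int\|D_Hf\|^2e^{V}\,d\ga .
\]
Both follow from $e^{\tilde V_k}\to e^{V}$ in $L^1(\ga)$, which holds by a.s.\ convergence plus uniform integrability. Uniform integrability comes from uniform sub-Gaussian concentration of $K$-CM-Lipschitz functions (\pref{lem:infinite-dim-gaussian-ui}), giving bounds on $\E e^{2\tilde V_k}$ uniform in $k$; medians are controlled since $\E\tilde V_k=\E V$. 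Finally, extend from $C_b^1(E)$ to $\calD^{1,2}(\nu)$ by density in the graph norm, applying Fatou to the entropy.

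\textbf{Main obstacle.} The hard step is the limiting argument in Step 3. We must check that $\tilde V_k$ is $K$-CM-Lipschitz without any loss in the constant, and that the exponential uniform integrability needed for $L^1$ convergence of the densities holds uniformly in $k$. Closability of $D_H$ under $\nu$ is also needed, so that the inequality, once proved on the cylindrical core, transfers to the full domain. I would handle the Lipschitz property via Jensen, writing $\tilde V_k(x+u)-\tilde V_k(x)=\E[V(B+\sum u_ie_i)-V(B)\mid\calF_k]$. The absolute constant $C$ absorbs the factor lost in the convex-envelope and Holley--Stroock steps.
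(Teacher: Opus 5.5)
Your proposal is correct and follows essentially the same route as the paper: the cylindrical approximation $\tilde V_k=\E_\gamma[V\mid\calF_k]$, which is $K$-Lipschitz by averaging over the independent tail, and uniform LSIs for $\nu^{(k)}$ via the product decomposition into a finite-dimensional Lipschitz tilt tensorized with $\gamma_{>k}$. The limit then goes through martingale convergence, $L^1(\gamma)$ convergence of the densities via uniform integrability, and closure to $\calD^{1,2}(\nu)$. The differences are minor: your Step 1 log-marginal $V_k=\log\E_{\gamma_{>k}}[e^{V}]$ is a detour you rightly abandon, and you obtain uniform integrability from uniform Borell concentration with mean control, whereas the paper uses the simpler conditional Jensen bound $e^{p\tilde V_k}\le\E[e^{pV}\mid\calF_k]$.
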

\begin{prf}
    The proof follows by a limiting argument.
    First, we take the cylindrical functions $V_k$ obtained by revealing $k$ Gaussians and averaging $V$ over the ``tail''.
    We then decompose the path-space perturbed measure $\nu_k$ on this revelation into a finite-dimensional Gaussian reweighed by $V_k$ tensorized with an infinite-dimensional Gaussian over path-space $E$.
    Combining a LSI for the former via~\pref{lem:lipschitz-lsi-measure-space} and a LSI for the latter due to a result of Gross (\pref{t:gross-lsi}) via entropy tensorization yields uniform path-space LSIs for $\{\nu_k\}_k$.
    A $L^1$ limiting argument using the fact that $V_k$ is a martingale in conjunction with uniform integrability allows the LSIs to pass to the limit $\nu$.
    The final step is a closure argument to boost the LSI for $\nu$ to hold for $f \in \calD^{1,2}(\nu)$.

    \ppart{Cylindrical approximation for $V$} Fix $B \in E$ and define a sequence of i.i.d. $\calN(0,1)$ random variables using an orthonormal basis $\{e_j\}_{j \ge 1}$ for $\calH$ as
    \[
        X_j(B) := \int_0^T \an{\frac{d}{dt}e_j, dB_t}\,.
    \]
    Denote the $\sigma$-algebra generated by the first $k$ elements in the basis as $\calF_k := \sigma\left(X_1,\dots,X_k\right)$. Then, the conditional expectation of $V$ under the finite-dimensional shadow $\calF_k$ is $V_k := \E_\gamma\left[V \mid \calF_k\right] = \widehat V_k\left(X_1(\cdot),\dots,X_k(\cdot)\right)$ for some Borel function $\widehat V_k:\R^k\to\R$ by the Doob-Dynkin lemma.

Define
\[
    \pi_k(B):=(X_1(B),\ldots,X_k(B)),\qquad
    R_k(B):=B-\sum_{i=1}^k X_i(B)e_i .
\]
    
    We show that $\widehat V_k$ is $K$-Lipschitz in the $\norm{\cdot}_2$-norm.
    For $a\in \R^k$, write $h_a:=\sum_{i=1}^k a_i e_i$. By the
Gaussian product decomposition in the coordinates $(\pi_k,R_k)$,

\[\widehat V_k(a)=\int_E V(h_a+r)\,d\gamma_{>k}(r),\]
where $\gamma_{>k}:=(R_k)_\#\gamma$. Hence for all $a,b\in\R^k$,
\begin{align*}
    \left|\widehat V_k(a)-\widehat V_k(b)\right|
    &= \left|\int_E \left[V(h_a+r)-V(h_b+r)\right]\,d\gamma_{>k}(r)\right| \\
    &\le \int_E \left|V(h_a+r)-V(h_b+r)\right|\,d\gamma_{>k}(r)\\
    &\le K\|h_a-h_b\|_{\calH}
     = K\|a-b\|_2,
 \end{align*}
where the last equality uses the orthonormality of $\{e_i\}_i$ in $\calH$.

    \ppart{Path-space LSI for Gaussian measures tilted via $V_k$} Define the following sequence of tilted measures, for any $B \in E$,
    \[
        d\nu_k(B) := \frac{e^{V_k(B)}}{Z_k}d\gamma(B)\,,
    \]
    where $\gamma$ is the Wiener measure on $(E,\calB(E))$.
    
    We next identify the finite-dimensional weak gradient of $\widehat V_k$ with the $H_k$-component of the Malliavin derivative of $V_k$. First note that for every $B \in E$, $\eps > 0$ and $h \in \calH$,
    \allowdisplaybreaks
    \begin{align*}
        X_i(B + \eps h) &= \int_0^T \an{\frac{d}{ds}e_i, d\left(B + \eps h\right)_s} = \int_0^T \an{\frac{d}{ds}e_i, dB_s} + \eps\int_0^T \an{\frac{d}{ds}e_i, \frac{d}{ds}h}\,ds \\
         &= X_i(B) + \eps \an{e_i, h}_H\,.
     \end{align*}
    Since $\calH$ is a separable Hilbert space, any $h \in \calH$ is given as $h = \sum_i h_i e_i$ with $h_i = \an{e_i,h}_H$.
    At points where $\widehat V_k$ is differentiable ($\gamma_k$-a.e. by Rademacher's theorem), the definition of the Malliavin derivative gives
    \[
        D_h V_k(B) = \sum_{i=1}^k \partial_i \widehat V_k(\pi_k(B))\,\an{e_i,h}_H .
     \]
     Hence, by the Riesz representation theorem,
    \[ 
        D_HV_k(B)=\sum_{i=1}^k \partial_i\widehat V_k(\pi_k(B))e_i\,,
    \]
    and $\norm{D_HV_k(B)}_{\calH}^2 = \norm{\nabla \widehat V_k(\pi_k(B))}_2^2$. Furthermore, letting $\gamma_k := (\pi_k)_\#\gamma=\calN(0,\Id_k)$ and $\tilde Z_k:=\int_{\R^k}e^{\widehat V_k(a)}\,d\gamma_k(a)$, \pref{lem:lipschitz-lsi-measure-space} applied to the measure
    \[d\tilde{\nu}_k(a) := \frac{e^{\widehat V_k(a)}}{\tilde Z_k}\,d\gamma_k(a),
    \]
    gives, for smooth test functions and then by the usual finite-dimensional Sobolev closure, for every $\varphi \in C_b^1(\R^k)$,
    \[
        \Ent_{\tilde{\nu}_k}[\varphi^2]
        \le
        \frac{4}{e^{-C\,K^2}}
        \int_{\R^k}\norm{\nabla \varphi(a)}_2^2\,d\tilde{\nu}_k(a)\,. 
     \]
    It is straightforward to see that, under the inclusion $\calH \subset E$, for every bounded and measurable $f: E \to \R$,
    \begin{align}\label{e:gaussian-prod-decomp}
        \E_{\nu_k}[f] = \int_{B \in E} f(B) d\nu_k(B) = \int_{a \in \R^k}\int_{r \in E}f(h_a + r)d\tilde{\nu}_k(a)d\gamma_{> k}(r)\, ,
    \end{align} where $\gamma_{> k} := (R_k)_{\#}\gamma$. Moreover,
    \[
        (\pi_k,R_k)_{\#}\gamma = \gamma_k \otimes \gamma_{> k},
        \qquad
        (\pi_k,R_k)_{\#}\nu_k = \tilde{\nu}_k \otimes \gamma_{> k},
    \]
    with the tensorization proved in~\pref{lem:wiener-lipschitz-reweigh-tail-decomposition}. Consequently,
    \[
        \Ent_{\nu_k}[f^2] = \Ent_{\tilde{\nu}_k \otimes \gamma_{>k}}\left[g^2\right]\, ,
    \]
    where $g(a,r) = f(h_a + r) = f(B)$ for $h_a = \sum_{i=1}^k a_i e_i$ with $a =\pi_k(B) \in \R^k$ and $r = R_k(B)$.  The push-forward $\gamma_{> k}$ is a centered Gaussian process\footnote{A proof of the $E$-Gaussianity of $\gamma_{>k}$ is given in~\pref{lem:gamma-tail-gaussian}.} on $E$ with a Cameron-Martin space $H^\perp_{k}$. Therefore, by tensorization of entropy under product decompositions we have
    \allowdisplaybreaks
    \begin{align*}
        \Ent_{\nu_k}[f^2] &= \Ent_{\tilde{\nu}_k \otimes \gamma_{>k}}\left[g^2\right] = \int_{y \in E}\Ent_{\tilde{\nu}_k}[g^2] d\gamma_{>k}(y) + \Ent_{\gamma_{>k}}\left[\E_{\tilde{\nu}_k}[g^2]\right] 
    \end{align*}
    We now apply the dimension-free LSI for Gaussians to $\gamma_{>k}$ on $E$~(\pref{t:gross-lsi}) and the finite-dimensional LSI for $\tilde{\nu}_k$ achieved in~\pref{lem:lipschitz-lsi-measure-space} to obtain the following LSI subadditivity in path space,
    \begin{align*}
        &\Ent_{\nu_k}[f^2] = \Ent_{\tilde{\nu}_k \otimes \gamma_{>k}}\left[g^2\right] = \int_{y \in E}\Ent_{\tilde{\nu}_k}[g^2] d\gamma_{>k}(y) + \Ent_{\gamma_{>k}}\left[\E_{\tilde{\nu}_k}[g^2]\right] \\
        &\le_{\text{\pref{lem:lipschitz-lsi-measure-space},\,\pref{t:gross-lsi}}} \int_{y \in E}\int_{a \in \R^k}\left(\frac{4}{e^{-CK^2}}\norm{\nabla_a g(a,y)}^2_2+ 2\norm{D_{H^\perp_k}g(a,y)}^2_{\text{CM}}\right)d\tilde{\nu}_k(a)d\gamma_{>k}(y) \\
        &\le \frac{4}{e^{-CK^2}}\int_{y \in E}\int_{a \in \R^k}\left(\norm{\nabla_a g(a,y)}^2_2+\norm{D_{H^\perp_k}g(a,y)}^2_{\text{CM}}\right)d\tilde{\nu}_k(a)d\gamma_{>k}(y) \\
        &=  \frac{4}{e^{-CK^2}}\int_{y \in E}\int_{a \in \R^k}\left(\norm{P_{H_k}D_Hg(a,y)}^2_{\text{CM}} + \norm{D_{H^\perp_k}g(a,y)}^2_{\text{CM}}\right)d\tilde{\nu}_k(a)d\gamma_{>k}(y) \\
        &= \frac{4}{e^{-CK^2}}\int_{y \in E}\int_{a \in \R^k}\norm{D_H f}^2_{\text{CM}} d\tilde{\nu}_k(a)d\gamma_{>k}(y) = \frac{4}{e^{-CK^2}}\int_{x \in E} \norm{D_H f}^2_{\text{CM}}d\nu_k(x)\,.
    \end{align*}
        
    \ppart{Limiting LSI for $C_b^1(E)$} We now combine the uniform LSIs for $\nu_k$ along with $L^1$ convergence of $\nu_k \to \nu$ to obtain the final LSI for $\nu$. For this, we first argue that $V_k \to V$, then use the integrability of $e^V$ to obtain convergence of $e^{V_k} \to e^V$, and finally use the convergences of $Z_k \to Z$ and $\nu_k \to \nu$ in total variation distance to obtain convergence of the Dirichlet form for functions in $C_b^1(E)$.
    
    Note that $\{V_k\}_{k \in \N}$ is a Doob martingale and $\calF_{\infty} := \sigma\left(\bigcup_{k \in \N} \calF_k\right) = \calB(E)$ ignoring Borel sets $A \in \calB(E)$ where $\gamma(A) = 0$. This implies, by L\'evy's upward theorem for martingale convergence~\cite[Theorem 14.2]{williams1991probability}, that
    \[
        V_k \to \E[V \mid \calF_\infty] = V\,,
    \]
    $\gamma$-a.s. and in $L^1(\gamma)$. By the continuous mapping theorem, this further yields
    \[
        e^{V_k} \to e^V\,,
    \]
    $\gamma$-a.s.. For $L^1$ convergence, first observe that since $V$ is $K$-Lipschitz in the CM norm, the sub-Gaussian tails implied by~\pref{t:borell} give that, for any finite $p > 0$, $\E_\gamma[e^{pV}] < \infty$\footnote{A proof of this is provided in~\pref{lem:infinite-dim-gaussian-ui}.}. Then, by Jensen's inequality applied to $e^{x}$,
    \[
        e^{pV_k} = e^{p\E[V\mid \calF_k]} \le \E[e^{pV} \mid \calF_k]\, ,
    \]
    and taking expectations over $\gamma$ on both sides while using the tower property of conditional expectations gives
    \[
        \sup_k \E_\gamma[e^{pV_k}] \le \E_\gamma[e^{pV}] < \infty\,. 
    \]
    Therefore, $\{e^{V_k}\}_k$ is uniformly integrable \cite[Theorem 4.6.2]{durrett2019probability}. Combining the $\gamma$-a.s.~convergence of $e^{V_k} \to e^V$ with uniform integrability and Vitali's theorem~\cite[Theorem 4.5.4]{bogachev2007measure} gives
    \[
        e^{V_k} \to_{L^1(\gamma)} e^V\,.
    \]
    This immediately implies
    \[
    Z_k := \int_E e^{V_k} d\gamma \to \int_E e^{V}\,d\gamma = Z.
    \]
    It now remains to show that the reweighed density $e^{V_k}/Z_k$ converges in $L^1$ to $e^V/Z$, as that immediately implies 
    \[
        \TV(\nu_k,\nu) = \frac{1}{2}\norm{\frac{e^{V_k}}{Z_k}-\frac{e^V}{Z}}_{L^1(\gamma)} \to 0\,.
    \]
    Since $Z > 0$, the convergence above implies that there exists a large enough $k$, such that $Z_k > 0$. So, 
    \allowdisplaybreaks
    \begin{align*}
        \norm{\frac{e^{V_k}}{Z_k} - \frac{e^V}{Z}}_{L^1} &= \norm{\frac{e^{V_k}}{Z_k} - \frac{e^V}{Z_k} + \frac{e^V}{Z_k} - \frac{e^V}{Z}}_{L^1} \\
        &\le \frac{1}{Z_k}\norm{e^{V_k} - e^V}_{L^1} + \left|\frac{1}{Z_k} - \frac{1}{Z}\right|\norm{e^V}_{L^1} \to 0\, ,
    \end{align*}
    by the $L^1$ convergence of $e^{V_k} \to e^V$, the convergence $Z_k \to Z$, and the integrability of $e^V$. This gives convergence of the Dirichlet form for any $f \in C_b^1(E)$ as $F(e) := \norm{D_H f(e)}^2_{\text{CM}}$ is bounded and measurable, giving
    \[
    \left|\int_E F(e)\,d\nu_k(e) - \int_E F(e)\,d\nu(e) \right|
        =
        \left|\int_E F(e)\left(\frac{e^{V_k(e)}}{Z_k} - \frac{e^{V(e)}}{Z}\right)d\gamma(e)\right|
        \le
        \norm{F}_\infty
        \norm{\frac{e^{V_k}}{Z_k} - \frac{e^V}{Z}}_{L^1(\gamma)}
        \to 0\,
        \]
    since $f$ is Lipschitz, and the densities converge in $L^1$ by the argument above. This immediately gives
    \[
        \int_{e \in E}\norm{D_H f}^2_{\text{CM}}d\nu_k(e) \to \int_{e \in E}\norm{D_H f}^2_{\text{CM}}d\nu(e)\,.
    \]
    Finally, note that the function $u \mapsto u\log(u)$ on $[0,\infty)$ with $0\log 0 := 0$ is continuous. Since $f$ is bounded and the density for $\nu_k$ converges to $\nu$ in $L^1(\gamma)$, we have that
    \[
        \int_E f(e)^2 d\nu_k(e) \to \int_E f(e)^2 d\nu(e)\, ,
    \]
    and, by the same $L^1$ density convergence applied to the bounded measurable function $e\mapsto f(e)^2\log(f(e)^2)$,
    \[
        \int_E f(e)^2\log(f(e)^2) d\nu_k(e) \to \int_E f(e)^2\log(f(e)^2)d\nu(e)\,.
    \]
    Plugging the convergences into the definition of the entropy functional gives
    \[
        \Ent_{\nu_k}[f^2] \to \Ent_{\nu}[f^2]\,.
    \]
    Combining the convergence of the entropy functional and Dirichlet form gives
    \[
        \Ent_{\nu}[f^2] \le \frac{4}{e^{-CK^2}}\int_{x\in E}\norm{D_H f}^2_{\text{CM}}d\nu(x)\, ,
    \]
    for every $f \in C_b^1(E)$. 
    
    \ppart{Closure under the Sobolev domain $\calD^{1,2}(\nu)$}  The final step is a closure argument to go from the cylindrical core $C_b^1(E)$ to the closed Sobolev
domain $\calD^{1,2}(\nu)$ defined in \pref{d:sobolev-domains}. Let $f \in \calD^{1,2}(\nu)$ and set $\{f_i\}_i$ be a Cauchy sequence of functions in $C_b^1(E)$ that converge to $f$ in $\norm{\cdot}_{\calD^{1,2}}$, so that $\int \norm{D_Hf_i}\CM^2d\nu \to \int \norm{D_Hf}\CM^2d\nu$. Then
    \[
         \norm{f^2 - f^2_i}_{L^1(\nu)} \le_{\text{Cauchy-Schwarz}} \norm{f - f_i}_{L^2(\nu)}\left(\norm{f}_{L^2(\nu)} + \norm{f_i}_{L^2(\nu)}\right)\,,
    \]
    and the lower semi-continuity of entropy gives that $\Ent_{\nu}[f^2] \le \liminf_{i \to \infty}\Ent_{\nu}[f^2_i]$. Combining these two with the LSI established above for $C_b^1(E)$ gives that, for $i \to \infty$,
    \[
        \Ent_{\nu}[f^2] \le \frac{4}{e^{-CK^2}}\int\norm{D_H f}\CM^2\,d\nu\,. \qedhere
    \]
\end{prf}

\subsection{LSI via Lipschitz end-point evaluation} We now use the Cameron--Martin Lipschitz bounds for the Jarzynski functional and the ASL drift in conjunction with the path-space LSI established in~\pref{t:lsi-CM-pert} to conclude that a Lipscshitz end-point evaluation will give a LSI for $\rh_T$. Note that the LSI will apply to all test functions $g: \R^n \to \R$ that are globally Lipschitz and (possibly) unbounded \emph{provided} $g \in L^2(\rh_T)$. 

\begin{lemma}[CM-Lipschitz maps under CM-Lipschitz Gaussian tilts]
\label{l:cm-lip-map-tilted}
Let \(V:E\to\R\) be finite and CM-Lipschitz, and define $d\nu^V=Z_V^{-1}e^V\,d\ga\,$. Let \(\Phi:E\to\R^m\) be \(K\)-CM-Lipschitz and suppose \(\Phi\in L^p(\ga;\R^m)\) for every finite \(p\). Then, for every \(f\in C_b^1(\R^m)\),
\[
    f\circ\Phi\in\calD^{1,2}(\nu^V),
    \qquad
    D_H(f\circ\Phi)
    =
    D_H\Phi^\sT\nabla f(\Phi)
    \quad \nu^V\text{-a.s.}
\]
\end{lemma}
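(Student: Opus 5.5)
The plan is to first prove the statement under $\ga$, then transfer it to $\nu^V$ using that the density $e^V/Z_V$ has all moments, in both directions.

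\emph{Step 1: the statement under $\ga$.} Each component $\Phi^{(j)}$ is $K$-CM-Lipschitz and lies in $L^2(\ga)$. By \pref{l:cm-lip-malliavin}, $\Phi^{(j)}\in\calD^{1,2}(\ga)$ and $\norm{D_H\Phi^{(j)}}\CM\le K$ $\ga$-a.s. The chain rule (\pref{l:malliavin-chain-rule}) with $\nu=\ga$ then gives $f\circ\Phi\in\calD^{1,2}(\ga)$ with the claimed formula $\ga$-a.s.

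Concretely, there are cylindrical $F_i\in C_b^1(E)$ with $F_i\to f\circ\Phi$ in $L^2(\ga)$ and $D_HF_i\to D_H\Phi^\sT\nabla f(\Phi)=:G$ in $L^2(\ga;\calH)$. The limit $G$ is bounded, since $\norm{G}\CM\le K\sqrt m\,\norm{\nabla f}_\infty$.

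\emph{Step 2: upgrade to $L^p(\ga)$ for some $p>2$.} The transfer in Step 3 needs convergence in $L^{p}(\ga)$ with $p>2$, because Hölder against the density costs a conjugate exponent. So $F_i$ should approximate $f\circ\Phi$ in $\calD^{1,p}(\ga)$ for some $p>2$, say $p=4$.

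Note that \pref{l:cm-lip-malliavin} is stated for all $1<p<\infty$, so $\Phi^{(j)}\in\calD^{1,p}(\ga)$. The chain rule holds equally in $\calD^{1,p}$, so $f\circ\Phi\in\calD^{1,p}(\ga)$ with cylindrical approximants converging in $L^p$ together with their derivatives. If one prefers to avoid $\calD^{1,p}$, I would instead build the approximants explicitly. Take $\Phi_k:=\E_\ga[\Phi\mid\calF_k]$, which is $K$-Lipschitz on $\R^k$ exactly as for $\widehat V_k$ in the proof of \pref{t:lsi-CM-pert}. Mollify it in the first $k$ Gaussian coordinates, and compose with $f$. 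This gives cylindrical $C_b^1$ functions that converge in every $L^p(\ga)$ by martingale convergence and uniform integrability, since $\Phi\in L^p$ for all $p$. Their derivatives are uniformly bounded by $K\sqrt m\norm{\nabla f}_\infty$ and converge in probability.

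\emph{Step 3: transfer to $\nu^V$.} By \pref{lem:infinite-dim-gaussian-ui}, $e^{V}\in L^q(\ga)$ for every finite $q$. Hölder then gives
\[
\norm{F_i-f\circ\Phi}_{L^2(\nu^V)}^2\le Z_V^{-1}\norm{e^V}_{L^{q}(\ga)}\norm{F_i-f\circ\Phi}_{L^{2q'}(\ga)}^2\to0,
\]
and the same bound holds for $\norm{D_HF_i-G}_{L^2(\nu^V;\calH)}$. Hence $f\circ\Phi\in\calD^{1,2}(\nu^V)$ with $D_H(f\circ\Phi)=G$. This uses closability of $D_H$ on $L^2(\nu^V)$ (\pref{l:closability-cm-tilts}), which makes the derivative independent of the approximating sequence.

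Finally, the identity holds $\ga$-a.s., and $\nu^V\ll\ga$ because $e^V>0$ is finite. Therefore it also holds $\nu^V$-a.s.

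\emph{Main obstacle.} The delicate point is Step 2: getting derivative convergence in $L^{p}(\ga)$ for some $p>2$ rather than only in $L^2(\ga)$. I expect uniform boundedness of the mollified cylindrical derivatives plus dominated convergence (after passing to a subsequence converging a.s.) to handle it.
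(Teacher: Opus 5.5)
Your proposal is correct and uses the same ingredients as the paper. Those are \pref{l:cm-lip-malliavin} at an exponent $p>2$, H\"older against the density $Z_V^{-1}e^V$ (all of whose moments are finite by \pref{lem:infinite-dim-gaussian-ui}), and closability under $\nu^V$ (\pref{l:closability-cm-tilts}). Only the order is swapped: the paper transfers the scalar maps $\an{u,\Phi}$ from $\calD^{1,p}(\ga)$ to $\calD^{1,2}(\nu^V)$ and then applies the chain rule under $\nu^V$, whereas you apply the chain rule under $\ga$ and transfer $f\circ\Phi$ itself.

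The one step the paper does not state is the chain rule in $\calD^{1,p}(\ga)$, and you can bypass it with the paper's own tools. Since $f\circ\Phi$ is bounded and $K\|\nabla f\|_\infty$-CM-Lipschitz, \pref{l:cm-lip-malliavin} gives $f\circ\Phi\in\calD^{1,p}(\ga)$ directly, and closability in $L^2(\ga)$ identifies its derivative with your $G$ from Step 1. This also makes your explicit mollification fallback unnecessary.
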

\begin{prf}
Denote \(w=Z_V^{-1}e^V\) and observe that \pref{lem:infinite-dim-gaussian-ui} along with the fact that $0 < Z <\infty$ $\gamma$-a.s. imply \(w^q\in L^1(\ga)\) for every finite \(q>0\). Fix \(u\in\R^m\) and set \(F_u=\an{u,\Phi}\). Then \(F_u\) is \(K\norm u_2\)-CM-Lipschitz and belongs to \(L^p(\ga)\) for every finite \(p\). Choose \(p>2\). By \pref{l:cm-lip-malliavin},
\[
    F_u\in\calD^{1,p}(\ga)\,,
\]
and
\[
    \norm{D_HF_u}_{\CM}\le K\norm u_2
\]
$\gamma$-a.s. Choose a sequence \(\left\{F_{u,j}\in C_b^1(E)\right\}_j\) that converges to \(F_u\) in the \(\calD^{1,p}(\ga)\) graph norm. With \(q=p/(p-2)\), a ($p/2,p/(p-2)$)-H\"older's inequality gives
\[
    \norm{F_{u,j}-F_u}_{L^2(\nu^V)}^2 \le \norm{F_{u,j}-F_u}_{L^p(\ga)}^2 \norm{w}_{L^q(\ga)}\,,
\]
and the same estimate applies to the Malliavin derivatives $\left\{D_H F_{u,j}\right\}_j$. Hence, \(F_u\in\calD^{1,2}(\nu^V)\) with
\[
    \norm{D_HF_u}_{\CM}\le K\norm u_2
\]
$\nu^V\text{-a.s.}$. Taking \(u \in \left\{e_1,\ldots,e_m\right\}\) gives \(\Phi^{(j)}\in\calD^{1,2}(\nu^V)\) for every $j \in [m]$. Choosing \(u\) in a countable dense subset of the unit sphere \(\calS^{m-1}(1)\) and using linearity gives
\[
    \norm{D_H\Phi}_{\op}\le K
\]
$\nu^V\text{-a.s.}$. The final assertion then follows from \pref{l:malliavin-chain-rule}.
\end{prf}

\begin{lemma}[Endpoint map belongs to the tilted Sobolev domain]
\label{l:endpoint-map-sobolev}
Let \(d\nu^V\) be defined as in \pref{l:cm-lip-map-tilted} and let \(\Psi(B)=\hat y_T(B)\). 
Consequently, for every \(f\in C_b^1(\R^n)\),
\[
    f\circ\Psi\in\calD^{1,2}(\nu)\,
\]
and
\[
    D_H(f\circ\Psi) = D_H\Psi^\sT\nabla f(\Psi)
\]
$\nu\text{-a.s.}$.
\end{lemma}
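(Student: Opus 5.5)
The plan is to reduce the statement to \pref{l:cm-lip-map-tilted} with $m=n$ and $\Phi=\Psi$. The tilt $V$ in $\nu=\nu^V$ is the Jarzynski functional $W$ of \pref{lem:cm-lipschitz-two}. That lemma already shows $W$ is finite and $L_\om T^{3/2}e^{LT}$-CM-Lipschitz, so $\nu^V$ falls in the scope of \pref{l:cm-lip-map-tilted}. What remains is to check the two hypotheses on $\Psi$: CM-Lipschitzness, and membership in $L^p(\ga;\R^n)$ for every finite $p$.

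\emph{CM-Lipschitz.} By \pref{lem:cm-lipscitz-one} with $t=T$, the endpoint map satisfies
\[
\norm{\Psi(B+h)-\Psi(B)}_2\le \sqrt T e^{LT}\norm h\CM
\]
for every $B$ in the full-measure set where the strong solution is defined. Since $\hat y_0=0$ is deterministic, we use the version without initial condition. So $\Psi$ is $K$-CM-Lipschitz with $K=\sqrt T e^{LT}$.

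\emph{Moments.} We bound $\Psi$ pathwise. Write $\hat y_T=B_T+\int_0^T\hat m(\hat y_s)\,ds$. By \pref{a:Lipschitz-drift}, $\norm{\hat m(y)}\le \norm{\hat m(0)}+L\norm y$. Gr\"onwall then gives
\[
\norm{\hat y_T}\le e^{LT}\pa{\sup_{s\le T}\norm{B_s}+T\norm{\hat m(0)}}.
\]
(The paper also notes that $\hat m$ is uniformly bounded, which gives the same conclusion even more directly.) Since $\sup_{s\le T}\norm{B_s}$ has Gaussian tails by Doob's inequality or Fernique's theorem, $\Psi\in L^p(\ga)$ for all finite $p$. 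Alternatively, each coordinate $\an{u,\Psi}$ is CM-Lipschitz and finite, so \pref{lem:infinite-dim-gaussian-ui} gives exponential moments.

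With both hypotheses in hand, \pref{l:cm-lip-map-tilted} gives $f\circ\Psi\in\calD^{1,2}(\nu)$ and $D_H(f\circ\Psi)=D_H\Psi^\sT\nabla f(\Psi)$ $\nu$-a.s. for every $f\in C_b^1(\R^n)$. It also gives the bound $\norm{D_H\Psi}_\op\le \sqrt Te^{LT}$, which is used later to transfer the LSI.

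The main subtlety is measurability and a.s. definition. $\Psi$ is only defined $\ga$-a.s. as a strong solution, so the CM-Lipschitz bound must hold for $\ga$-a.e. $B$ and all $h\in\calH$ simultaneously. This holds because the SDE here has additive noise: the solution is a deterministic continuous functional of the path, obtained by solving the ODE $\dot z=\hat m(z+B)$ for $z=\hat y-B$. Hence $\Psi$ is defined and continuous on all of $E$, and the Gr\"onwall estimate holds for every $B$. I would state this explicitly, since it removes any issue with shifts by $h$ and with null sets.
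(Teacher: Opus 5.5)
Your proposal is correct and follows the paper's proof. You take the tilt $V$ to be the Jarzynski functional, which is CM-Lipschitz by \pref{lem:cm-lipschitz-two}, take the constant $\sqrt T e^{LT}$ for $\Psi$ from \pref{lem:cm-lipscitz-one}, check finite moments, and apply \pref{l:cm-lip-map-tilted}. Your explicit Gr\"onwall/Fernique moment bound, and the remark that with additive noise $\Psi$ is a continuous functional defined for every path, simply fill in what the paper cites as ``the standard moment estimate for Lipschitz SDEs.''
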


\begin{prf}
Let
\[
    V(B):=\int_0^T \omega(\hat y_t(B))\,dt\,.
\]
By \pref{lem:cm-lipschitz-two}, \(V\) is finite and CM-Lipschitz, so \pref{l:cm-lip-map-tilted} applies to \(\nu^V\). \pref{lem:cm-lipscitz-one} implies that \(\Psi\) is \(\sqrt T e^{LT}\)-CM-Lipschitz. Moreover, \(\Psi\in L^p(\ga;\R^n)\) for every finite \(p\), by the standard moment estimate for Lipschitz SDEs. Applying
\pref{l:cm-lip-map-tilted} with \(\Phi=\Psi\) therefore immediately gives, for every \(f\in C_b^1(\R^n)\),
\[
    f\circ\Psi\in\calD^{1,2}(\nu)\,,
\]
and
\[
    D_H(f\circ\Psi) = D_H\Psi^\sT\nabla f(\Psi)\,.
\]
$\nu\text{-a.s.}$. \qedhere
\end{prf}

We now prove the LSI for $\rh_T$ over the Sobolev space $W^{1,2}(\rh_T)$ by combining \pref{t:lsi-CM-pert} and \pref{l:endpoint-map-sobolev} to ``pushforward'' the path-space LSI back to Euclidean space.

\begin{proposition}[Annealed distribution LSI over the Sobolev space $\calW^{1,2}(\rh_T)$]\label{prop:lsi-rhT-bounded}
    Fix $T \in [0, \infty)$ independent of $n$. Let $\rh_T$ be defined as in~\pref{e:je-weights} in the special case that $\hat{p}_0 = \delta_0$. Then, for every $f \in \calW^{1,2}(\rh_T)$, the following LSI holds for the measure $\rh_T$,
    \[
        \Ent_{\rh_T}[f^2] \le \frac{4T e^{2LT}}{e^{-CK^2}}\int_{x \in \R^n}\norm{\nabla f(x)}^2_2 d\rh_T(x)\,,
    \]
    where $L$ and $K$ are as defined in~\pref{a:Lipschitz-drift} and \pref{t:lsi-CM-pert}, and $C > 0$ is an absolute constant.
\end{proposition}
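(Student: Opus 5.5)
The plan is to realize $\rh_T$ as the push-forward of a tilted Wiener measure and transfer the path-space LSI of \pref{t:lsi-CM-pert} through the endpoint map. With $\hat p_0=\de_0$, the ASL path is the strong solution $\hat y_t=\Phi_t(B)$ driven by Brownian motion $B\sim\ga$ on $E=C_0([0,T],\R^n)$. Set $V(B):=W(B)=\int_0^T\om(\Phi_t(B))\,dt$ and $d\nu:=Z^{-1}e^{V}d\ga$. By \eqref{e:je-weights}, for any bounded measurable $g$ on $\R^n$,
\[
\int g\,d\rh_T=\fc{\E_\ga[e^{w_T}g(\hat y_T)]}{\E_\ga[e^{w_T}]}=\int_E g(\Psi(B))\,d\nu(B),
\]
where $\Psi=\Phi_T$. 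Hence $\rh_T=\Psi_\#\nu$. By \pref{lem:cm-lipschitz-two}, $V$ is finite and $K$-CM-Lipschitz with $K=L_\om T^{3/2}e^{LT}$. So \pref{t:lsi-CM-pert} gives
\[
\Ent_\nu[F^2]\le \fc{4}{e^{-CK^2}}\int\ve{D_HF}\CM^2\,d\nu
\qquad\text{for all } F\in\calD^{1,2}(\nu).
\]

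Next, take $f\in C_b^1(\R^n)$ and set $F=f\circ\Psi$. By \pref{l:endpoint-map-sobolev}, $F\in\calD^{1,2}(\nu)$ and $D_HF=D_H\Psi^\sT\gd f(\Psi)$ $\nu$-a.s. By \pref{lem:cm-lipscitz-one} (through \pref{l:cm-lip-map-tilted}), $\opnorm{D_H\Psi}\le\sqrt Te^{LT}$, so $\ve{D_HF}\CM^2\le Te^{2LT}\ve{\gd f(\Psi)}_2^2$. Since $\rh_T=\Psi_\#\nu$, we have $\Ent_{\rh_T}[f^2]=\Ent_\nu[F^2]$ and $\int\ve{\gd f(\Psi)}^2d\nu=\int\ve{\gd f}^2d\rh_T$. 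Chaining these gives the claimed inequality with constant $4Te^{2LT}/e^{-CK^2}$ for all $f\in C_b^1(\R^n)$.

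Finally, extend to $\calW^{1,2}(\rh_T)$ by closure, as in the last step of \pref{t:lsi-CM-pert}. By \pref{d:euclidean-sobolev-domain}, any $f\in\calW^{1,2}(\rh_T)$ is a limit of $f_i\in C_b^1(\R^n)$ in the graph norm. Then $f_i^2\to f^2$ in $L^1(\rh_T)$ (by Cauchy--Schwarz), and the Dirichlet energies converge. Lower semicontinuity of entropy under $L^1$ convergence (Fatou applied to $u\log u$, plus convergence of $\E f_i^2$) passes the inequality to the limit.

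I expect the main obstacle to be justifying the identification $\rh_T=\Psi_\#\nu$, together with the Sobolev membership. The first requires that \eqref{e:je-weights} defines $\rh_T$ exactly as the endpoint marginal of the reweighted path measure. For $\hat p_0=\de_0$ this is a tautology: disintegrate $\nu$ over $\hat y_T$. The integrability of $e^{V}$ is supplied by \pref{lem:infinite-dim-gaussian-ui}. The Sobolev membership is exactly what \pref{l:endpoint-map-sobolev} supplies, so the remaining work is mostly this bookkeeping.
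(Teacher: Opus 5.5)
Your proposal is correct and follows the paper's proof essentially step for step. You realize $\rh_T=\Psi_\#\nu$ for the Jarzynski-tilted Wiener measure and invoke the path-space LSI of \pref{t:lsi-CM-pert}. You then use \pref{l:endpoint-map-sobolev} together with the $\sqrt T e^{LT}$ Cameron--Martin Lipschitz bound on the endpoint map to control $\ve{D_H(f\circ\Psi)}\CM$ for $f\in C_b^1(\R^n)$, and close in the graph norm via lower semicontinuity of entropy. Your Fatou-based justification of that lower semicontinuity, and your explicit identification of $V=W$ with $K\le L_\om T^{3/2}e^{LT}$ (which the paper records in the proof of \pref{t:rho-ls}), are slightly more detailed than the paper but do not change the route.
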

\begin{prf}
    The proof uses the push-forward map $\rh_T = \Psi_{\#}\nu$ where $\Psi: E \to \R^n$ and $\Psi(B) = \hat{y}_T(B)$, and combines it with the fact that the Malliavin derivative of $\Psi$ is bounded $\nu$-a.s. (a direct consequence of \pref{lem:cm-lipscitz-one} and \pref{l:endpoint-map-sobolev}) so we can invoke~\pref{t:lsi-CM-pert} after an application of the (weak) chain rule in conjunction with the stability of the Malliavin derivative. The final step is a closure argument to the entire Sobolev domain $\calW^{1,2}(\rh_T)$.

    \ppart{Annealed LSI for $C^1_b(\R^n)$}
    Let $f \in C^1_b(\R^n)$ and define its composition with the ASL drift as $F:= f \circ \Psi$.
    By \pref{lem:cm-lipscitz-one}, the endpoint map
$\Psi(B)=\hat y_T(B)$ is $K_\Psi$-CM-Lipschitz with
\[
    K_\Psi=\sqrt T e^{LT}.
\]
Applying \pref{l:endpoint-map-sobolev} to $\Psi$ gives
\[
    f\circ\Psi\in\calD^{1,2}(\nu),
    \qquad
    D_H(f\circ\Psi)(B)
    =
    D_H\Psi(B)^\sT\nabla f(\Psi(B))
    \quad \nu\text{-a.s.},
\]
which implies
    \[
        \norm{D_H(f \circ \Psi)(x)}_{\text{CM}}^2 \le_{\text{Cauchy-Schwarz}} \norm{D_H \Psi(x)}^2_{\text{CM}}\norm{\nabla f(\Psi(x))}^2_2 \le_{\text{\pref{l:endpoint-map-sobolev}}} Te^{2LT}\norm{\nabla f(\Psi(x))}^2_2\,.  
    \]
    This further gives
    \allowdisplaybreaks
    \begin{align*}
        \Ent_{\rh_T}[f^2] &=_{\rh_T = \Psi_{\#}\nu} \Ent_{\nu}[(f \circ \Psi)^2] \le_{\text{\pref{t:lsi-CM-pert}}} \frac{4}{e^{-CK^2}} \int_{x \in E}\norm{D_H(f\circ \Psi)(x)}^2_{\text{CM}}d\nu(x) \\
        &\le \frac{4Te^{2LT}}{e^{-CK^2}}\int_{x \in E}\norm{\nabla f(\Psi(x))}^2_2 d\nu(x) = \frac{4Te^{2LT}}{e^{-CK^2}} \int_{a \in \R^n} \norm{\nabla f(a)}^2_2 d\rh_T(a)\,.
    \end{align*}

    \ppart{Closure under the Sobolev domain $\calW^{1,2}(\rh_T)$} By \pref{d:euclidean-sobolev-domain},
$\calW^{1,2}(\rh_T)$ is the Euclidean graph-norm closure of
$C_b^1(\R^n)$. Let $f\in\calW^{1,2}(\rh_T)$ and choose
$f_j\in C_b^1(\R^n)$ such that $f_j\to f$ is a Cauchy sequence in the
$\calW^{1,2}(\rh_T)$ graph norm $\norm{f}_{\calW^{1,2}}^2 := \int_{x \in \R^n} f^2d\rh_T(x) + \int_{x \in \R^n}\norm{\nabla f(x)}^2_2 d\rh_T(x)$. An exactly analogous argument to the one in [Closure under the Sobolev domain $\calD^{1,2}(\nu)$,~\pref{t:lsi-CM-pert}] yields that
    \[
        \Ent_{\rh_T}[f^2] \le \frac{4Te^{2LT}}{e^{-CK^2}}\int_{x \in \R^n}\norm{\nabla f(x)}^2_2d\rh_T(x)\,. \qedhere
    \]
\end{prf}

For our use case, it will be critical that the LSI established in~\pref{prop:lsi-rhT-bounded} can be extended to hold for \emph{unbounded} Lipschitz test functions. This is established below by an explicit, smoothed truncation argument that makes use of the fact that the test functions have finite second moment with respect to $\rh_T$.

\begin{theorem}[Annealed distribution LSI for unbounded Lipschitz test functions]\label{thm:lsi-rhT-lipschitz}
Assume the same conditions as in \pref{t:lsi-CM-pert} and \pref{prop:lsi-rhT-bounded}.
    Fix $0 < A, T <\infty$. Let $f \in C^1(\R^n)$, such that $f$ is $A$-Lipschitz and $f \in L^2(\rh_T)$. Then, 
    \[
        \Ent_{\rh_T}[f^2] \le \frac{4T e^{2LT}}{e^{-CK^2}}\int_{x \in \R^n}\norm{\nabla f(x)}^2_2 d\rh_T(x)\,,
    \]
    where $L$ and $K$ are as defined in \pref{a:Lipschitz-drift} and \pref{t:lsi-CM-pert}, and $C > 0$ is an absolute constant.
\end{theorem}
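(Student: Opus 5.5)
We will deduce \pref{thm:lsi-rhT-lipschitz} from \pref{prop:lsi-rhT-bounded} by a smoothed truncation, approximating $f$ by $C_b^1$ functions $f_R$ that agree with $f$ on large balls and have gradients dominated by $|\nabla f|$ up to a vanishing error. We then pass to the limit on both sides of the LSI. We need two integrability facts. First, $f^2\in L^1(\rh_T)$, which is assumed. Second, since $f$ is $A$-Lipschitz and $\rh_T$ satisfies the LSI of \pref{prop:lsi-rhT-bounded} (applied to bounded Lipschitz truncations of $|x|$), \pref{l:conc-lsi} gives sub-Gaussian tails for $\|x\|$ under $\rh_T$. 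Hence $f^2\log(1+f^2)\in L^1(\rh_T)$ and $\int \|\nabla f\|^2 d\rh_T\le A^2<\iy$.

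\textbf{Construction.} Take a smooth cutoff $\chi_R(x)=\chi(\|x\|/R)$ with $\chi=1$ on $[0,1]$, $\chi=0$ on $[2,\iy)$ and $|\chi'|\le 2$. Also take a smooth odd clipping $\psi_M:\R\to\R$ with $\psi_M(u)=u$ for $|u|\le M$, $0\le\psi_M'\le 1$ and $|\psi_M|\le 2M$. Set $f_R:=\chi_R\cdot\psi_{M}(f)$ with $M=M(R)\to\iy$. Then $f_R\in C_b^1(\R^n)$, because $f$ is $C^1$ and the product is compactly supported. We have
\[
\|\nabla f_R\|\le \|\nabla f\|\,\one_{\|x\|\le 2R}+\fc{2}{R}|f|\,\one_{R\le\|x\|\le 2R}.
\]
Therefore $\int\|\nabla f_R\|^2d\rh_T\le (1+\eta)\int\|\nabla f\|^2d\rh_T+C_\eta R^{-2}\int_{\|x\|\ge R} f^2\,d\rh_T$, and the last term tends to $0$ by dominated convergence. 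Since $\eta>0$ is arbitrary, $\limsup_R\int\|\nabla f_R\|^2d\rh_T\le\int\|\nabla f\|^2 d\rh_T$.

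\textbf{Passage to the limit.} \pref{prop:lsi-rhT-bounded} gives the LSI for each $f_R$. Pointwise we have $f_R\to f$ and $|f_R|\le |f|$. Consequently $f_R^2\to f^2$ in $L^1$, and $f_R^2\log f_R^2\to f^2\log f^2$ in $L^1$ by dominated convergence with dominating function $f^2|\log f^2|+e^{-1}$. Here $f^2\log f^2$ is integrable as noted above, and $u\mapsto u|\log u|$ is bounded on $[0,1]$. Thus $\Ent_{\rh_T}[f_R^2]\to\Ent_{\rh_T}[f^2]$. Lower semicontinuity of entropy would also suffice for this direction. Combining this with the gradient bound yields the claimed inequality with the same constant $\fc{4Te^{2LT}}{e^{-CK^2}}$.

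\textbf{Main obstacle.} The main obstacle is justifying integrability of $f^2\log f^2$. If we prefer not to invoke sub-Gaussian concentration, we can avoid it: apply the LSI to $f_R$ to get $\sup_R\Ent[f_R^2]<\iy$, and then use monotone convergence on $f_R^2\log f_R^2$ along the region where $f_R^2\ge 1$, where $f_R^2$ increases to $f^2$ if $\psi_M$ is chosen monotone in $M$. This gives $f^2\log f^2$ bounded from above in $L^1$, so Fatou applies. Either route closes the argument.
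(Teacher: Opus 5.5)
Your proof is correct, but it closes the argument by a different mechanism than the paper.

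\textbf{The paper's route.} It truncates only in the range, $f_R=\max(-R,\min(f,R))$. It picks levels $R_j$ with $\rh_T\{|f|=R_j\}=0$, so that $\nabla f_{R_j}=\mathbf{1}_{\{|f|<R_j\}}\nabla f$ a.e., and mollifies to $\eta_\eps*f_{R_j}\in C_b^\infty$. It then uses \pref{lem:mollifiers} to get convergence to $f$ in the $\calW^{1,2}(\rh_T)$ graph norm; the gradient-convergence item of that lemma needs $\rh_T\ll$ Lebesgue and the uniform bound $\|\nabla f\|\le A$. Finally it concludes $f\in\calW^{1,2}(\rh_T)$ and applies \pref{prop:lsi-rhT-bounded} on the full Sobolev domain.

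\textbf{Your route.} You use $f\in C^1$ to build $C_b^1$ approximants $\chi_R\,\psi_M(f)$ directly, with no mollification. You apply only the $C_b^1$ case of the proposition to each approximant. You then pass to the limit in the inequality itself, which needs only a $\limsup$ bound on the energy and lower semicontinuity of entropy, not graph-norm convergence.

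\textbf{What each buys.} Your argument is shorter and never uses the Lipschitz constant $A$ or absolute continuity of $\rh_T$. It proves the inequality for every $f\in C^1(\R^n)\cap L^2(\rh_T)$, trivially when $\int\|\nabla f\|^2\,d\rh_T=\infty$. The paper's route, conversely, does not really need $f\in C^1$: a.e.\ differentiability suffices, which Lipschitz functions have by Rademacher. It also yields the membership $f\in\calW^{1,2}(\rh_T)$ as a byproduct.

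\textbf{The ``main obstacle'' is not one.} Since $u\log u\ge -e^{-1}$ and $\rh_T$ is a probability measure, apply Fatou's lemma to $f_R^2\log f_R^2+e^{-1}\to f^2\log f^2+e^{-1}$. Together with $\E_{\rh_T} f_R^2\to \E_{\rh_T} f^2$, this gives $\Ent_{\rh_T}[f^2]\le\liminf_R \Ent_{\rh_T}[f_R^2]$ with no a priori integrability of $f^2\log f^2$. So you can delete both the sub-Gaussian-tail detour and the monotonicity argument in your last paragraph.

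\textbf{Minor point.} State explicitly that $0\le\chi\le 1$, since your gradient bound uses $|\chi_R|\le 1$.
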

\begin{prf}
    Let $f$ be $A$-Lipschitz and $f \in L^2(\rh_T)$. Fix increasing radii $R > 0$ and choose a family of truncation functions $\{ \tau_R : \R \mapsto \R\}_{R \ge 0}$ such that
    \[
        \tau_R(s) = s\, ,
    \]
    whenever $|s| \le R$, $|\tau_R(s)| \le |s|$ and $\norm{\tau_R}_{\text{Lip}} \le 1$\footnote{An explicit choice for such a truncator is $\tau_R(s) = \max(-R, \min(s,R))$.}. Denote the radius-$R$ truncation of $f$ as $f_R := \tau_R \circ f$. To smoothen this truncation so it can be approximated within $\calW^{1,2}(\rh_T)$, we set $g_{R,\eps} := \eta_\eps * f_R = \int_{x \in \R^n} \eta_\eps(y)f_R(x-y)\,dy$ where $\eta_\eps: \R^n \to \R$ is a mollifier\footnote{Mollifiers are standard in distribution theory, and their key properties used in the proof are collected in \pref{lem:mollifiers}.}. 

    \ppart{$L^2(\rh_T)$-convergence of hard truncations} It is straightforward to see that $f_R \to f$ pointwise as $R \to \infty$. Since $f \in L^2(\rh_T)$ and $|f_R(x)| \le |f(x)|$, the dominated convergence theorem implies that
    \[
        f_R \rightarrow_{L^2(\rh_T)} f\,.
    \]
    Also, since the composition of two Lipschitz continuous functions is continuous, $\norm{f_R}_{\text{Lip}} \le A$. Furthermore, $f_R$ is differentiable for every $x \not\in \{|f| = R\}$. Therefore, we choose a sequence of truncation radii $\{ R_j\}_{j}$ such that $\lim_{j \to \infty} R_j \to \infty$ and $\rh_T\{|f(x)| = R_j\} = 0$ for every $j$ since the set of atoms in the support of $\rh_T$ is at most countable. This immediately implies that, for $x \in \R \setminus \{-R,R\}$, 
    \[
        \nabla f_R(x) = \1_{|f(x)| < R}\nabla f(x)\,,
    \]
    which holds $\rh_T$-a.e.. So, since $\norm{ \1_{|f(x)| \le R}\nabla f(x)}_2 \le A$ and $\1_{|f(x)| \le R}\nabla f(x) \to \nabla f(x)$ pointwise as $R \to \infty$, the dominated convergence theorem implies that
    \[
        \1_{|f(x)| < R}\nabla f \rightarrow_{L_2(\rh_T)} \nabla f\,. 
    \]
    So, $f_R$ and $\nabla f_R$ already converge to $f$ and $\nabla f$, but $f_R \notin C^1_b(\R^n)$. To deal with this, we use the mollified $g_{R,\eps}$ evaluated at a sequence of truncation radii $\{R_j\}_j$ which are null-sets under $\rh_T$, and show that the properties of the mollifier allow us to conclude that $g_{R,\eps} \to f_R$ and $\nabla g_{R,\eps} \to \nabla f_R$ in $L^2(\rh_T)$ with $g_{R,\eps} \in C^1_b(\R^n)$. At this point, the closure of the Cauchy sequence $\{g_{R_j,\eps}\}_j$ under the $\norm{\cdot}_{\calW^{1,2}(\rh_T)}$-norm gives a $L^2(\rh_T)$ convergence to $f$ and shows that $f \in \calW^{1,2}(\rh_T)$. Invoking~\pref{prop:lsi-rhT-bounded} immediately yields the desired LSI for $f$.     

    \ppart{$L^2(\rh_T)$-convergence of mollified Cauchy sequence} Fix the sequence $R_j\to\infty$ chosen above with
    $\rh_T\{|f(x)|=R_j\}=0$. For each $R_j$ in the sequence, \pref{lem:mollifiers} applied to $F=f_{R_j}$ yields that
    \[
        g_{R_j,\eps}:=\eta_\eps*f_{R_j}\rightarrow_{L^2(\rh_T)} f_{R_j},\,
    \]
    and
    \[
        \nabla g_{R_j,\eps}\rightarrow_{L^2(\rh_T;\R^n)} \mathbf 1_{\{|f|<R_j\}}\nabla f
    \]
    Fix a sequence $\{\eps_j > 0\}_j$ such that $\lim_{j \to \infty} \eps_j \to 0$, and the following graph-norm convergence
    \[
        \|g_{R_j,\eps_j}-f_{R_j}\|_{L^2(\rh_T)} + \|\nabla g_{R_j,\eps_j} -\mathbf 1_{\{|f|<R_j\}}\nabla f\|_{L^2(\rh_T)}
        \le \frac1j\,,
    \]
    holds. Set $u_j:=g_{R_j,\eps_j}$ and observe that $u_j\in C_b^\infty(\mathbb R^n)\subset C_b^1(\mathbb R^n)$ by \pref{lem:mollifiers}. Moreover, since $f_{R_j}\to f$ in $L^2(\rh_T)$ and $\mathbf 1_{\{|f(x)|<R_j\}}\nabla f\to\nabla f$ in
    $L^2(\rh_T;\mathbb R^n)$, we obtain
    \[
        u_j\rightarrow_{L^2(\rh_T)} f
    \]
    and
    \[
    \nabla u_j\rightarrow_{L^2(\rh_T;\R^n)} \nabla f\,,
    \]
    by the triangle inequality. This means $f\in\calW^{1,2}(\rh_T)$ by
\pref{d:euclidean-sobolev-domain}.
    \pref{prop:lsi-rhT-bounded} then gives
    \[
        \Ent_{\rh_T}(f^2) \le \frac{4Te^{2LT}}{e^{-C K^2}} \int_{\R^n}\|\nabla f(x)\|_2^2\,d\rh_T(x)\,. \qedhere
    \]

\end{prf}

\subsection{Proof of LSI for annealed distributions}
\label{s:rho-t-lsi-proof}
\begin{prf}[Proof of \pref{t:rho-ls}]
Let
\[
    W(B)=\int_0^T\omega(\hat y_t(B))\,dt .
\]
By \pref{lem:cm-lipschitz-two}, \(W\) is \(K\)-CM-Lipschitz with
\(K\le L_\omega T^{3/2}e^{LT}\). Hence \pref{t:lsi-CM-pert} applies to
\[
    d\nu=Z^{-1}e^W\,d\gamma
\]
and gives a LSI for \(\nu\). By the Jarzynski representation
\eqref{e:je-weights}, \(\rh_T=\Psi_\#\nu\), where
\(\Psi(B)=\hat y_T(B)\). Since \(\Psi\) is \(\sqrt T e^{LT}\)-CM-Lipschitz
by \pref{lem:cm-lipscitz-one}, \pref{prop:lsi-rhT-bounded} gives the
claimed LSI for \(\rh_T\), with constants depending only on
\(T,L,L_\omega\) and the universal constants in \pref{t:lsi-CM-pert}.
\end{prf}

\section{Warm starts to the SL distribution}
\label{s:ws}
We expand the notion of a $L^\iy$ warm start to allow $\ep$ error in total variation.
\begin{definition}
\label{d:ws}
    Let $\hat P$ and $P$ be probability measures on the same space. 
    We say that $\hat P$ is a \vocab{$(B,\ep)$-warm start} to $P$ if there exists a probability measure $Q$ such that 
    \begin{align*}
        \TV(\hat P, Q)&\le \ep, & 
        \ve{\dd{Q}{P}}_{L^\iy(P)} &\le B.
    \end{align*}
    Equivalently, $\E_{\hat P} \ba{1-\pa{B \dd{P}{\hat P}\wedge 1}} \le \ep$ or $\E_P[\dd{\hat P}{P}\wedge B]\ge 1-\ep$. 
\end{definition}
For short, we will write $\hat P\xra{B,\ep} P$.
It is easy to see that if $\hat P\xra{B,\ep} P$, then $\hat P(A) \le B\cdot P(A)+\ep$.

Our main theorem in this section is the following.
\begin{theorem}[Annealed and ASL distributions are warm starts for SL distribution]
\label{t:annealed-ws}
Suppose \pref{a:m-error}, \pref{a:Lipschitz-drift}, and \pref{a:Lipschitz-je} hold for \eqref{e:rh-t-F}.
    Then $\rh_t$ and $\hat p_t$ are both $((1/\ep)^{O(1)}, \ep)$-warm starts to $p_t$.
\end{theorem}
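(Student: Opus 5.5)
The plan is to prove the statement for $\hat p_t$ first and then pass to $\rh_t$ by composing warm starts. Composition works because $\hat P\xra{B_1,\ep_1}P'$ and $P'\xra{B_2,\ep_2}P$ give $\hat P(A)\le B_1B_2P(A)+B_1\ep_2+\ep_1$. Taking $\ep_2=\ep_1/B_1$, and using that all parameters will be polynomial in $1/\ep$, keeps the composite bound of the form $((1/\ep)^{O(1)},\ep)$.

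\emph{Step 1: $\rh_t$ is a warm start to $\hat p_t$.} By \eqref{e:je-weights}, $\dd{\rh_t}{\hat p_t}(x)=\E[e^{w_t}\mid y_t=x]/\E[e^{w_t}]$. By \pref{lem:cm-lipschitz-two}, $w_t=W(B)$ is CM-Lipschitz, so \pref{lem:infinite-dim-gaussian-ui} and Borell's inequality give that every moment $\E_\ga e^{qW}$ is finite and dimension-free. Jensen on the conditional expectation then bounds $\ve{\dd{\rh_t}{\hat p_t}}_{L^q(\hat p_t)}$ by a constant $M_q$ for some fixed $q>1$. Truncating the density at level $B$ and applying Markov/H\"older shows that the mass of $\rh_t$ where the density exceeds $B$ is at most $M_q^q B^{1-q}$. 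Hence $\rh_t\xra{B,\ep}\hat p_t$ with $B=(1/\ep)^{O(1)}$.

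\emph{Step 2: $\hat p_t$ is a warm start to $p_t$, by scaffolding as in \pref{t:wpi-pT}.} Fix $T_1>t$ and let $\hat q:=\hat p_{T_1}K_{T_1\to t}$. I would proceed in three sub-steps.
\begin{enumerate}
\item \emph{Replace $\hat p_t$ by $\hat q$.} I would show that $\hat p_t$ has bounded R\'enyi (or $\chi^2$) divergence to $\hat q$. Both are laws at time $t$ of a process pinned to $\hat p_{T_1}$ at time $T_1$: the first is the reverse of ASL, the second the Brownian-bridge reversal \eqref{e:SL-time-change}. Their drifts differ by a term controlled by the Lipschitz constant $L$ of \pref{a:Lipschitz-drift} and by $\ve{\hat m}\le r$. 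Choosing $T_1-t$ small enough makes a Girsanov-type R\'enyi bound $O(1)$. By the same tail-truncation as in Step 1, this gives $\hat p_t\xra{O_\ep(1),\ep}\hat q$.
\item \emph{Compare $\hat q$ with $p_t=p_{T_1}K_{T_1\to t}$.} Apply \pref{l:p-subset-hat-p} with $a=1$ and $E=E_{T_1}$: the set $A^\sharp=\{\dd{p_{T_1}}{\hat p_{T_1}}\ge \rc2\}$ has $\hat p_{T_1}(A^\sharp)\ge e^{-O(E_{T_1}+1)}$. By the LSI of \pref{l:LSI-time-t} and the concentration in \pref{l:set-separation-lsi}, all but $\ep$ of the $\hat p_{T_1}$-mass lies within distance $\bar d=O(\sqrt{\log(1/\ep)/\hat c_{T_1}})$ of $A^\sharp$. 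Through \pref{l:couple-distance} this yields a transport $x\mapsto x'\in A^\sharp$ with $\ve{x-x'}\le\bar d$ except on an $\ep$-mass set.
\item \emph{Pass the domination through the kernel.} Gaussian kernels $K_{T_1\to t}(x,\cdot)$ and $K_{T_1\to t}(x',\cdot)$ with $\ve{x-x'}\le\bar d$ satisfy, off an $\ep$-probability tail, a pointwise density-ratio bound $e^{O(\bar d^2/\si^2+\bar d\sqrt{\log(1/\ep)}/\si)}$, which is polynomial in $1/\ep$. Integrating then gives $\hat q(A)\le B\,p_t(A)+O(\ep)$, where the factor $B$ also absorbs the $2e^{O(E_{T_1}+1)}$ loss from $A^\sharp$.
\end{enumerate}

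\emph{Main obstacle.} I expect sub-step (i) to be the hardest: controlling $\hat p_t$ against $\hat p_{T_1}K_{T_1\to t}$. ASL is not exactly a Gaussian noising of its later marginal. I would first try a Girsanov comparison between the two reverse-time bridges using only the Lipschitz drift. If that fails to be dimension-free, the fallback is a direct coupling: run ASL forward from $\hat y_t$ to $\hat y_{T_1}$ and show $\hat y_t$ stays within $O(1)$ of $\frac{t}{T_1}\hat y_{T_1}$ with Gaussian fluctuation, then absorb the discrepancy into the distance budget $\bar d$ of sub-step (ii).
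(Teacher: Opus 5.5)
Your sub-step (i) has a genuine gap, and it is the crux of the theorem. The reversal of ASL started from $\hat p_{T_1}$ has drift $\nabla\log\hat p_s-\hat m$. The kernel $K_{T_1\to t}$ is the reversal of Brownian motion, with drift $-y/s$. Since $\nabla\log\rh_s=\hat m-y/s$ by \eqref{e:rh-t-F}, the Girsanov integrand is exactly $\nabla\log(\hat p_s/\rh_s)$, the gradient of a log Jarzynski weight. Neither $L$ nor $\ve{\hat m}\le r$ controls it.

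For intuition, take $\hat m(y)=Ay$ with $A$ symmetric and $A^2=L^2I_n$ (boundedness aside). Then $\hat p_t$ and $\hat p_{T_1}K_{T_1\to t}$ are centered Gaussians whose covariances differ by $\tfrac{2}{3}t^2(T_1-t)L^2I_n$ to leading order. So their divergence is $\Theta(n)$, and any coupling moves most points a distance of order $\sqrt n$. Here $\nabla\om(y)=A^2y$ has size $\sqrt n$, which is exactly what \pref{a:Lipschitz-je} rules out.

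Falling back on $r=\sqrt n$ and shrinking $T_1-t$ to $O(1/n)$ fails for a different reason. Sub-step (iii) needs the kernel variance $\si^2=t(T_1-t)/T_1$ to be of order one relative to $\bar d^2$, and $\bar d$ is fixed by $\hat c_{T_1}$. Otherwise the factor $e^{O(\bar d^2/\si^2)}$ becomes $e^{\Om(n)}$. Your coupling fallback fails too: the displacement is of order $\sqrt n$, not $O(1)$. In any case, closeness of points gives no density-ratio bound for $\hat p_t$, which is not itself a Gaussian smoothing. Note that Step 2 never uses \pref{a:Lipschitz-je}, and that hypothesis is precisely what is needed here.

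The paper uses it as follows. Because $\rh_s$ is explicit, \pref{l:je-reverse} writes $\dd{\rh_t}{\rh_{T_1}K_{T_1\to t}}$ as a normalized conditional expectation of $e^{w}$, where $w$ integrates $\om$ along the reverse bridge started from $\rh_{T_1}$. This $w$ is CM-Lipschitz jointly in the starting point and the path. The LSI for $\rh_{T_1}$ (\pref{t:rho-ls}) and \pref{l:herbst-quick} then make the weights sub-Gaussian, and \pref{l:ws-je} gives a mutual $((1/\ep)^{o(1)},\ep)$ warm start. The scaffolding \pref{l:constant-portion-noise-ws} is run on $q_{T_1}=\rh_{T_1}$, after transferring the property of capturing a constant portion of $p_{T_1}$ from $\hat p_{T_1}$ to $\rh_{T_1}$. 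Only at the end is $\hat p_t$ handled, through $\hat p_t$ being a warm start to $\rh_t$. This is the reverse of your order.

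Your order can also be rescued directly. By Girsanov and It\^o's formula, the ASL path law on $[0,s]$ has density $\exp\pa{\calF(x_s)-\calF(0)-\int_0^s\om(x_u)\,du}$ with respect to Wiener measure. Conditioning on the endpoint gives $\frac{\hat p_s}{\rh_s}(y)\propto\E\exp\pa{-\int_0^s\om(\tfrac{u}{s}y+\be_u)\,du}$ for a standard Brownian bridge $\be$. Hence $\ve{\nabla\log(\hat p_s/\rh_s)}\le L_\om s/2$ pointwise, and your Girsanov comparison of the two reversals gives a dimension-free R\'enyi bound with $T_1-t$ of order one.

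The rest of your plan is sound in spirit. Step 1 is \pref{l:ws-je}(2); what is dimension-free there is the ratio $\E e^{qW}/(\E e^{W})^q$, via concentration, not the moments themselves. Sub-steps (ii)--(iii) amount to \pref{l:constant-portion-noise-ws} applied to $\hat p_{T_1}$ with its free LSI. For that you need a mass-preserving transport as in \pref{l:couple-distance-2}. \pref{l:couple-distance} only matches about $\hat p_{T_1}(A^\sharp)$ of mass, so it cannot move all of $\hat p_{T_1}$ onto $A^\sharp$ with density at most $2/\hat p_{T_1}(A^\sharp)$ relative to $p_{T_1}$.
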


First, we give some basic facts about when warm starts are preserved. 
\begin{proposition}[Preservation of warm starts]\label{p:ws-preserve}
~\vspace{-\baselineskip}
\begin{thmenum}
    \item\label{pitem:ws-preserve-data-processing} (Data processing) Let $T$ be a measurable map. If $\hat P$ is a $(B,\ep)$-warm start for $P$, then $T_\# \hat P$ is a $(B,\ep)$-warm start for $T_\# P$.
    \item\label{pitem:ws-preserve-warm-start-transitive} (Transitivity of warm starts) Suppose $P_1$ is a $(B_1,\ep_1)$-warm start to $P_2$, and $P_2$ is a $(B_2,\ep_2)$-warm start to $P_3$. Then $P_1$ is a $(B_1B_2, \ep_1+B_1\ep_2)$-warm start to $P_3$.
    \item\label{pitem:ws-preserve-warm-start-conditional} (Warm start from conditional distributions) Let $\hat P$ and $P$ be joint distributions on $(X,Y)$. 
    Suppose there is $A$ such that $A\subeq \set{x}{\dd{\hat P_X}{P_X}(x) \le B_1}$ (where $P_X$ denotes the marginal distribution of $X$) with $\hat P(X\in A)\ge 1-\ep_1$\footnote{Necessarily, for the marginal distributions of $X$, $\hat P_X$ is a $(B_1,\ep_1)$-warm start to $P_X$.}, and for $x\in A$, 
    $\hat P(\cdot \mid X=x)$ is a $(B_2,\ep_2)$-warm start for $P(\cdot \mid X=x)$. Then $\hat P$ is a $(B_1B_2,\ep_1+\ep_2)$-warm start for $P$.
\end{thmenum}
\end{proposition}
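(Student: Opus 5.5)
Each of the three items follows directly from \pref{d:ws}. We fix, for each given warm start, the witnessing measure $Q$ (with $\TV(\hat P,Q)\le\ep$ and $\dd QP\le B$) and build a new witness explicitly.

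\emph{Data processing.} Take $T_\# Q$ as the witness. Total variation contracts under pushforward, so $\TV(T_\#\hat P,T_\#Q)\le\TV(\hat P,Q)\le\ep$. Since $Q\le B\,P$ as measures, we get $T_\#Q(A)=Q(T^{-1}A)\le B\,P(T^{-1}A)=B\,T_\#P(A)$ for every measurable $A$. Hence $\dd{T_\#Q}{T_\#P}\le B$.

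\emph{Transitivity.} Let $Q_1$ witness $P_1\xra{B_1,\ep_1}P_2$, so $Q_1\le B_1P_2$. Let $Q_2$ witness $P_2\xra{B_2,\ep_2}P_3$. Write $P_2=(P_2\wedge Q_2)+R$, where $R$ is a positive measure of mass $\TV(P_2,Q_2)\le\ep_2$. Set $Q_1'=\rc{B_1}\dd{Q_1}{P_2}\cdot(P_2\wedge Q_2)$ and let $D=Q_1-Q_1'$. Then $D$ is a positive measure with $D\le B_1 R$, so its mass is at most $B_1\ep_2$. The measure $Q_1'$ is dominated by $P_2\wedge Q_2\le Q_2\le B_2P_3$. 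Define the witness $Q_3=Q_1'+D(\Omega)\,P_3$. This is a probability measure, $Q_3\le B_1B_2P_3$ as long as $B_1B_2\ge 1$, and
\[
\TV(P_1,Q_3)\le\TV(P_1,Q_1)+\TV(Q_1,Q_3)\le\ep_1+B_1\ep_2 .
\]
It is cleaner to run the same argument through the equivalent formulation $\E_{\hat P}\ba{1-(B\dd P{\hat P}\wedge1)}\le\ep$: bound the pointwise mass deficit and sum.

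\emph{Conditional.} Define $Q$ by taking the marginal $Q_X=\hat P_X|_A$ and, for $x\in A$, letting $Q(\cdot\mid X=x)=Q_x$ be the witness for $\hat P(\cdot\mid X=x)\xra{B_2,\ep_2}P(\cdot\mid X=x)$. Normalize, or pad the missing mass of $A^c$ with $P$, which costs nothing since $B_1B_2\ge1$. Density bound: $\dd{Q}{P}(x,y)=\dd{\hat P_X}{P_X}(x)\,\dd{Q_x}{P(\cdot\mid x)}(y)\le B_1B_2$ on $A$. TV bound: couple in two stages. The marginal stage loses at most $\hat P(X\notin A)\le\ep_1$, and the conditional stage loses at most $\E_{\hat P_X}[\ep_2\one_A]\le\ep_2$. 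Together,
\[
\TV(\hat P,Q)\le\ep_1+\ep_2 .
\]

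The main obstacle is transitivity. The difficulty is that $Q_1$ may place mass where $P_2$ disagrees with $Q_2$, and this mass must be charged at rate $B_1$. The explicit decomposition $P_2=(P_2\wedge Q_2)+R$ above is what handles this. The other two items are routine bookkeeping.
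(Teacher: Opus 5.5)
Your data-processing argument is the paper's, and your two-stage coupling for the conditional item matches the paper's computation. The transitivity construction, which you flag as the crux, fails as written. Write $f=\dd{Q_1}{P_2}\le B_1$. Your definitions give
\[
D=Q_1-Q_1'=\Bigl(1-\rc{B_1}\Bigr)f\,(P_2\wedge Q_2)+f\,R,
\]
so the claim $D\le B_1R$ is false. Concretely, take $Q_2=P_3=P_2$, $B_2=1$, $P_1=Q_1$ and $\ep_1=\ep_2=0$, so that $R=0$. Then $D=(1-\rc{B_1})Q_1$, and your witness $Q_3=\rc{B_1}Q_1+(1-\rc{B_1})P_3$ has $\TV(P_1,Q_3)=(1-\rc{B_1})\TV(Q_1,P_2)$. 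This is positive, whereas the target is $\ep_1+B_1\ep_2=0$, whenever $B_1>1$ and $Q_1\ne P_2$.

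The factor $\rc{B_1}$ is unnecessary. With $Q_1':=f\,(P_2\wedge Q_2)$ you get $D=fR\le B_1R$ and $Q_1'\le B_1(P_2\wedge Q_2)\le B_1Q_2\le B_1B_2P_3$, which is exactly the density the target allows.

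There is a second error in the padding. Adding $D(\Omega)P_3$ can push the density up to $B_1B_2+D(\Omega)$ wherever $Q_1'$ is already saturated. The hypothesis $B_1B_2\ge1$ only guarantees that the slack measure $B_1B_2P_3-Q_1'$ has mass $B_1B_2-1+D(\Omega)\ge D(\Omega)$. So you must pad with $\frac{D(\Omega)}{B_1B_2-Q_1'(\Omega)}(B_1B_2P_3-Q_1')$. With both changes, $Q_3\le B_1B_2P_3$ and $\TV(P_1,Q_3)\le\ep_1+D(\Omega)\le\ep_1+B_1\ep_2$.

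The same padding issue affects your conditional item. Normalizing gives density up to $B_1B_2/\hat P_X(A)$, and padding with $P$ gives density up to $B_1B_2+\hat P_X(A^c)$; both overshoot $B_1B_2$. Pad instead with the normalized slack of $B_1B_2P-\hat P_X|_A\otimes Q_x$. You should also choose $x\mapsto Q_x$ measurably, for instance the canonical witness built from $\hat P(\cdot\mid x)\wedge B_2P(\cdot\mid x)$.

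The paper avoids this bookkeeping by working throughout with the deficit form $\E_{\hat P}[1-(B\dd{P}{\hat P}\wedge1)]\le\ep$. Its equivalence with the witness form for $B\ge1$ is precisely the slack-padding argument above. For transitivity, set $a=B_1\dd{P_2}{P_1}\wedge1$ and $b=B_2\dd{P_3}{P_2}\wedge1$. Pointwise, $1-(B_1B_2\dd{P_3}{P_1}\wedge1)\le1-ab=(1-a)+a(1-b)$, and $\E_{P_1}[a(1-b)]\le B_1\E_{P_2}[1-b]\le B_1\ep_2$. The conditional item is the same computation, using $\dd{P_X}{\hat P_X}\ge1/B_1$ on $A$. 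Your closing remark points at exactly this route, and carrying it out is the cleanest repair of the transitivity step.
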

We note as a corollary of \pref{pitem:ws-preserve-warm-start-transitive} that if we have the power-law tails that for any $\ep>0$,
\[
P_1 \xra{O(1/\ep)^{C_1}, \ep} P_2 \xra{O(1/\ep)^{C_2}, \ep} P_3,
\]
then $P_1\xra{O(1/\ep_1)^{C_1} O(1/\ep_2)^{C_2}, \ep_1 + O(1/\ep_1)^{C_1}\ep_2} P_3$. Taking $\ep_1=\fc \ep2$, $\ep_2 = O(\pf{\ep}2^{C_1+1})$ gives $P_1\xra{O(1/\ep)^{C_1C_2+C_1+C_2},\ep} P_3$.
\begin{prf}
For part i, take $T_\# Q$ as the witness in the first definition.

For part ii, we have
\begin{align*}
    \E_{P_1}\ba{1-\pa{B_1B_2 \dd{P_3}{P_1}\wedge 1}} &\le 
    \E_{P_1}\ba{1-\pa{B_1 \dd{P_2}{P_1}\wedge 1} + \pa{B_1 \dd{P_2}{P_1}\wedge 1}\pa{1-B_2 \dd{P_3}{P_2}\wedge 1}}\\
    &\le \ep_1 + B_1 \E_{P_2}\pa{1-B_2 \dd{P_3}{P_2}\wedge 1} \le \ep_1+B_1\ep_2.
\end{align*}

For part iii, 
\begin{align*}
    \E_{\hat P}\ba{1-\pa{B_1B_2 \dd{P}{\hat P}\wedge 1}} &\le \ep_1+
    \E_{\hat P}\ba{\one_{x\in A}\pa{1-\pa{B_1B_2 \dd{P}{\hat P}\wedge 1}}} \\
    &\le \ep_1+\E_{\hat P}\ba{\one_A \E_{\hat P}\ba{1-\pa{B_1B_2 \dd{P_X}{\hat P_X}\dd{P_{Y\mid X}}{\hat P_{Y\mid X}}\wedge 1}} \mid X} \\
    &\le \ep_1+\E_{\hat P}\ba{\one_A \E_{\hat P}\ba{1-\pa{B_2 \dd{P_{Y\mid X}}{\hat P_{Y\mid X}}\wedge 1}} \mid X}\le \ep_1+\ep_2.\qedhere
\end{align*}
\end{prf}

The following is a simple consequence of \Cref{l:wpi-converge}.
\begin{lemma}[Mixing from an approximate $L^\iy$ warm start]
\label{l:ws-mix}
    Suppose that for a Markov process, $\pi$ satisfies a $(c,\de)$-weak Poincar\'e inequality.
    Suppose that $\nu_0$ is a $(B,\ep)$-warm start to $\pi$. Then
    \[
\TV(\nu_T,\pi)
\le \rc 2\sqrt{e^{-2c T}+ \de }\cdot B + \ep.
    \]
    For a Markov chain, the same bounds hold for $T\in \N$ with $e^{-2c T}$ replaced by $(1-c)^{2T}$.
\end{lemma}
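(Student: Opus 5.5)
We would prove \pref{l:ws-mix} by splitting $\nu_0$ into a genuinely warm part and a small remainder, applying \pref{l:wpi-converge} to the warm part, and controlling the remainder by contraction of total variation. By \pref{d:ws} there is a probability measure $Q$ with $\TV(\nu_0,Q)\le \ep$ and $\ve{\dd{Q}{\pi}}_{L^\iy(\pi)}\le B$. Let $Q_T$ denote the law at time $T$ of the process started from $Q$. Since the Markov semigroup is a contraction in total variation, $\TV(\nu_T,Q_T)\le \TV(\nu_0,Q)\le \ep$. By the triangle inequality it then suffices to show
\[
\TV(Q_T,\pi)\le \rc2\sqrt{e^{-2cT}+\de}\cdot B.
\]

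For the warm part, we would use Cauchy--Schwarz in the form $\TV(Q_T,\pi)\le \rc2\sqrt{\chi^2(Q_T\|\pi)}$. We then apply \pref{l:wpi-converge} with $\Err=\osc^2$ and the monotonicity remark following it, which gives
\[
\chi^2(Q_T\|\pi)\le e^{-2cT}\chi^2(Q\|\pi)+\de\,\osc\pa{\dd{Q}{\pi}}^2 .
\]
The density ratio takes values in $[0,B]$. Hence $\osc(\dd Q\pi)^2\le B^2$, and $\chi^2(Q\|\pi)=\E_\pi[(\dd Q\pi)^2]-1\le B-1\le B^2$. Substituting these bounds gives $\chi^2(Q_T\|\pi)\le (e^{-2cT}+\de)B^2$, and taking square roots yields the display above.

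The monotonicity of $\osc(\dd{\nu_s}{\pi})$ along the semigroup holds because $\dd{\nu_s}{\pi}=P_s^*\dd{\nu_0}{\pi}$. Here $P_s^*$ is the adjoint Markov operator, which equals $P_s$ by reversibility, and a Markov operator maps functions with values in $[\inf g,\sup g]$ into the same interval.

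For a Markov chain, the argument is identical. We use the discrete version stated in the remark after \pref{l:wpi-converge}, with $e^{-2cT}$ replaced by $(1-c)^{2T}$ for $T\in\N$, together with the same total variation contraction for the chain kernel.

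No step here is a real obstacle. The only point needing care is to apply \pref{l:wpi-converge} to the surrogate $Q$ rather than to $\nu_0$ itself, since $\nu_0$ may not have a bounded density ratio. The $\ep$ error is then absorbed separately through total variation contraction.
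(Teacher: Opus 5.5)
Your proposal is correct and follows the paper's proof: replace $\nu_0$ by the warm-start witness $Q$, pay $\ep$ through the triangle inequality and total-variation data processing, and bound $\TV(Q_T,\pi)\le \frac12\sqrt{\chi^2(Q_T\|\pi)}$ via \pref{l:wpi-converge}. You also make explicit the bounds $\chi^2(Q\|\pi)\le B-1\le B^2$ and $\osc\pa{\dd{Q}{\pi}}^2\le B^2$, which the paper leaves implicit.
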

\begin{prf}
Let $\td \nu_0$ be such that
\[
    \TV(\nu_0,\td\nu_0)\le \ep,
    \qquad
    \left\|\dd{\td\nu_0}{\pi}\right\|_\infty\le B.
\]
Let $\td \nu_T$ be the distribution at time $T$ with initial distribution $\td\nu_0$. Then by the triangle inequality and data processing, and \pref{l:wpi-converge} applied to $\td \nu_T$,
    \[\TV(\nu_T,\pi)\le \TV(\td \nu_T,\pi) +  \ep
    \le \rc2\sqrt{\chi^2(\td \nu_T\| \pi)} +  \ep \le\rc 2 \sqrt{e^{-2c T}  + \de }\cdot B + \ep.\qedhere
    \]
\end{prf}

To set up a coupling argument, we record the standard fact that non-atomic probability measures are all isomorphic to each other.
\begin{lemma}[Atomless matching]\label{l:atomless-matching}
Let $E,F$ be Borel subsets of $\R^n$, and let $\mu_E,\mu_F$ be finite
non-atomic Borel measures on $E$ and $F$ with
\[
    \mu_E(E)=\mu_F(F)<\infty.
\]
Then there exist full-measure measurable subsets $E_0\subseteq E$ and
$F_0\subseteq F$ and a measurable bijection
\[
    S:E_0\to F_0
\]
with measurable inverse such that
\[
    S_\#(\mu_E|_{E_0})=\mu_F|_{F_0}.
\]
\end{lemma}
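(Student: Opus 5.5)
The plan is to reduce to the one-dimensional case via a Borel isomorphism, and then match the two sides by monotone rearrangement of their distribution functions. Since $E$ and $F$ are Borel subsets of $\R^n$, they are standard Borel spaces. By Kuratowski's theorem there are Borel isomorphisms (measurable bijections with measurable inverses) $\iota_E:E\to E'$ and $\iota_F:F\to F'$ onto Borel subsets $E',F'\subseteq[0,1]$. Push forward to get $\lambda_E:=(\iota_E)_\#\mu_E$ and $\lambda_F:=(\iota_F)_\#\mu_F$. These are finite Borel measures on $[0,1]$ with the same total mass $m$, and they are still non-atomic because each $\iota$ is a bijection, so points pull back to points. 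After dividing both by $m$ we may assume $m=1$. It therefore suffices to build the bijection $S$ between full-measure subsets of $[0,1]$ and then conjugate by $\iota_E,\iota_F$.

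On $[0,1]$, let $G_E(x)=\lambda_E([0,x])$. Non-atomicity makes $G_E$ continuous and nondecreasing, so $(G_E)_\#\lambda_E$ is Lebesgue measure on $[0,1]$. The map $G_E$ is injective except on intervals where it is constant. There are only countably many maximal such intervals, and each is $\lambda_E$-null, so their union $N_E$ is $\lambda_E$-null. Hence $G_E$ restricted to $[0,1]\setminus N_E$ is injective, with image a Borel set of full Lebesgue measure. Its inverse there is the generalized inverse $G_E^{-1}(u)=\inf\{x:G_E(x)\ge u\}$, which is monotone and hence Borel. Do the same for $F$, and set $S_0:=G_F^{-1}\circ G_E$ on a set where both maps are bijective. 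Concretely, take $E_0'=([0,1]\setminus N_E)\cap G_E^{-1}(\mathrm{Im}\,G_F|_{[0,1]\setminus N_F})$, and define $F_0'$ symmetrically. These are Borel, of full measure, and $S_0:E_0'\to F_0'$ is a bijection with Borel inverse $G_E^{-1}\circ G_F$. It pushes $\lambda_E|_{E_0'}$ to $\lambda_F|_{F_0'}$, since both map to Lebesgue measure through $G$.

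Finally, set $S:=\iota_F^{-1}\circ S_0\circ\iota_E$ on $E_0:=\iota_E^{-1}(E_0'\cap E')$, and $F_0:=\iota_F^{-1}(F_0'\cap F')$. These are measurable and of full measure, since $\lambda_E$ is concentrated on $E'$ and $\lambda_F$ on $F'$. The composition inherits bijectivity, measurability of the map and its inverse, and the pushforward identity.

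The main obstacle is the measure-theoretic bookkeeping in the middle step. One must check that the exceptional sets are null and that the images are Borel: an injective Borel image is Borel by the Lusin–Souslin theorem, which also gives measurability of the inverses.
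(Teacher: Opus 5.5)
Your argument is correct in substance, but it takes a different route from the paper. The paper handles the zero-mass case, normalizes both measures to probability measures, cites the isomorphism theorem for atomless standard probability spaces \cite[Theorem A.7]{janson2013graphons}, and rescales. You instead essentially prove that theorem. Kuratowski's theorem moves both spaces to $[0,1]$. The monotone rearrangement $G_F^{-1}\circ G_E$ then matches the two measures, using Lebesgue measure as a common reference, and the only exceptional sets are the constancy intervals of the distribution functions. Your route is self-contained modulo Kuratowski and Lusin--Souslin and makes the matching explicit. The paper's is shorter and hides all the bookkeeping inside the citation. Your middle step checks out:
\begin{itemize}
\item $N_E$ is a countable union of $\lambda_E$-null closed intervals, and $G_E$ is injective off $N_E$.
\item $E_0'$ has full $\lambda_E$-measure, because $(G_E)_\#\lambda_E$ is Lebesgue measure and $G_F([0,1]\setminus N_F)$ is co-countable.
\item $S_0$ is a Borel bijection $E_0'\to F_0'$ pushing $\lambda_E|_{E_0'}$ to $\lambda_F|_{F_0'}$.
\end{itemize}

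There is one bookkeeping slip in your last step. With $E_0:=\iota_E^{-1}(E_0'\cap E')$ and $F_0:=\iota_F^{-1}(F_0'\cap F')$, the map $S=\iota_F^{-1}\circ S_0\circ\iota_E$ need not be defined on all of $E_0$, nor map onto $F_0$. The reason is that $S_0$ can send points of $E_0'\cap E'$ outside $F'$, where $\iota_F^{-1}$ is undefined. For example, take $\lambda_E=\lambda_F$ equal to Lebesgue measure, with $E'=[0,1]$ and $F'=[0,1]\setminus\mathbb{Q}$. Then $S_0$ is the identity and sends rationals out of $F'$.

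The fix is immediate: replace $E_0'$ by $E_0'':=E_0'\cap E'\cap S_0^{-1}(F')$ and $F_0'$ by $S_0(E_0'')$. The discarded parts are null, because $S_0$ is measure-preserving and $\lambda_E,\lambda_F$ are concentrated on $E',F'$. Alternatively, note that a nonzero non-atomic measure cannot live on a countable set. So $E$ and $F$ are uncountable, Kuratowski gives Borel isomorphisms onto all of $[0,1]$, and $E'=F'=[0,1]$. You should also state the trivial case $m=0$ (take $E_0=F_0=\emptyset$) before dividing by $m$.
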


\begin{prf}
If the common mass is zero, this is trivial. Otherwise normalize both
measures to be probability measures. The result is the standard
isomorphism theorem for atomless standard probability spaces; see, e.g.,
\cite[Theorem A.7]{janson2013graphons}. Rescaling gives the stated
finite-measure form.
\end{prf}
We use the following variation of the coupling argument of \Cref{l:couple-distance}. 
\begin{lemma}[Coupling sets with small distance, II]
\label{l:couple-distance-2}
    Keep the assumptions of \Cref{l:set-separation-lsi}. Suppose $a\le b$ and $0<\ep<1$, and assume moreover that $\mu$ is non-atomic.
    Let $\mu_A:=\mu(\cdot\cap A)/a$ and
$\mu_B:=\mu(\cdot\cap B)/b$.
    Then for any $d> \sfc{2}{c} \pa{\sqrt{\ln \prc {a\ep}}+ \sqrt{\ln \prc {b\ep}}}$,
there exist full-measure sets $B'\subseteq B$ and $A'\subseteq A$ and a
measurable bijection $T:B'\to A'$ with measurable inverse such that
$T_\#\mu_B=\mu_A$ and
\[
    \mu_B\{y\in B':\|T(y)-y\|\le d\}\ge 1-\ep.
\]
\end{lemma}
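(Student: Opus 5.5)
We follow the greedy box-matching construction of \pref{l:couple-distance}, now working with normalized measures and then upgrading the resulting partial coupling to a measure-preserving bijection via \pref{l:atomless-matching}.

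\textbf{Step 1: a separation bound for surviving pieces.} Fix a small $\de>0$ and a large box $K=[-N\de,N\de]^n$ with $\mu(K^c)\le \de$, partitioned into cubes of side $\de$. Run the greedy procedure on $A_0=A\cap K$ and $B_0=B\cap K$. At each step, \pref{l:set-separation-lsi} is applied to the currently unmatched sets $A_t\subeq A$ and $B_t\subeq B$. We maintain the invariant that a fraction $\theta$ of $\mu_B$ and the same fraction of $\mu_A$ has already been matched, via a sub-coupling with the correct normalized marginals (transport mass rate $a/b$). Then $\mu(B_t)\ge b(1-\theta)-\de$ and $\mu(A_t)\ge a(1-\theta)-\de$. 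While $\theta<1-\ep$, we have $\mu(A_t)\gtrsim a\ep$ and $\mu(B_t)\gtrsim b\ep$. Hence \pref{l:set-separation-lsi} yields points $x\in A_t$ and $y\in B_t$ with
\[
\|x-y\|\le \sfc{2}{c}\pa{\sqrt{\ln \prc{a\ep-\de}}+\sqrt{\ln\prc{b\ep-\de}}}.
\]

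\textbf{Step 2: matching pieces.} Choose $x,y$ inside cubes $K_x,K_y$ that carry positive remaining mass. Match $\mu_A|_{A_t\cap K_x}$ against $\mu_B|_{B_t\cap K_y}$ up to the smaller of the two normalized masses, using non-atomicity to carve out a subset of exactly the right mass. Each match exhausts one cube, so the procedure terminates after finitely many steps. All matched pairs are within distance $d_\de := (\text{bound}) + 2\sqrt n\,\de$, and the matched $\mu_B$-mass is at least $1-\ep-O(\de/b)$. Because $d$ is a strict upper bound in the statement, we can choose $\de$ small enough that $d_\de\le d$. The $O(\de)$ losses are absorbed by running the procedure with target $\ep'<\ep$ close to $\ep$; this is possible since the threshold on $d$ is continuous in $\ep$.

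\textbf{Step 3: from a coupling to a bijection.} Each matched pair $(A_t\cap K_x, B_t\cap K_y')$ has equal normalized mass under $\mu_A$ and $\mu_B$, and both restricted measures are non-atomic. Applying \pref{l:atomless-matching} to each such pair gives a measurable bijection mapping one restricted measure onto the other. These bijections move points by at most the diameter bound, since both pieces lie in cubes whose chosen representatives are close. Apply \pref{l:atomless-matching} once more to the leftover parts (equal total $\mu_B$- and $\mu_A$-mass, including $K^c$) to get an arbitrary measure-preserving bijection there. Gluing these countably many bijections on disjoint pieces and discarding null sets produces $B'$, $A'$, and $T$ with measurable inverse and $T_\#\mu_B=\mu_A$.

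\textbf{Main obstacle.} The delicate part is the bookkeeping in Step 1. We must ensure that the residual sets retain masses of order $a\ep$ and $b\ep$ simultaneously, even though the two normalizations differ. This is what makes the threshold involve $\ln(1/(a\ep))$ and $\ln(1/(b\ep))$ rather than the expressions appearing in \pref{l:couple-distance}.
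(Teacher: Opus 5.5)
Your proposal is correct and follows the paper's proof essentially step for step. The shared ingredients are: greedy matching of side-$\de$ cubes with mass ratio $b/a$ so that the normalized masses agree; \pref{l:set-separation-lsi} applied to residual sets of mass roughly $a\ep$ and $b\ep$, after discarding null pieces so the chosen points lie in cubes of positive remaining mass; \pref{l:atomless-matching} on each matched pair and on the residual; and the strict inequality on $d$ absorbing the $\de$-losses by working with some $\tilde\ep<\ep$. The only difference is cosmetic: you stop when the matched fraction reaches $1-\tilde\ep$ rather than when $\mu(A_t)\le a\tilde\ep$, which makes the final mass count slightly cleaner.
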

\begin{prf}
Fix $\td\ep\in(0,\ep)$ to be chosen at the end, and let
$0<\de<a\td\ep$. Let $K=[-N\de,N\de]^n$ be large enough that
$\mu(K)\ge 1-\de$, and divide it into boxes of side length $\de$.

We inductively construct a transport map on a large subset of $B$. Let
$A_0=A\cap K$ and $B_0=B\cap K$. We maintain the invariant
\[
    \mu(B_0 \setminus B_t)=\frac ba\, \mu(A_0 \setminus A_t).
\]
At step $t$, while $\mu(A_t)>\td\ep a$, and discarding the parts of the measure falling in boxes that contain zero measure, \Cref{l:set-separation-lsi}
gives points $x\in A_t$, $y\in B_t$ such that
\[
    \|x-y\|
    \le
    \sqrt{\frac{2}{c}}
    \left(
        \sqrt{\ln\frac1{a\td\ep}}
        +
        \sqrt{\ln\frac1{b\td\ep-\de}}
    \right).
\]
Let $K_x$ and $K_y$ be the side-length-$\de$ boxes containing $x$ and $y$, which have non-zero mass because we discarded zero-mass boxes before applying \Cref{l:set-separation-lsi}. Choose measurable
sets
\[
    K_x'\subseteq A_t\cap K_x,\qquad
    K_y'\subseteq B_t\cap K_y
\]
such that either $K_x'=A_t\cap K_x$ or $K_y'=B_t\cap K_y$, and
\[
    \frac{\mu(K_y')}{\mu(K_x')}=\frac ba .
\]
Equivalently,
\[
    \mu_B(K_y')=\mu_A(K_x').
\]
By \Cref{l:atomless-matching} applied to the finite measures
$\mu_B|_{K_y'}$ and $\mu_A|_{K_x'}$, there are full-measure subsets of
$K_y'$ and $K_x'$ and a measurable bijection between them, with measurable
inverse, pushing $\mu_B|_{K_y'}$ to $\mu_A|_{K_x'}$. Define $T$ on this
matched source piece by this bijection.

Set
\[
    A_{t+1}=A_t\setminus K_x',
    \qquad
    B_{t+1}=B_t\setminus K_y'.
\]
The process terminates after finitely many steps, since one box is emptied
at each step. Let $B_{\rm g}$ and $A_{\rm g}$ be the unions of the matched
source and target pieces, excluding the null sets discarded by
\Cref{l:atomless-matching}. Then
\[
    \mu_B(B_{\rm g})=\mu_A(A_{\rm g}),
\]
and for every $y\in B_{\rm g}$,
\[
    \|T(y)-y\|
    \le
    \sqrt{\frac{2}{c}}
    \left(
        \sqrt{\ln\frac1{a\td\ep}}
        +
        \sqrt{\ln\frac1{b\td\ep-\de}}
    \right)
    +2\sqrt n\,\de .
\]
Moreover, since $\mu(A_0)\ge a-\de$ and the procedure stops with
$\mu(A_t)\le a\td\ep$,
\[
    \mu_A(A_{\rm g})\ge 1-\td\ep-\frac{\de}{a},
\]
and hence also
\[
    \mu_B(B_{\rm g})\ge 1-\td\ep-\frac{\de}{a}.
\]

The residual measures
    $\mu_B|_{B\setminus B_{\rm g}}$
    and
    $\mu_A|_{A\setminus A_{\rm g}}$
have the same total mass and are non-atomic. Applying
\Cref{l:atomless-matching} once more, define $T$ on full-measure subsets of
the residual pieces by a measurable bijection pushing the first residual
measure to the second. Patching the finitely many matched-piece maps with
this residual map gives full-measure sets $B'\subseteq B$ and $A'\subseteq A$
and a measurable bijection $T:B'\to A'$ with measurable inverse such that
\[
    T_\#(\mu_B|_{B'})=\mu_A|_{A'}.
\]
Redefining $T$ arbitrarily on a $\mu_B$-null set obtains
    $T_\#(\mu_B)=\mu_A$.

Finally, since $d$ is strictly larger than
\[
    \sqrt{\frac{2}{c}}
    \left(
        \sqrt{\ln\frac1{a\ep}}
        +
        \sqrt{\ln\frac1{b\ep}}
    \right),
\]
we may choose $\td\ep<\ep$ sufficiently close to $\ep$ so that
\[
    d>
    \sqrt{\frac{2}{c}}
    \left(
        \sqrt{\ln\frac1{a\td\ep}}
        +
        \sqrt{\ln\frac1{b\td\ep}}
    \right).
\]
Then choose $\de>0$ sufficiently small that
    $\de<a\td\ep$ and
    $\de/a\le \ep-\td\ep$
and
\[
    \sqrt{\frac{2}{c}}
    \left(
        \sqrt{\ln\frac1{a\td\ep}}
        +
        \sqrt{\ln\frac1{b\td\ep-\de}}
    \right)
    +2\sqrt n\,\de
    \;\le\; d.
\]
With these choices,
\[
    \mu_B\{y\in B':\|T(y)-y\|\le d\}
    \;\ge\;
    \mu_B(B_{\rm g})
    \;\ge\;
    1-\td\ep-\frac{\de}{a}
    \;\ge\;
    1-\ep.
\]
This proves the claim.
\end{prf}

If we have a KL bound $\KL(p\|\hat p)$, then a priori $\hat p$ could be large where $p$ has small mass, but at least a constant portion of $\hat p$ is such that $p$ not too much smaller. This is weaker than $\hat p$ being a warm start; for the latter, we need $1-\ep$ mass under $\hat p$, and we will bootstrap to obtain this in the next lemma.
\begin{lemma}[Reverse KL bound implies capturing constant portion of distribution]
\label{l:rev-KL-constant-portion}
Let $\mu,\nu$ be probability measures with $\KL(\mu\|\nu)<\infty$.
Assume moreover that $\nu\ll\mu$.
    Let $L, L_2>1$. Then
    \[
\nu\pa{\bc{\dd{\nu}{\mu}\le L_2}} \ge \rc L \pa{
1- \fc{\KL(\mu\|\nu)+1}{\log L} - \rc{L_2}
}.
    \]
\end{lemma}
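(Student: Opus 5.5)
Write $r:=\dd{\nu}{\mu}$; this exists because $\nu\ll\mu$. Since $\KL(\mu\|\nu)<\infty$ we also have $\mu\ll\nu$, so $\mu$ and $\nu$ are equivalent and $r\in(0,\infty)$ $\mu$-a.s. I will bound $\nu$ of the good set $G:=\{r\le L_2\}$ from below by splitting it into the region where $r$ is not too small and the region where $r$ is tiny. The key identity is $\KL(\mu\|\nu)=\E_\mu[-\log r]$.

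The first step is to control the mass under $\mu$ where $r<1/L$. On $\{r<1/L\}$ we have $-\log r>\log L$. However, $-\log r$ can be negative elsewhere, so Markov's inequality cannot be applied directly. I will use the positive/negative part decomposition: $\E_\mu[(-\log r)_+]=\KL(\mu\|\nu)+\E_\mu[(\log r)_+]$. Bounding the second term via $\log x\le x-1$ on $x\ge1$ gives
\[
\E_\mu[(\log r)_+]\le \E_\mu[(r-1)_+]\le \E_\mu r=1 .
\]
Markov's inequality then yields
\[
\mu(r<1/L)\le \fc{\KL(\mu\|\nu)+1}{\log L}.
\]

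The second step is to control the mass under $\mu$ where $r>L_2$. Since $\E_\mu r=1$, Markov's inequality gives $\mu(r>L_2)\le 1/L_2$.

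Combining the two steps, the set $G':=\{1/L\le r\le L_2\}$ satisfies
\[
\mu(G')\ge 1-\fc{\KL(\mu\|\nu)+1}{\log L}-\rc{L_2}.
\]
Finally, $\nu(G)\ge\nu(G')=\int_{G'}r\,d\mu\ge \rc L\,\mu(G')$, which is exactly the stated bound. The only delicate point is the first step: KL is an expectation of a signed quantity, so one must handle the negative part separately, and the "$+1$" in the numerator absorbs exactly that correction. Everything else is two applications of Markov's inequality.
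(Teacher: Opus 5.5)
Your proof is correct and follows the paper's argument step for step: a Markov-type bound on $\mu(r<1/L)$ from the KL, Markov's inequality for $\mu(r>L_2)$, and then $\nu(G)\ge \frac{1}{L}\mu(G')$. The only cosmetic difference is how the negative part of the log-ratio is absorbed. The paper bounds $\mathbb{E}_\mu\bigl[\log\frac{d\mu}{d\nu}\,\mathbf{1}_{\{d\mu/d\nu<1\}}\bigr]\ge -1$, implicitly via $x\log x\ge -1/e$ under $\nu$. You instead bound $\mathbb{E}_\mu[(\log r)_+]\le \mathbb{E}_\mu r=1$. Both produce the same ``$+1$''.
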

\begin{prf}
    First note that if $\mu\ll \nu$, then 
    \begin{align}
    \nonumber
        \KL(\mu\|\nu)& = \E_\mu \pa{\log \dd{\mu}{\nu}}\one_{\dd{\mu}{\nu}\ge 1} + \E_\mu\pa{\log \dd{\mu}{\nu}} \one_{\dd{\mu}{\nu}< 1} 
        \ge  \E_\mu \pa{\log \dd{\mu}{\nu}}\one_{\dd{\mu}{\nu}\ge 1} - 1
        \ge \mu\pa{\dd{\mu}{\nu}\ge L}(\log L)-1\\
        \implies
        \mu\pa{\dd{\mu}{\nu}\ge L} & \le \fc{\KL(\mu\|\nu) + 1}{\log L}.
        \label{e:kl-markov}
    \end{align}
    Hence
    \begin{align*}
        \nu\pa{\bc{\dd{\nu}{\mu}\le L_2}}
        &\ge\E_{\mu}\ba{\dd{\nu}{\mu} \one_{\bc{\dd{\nu}{\mu}\le L_2 \text{ and }\dd{\mu}{\nu}\le L}}}\ge \rc{L} \mu\pa{
\bc{\dd{\nu}{\mu}\le L_2}\cap \bc{\dd{\nu}{\mu}\ge \rc L}
        }\\
        &\ge \rc L \ba{\mu\pa{
\bc{\dd{\mu}{\nu}\le L }}- \mu\pa{\bc{\dd{\nu}{\mu}>L_2}}}\ge_{\eqref{e:kl-markov},\,\text{Markov}} \rc L \pa{1 - \fc{\KL(\mu\|\nu) + 1}{\log L} - \rc{L_2}}.\qedhere
    \end{align*}
\end{prf}

We first establish that a convolved distribution is a warm start.
\begin{lemma}[If $q$ satisfies LSI and captures a constant portion of the SL distribution, then the noising process turns it into a warm start]
\label{l:constant-portion-noise-ws}
    Suppose that $q_{t_1}$ is non-atomic, satisfies a log-Sobolev inequality with constant $c_{t_1}$ and $q_{t_1}\pa{\bc{\dd{q_{t_1}}{p_{t_1}}\le L}}\ge c$. 
    Then $q_{t_1}K_{t_1\to t_0}$ is a $\pa{\fc{L}{c}\exp\pa{O\pa{\pa{\fc{t_0}{t_1(t_1-t_0)c_{t_1}} \vee 1}\ln \prc{c\ep}}},\ep}$-warm start to $p_{t_0}$.
\end{lemma}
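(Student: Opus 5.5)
Let $G:=\bc{x:\dd{q_{t_1}}{p_{t_1}}(x)\le L}$, so $q_{t_1}(G)\ge c$, and write $q_G:=q_{t_1}(\cdot\cap G)/q_{t_1}(G)$. The plan is to show that $q_{t_1}K_{t_1\to t_0}$ is close in TV to a reweighting of $q_G K_{t_1\to t_0}$ with bounded density, and then use $q_G\le \fc{L}{c}p_{t_1}$ together with data processing (\pref{pitem:ws-preserve-data-processing}). The route is to transport all of $q_{t_1}$ onto $q_G$ with small displacement, using \pref{l:couple-distance-2}, and then absorb the displacement with the Gaussian noise of $K_{t_1\to t_0}$.

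\emph{Step 1 (transport).} Apply \pref{l:couple-distance-2} to $\mu=q_{t_1}$ (non-atomic, LSI constant $c_{t_1}$) with $A=G$ and $B=\R^n$ (so $a\ge c$, $b=1$; if $G$ is already all of the mass we are done). Disjointness is not really needed for \pref{l:set-separation-lsi}; if needed, take $B=G^c$ and treat $G$ by the identity map. This produces a measurable $T$ with $T_\#q_{t_1}=q_G$ and $\|T(y)-y\|\le d$ except on $q_{t_1}$-mass $\ep/2$, where
\[
d=O\pa{\sqrt{\ln(1/(c\ep))/c_{t_1}}}.
\]

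\emph{Step 2 (Gaussian overlap).} For $\|x-y\|\le d$, the kernels $K_{t_1\to t_0}(y,\cdot)$ and $K_{t_1\to t_0}(x,\cdot)$ are Gaussians with means differing by $\fc{t_0}{t_1}d$ and standard deviation $\si=\sqrt{t_0(1-t_0/t_1)}$. Their likelihood ratio $\exp\pa{\an{\fc{t_0}{t_1}(y-x),z}/\si^2-\cdots}$ is bounded by
\[
\exp\pa{O\pa{\fc{t_0 d^2}{t_1(t_1-t_0)}+\fc{t_0d}{t_1\si}\sqrt{\ln(1/\ep)}}}
\]
outside an event of Gaussian probability $\ep/2$. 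Both terms are $O\pa{(\fc{t_0}{t_1(t_1-t_0)c_{t_1}}\vee 1)\ln\fc1{c\ep}}$, using $ab\le a^2+b^2$ for the cross term. So $K(y,\cdot)$ is a $(B_0,\ep/2)$-warm start to $K(T(y),\cdot)$ with $B_0=\exp\pa{O\pa{(\fc{t_0}{t_1(t_1-t_0)c_{t_1}}\vee1)\ln\fc1{c\ep}}}$.

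\emph{Step 3 (assemble).} Consider the joint law of $(T(Y),Z)$ with $Y\sim q_{t_1}$ and $Z\sim K(Y,\cdot)$, against the law of $(X,Z')$ with $X=T(Y)\sim q_G$ and $Z'\sim K(X,\cdot)$. The first coordinates agree exactly, and conditionally on $X=x$ the $Z$ law is a mixture over the fiber $T^{-1}(x)$. Since $T$ is a bijection, the fiber is a single point, so \pref{pitem:ws-preserve-warm-start-conditional} applies with $B_1=1$ on the good set and an $\ep/2$ loss for bad $y$. Projecting to $Z$ (data processing) gives
\[
q_{t_1}K\xra{B_0,\ep} q_GK .
\]
Since $\dd{q_G}{p_{t_1}}\le L/c$, we have $q_GK\xra{L/c,0}p_{t_1}K=p_{t_0}$, and transitivity (\pref{pitem:ws-preserve-warm-start-transitive}) finishes the proof.

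The main obstacle is Step 2. The bounded-ratio bound must hold in the $L^\infty$-after-$\ep$-removal sense, not just as a TV overlap, and the unbounded linear term has to be controlled by a Gaussian tail. The bookkeeping of the $\ep$ splits also needs care so the final parameters match the stated form.
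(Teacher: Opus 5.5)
Your proposal is correct and follows essentially the same route as the paper: a small-displacement transport onto the good set via \pref{l:couple-distance-2}, a Gaussian likelihood-ratio bound outside an $\ep/2$ tail, the conditional warm-start rule \pref{pitem:ws-preserve-warm-start-conditional}, and projection to the noised coordinate, giving the same $(LB_0/c,\ep)$ parameters. The differences are cosmetic. The paper shrinks the good set to mass $c\le\frac12$ to respect the disjointness and $a\le b$ hypotheses, transports only the complement onto it, and extends by the identity, so each fiber is a two-point mixture $c\delta_x+(1-c)\delta_y$; it then compares directly to the joint law under $p_{t_1}$ with $B_1=L/c$, whereas you use a single bijection of all of $q_{t_1}$ onto $q_G$ and pass through $q_GK$ via transitivity.
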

\begin{prf}
    It suffices to prove the claim with error $\eta+\eta'$ and then take
    $\eta=\eta'=\ep/2$. Replacing $c$ by $c\wedge \frac12$ only changes
    constants that can be absorbed by the $O(\cdot)$, so assume $c\le \frac12$. Let
    \[
        A\subeq \bc{\dd{q_{t_1}}{p_{t_1}}\le L},
        \qquad q_{t_1}(A)=c,
    \]
    and write
    \[
        q_A:=q_{t_1}(\cdot\mid A),
        \qquad
        q_B:=q_{t_1}(\cdot\mid A^c).
    \]
    By \pref{l:couple-distance-2}, there is a
    measurable bijection, modulo null sets, $T:A^c\to A$ with
    $T_\#q_B=q_A$ such that, for
    \[
        \bar d:=\fc{2}{\sqrt{c_{t_1}}}
        \pa{\sqrt{\ln\prc{c\eta}}+\sqrt{\ln\prc{(1-c)\eta}}},
    \]
    the set
    \[
        B_1:=\set{y\in A^c}{\|T(y)-y\|\le \bar d}
    \]
    satisfies $q_B(B_1)\ge 1-\eta$. Let $A_1:=T(B_1)$. Since $T$ is
    bijective and measure-preserving, $q_A(A_1)\ge 1-\eta$.

    Extend $T:A^c \to A$ by the identity on $A$ to get the map $\td T: \R^n \to A$. Then
    \[
        \td T_\#q_{t_1}=q_A,
        \qquad
        \dd{q_A}{p_{t_1}}\le \fc Lc .
    \]
    Consider the joint distributions
    \begin{align*}
        X_{t_1}&\sim p_{t_1} & X_{t_0}&\sim K_{t_1\to t_0}(X_{t_1}, \cdot)\\
        \hat X_{t_1} & \sim q_{t_1} & \hat  X_{t_0} &\sim K_{t_1\to t_0}(\hat X_{t_1},\cdot).
    \end{align*}
    We claim that $\Law(\td T(\hat X_{t_1}), \hat X_{t_0})$ is a warm start to $\Law(X_{t_1},X_{t_0})$.
    Fix $x\in A_1$. Let $y=T^{-1}(x)$ and $v=y-x$. Then
    $\|v\|\le \bar d$, and, since
    $q_{t_1}=cq_A+(1-c)q_B$ and $T$ is bijective, either $\hat X_{t_1} \in A$ in which case $\td T(\hat X_{t_1}) = \hat X_{t_1}$ or $\hat X_{t_1} \in A^c$ in which case $\td T(\hat X_{t_1}) = T(\hat X_{t_1})$ and so
    \[
        \Law(\hat X_{t_1}\mid \td T(\hat X_{t_1})=x)
        =
        c\delta_x+(1-c)\delta_y.
    \]
    Thus, writing $K(z,\cdot):=K_{t_1\to t_0}(z,\cdot)$,
    \[
        \Law(\hat X_{t_0}\mid \td T(\hat X_{t_1})=x)
        =
        cK(x,\cdot)+(1-c)K(y,\cdot),
        \qquad
        \Law(X_{t_0}\mid X_{t_1}=x)=K(x,\cdot).
    \]

    Let $\nu:=K(x,\cdot)$ and $\mu:=K(y,\cdot)$. These are Gaussians with
    common variance
    \[
        \sigma^2=t_0\pa{1-\frac{t_0}{t_1}}
        =
        \frac{t_0(t_1-t_0)}{t_1}
    \]
    and mean difference $\Delta=\frac{t_0}{t_1}v$. A standard Gaussian
    density-ratio calculation gives that, outside a set of $\mu$-mass at
    most $\eta'$,
    \[
        \dd{\mu}{\nu}
        \le
        B:=
        \exp\pa{
            \fc{t_0\bar d^2}{2t_1(t_1-t_0)}
            +
            \bar d
            \sqrt{\fc{2t_0\ln(1/\eta')}{t_1(t_1-t_0)}}
        }.
    \]
    Hence
    \[
        cK(x,\cdot)+(1-c)K(y,\cdot)
        \xra{B,\,\eta'} K(x,\cdot).
    \]

    The first marginal of $(\td T(\hat X_{t_1}),\hat X_{t_0})$ is $q_A$, and on $A_1$ it has
    density at most $L/c$ with respect to $p_{t_1}$, while
    $q_A(A_1)\ge 1-\eta$. By \pref{pitem:ws-preserve-warm-start-conditional},
    \[
        \Law(\td T(\hat X_{t_1}),\hat X_{t_0})
        \xra{LB/c,\;\eta+\eta'}
        \Law(X_{t_1},X_{t_0}).
    \]
    Projecting to the second coordinate gives
    \[
        q_{t_1}K_{t_1\to t_0}
        \xra{LB/c,\;\eta+\eta'}
        p_{t_0}.
    \]
    Taking $\eta=\eta'=\ep/2$ and using the definition of $\bar d$ gives
    \[
        B\le
        \exp\pa{
            O\pa{
                \pa{\fc{t_0}{t_1(t_1-t_0)c_{t_1}}\vee 1}
                \ln\prc{c\ep}
            }
        },
    \]
    which is the claimed bound.
\end{prf}

This lemma will give that $\rh_{t_1}K_{t_1\to t_0}$ is a warm start to $p_{t_0}$. We now relate this to $\rh_{t_0}$ and $\hat p_{t_0}$. 

If $\rh_t$ is an approximation of the SL process, then we expect that $\rh_{t_1}K_{t_1\to t_0} \approx \rh_{t_0}$. 
The following key calculation shows reweighting $\rh_{t_1}K_{t_1\to t_0}$ via Jarzynski's equality gives $\rh_{t_0}$, and the ODE for the weights is the same as for the SL process. As the weights are concentrated for the same reason, this makes the approximation precise. We will see in \Cref{l:ws-je} that concentration of the Jarzynski weights will give that each side is a warm start for the other.
\begin{lemma}
\label{l:je-reverse}
    Suppose that $\rh_s$ has the explicit form \eqref{e:rh-t-F} for
    $s\in[t,t_1]$, with $\nabla\calF=\hat m$, and that the regularity
    hypotheses of \pref{t:je} hold on this interval. This is the reverse-time
    analogue of \pref{a:Lipschitz-je}.
    Then we have that for $0<t<t_1$,
    \begin{align*}
        \dd{\rh_{t}}{\rh_{t_1}K_{t_1\to t}}(x)
        &= \fc{\E[e^{w_t} | y_t = x]}{\E[e^{w_t}]},
    \end{align*}
    where $y_t$, $w_t$ are defined by
    \begin{align*}
        dy_t &= \fc{y_t}{t}dt + dB_t^{\leftarrow}\\
        dw_t &= \rc 2 \ba{\gd \cdot \hat m(y) + \ve{\hat m(y)}^2} = \om(y),
    \end{align*}
    and $B_t^{\leftarrow}$ is reverse Brownian motion.
\end{lemma}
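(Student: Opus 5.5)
The plan is to apply Jarzynski's equality (\pref{t:je}) to a time-reversed family of densities. The starting law is $\rh_{t_1}$, evolved backwards in time by the noising kernel $K_{t_1\to s}$, and the target family is $\rh_s$ for $s\in[t,t_1]$. First I would write the reverse dynamics of $K_{t_1\to s}$ as an SDE. By \eqref{e:SL-time-change}, $y_s = \fc{s}{t_1}y_{t_1} + \sqrt{s(1-s/t_1)}\,\xi$. Equivalently, $y_s/s$ evolves as Brownian motion run backwards in the variable $1/s$. Differentiating gives the backward SDE $dy_s = \fc{y_s}{s}\,ds + dB^{\leftarrow}_s$, integrated in decreasing $s$, with the law at time $s$ equal to $\rh_{t_1}K_{t_1\to s}$. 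This is a reparametrisation of Brownian bridge/heat flow and is routine.

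Next I would set $\tau = t_1 - s$ to obtain a forward-time process and match it to \pref{t:je}. The potential is $U_\tau = -\calF + \fc{\ve{y}^2}{2s}$, and $b$ is chosen so that $-\rc2\gd U+b$ equals the backward drift $-\fc{y}{s}$ (the sign flips under reversal). Then $-\rc2\gd U = \rc2\hat m - \fc{y}{2s}$, so $b = -\rc2\pa{\hat m + \fc ys}$. This is exactly the negative of the forward choice used to derive \eqref{e:om}. The Jarzynski weight increment is $\gd\cdot b - \an{\gd U,b} - \pl_\tau U$, where $\pl_\tau U = -\pl_s U = \fc{\ve y^2}{2s^2}$. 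Expanding:
\[
\gd\cdot b = -\rc2\gd\cdot\hat m - \fc{n}{2s},
\qquad
-\an{\gd U,b} = \rc2\an{-\hat m+\fc ys,\ \hat m+\fc ys} = \rc2\pa{\fc{\ve y^2}{s^2}-\ve{\hat m}^2}.
\]
Combining these with $-\pl_\tau U$, the $\ve y^2/s^2$ terms cancel. The increment is $-\rc2\pa{\gd\cdot\hat m+\ve{\hat m}^2} - \fc{n}{2s} = -\om(y) - \fc n{2s}$. The second term depends only on time, so it cancels in the normalized ratio.

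Integrating $d\tau = -ds$ flips the sign, so accumulated from $t_1$ down to $t$ the weight equals $\int_t^{t_1}\om(y_s)\,ds$ plus a constant. This is $w_t$ in the statement's convention, with $dw = \om\,dt$ and reverse time. The final identity of \pref{t:je} then gives $\dd{\rh_t}{\rh_{t_1}K_{t_1\to t}}(x) = \E[e^{w_t}\mid y_t=x]/\E[e^{w_t}]$.

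I expect the main obstacle to be checking the hypotheses of \pref{t:je}. These are Lipschitz $\gd U$ and $b$ uniformly on the interval, differentiability in time, and matching the initial law to $\rh_{t_1}$. Since $t>0$, the factor $1/s$ is bounded on $[t,t_1]$, and $\hat m$ is Lipschitz by \pref{a:Lipschitz-drift}, so this is exactly what the stated regularity hypothesis supplies. The remaining care is bookkeeping of signs under time reversal, verified through the cancellation above.
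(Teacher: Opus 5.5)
Your route is the paper's: \pref{t:je} in reverse time with $-U_s=\calF(y)-\fc{\ve{y}^2}{2s}$ and $b_s=-\rc2\pa{\hat m+\fc ys}$. This collapses the drift to the noising drift, so the law at time $t$ is $\rh_{t_1}K_{t_1\to t}$, and the weight rate is $\om$ plus a function of time only. Your $\tau$-rate $-\om(y)-\fc{n}{2s}$ is correct, but the final sign bookkeeping is wrong. Substituting $s=t_1-\tau$ reverses both the differential and the limits of integration, so there is no net sign flip:
\[
W=\int_0^{t_1-t}\pa{-\om(y_{t_1-\tau})-\fc{n}{2(t_1-\tau)}}d\tau=-\int_t^{t_1}\om(y_s)\,ds-\fc n2\ln\fc{t_1}{t}.
\]
It is not $+\int_t^{t_1}\om(y_s)\,ds+\text{const}$. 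This $W$ agrees with the statement's $w_t$ up to a constant exactly when $w_t$ is read as in the paper's proof. There, $w_t$ solves $dw_t=\om(y_t)\,dt$ with terminal condition $w_{t_1}=0$, i.e.\ $w_t=-\int_t^{t_1}\om(y_s)\,ds$; equivalently, the paper's reverse-time rate is $-\gd\cdot b_t+\an{\gd U_t,b_t}-\pl_tU_t=\om(y_t)+\fc{n}{2t}$. So your two assertions are each off by a sign: that the accumulated weight is $+\int_t^{t_1}\om$, and that this equals $w_t$. You reach the right formula only through compensating errors. If you genuinely mean $w_t=+\int_t^{t_1}\om(y_s)\,ds$, the conclusion is false.

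The sign matters, as a Gaussian example shows. Take $n=1$ and $\calF(y)=\fc{ay^2}{2}$ with $0<at_1<1$, so $\rh_s=\calN\pa{0,\fc{s}{1-as}}$ and $\om(y)=\rc2(a+a^2y^2)$. Then $\rh_{t_1}K_{t_1\to t}=\calN\pa{0,t+\fc{at^2}{1-at_1}}$ has strictly larger variance than $\rh_t=\calN\pa{0,t+\fc{at^2}{1-at}}$, so $\dd{\rh_t}{\rh_{t_1}K_{t_1\to t}}$ must decrease in $|x|$. Given $y_t=x$, the path $(y_s)_{s\in[t,t_1]}$ is Gaussian with mean linear in $x$ and covariance independent of $x$. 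Hence $\E[e^{-\int_t^{t_1}\om(y_s)ds}\mid y_t=x]$ decreases in $|x|$, while the version with $e^{+\int_t^{t_1}\om(y_s)ds}$ increases. To repair the argument, replace your ``flips the sign'' sentence with $W=\int_{t_1}^{t}\om(y_s)\,ds+\text{const}=w_t+\text{const}$. The rest is fine: the drift matching, the cancellation of the $\ve{y}^2/s^2$ terms, discarding the time-only term, and the regularity check using $1/s\le 1/t$ on $[t,t_1]$.
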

Here, the reverse Brownian motion can be interpreted as follows: if $dy_t = f(y_t,t)\,dt + G(y_t,t)\,dB_t^{\leftarrow}$, then the time-reversed process $y_t^{\leftarrow} = y_{-t}$ satisfies $dy_t^{\leftarrow} = -f(y_t^{\leftarrow},-t) \,dt+ G(y_t^{\leftarrow},-t)\,dB_t$.
\begin{prf}
    By \Cref{t:je} in reverse time, if
    \begin{align*}
        dy_t &= \ba{\rc 2 \gd U_t(y_t) - b_t(y_t)}dt + dB_t^{\leftarrow}, & y_{t_1} &\sim \rh_{t_1}\\
        dw_t &= \ba{-\gd \cdot b_t(x_t) + \an{\gd U_t(x_t), b_t(x_t)} - \pl_t U_t(x_t)}dt, & w_{t_1}&=0,
    \end{align*}
    and $\rh_t(y) \propto e^{-U_t(y)}$, and $\hat \rh_t$ is the density of $y_t$, then we have
    \[
\dd{\rh_t}{\hat \rh_t}(x) = \fc{\E[e^{w_t} | y_t = x]}{\E[e^{w_t}]}.
    \]
    In our case, we take $-U_t(y) = \calF(y) - \fc{\ve{y}^2}{2t}$ and $b_t = -\rc 2 \ba{\hat m(y) + \fc{y}t}$, so that $\gd U_t(y) = -\hat m(y) + \fc yt$ and 
    \begin{align*}
        dy_t &= \fc{y_t}{t}dt + dB_t^{\leftarrow}\\
        dw_t &= \ba{\rc 2 \gd \cdot \hat m(y_t) + \fc{n}{2t} - \rc 2\an{-\hat m(y_t)+\fc{y_t}{t},\hat m(y_t)+\fc{y_t}{t}} + \fc{\ve{y_t}^2}{2t^2}}dt 
        = [\om(y_t) + c(t)]\,dt,
    \end{align*}
    where $c(t)$ is some function depending only on $t$. Solving the SDE for $y_t$ gives that $\hat \rh_t = \rh_{t_1}K_{t_1\to t}$. Removing $c(t)$ has no effect on the normalized weights $\fc{e^{w_t}}{\E[e^{w_t}]}$, so the result follows. 
\end{prf}

We obtain a tail bound for the Jarzysnki weights using the following adaptation of \pref{l:conc-lsi} for path space. 
Note this bound is loose by a factor of some absolute constant $C>0$ to enable a simpler proof.

\begin{lemma}[Lipschitz functions in path space are sub-gaussian]
\label{l:herbst-quick}
Let $\mu=\rho\otimes\gamma$, where $\rho$ is a measure on $\R^n$ satisfying
a log-Sobolev inequality with constant $\rls$, and $\gamma$ is the Wiener measure on $(E,\calB(E))$. Let $f:\R^n\times E\to\R$ be $L$-Lipschitz with respect to shifts by $(v,h) \in \R^n \times \calH$ with norm
\[
    \|(v,h)\|^2=\rls\|v\|_2^2+\|h\|_{\calH}^2 .
\]
Then, for every $t\in\R$ such that $|t| \ge 1$,
\[
    \E_\mu\exp\left(t(f-\E_\mu f)\right) \le 2\exp\left(CL^2t^2\right)\,,
\]
for some sufficiently large absolute constant $C>0$.
\end{lemma}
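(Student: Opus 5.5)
The plan is to prove a log-Sobolev inequality for the product measure $\mu=\rho\otimes\gamma$ with respect to the Dirichlet form built from the weighted norm $\|(v,h)\|^2=\rls\|v\|_2^2+\|h\|_\calH^2$. Once that is in hand, the usual Herbst argument gives the sub-Gaussian moment bound.

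\textbf{Step 1: LSI for the product.} By tensorization of entropy,
\[
\Ent_\mu[g^2]\le \int \Ent_\rho[g(\cdot,B)^2]\,d\gamma(B)+\int\Ent_\gamma[g(v,\cdot)^2]\,d\rho(v).
\]
We bound the two terms separately.
\begin{itemize}
\item The first term is at most $\fc{2}{\rls}\int\|\nabla_v g\|_2^2\,d\mu$, by the LSI for $\rho$ in the form $\Ent(f^2)\le \fc2c\sE(f,f)$.
\item The second term is at most $2\int\|D_Hg\|\CM^2\,d\mu$, by \pref{t:gross-lsi}.
\end{itemize}
The gradient of $g$ dual to the weighted norm has squared length $\rls^{-1}\|\nabla_vg\|^2+\|D_Hg\|\CM^2$. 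So, in terms of this dual gradient, $\mu$ satisfies an LSI with constant $1$:
\[
\Ent_\mu[g^2]\le 2\int \Big(\rls^{-1}\|\nabla_v g\|_2^2+\|D_Hg\|\CM^2\Big)\,d\mu .
\]
If $f$ is $L$-Lipschitz with respect to shifts in this norm, then the dual gradient of $f$ has length at most $L$ almost surely. For the Malliavin part this uses \pref{l:cm-lip-malliavin}, applied fiberwise in $v$; for the $v$ part it uses Rademacher's theorem.

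\textbf{Step 2: Herbst.} Set $g=e^{tf/2}$ and $H(t)=\E_\mu e^{tf}$. Step 1 then yields
\[
tH'(t)-H\log H\le \fc{L^2t^2}{2}H,
\]
that is, $\fc{d}{dt}\big(\fc{\log H}{t}\big)\le \fc{L^2}{2}$. Integrating gives $\E e^{t(f-\E f)}\le e^{L^2t^2/2}$, which is stronger than the claimed bound for every real $t$. The factor $2$ and the absolute constant $C$ in the statement leave room for the technical approximation in Step 3.

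\textbf{Step 3 (main obstacle): justifying Herbst on path space.} Here $f$ need not be bounded or cylindrical, and $e^{tf/2}$ may not a priori lie in $\calD^{1,2}$. I would work with the truncation $f_N=\max(-N,\min(f,N))$, which is still $L$-Lipschitz, so $e^{tf_N/2}$ is bounded. For such functions I would apply the LSI after a cylindrical approximation: condition on $\calF_k$ as in \pref{t:lsi-CM-pert}, use the finite-dimensional LSI for $\rho\otimes\calN(0,\Id_k)$ from tensorization, and pass to the limit by martingale convergence. This gives the bound for $f_N$ with constants uniform in $N$. Finally, let $N\to\infty$ using Fatou's lemma. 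Since $f$ is integrable by Borell's inequality (\pref{t:borell}) combined with Lipschitz concentration, this yields the bound for $f$; any slack from the limiting steps is absorbed into the constants $2$ and $C$.

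If the Malliavin closure step proves delicate, a cruder fallback avoids it. Apply the finite-dimensional Lipschitz concentration of \pref{l:conc-lsi} separately to the $\rho$ part, for fixed $B$, and use Borell's inequality for the $\gamma$ part, for fixed $v$. Then combine the two via $f-\E f=(f-\E_\rho f)+(\E_\rho f-\E_\mu f)$ and Cauchy--Schwarz, which gives a bound of the form $2e^{CL^2t^2}$ for $|t|\ge1$ (the tail-integration constants are dominated by $e^{CL^2t^2}$ there). This is where the looseness in the statement comes from.
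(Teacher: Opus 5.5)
Your argument is correct, but your main route is not the paper's. The paper's proof is essentially your ``crude fallback,'' done slightly more cleanly.

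The paper never establishes an LSI for $\rho\otimes\gamma$. Its steps are:
\begin{enumerate}
\item Set $g(B):=\E_\rho f(\cdot,B)$.
\item Since $f(\cdot,B)$ is $L\sqrt{\rls}$-Lipschitz, \pref{l:conc-lsi} gives $\E_\rho e^{t(f-g)}\le e^{L^2t^2/2}$ uniformly in $B$.
\item Since $g$ is $L$-CM-Lipschitz, the Borell-based \pref{rem:lemma-ui} gives $\E_\gamma e^{t(g-\E g)}\le 2e^{C'L^2t^2}$ for $t\ge 1$.
\item Combine the two bounds by the tower property.
\end{enumerate}
Because the inner bound is uniform in $B$, no Cauchy--Schwarz is needed. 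The factor $2$ and the restriction $|t|\ge1$ come only from the median-to-mean conversion in \pref{rem:lemma-ui}.

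The two routes buy different things:
\begin{itemize}
\item The paper's route uses only set-level isoperimetry on path space. So the question of whether $e^{tf/2}$ lies in a Malliavin Sobolev domain never arises.
\item Your route gives the sharper bound $e^{L^2t^2/2}$ for every real $t$. That would remove the factor $2$ and the constraint $u\ge 1$ in \pref{l:ws-je}, at the cost of the approximation work in Step 3.
\end{itemize}

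Two remarks on executing your route. First, fiberwise arguments (Rademacher in $v$, \pref{l:cm-lip-malliavin} in $B$) give $\rls^{-1}\|\nabla_v f\|_2^2\le L^2$ and $\|D_Hf\|\CM^2\le L^2$ only separately. Without a joint differentiability argument you therefore get $e^{L^2t^2}$ rather than $e^{L^2t^2/2}$, which is harmless for the statement.

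Second, the cylindrical version you sketch in Step 3 makes the infinite-dimensional LSI of Step 1 unnecessary. Set $F_k(v,a):=\int_E f_N(v,h_a+r)\,d\gamma_{>k}(r)$. Then:
\begin{itemize}
\item Averaging over the Gaussian tail, exactly as in the proof of \pref{t:lsi-CM-pert}, shows $F_k$ is jointly $L$-Lipschitz on $\R^n\times\R^k$ in the norm $(\rls\|v\|_2^2+\|a\|_2^2)^{1/2}$.
\item $F_k$ has the same $\mu$-mean as $f_N$.
\item $F_k$ converges boundedly to $f_N$ by martingale convergence.
\end{itemize}
So the finite-dimensional tensorized LSI for $\rho\otimes\calN(0,\Id_k)$ and the standard Herbst argument give the sharp bound for each $F_k$. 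It then passes to $f_N$ and, via Fatou, to $f$. This avoids Gross's theorem, closability, and the Malliavin chain rule altogether.
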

\begin{prf}
Let $X\sim\rho$ and $B\sim\gamma$ be independent. Define
\[
    g(B):=\E_\rho f(X,B).
\]
For fixed $B$, the map $x\mapsto f(x,B)$ is $L\sqrt{\rls}$-Lipschitz in the Euclidean norm. Hence, by \pref{l:conc-lsi} we have
\[
    \E_\rho\exp\left(t(f(X,B)-g(B))\right) \le \exp\left(\frac{L^2t^2}{2}\right)\,.
\]
Moreover, $g$ is $L$-Lipschitz in Cameron--Martin directions, since for $h\in\calH$,
\[
    |g(B+h)-g(B)| \le \E_\rho |f(X,B+h)-f(X,B)| \le L\|h\|_{\calH}.
\]
Therefore, by \pref{rem:lemma-ui},
\[
    \E_\gamma\exp\left(t(g-\E_\gamma g)\right) \le 2\exp\left(C' L^2t^2\right)\,,
\]
for some sufficiently large absolute constant $C'>0$. Combining the two conditional bounds gives
\begin{align*}
    \E_{\rho\otimes\gamma}e^{t(f-\E f)}
    &=\E_\gamma\left[e^{t(g-\E g)}\E_\rho e^{t(f-g)}\right] \\
    &\le\exp\left(\frac{L^2t^2}{2}\right)\E_\gamma e^{t(g-\E g)} \\
    &\le2\exp\left(CL^2t^2\right). \qedhere
\end{align*}
\end{prf}

\begin{lemma}[Mutual warm start from Jarzynski's equality]
\label{l:ws-je}
    Suppose that $\hat p_0$ satisfies a LSI with constant $\hat c_0$ and \pref{ass:Lipschitz-je} holds. Let $L_w =L_w(t)=L_\om t\sqrt{t+\hat {c_0}^{-1}}\cdot e^{Lt}$. Then 
\begin{enumerate}
    \item 
$\hat p_t\pa{\dd{\hat p_t}{\rh_t}\ge e^{\Om\pa{L_w\sqrt{\ln \prc{\ep}} + L_w^2}}} \le \ep$, and 
    $\hat p_t$ is a $(e^{O(L_w \sqrt{\ln(1/\ep)}+L_w^2)},\ep)$-warm start to $\rh_t$.
    \item 
$\rh_t\pa{\dd{\rh_t}{\hat p_t}\ge e^{\Om\pa{L_w\sqrt{\ln \prc{\ep}} + L_w^2}}} \le \ep$, and 
    $\rh_t$ is a $(e^{O(L_w \sqrt{\ln(1/\ep)}+L_w^2)},\ep)$-warm start to $\hat p_t$.
\end{enumerate}
\end{lemma}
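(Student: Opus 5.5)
The density ratio from \pref{ass:Lipschitz-je} is
\[
r(x):=\dd{\rh_t}{\hat p_t}(x)=\fc{\E[e^{w_t}\mid y_t=x]}{\E[e^{w_t}]}.
\]
The plan is to control $r$ from above and below by concentration of the Jarzynski weight $w_t$ on the joint space of initial condition and driving path. Write $w_t=W(y_0,B)$ with $(y_0,B)\sim\hat p_0\otimes\ga$. By the initial-condition version of \pref{lem:cm-lipschitz-two}, with $c=\hat c_0$ in the norm \eqref{e:norm-y-h}, $W$ is $L_w$-Lipschitz with respect to shifts by $(v,h)$ in the norm $\sqrt{\hat c_0\|v\|^2+\|h\|\CM^2}$. \pref{l:herbst-quick} then gives $\E e^{s(W-\E W)}\le 2e^{CL_w^2s^2}$ for all $|s|\ge1$. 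Set $\bar w:=\E W$.

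\textbf{Upper tail of $r$ (part 2).} Use the exponential moments directly. Jensen gives $\E e^{w_t}\ge e^{\bar w}$, so $r(x)\le \E[e^{w_t-\bar w}\mid y_t=x]$. Then
\[
\E_{\rh_t}[r^{\lambda}] = \E_{\hat p_t}[r^{1+\lambda}] \le \E\big[e^{(1+\lambda)(w_t-\bar w)}\big] \le 2e^{C(1+\lambda)^2L_w^2},
\]
where the middle step is conditional Jensen applied to $u\mapsto u^{1+\lambda}$. Markov's inequality with $\lambda\asymp 1+\sqrt{\ln(1/\ep)}/L_w$ gives
\[
\rh_t\big(r\ge e^{K}\big)\le\ep \quad\text{for } K=O\big(L_w\sqrt{\ln(1/\ep)}+L_w^2\big).
\]
The warm-start claim in part 2 follows by taking $Q=\rh_t$ conditioned on $\{r\le e^K\}$. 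Then $\TV(\rh_t,Q)\le\ep$ and $\dd{Q}{\hat p_t}\le e^K/(1-\ep)$.

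\textbf{Lower tail of $r$ (part 1).} We need $\hat p_t(r\le e^{-K})\le\ep$. First bound the denominator: $\E e^{w_t}\le 2e^{\bar w+CL_w^2}$. Then apply conditional Jensen the other way, $\E[e^{w_t}\mid y_t]\ge e^{\E[w_t\mid y_t]}$, to get
\[
r(x)\ge \tfrac12\exp\big(\E[w_t-\bar w\mid y_t=x]-CL_w^2\big).
\]
The function $g(x):=\E[w_t\mid y_t=x]-\bar w$ has the same sub-Gaussian moment bound as $w_t-\bar w$, again by conditional Jensen on $e^{-s(\cdot)}$. A Chernoff bound then gives $\hat p_t\big(g\le -O(L_w\sqrt{\ln(1/\ep)}+L_w^2)\big)\le\ep$. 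Hence $\hat p_t\big(\dd{\hat p_t}{\rh_t}\ge e^{K}\big)\le\ep$, and conditioning $\hat p_t$ on the complementary event produces the warm-start witness exactly as in part 2.

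\textbf{Main obstacle.} The main difficulty is justifying the joint-space Lipschitz structure. Specifically, $W$ must be a function of $(y_0,B)$ that is Lipschitz in the mixed norm, so that \pref{l:herbst-quick} applies. This needs \pref{lem:cm-lipscitz-one} with the initial condition and the correct choice $c=\hat c_0$. It also needs care when $\hat p_0=\delta_0$: then $\hat c_0^{-1}=0$ and the Euclidean factor is trivial. A second subtlety is that the \pref{l:herbst-quick} bound is stated only for $|s|\ge1$. This is harmless, since every moment bound above is applied with exponent $s=1+\lambda\ge1$ (or $s\ge1$ in the Chernoff step), at the cost of the additive $L_w^2$ term in the exponent.
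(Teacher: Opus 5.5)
Your proposal is correct and uses the same ingredients as the paper: the joint Lipschitz bound for $W(y_0,B)$ from \pref{lem:cm-lipschitz-two} with $c=\hat c_0$, the exponential moment bound of \pref{l:herbst-quick}, conditional Jensen, and Markov/Chernoff.

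\textbf{Part 1.} Your route and the paper's are equivalent. The paper applies Markov directly to $r^{-u}$, using Jensen for the convex map $x\mapsto x^{-u}$, to get $\E[r^{-u}]\le \E[e^{-uw_t}]\,(\E e^{w_t})^{u}$. You instead go through $\E[e^{w_t}\mid y_t]\ge e^{\E[w_t\mid y_t]}$ and then a Chernoff bound on $g$. Both give the same estimate.

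\textbf{Part 2.} Here your route is genuinely different and simpler. The paper first bounds the $\hat p_t$-tail, $\hat p_t(r\ge e^L)\le 2e^{-L^2/(4CL_w^2)}$. It then converts this into an $\rh_t$-tail via $\rh_t(r\ge e^L)=\E_{\hat p_t}[(r-e^L)\vee 0]+e^L\,\hat p_t(r\ge e^L)$. That conversion requires integrating the tail against $e^y$, a Gaussian-integral estimate, and the extra condition $L\ge 8CL_w^2$ to absorb the factor $e^L$. Your identity $\E_{\rh_t}[r^\lambda]=\E_{\hat p_t}[r^{1+\lambda}]$ does the conversion in one line. The change of measure only raises the moment by one, which costs an additive $O(L_w^2)$ in the threshold. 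You end up with the same $K=O(L_w\sqrt{\ln(1/\ep)}+L_w^2)$.

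\textbf{Minor point.} Conditioning on the good event gives $B=e^K/(1-\ep)$. You can avoid the $1/(1-\ep)$ factor by reading the warm start off the equivalent form in \pref{d:ws}. For part 2, for example, $\E_{\rh_t}\bigl[1-(e^K\dd{\hat p_t}{\rh_t}\wedge 1)\bigr]\le \rh_t(r>e^K)\le\ep$. This is what the paper implicitly does.
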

Note that the parameters are $((1/\ep)^{o(1)}, \ep)$, so this gives a bound in R\'enyi divergence $\calR_p$ for any $p$.
\begin{prf}
By \Cref{lem:cm-lipschitz-two}, $w_t$ is $L_w(t)$-Lipschitz under \(\R^n\times\calH\) shifts in the norm given in \pref{e:norm-y-h}. 
Therefore, by 
\Cref{l:herbst-quick}, $\E e^{w_t} \le 2e^{CL_w^2} e^{\E w_t}$, so 
\begin{align}
\label{e:Ew-ub}
e^{\E w_t}\ge \frac{1}{2}e^{-CL_w^2} \E e^{w_t}.
\end{align}
By convexity of $x^{-u}$ for $u \ge 1$, another application of \pref{l:herbst-quick}, and then the above inequality,
\begin{align*}
    \hat p_t \ba{\dd{\hat p_t}{\rh_t}\ge e^{L}}
    &=_{\eqref{e:je-weights}} \P\ba{\E[e^{w_t}|y_t=x]^{-u}\ge e^{Lu} \E[e^{w_t}]^{-u}}\\
    &\le_{\text{Markov}} \fc{\E\ba{\E[e^{w_t}|y_t=x]^{-u}}}{e^{Lu}\E[e^{w_t}]^{-u}}\\
    &\le_{\text{Jensen}}
    \fc{\E\ba{\E[e^{-w_tu}|y_t=x]}}{e^{Lu}\E[e^{w_t}]^{-u}}\le \fc{\E[e^{-w_tu}]}{e^{Lu}\E[e^{w_t}]^{-u}}\\
    &
    \le_{\textup{\Cref{l:herbst-quick}}} 2e^{-u\E w_t} e^{L_w^2u^2 - Lu}\E[e^{w_t}]^{u}\\
    &\le_{\eqref{e:Ew-ub}} 4e^{CL_w^2u}e^{CL_w^2u^2- Lu}. 
\end{align*}
Now set
\[
    S_-:=\ln\frac{4}{\ep}\,, \qquad u_-:=1+\sqrt{\frac{S_-}{2D}}\,, \qquad L_-:=D+4Du_-=5D+2\sqrt{2DS_-}\,.
\]
where $D := CL^2_w$. Then $u_-\ge1$, and the preceding display gives
\[
    \hat p_t \ba{\dd{\hat p_t}{\rh_t}\ge e^{L_-}} \le 4e^{-2Du_-^2} \le 4e^{-S_-} = \ep\,.
\]
Therefore,
\begin{align}
\label{e:je-subg}
\hat p_t \ba{\dd{\hat p_t}{\rh_t}\ge e^{L_-}} \le \eps =: \de(L).
\end{align}
Letting $\ep=\de(L)$, we have $L_-=2\sqrt{2C}L_\omega\sqrt{\ln \frac{4}{\eps}} + 5CL_w^2$, so 
\[
\hat p_t\pa{\dd{\hat p_t}{\rh_t}\ge e^{2\sqrt{2C}L_\omega\sqrt{\ln \frac{4}{\eps}} + 5CL_w^2}} \le \ep.
\]
This gives the first part.

For the other direction, note by Jensen's inequality that $e^{\E w_t}\le \E e^{w_t}$.
For $u\ge 1$, we similarly have
\begin{align*}
    \hat p_t  \ba{\dd{\rh_t}{\hat p_t}\ge e^{L}}
    &=_{\eqref{e:je-weights}} \P\ba{\E[e^{w_t}|y_t=x]^{u}\ge e^{Lu}\E[e^{w_t}]^u}\\
    &\le_{\text{Markov}} \fc{\E\ba{\E[e^{w_t}|y_t=x]^{u}}}{e^{Lu}\E[e^{w_t}]^u}\\
    &\le_{\text{Jensen}}
    \fc{\E\ba{\E[e^{w_tu}|y_t=x]}}{e^{Lu}\E[e^{w_t}]^u}\le \fc{\E[e^{w_tu}]}{e^{Lu}\E[e^{w_t}]^u}\\
    &
    \le_{\textup{\Cref{l:herbst-quick}}} 2e^{u\E w_t} e^{CL_w^2u^2 - Lu}\E[e^{w_t}]^{-u}\\
    &\le_{\textup{Jensen}}2e^{CL_w^2u^2- Lu}. 
\end{align*}
Suppose that $L\ge 2CL_w^2$. Then, choosing $u=\fc{L}{2CL_w^2} \ge 1$ gives
\begin{align}
\label{e:hatp-rh-hatp-tail}
\hat p_t\ba{\dd{\rh_t}{\hat p_t}\ge e^L} \le 2\exp\pa{-\fc{L^2}{4CL_w^2}}.
\end{align}
Then 
\begin{align*}
    \E_{\hat p_t} \ba{\pa{\dd{\rh_t}{\hat p_t} - e^L}\vee 0}
    &=
    \int_{e^L}^\infty 
    \hat p_t\ba{\dd{\rh_t}{\hat p_t}\ge x}dx
    = \int_L^\infty \hat p_t \ba{\dd{\rh_t}{\hat p_t}\ge e^y}e^y\,dy\\
    &\le_{\eqref{e:hatp-rh-hatp-tail}} 2\int_L^\infty 
    \exp\pa{-\fc{y^2}{4CL_w^2} + y}\,dy
    \le 2\int_L^\iy \exp\pa{-\rc{4CL_w^2}(y-2CL_w^2)^2 + CL_w^2}\,dy\\
    &\le 2\sqrt{\pi}CL_w e^{L_w^2}\Phi\pa{-\fc{L-2CL_w^2}{\sqrt{2C}L_w}}\le_{\eqref{e:Phi-tail-ub}} \fc{4CL_w^2}{L-2CL_w^2} e^{-\fc{L^2}{4CL_w^2} + L}.
\end{align*}
Because $\rh_t\ll \hat p_t$,
\begin{align*}
    \rh_t\pa{\dd{\rh_t}{\hat p_t} \ge e^L} &= \E_{\hat p_t} \ba{\pa{\dd{\rh_t}{\hat p_t} - e^L}\vee 0} + e^L \hat p_t \pa{\dd{\rh_t}{\hat p_t}\ge e^L}\le \left(2 + \frac{4CL^2_w}{L-2CL^2_w}\right)\exp\pa{L-\fc{L^2}{4CL_w^2}}.
\end{align*}
When $L\ge 8CL_w^2$, this is $\le 3e^{-L^2/(8CL_w^2)}$. For $\ep\in (0,1)$, setting $L=\max\left\{8CL_w^2+2\sqrt{2C}L_w\sqrt{2\ln\pf3\ep},\,L_-\right\}$ gives the result.
\end{prf}

We can now prove the main theorem of this section.
\begin{prf}[Proof of \pref{t:annealed-ws}]
    Choose $t_1>t_0=t$. 
    Applying \pref{l:rev-KL-constant-portion} to $\nu = \hat p_{t_1}$ and $\mu = p_{t_1}$ with $L_1=e^{4(E_{t_1}+1)}$ and $L_2=4$, and noting the KL bound of \pref{l:KL} (under \pref{a:m-error}),
    \begin{align}
    \label{e:hatp-slice-p}
        \hat p_{t_1}\pa{\bc{\dd{\hat p_{t_1}}{p_{t_1}} \le 4}} 
        & \ge
        \rc{L_1} \pa{1-\fc{E_{t_1}+1}{\log L_1} - \rc 4} \ge \rc{2L_1} = \rc 2 e^{-4(E_{t_1}+1)}.
    \end{align}
    We transfer the property of ``capturing a constant portion of $p_{t_1}$'' to the TAP distribution using \pref{l:ws-je}, which gives for some $C_1$ and $C_2$ that 
    \begin{enumerate}
        \item $\hat p_{t_1}$ is a $(B_1 := e^{C_1 (L_w \sqrt{E_{t_1}+1} + L_w^2)},\rc{4L_1})$-warm start to $\rh_{t_1}$.
        \item $\rh_{t_1}\pa{\bc{\dd{\rh_{t_1}}{\hat p_{t_1}} > B_2 := e^{C_2 (L_w \sqrt{E_{t_1}+1} + L_w^2)}
        }} \le_{\eqref{e:hatp-slice-p}} \rc{8L_1B_1}$.
    \end{enumerate}
    Then 
    \begin{align*}
        \rh_{t_1}\pa{\bc{\dd{\rh_{t_1}}{p_{t_1}}\le 4B_2}}
        &\ge \rh_{t_1} 
        \pa{\bc{\dd{\rh_{t_1}}{\hat p_{t_1}}\le B_2}\cap \bc{\dd{\hat p_{t_1}}{p_{t_1}} \le 4}
        }\\ 
        &\ge 
        \rh_{t_1} \pa{\bc{\dd{\hat p_{t_1}}{p_{t_1}} \le 4}}
        - \rh_{t_1} 
        \pa{\bc{\dd{\rh_{t_1}}{\hat p_{t_1}}>B_2}}\\
        &\ge_{(1),(2)} 
        \fc{\hat p_{t_1}\pa{\bc{\dd{\hat p_{t_1}}{p_{t_1}} \le 4}} - \rc{4L_1}}{B_1} - \rc{8L_1B_1}\ge \rc{8L_1B_1}.
    \end{align*}
    Now we apply \pref{l:constant-portion-noise-ws} with $L=4B_2$ and $c=\rc{8L_1B_1}$ to obtain that 
    $\rh_{t_1}K_{t_1\to t_0}$ is a $\pa{\fc{L}{c}\exp\pa{O\pa{\rc{c_{t_1}^\rh}\ln\prc{c\ep_1}}}, \ep_1}$-warm start to $p_{t_0}$, where $c_{t_1}^{\rh}$ is the log-Sobolev constant of $\rh_{t_1}$ and constant by \pref{t:rho-ls}.
    Keeping track of just the dependence on $\ep_1$, this is $((1/\ep_1)^{O(1)}, \ep_1)$. 
    Now because $\rh_{t_0}$ and $\rh_{t_1}K_{t_1\to t_0}$ are related by Jarzynski weights by 
    \pref{l:je-reverse}, we can apply \pref{l:ws-je} with initial distribution $\rh_{t_1}$ to get that $\rh_{t_0}$ is a $(e^{O(\sqrt{\ln (1/\ep_2)})},\ep_2)$-warm start for $\rh_{t_1}K_{t_1\to t_0}$. 
    We have
    \[
\rh_{t_0}\xra{\prc{\ep}^{o(1)}, \ep}
\rh_{t_1}K_{t_1\to t_0} \xra{\prc{\ep}^{O(1)}, \ep} p_{t_0}
    \]
    Thus by transitivity (\pref{pitem:ws-preserve-warm-start-transitive} and following remark),
    $\rh_{t_0}\xra{\prc{\ep}^{O(1)}, \ep} p_{t_0}$
    By iterating one more time using the fact that $\hat p_{t_0}$ is a $(e^{O(\sqrt{\ln (1/\ep)})}, \ep)$-warm start to $\rh_{t_0}$, we also get the same fact for $\hat p_{t_0}$.
\end{prf}

\section{Proof of algorithmic result}
\label{s:alg}

We use the following discretization result for Langevin dynamics.
\begin{theorem}[Consequence of {\cite[Theorem 4]{chewi2025analysis}}]
\label{t:ld-disc}
    Let $\pi\propto e^{-V}$ be a density on $\R^n$ with log-Sobolev constant $\rls\le 1$ and such that $\gd V$ is $L$-Lipschitz. Let $\ep<1$. 
    Then 
    Langevin dynamics with initial distribution $\mu_0$, step size $h = \td \Te \pa{\fc{\ep \rls}{nL^2}}$ and number of steps $N\ge \td\Om\pf{nL^2[\log\log(\chi^2(\mu_0\|\pi)+1)\vee 1]}{\ep\rls^2}$ samples from a distribution $\hat \pi$ satisfying $\chi^2(\hat \pi\|\pi)\le \ep$. 
\end{theorem}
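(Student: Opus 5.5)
The plan is to derive the statement directly from the Rényi-divergence guarantee for the Langevin Monte Carlo (LMC) iteration in \cite[Theorem 4]{chewi2025analysis}, specialized to Rényi order $q=2$. The reason for order $2$ is the identity $\chi^2(\nu\|\pi)=\exp(\calR_2(\nu\|\pi))-1$. As I recall it (to be checked against the paper), that theorem says: for $\pi\propto e^{-V}$ with log-Sobolev constant $\rls$ and $L$-Lipschitz $\gd V$, and step size $h\lesssim \rls/(qL^2)$ up to logarithmic factors, the LMC iterates $\mu_N$ satisfy a bound of the schematic form
\[
\calR_q(\mu_N\|\pi)\;\lesssim\; \exp\pa{-c\,\rls N h}\,\calR_{q'}(\mu_0\|\pi)\;+\;\fc{q\,n L^2 h}{\rls}.
\]
The first term is exponential contraction of the initial divergence, and the second is a discretization bias that is linear in $h$. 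The order $q'$ is either equal to $q$ or a slightly larger order, depending on the exact form of the theorem.

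First I will choose the step size to control the bias. Taking $h=\td\Te\pa{\fc{\ep\rls}{nL^2}}$ makes the bias term at most $\ep/4$. Since $\ep<1$ and $\rls\le 1$, this $h$ also satisfies the step-size restriction of the cited theorem; the hidden polylogarithmic factors are absorbed into $\td\Te$.

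Second, I will make the contraction term small. The initial quantity is $\calR_2(\mu_0\|\pi)=\log(1+\chi^2(\mu_0\|\pi))$. It suffices that
\[
\rls Nh \;\gtrsim\; \log\pa{\fc{\log(1+\chi^2(\mu_0\|\pi))\vee 1}{\ep}}.
\]
This is where the double logarithm in the statement comes from: the contraction only has to beat $\log$ of the initial Rényi divergence, which is itself $\log(1+\chi^2)$. Substituting the chosen $h$, this requirement becomes $N\ge\td\Om\pa{\fc{nL^2[\log\log(\chi^2(\mu_0\|\pi)+1)\vee 1]}{\ep\rls^2}}$, where the remaining $\log(1/\ep)$ factor is absorbed into $\td\Om$. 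With both choices, $\calR_2(\mu_N\|\pi)\le \ep/2$. Finally, $\chi^2 = e^{\calR_2}-1\le 2\calR_2\le\ep$ for $\calR_2\le 1$, which gives the conclusion with $\hat\pi=\mu_N$.

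I expect the main obstacle to be matching Rényi orders. If \cite[Theorem 4]{chewi2025analysis} bounds $\calR_q(\mu_N)$ in terms of $\calR_{q'}(\mu_0)$ with $q'>q$, then the initial quantity would no longer be $\chi^2(\mu_0\|\pi)$. My first attempt would be to apply the theorem at a slightly smaller order $q<2$ paired with initial order $q'=2$. I would then recover a $\chi^2$ bound at the end by running a short extra phase, or by using the smoothing (regularization) step in their analysis that upgrades lower-order bounds once the divergence is $O(1)$. The cost of either route is only constant factors in $N$.

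A secondary point is the constant-$\rls$ normalization. The assumption $\rls\le 1$ ensures that the condition number $L/\rls\ge L$, so the step-size condition is dominated by the $\ep$-dependent choice of $h$.
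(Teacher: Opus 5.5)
Your route is the paper's route: a direct invocation of \cite[Theorem 4]{chewi2025analysis} with $C_{\rm LSI}=1/\rls$. The double logarithm comes from the initial quantity $\calR_2(\mu_0\|\pi)=\log(1+\chi^2(\mu_0\|\pi))$, and the leftover $\log(1/\ep)$ is absorbed into $\td\Om$.

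The one real difference is the R\'enyi order. The paper applies the theorem at $q=3$ and targets $\calR_3(\hat\pi\|\pi)\le\log(1+\ep)$. It then concludes by monotonicity in the order, $\calR_2\le\calR_3$, so that $\chi^2(\hat\pi\|\pi)=e^{\calR_2(\hat\pi\|\pi)}-1\le\ep$. Since $q$ is a fixed constant, this only changes constants in $h$ and $N$, and the initial condition still enters only through $\calR_2(\mu_0\|\pi)$. Applying the theorem directly at $q=2$, as you plan, is legitimate only if its stated range of orders includes $2$; the paper's choice of $q=3$ avoids having to check this.

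This is also the correct way to resolve the order mismatch you anticipated: move \emph{up} in order and use monotonicity, not down. Your fallback would not work as described. That fallback works at an order below $2$ and then tries to ``upgrade'' via an extra phase or a smoothing step. But a small R\'enyi divergence of order $q<2$ gives no control on $\chi^2$. Moreover, LMC has a biased stationary law $\pi_h\neq\pi$, so it offers no free hypercontractive smoothing step with respect to $\pi$. Any such extra phase would therefore need its own discretization analysis.
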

\begin{prf}
Apply \cite[Theorem 4]{chewi2025analysis} with $q=3$ and with
$C_{\rm LSI}=1/\rls$, targeting $R_3\le \log(1+\epsilon)$.
Since $R_2\le R_3$ and $R_2(\mu\|\pi)=\log(1+\chi^2(\mu\|\pi))$,
this gives $\chi^2(\mu\|\pi)\le \epsilon$.
\end{prf}

Regarding the other options in \pref{alg:Langevin}, we note that \pref{t:annealed-ws} also shows that the ASL distribution $\hat p_T$ is a warm start to $p_T$.
A discretization result for ASL similar to \pref{t:ld-disc} would then show that using the discretized ASL in \pref{alg:Langevin} can also give an approximate sample from the SK Gibbs distribution. Mixing for the polarized walk on the wedge has already been established in \cite[\S7]{DLSS26}.
%
\begin{prf}[Proof of \pref{t:alg}] We first show a warm-start is obtained, and then prove the sampler can sample from it.
    \ppart{Part 1 (Obtaining a warm start)}
    Let $\calF(y):= \FT(\hat m(y),y)$.
    By \pref{c:lsi-anneal-sk}, the measure $\rh_T(y) \propto e^{\calF(y) - \fc{\ve{y}^2}{2T}}$ satisfies a log-Sobolev inequality with some constant $c_T^\rh$. Note that 
    \[
\gd \log \rh_T(y) = \hat m(y) - \fc{y}{T}
    \]
    is $(L+\rc{T})$-Lipschitz. 
    Note $\gd \calF = \hat m$, so $\calF$ is $\sqrt n$-Lipschitz. Hence we have
    \begin{align}
    \label{e:anneal-Z-ub}
        \int_{\R^n} e^{\pm\calF(y) - \fc{\ve{y}^2}{2T}} \,dy 
        &\le \int_{\R^n} e^{\pm\calF(0) + \sqrt{n}\ve{y} - \fc{\ve{y}^2}{2T}}\,dy
        \le 
        \int_{\R^n}e^{\pm\calF(0) + nT - \fc{\ve{y}^2}{4T}}\,dy
        \le e^{\pm\calF(0) + nT}\pa{2\pi(2T)}^{n/2}.
    \end{align}
    Hence letting $\ga_T$ be the density of $\calN(0,TI_n)$,
    \begin{align*}
        \chi^2(\ga_T \| \rh_T) + 1
        &\le_{\eqref{e:anneal-Z-ub}} e^{\calF(0) + nT} (2\pi(2T))^{n/2}
        \rc{(2\pi T)^n} \int_{\R^n} 
        \fc{(e^{-\fc{\ve{y}^2}{2T}})^2}{e^{\calF(y) - \fc{\ve{y}^2}{2T}}} dy\\
        &\le 
        e^{\calF(0) + nT} 2^{n/2}
        \rc{(2\pi T)^{n/2}} \int_{\R^n}
        e^{-\calF(y)- \fc{\ve{y}^2}{2T}}dy
        \le_{\eqref{e:anneal-Z-ub}} e^{2nT} 2^{n/2} \rc{(2\pi T)^{n/2}} (2\pi (2T))^{n/2} \le e^{2nT}2^n.
    \end{align*}
    Hence, for fixed $T$, by \pref{t:ld-disc}, to get within constant $\chi^2$ distance takes $\td O(n\log n) = \td O(n)$ steps.
    Note that for any measures $\mu,\nu$ with $\mu\ll \nu$ that 
    \[
\mu\pa{\dd{\mu}{\nu}\ge L} \le \fc{\E_\mu[\dd{\mu}{\nu}]}{L} = \fc{\chi^2(\mu\|\nu)+1}{L},
    \]
    so the distribution of the resulting sample is a $(O(1/\ep), \ep)$ warm start to $\rh_T$ for any $\ep>0$, and hence by \pref{t:annealed-ws} and \pref{pitem:ws-preserve-warm-start-transitive} a $((1/\ep)^{O(1)}, \ep)$-warm start to $p_T$. 

    Now consider running Glauber dynamics starting from $\sign(y_T)$. Given $C$, by \pref{t:wpi-localized}, we can choose $T(\be)$ so that with probability $1-e^{-\Om(n)}$ over $A$, for $T\ge T(\be)$, with probability $\ge 1-e^{-Cn}$ over $y_T\sim p_T$, $\mu_{\be A,y_T}$ satisfies a $(\fc{c(\be)}{n}, e^{-Cn})$-WPI. 
    Note that with probability $1-e^{-\Om(n)}$ over $A$, we have $\opnorm{A}\le 3$.
    On that event, for $\si_0:=\sign(y_T)$ and any other $\si\in \{\pm 1\}$, since
$\langle y_T,\sigma_0-\sigma\rangle\ge 0$,
\[
\frac{\mu_{\be A,y_T}(\sigma_0)}
     {\mu_{\be A,y_T}(\sigma)}
=
\exp\left\{
\frac\be2\left(
\langle\sigma_0,A\sigma_0\rangle-\langle\sigma,A\sigma\rangle
\right)
+\langle y_T,\sigma_0-\sigma\rangle
\right\}
\ge e^{-3\be n}.
\]
Therefore
\[
\mu_{\be A,y_T}(\sigma_0)
\ge e^{-3\be n}2^{-n},
\qquad
\left\|\frac{d\delta_{\sigma_0}}{d\mu_{\be A,y_T}}\right\|_\infty
\le e^{3\be n}2^n.
\]
By \pref{l:ws-mix}, letting $\nu_t$ be the distribution after running Glauber for $t$ steps from $\si_0$,
    \[
\chi^2(\nu_t\| \mu_{\be A, y_T}) \le (e^{-2\fc{c(\be)}nt}+e^{-Cn})( e^{3\be n}2^n)^2 =O(1)
    \]
    by choosing $C$ large enough and $t=\Om(n^2)$. Then $\nu_t$ is a 
    $(O(1/\ep),\ep)$-warm start 
    to $\mu_{\be A, y_T}$ for any $\ep>0$. 
    
    Since $\mu_{\be A, y_T}$ is the conditional distribution of $\si \mid y_T$ under the SL process, by \pref{pitem:ws-preserve-warm-start-conditional} and \pref{pitem:ws-preserve-data-processing}, this gives a $((1/\ep)^{O(1)}, \ep)$-warm start to $\mu_{\be A}$. 
    
    \ppart{Part 2 (Sampling from warm start)}
    Set
\[
    \kappa_{\ep_2}:=\frac1n e^{-O(1/\ep_2)}.
\]
By \Cref{c:sk}, $\mu_{\be A}$ satisfies a
$(\kappa_{\ep_2},\ep_2)$-WPI. Since the output of Part 1 is a
$((1/\ep_1)^{O(1)},\ep_1)$-warm start to $\mu_{\be A}$, \Cref{l:ws-mix}
for the Glauber chain gives
\[
\TV(\nu_T,\pi)
\le
\frac12 (1/\ep_1)^{O(1)}
\sqrt{(1-\kappa_{\ep_2})^{2T}+\ep_2}
+\ep_1
\le
\frac12 (1/\ep_1)^{O(1)}
\sqrt{e^{-2\kappa_{\ep_2}T}+\ep_2}
+\ep_1.
\]
Taking $\ep_1=\ep/2$, choosing $\ep_2=\ep_1^{O(1)}$, and taking
\[
    T\gtrsim \kappa_{\ep_2}^{-1}\log\frac{(1/\ep_1)^{O(1)}}{\ep}
    =
    n e^{O(1/\ep_2)}\log\frac1\ep
\]
makes the right-hand side at most $\ep$.
\end{prf}

%% file: decomp.tex
\section{Decomposition theorem for the weak Poincar\'e inequality}

\label{s:decomp}

We show a general decomposition theorem for a WPI, which gives a way to conclude a WPI by proving WPIs for simpler distributions, and then specialize to our setting. We note that \cite{qin2025spectral} shows similar results; the proximal sampler fits into their framework as it can be thought of as a data augmentation algorithm. (See their \S3.3 describing the application to a hybrid data augmentation algorithm, and Proposition 22 for the WPI.) Namely, the proximal sampler is the Gibbs chain obtained from augmenting the state space with the Gaussian-noised observation. Their analysis is based on a sandwich structure, while we directly decompose the Dirichlet form. Note we also allow a continuous chain such as Langevin.

We consider the following setup. 

Let $(\Om_Y,\scr F_Y)$ be a measurable space with a probability measure $\mu_Y$. 

Let $(\Om_X,\scr F_X)$ be a measurable space.
Let $P:\Om_Y\times \scr F_X\to [0,1]$ be a Markov kernel (for each $y\in \Om_Y$, $P(y,\cdot)$ is a probability measure on $(\Om_X,\scr F_X)$, and for each $B\in \scr F_X$, $P(\cdot ,B)$ is a $\scr F_Y$-measurable function on $Y$). Write $P_y = P(y, \cdot)$. 
Define the measure $\mu_{X,Y}$ on the product space $(\Om_X\times \Om_Y, \scr F_X\ot \scr F_Y)$ by
\[
\int_{\Om_X\times \Om_Y} g(x,y) \,d\mu_{X,Y}(x,y) = 
\int_{\Om_Y} \int_{\Om_X} g(x,y)\,dP_y(x) \, d\mu_Y(y).
\]
(That is, this is the measure obtained by drawing $Y\sim \mu_Y$ and $X\sim P(Y,\cdot)$.)
For a measure $\mu$ on $(\Om_Y,\scr F_Y)$, let $\mu P$ denote the ``mixture'' measure defined by 
\[
\int_{\Om_X} g(x)\, d(\mu P)(x)  = \int_{\Om_Y} \int_{\Om_X} g(x)\,dP_y(x) \, d\mu(y).
\]
Informally, $\mu P = \int_Y P_y\,d\mu(y)$. Note $\mu_X:= \mu_Y P$ is the $X$-marginal of $\mu_{X,Y}$.

Suppose that each probability measure $\pi \in \set{P_y}{y\in \Om_Y}\cup \{\mu_X\}$ is associated with a reversible Markov process with generator $\sL_\pi$ with $\pi$ as its stationary distribution, and define the Dirichlet forms
\[\sE_{\pi}(f,g) := -\an{f,\sL_\pi g}_\pi.\]
Assume that for every test function $g: \Om_X \to \R$,
\begin{align}
\label{e:dir-decomp}
    \sE_{\mu_X}(g,g) 
    \ge 
    \int_{\Om_Y}  \sE_{P_y}(g,g)\, d\mu_Y(y).
\end{align}

Assume that $\mu_{X,Y}$ admits conditional distributions $\mu_{Y|X=x}$. 
We define two auxiliary Markov processes.
\begin{enumerate}
    \item We define a Markov process $\sL_Y$ on $(\Om_Y,\scr F_Y)$
with stationary distribution $\mu_Y$ as follows. Let
\[
M_Y(y,dy')
:=
\int_{\Om_X}\mu_{Y|X=x}(dy')\,dP_y(x)
\]
be the $Y$-marginal Gibbs kernel obtained by sampling
$X\mid Y=y$ and then resampling $Y'\mid X$. Define the rate-one
generator
\begin{align}
\label{e:L-Gibbs}
(\sL_Y g)(y)
&=
\int_{\Om_Y}(g(y')-g(y))\,M_Y(y,dy') \\
&=
\int_{\Om_X}\int_{\Om_Y}
(g(y')-g(y))\,d\mu_{Y|X=x}(y')\,dP_y(x).
\end{align}
    \item 
    We define a Markov process $\sL_{X,Y}$ on the product space $(\Om_X\times \Om_Y, \scr F_X\ot \scr F_Y)$ with stationary distribution $\mu_{X,Y}$ by the generator
\[
(\sL_{X,Y} g)(x,y) = \sL_{P_y} (g(\cdot,y))(x) 
+ \pa{\int_{\Om_Y} (g(x,y')-g(x,y))\,d\mu_{Y|X=x}(y')}.
\]
(This is the combination of evolving according to the Markov process for the current value of $Y$, and resampling $Y|X$ with rate 1.)
\end{enumerate}

Note that letting $\sE_{X,Y}(f,g) = -\an{f,\sL_{X,Y}g}_{\mu_{X,Y}}$, we have the decomposition
\begin{align*}
\sE_{X,Y}(g,g)
&= \int_{\Om_Y} \sE_{P_y}(g(\cdot,y),g(\cdot,y)) \,d\mu_Y(y) + 
\sE_{\lr} (g,g)\\
\text{where }
\sE_{\lr} (g,g) &= 
\rc 2
\int_{\Om_X}\int_{\Om_Y}\int_{\Om_Y} (g(x,y_1)-g(x,y_2))^2 \, d\mu_{Y|X=x}(y_1)\, d\mu_{Y|X=x}(y_2)\,d\mu_X(x).
\end{align*}

The following result is similar to \cite[Theorem 6.1]{ge2018simulated}, where we instead analyze the mixture as the $X$-marginal of a joint distribution on $(X,Y)$ as in \cite{zhou2025polynomial}. In this way, the auxiliary chain on $Y$ is exactly the Gibbs chain obtained by
sampling $X\mid Y$ and then resampling $Y\mid X$.
We will apply this to the proximal sampler, which is a Gibbs chain.

\begin{theorem}[Decomposition for weak Poincar\'e inequality with Gibbs sampling]
\label{t:wpi-decomp}
    Consider the setup above. Suppose the following hold.
    \begin{enumerate}
        \item (Weak Poincar\'e inequality for each $P_y$) Let $\sL_y = \sL_{P_y}$, and $\sE_y(f,g) = -\an{f,\sL_yg}_{P_y}$. With probability $\ge 1-\de'$ over $\mu_Y$, $\sE_y$ satisfies the weak Poincar\'e inequality (for $g_X\in L^2(\Om_X)$)
        \[
\Var_{P_y} (g_X) \le C \sE_y(g_X,g_X) + \de \osc(g_X)^2.
        \]
        \item (Weak Poincar\'e inequality for Gibbs chain) 
        Let $\sE_Y(f,g) = -\an{f,\sL_Yg}_{\mu_Y}$.
        The generator for alternating Gibbs sampling for $\mu_Y$ satisfies the weak Poincar\'e inequality (for $g_Y\in L^2(\Om_Y)$)
        \[
\Var_{\mu_Y} (g_Y) \le C_Y \sE_Y(g_Y,g_Y) + \de_Y \osc(g_Y)^2.
        \]
    \end{enumerate}
    Then
    \begin{enumerate}
        \item (Weak Poincar\'e inequality for joint distribution) Let $\sE_{X,Y}(f,g) = -\an{f,\sL_{X,Y}g}_{\mu_{X,Y}}$. Then (for $g\in L^2(\Om_X\times \Om_Y)$)  for any $\lm>0$,
        \begin{align*}
\Var_{\mu_{X,Y}}(g)
    &\le C\pa{1+(1+\lm)C_Y}\int_{\Om_Y} \sE_y(g(\cdot,y),g(\cdot,y))\,d\mu_Y(y)\\
    &\quad 
    + C_Y \pa{1+\rc{\lm}} \sE_{\lr}(g,g)
    + \ba{\pa{1+(1+\lm)C_Y}(\de + \de') + \de_Y}\osc(g)^2 \\
    &\le \max\bc{C\pa{1+(1+\lm)C_Y}, C_Y\pa{1+\rc{\lm}}} \sE_{X,Y}(g,g) + \ba{\pa{1+(1+\lm)C_Y}(\de + \de') + \de_Y}\osc(g)^2
        \end{align*}
        \item (Weak Poincar\'e inequality for mixture)
        Let $\sE_X(f,g) = -\an{f,\sL_{\mu_X}g}_{\mu_X}$. 
        Then for $g\in L^2(\Om_X)$,
        \[
        \Var_{\mu_{X}}(g)\le 
C\pa{1+C_Y} \sE_{X}(g,g)
+ 
\pa{\pa{1+C_Y}(\de + \de') + \de_Y}\osc(g)^2.
        \]
    \end{enumerate}
\end{theorem}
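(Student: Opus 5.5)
The plan is to decompose the variance of $g$ under $\mu_{X,Y}$ by conditioning on $Y$, apply hypothesis (1) to each fiber, and apply hypothesis (2) to the fiber means. Then the second conclusion follows by specializing to functions of $x$ alone and using \eqref{e:dir-decomp}.

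\emph{Step 1: law of total variance.} For $g\in L^2(\Om_X\times\Om_Y)$, set $\bar g(y):=\int g(x,y)\,dP_y(x)$. Then
\[
\Var_{\mu_{X,Y}}(g)=\int_{\Om_Y}\Var_{P_y}(g(\cdot,y))\,d\mu_Y(y)+\Var_{\mu_Y}(\bar g).
\]
On the good set of $y$, which has $\mu_Y$-probability at least $1-\de'$, hypothesis (1) bounds each fiber term by $C\sE_y(g(\cdot,y),g(\cdot,y))+\de\osc(g)^2$. On the bad set we use the trivial bound $\Var_{P_y}\le\osc(g)^2$. So the first term is at most $C\int\sE_y\,d\mu_Y+(\de+\de')\osc(g)^2$. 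Note that $\osc(g(\cdot,y))\le\osc(g)$ and $\osc(\bar g)\le\osc(g)$.

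\emph{Step 2: the Gibbs Dirichlet form.} Apply hypothesis (2) to $\bar g$:
\[
\Var_{\mu_Y}(\bar g)\le C_Y\sE_Y(\bar g,\bar g)+\de_Y\osc(g)^2.
\]
By reversibility of $M_Y$,
\[
\sE_Y(\bar g,\bar g)=\tfrac12\iint(\bar g(y_1)-\bar g(y_2))^2\,d\mu_{Y|X=x}(y_1)\,d\mu_{Y|X=x}(y_2)\,d\mu_X(x),
\]
because $(y_1,x,y_2)$ with $y_1\sim\mu_Y$, $x\sim P_{y_1}$, $y_2\sim\mu_{Y|X=x}$ is exactly $x\sim\mu_X$ with $y_1,y_2$ i.i.d.\ from $\mu_{Y|X=x}$. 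Write
\[
\bar g(y_1)-\bar g(y_2)=[\bar g(y_1)-g(x,y_1)]+[g(x,y_1)-g(x,y_2)]+[g(x,y_2)-\bar g(y_2)].
\]
Using $(a+b)^2\le(1+\lm)a^2+(1+1/\lm)b^2$, with $a$ the sum of the two outer brackets and $b$ the middle one, the $b$ part contributes $(1+1/\lm)\sE_{\lr}(g,g)$. For the $a$ part, use $(u+v)^2\le 2u^2+2v^2$ and the fact that under the joint law $(x,y_i)\sim\mu_{X,Y}$. This gives
\[
\tfrac12\E a^2\le 2\int\Var_{P_y}(g(\cdot,y))\,d\mu_Y(y).
\]
The constants need care to land exactly on $(1+\lm)C_Y$ rather than $2(1+\lm)C_Y$. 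To get this, I would use the sharper identity
\[
\E\big[(u(x,y_1)-u(x,y_2))^2\big]\le 2\,\E\big[u(x,y)^2\big]-2\E_x\big[(\E_{y|x}u)^2\big]\le 2\,\E\big[u(x,y)^2\big]
\]
for $u=g-\bar g$, where $\E[u^2]=\int\Var_{P_y}\,d\mu_Y$. The $a$ part then contributes at most $(1+\lm)\int\Var_{P_y}\,d\mu_Y$. I would re-bound that by Step 1's fiber estimate. Collecting terms:
\[
\Var_{\mu_{X,Y}}(g)\le(1+(1+\lm)C_Y)\Big[C\int\sE_y\,d\mu_Y+(\de+\de')\osc(g)^2\Big]+C_Y(1+1/\lm)\sE_{\lr}(g,g)+\de_Y\osc(g)^2.
\]
This is the first displayed bound. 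The second follows from the decomposition $\sE_{X,Y}=\int\sE_y\,d\mu_Y+\sE_{\lr}$.

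\emph{Step 3: the mixture.} For $g=g(x)$, we have $\sE_{\lr}(g,g)=0$ because $g$ does not depend on $y$. We also have $\Var_{\mu_{X,Y}}(g)=\Var_{\mu_X}(g)$. So we may let $\lm\to0$, keeping the $\sE_{\lr}$ term at zero, which gives the factor $1+C_Y$. Finally, \eqref{e:dir-decomp} gives $\int\sE_y(g,g)\,d\mu_Y\le\sE_X(g,g)$, which completes the bound for the mixture.

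\emph{Main obstacle.} The main obstacle is the bookkeeping in Step 2: justifying the reversible-kernel representation of $\sE_Y$ measurably via the conditional distributions $\mu_{Y|X=x}$, and getting the cross-term constants exactly as stated. If the sharp constant fails, the fallback is a worse absolute constant, which would not matter downstream.
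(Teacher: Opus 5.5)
Your proposal is correct and follows the paper's proof essentially step for step. Both use the law of total variance over $Y$, apply the Gibbs-chain WPI to the fiber means, and rewrite $\sE_Y$ by the Nishimori identity, splitting with $(1+\lm)$ and $(1+1/\lm)$. Both get the sharp $(1+\lm)$ constant from the conditional-variance identity \eqref{e:g2-gdiff2}, and for the mixture both take $g$ independent of $y$, let $\lm\to 0^+$, and apply \eqref{e:dir-decomp}.
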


\begin{prf}
Given $g\in L^2(\Om_X\times \Om_Y)$, define $g_Y\in L^2(\Om_Y)$ by $g_Y(y) = \E_{X\sim P_y} g(X,y)$. 
Then variance decomposition and the assumptions give
\begin{align}
\nonumber
    \Var_{\mu_{X,Y}}(g)
    &= \int_{\Om_Y} \Var_{P_y} (g(\cdot,y))\,d\mu_Y(y) + \Var_{\mu_Y} (g_Y)\\
    &\le 
    \int_{\Om_Y} \Var_{P_y} (g(\cdot,y))\,d\mu_Y(y)
    + 
    C_Y \sE_Y(g_Y,g_Y) + \de_Y \osc(g)^2.
\label{e:wpi-decomp-1}
\end{align}

Note that drawing $Y_1\sim \mu_Y$, $X|Y_1$, and then $Y_2|X$ gives the same joint distribution of $(X,Y_1,Y_2)$ as drawing $X\sim \mu_X$, and then $Y_1,Y_2|X$ independently (Nishimori's identity). Hence
\begin{align}
\nonumber
    \sE_Y(g_Y,g_Y)
    &= \rc 2\int_{\Om_Y} \int_{\Om_X} \int_{\Om_Y}
    \pa{\E_{P_{y_2}} g(\cdot, y_2) - \E_{P_{y_1}} g(\cdot, y_1)}^2 \,d\mu_{Y|X=x}(y_2) \,dP_{y_1}(x)\, d\mu_Y(y_1)\\
\nonumber
    &= \rc 2 \int_{\Om_X}\int_{\Om_Y}\int_{\Om_Y} 
    \pa{\E_{P_{y_2}} g(\cdot, y_2) - \E_{P_{y_1}} g(\cdot, y_1)}^2 \,d\mu_{Y|X=x}(y_1) \,d\mu_{Y|X=x}(y_2)\,d\mu_X(x)\\    
\nonumber
    &\le_{\eqref{e:g-y1-y2}} \rc 2 \int_{\Om_X}\int_{\Om_Y}\int_{\Om_Y} 
    \Bigg[
\pa{1+\lm} \pa{\pa{g(x,y_1) - \E_{P_{y_1}} g(\cdot, y_1)} - \pa{g(x,y_2) - \E_{P_{y_2}}g(\cdot, y_2)}}^2\\
\nonumber
&\quad \phantom{\rc 2 \int_{\Om_X}\int_{\Om_Y}\int_{\Om_Y} }+ \pa{1+\rc{\lm}} \pa{g(x,y_2)-g(x,y_1)}^2
    \Bigg] \,d\mu_{Y|X=x}(y_1) \,d\mu_{Y|X=x}(y_2)\,d\mu_X(x)\\
\nonumber
&\le_{\eqref{e:g2-gdiff2}} 
 (1+\lm) \int_{\Om_X}\int_{\Om_Y} 
 (g(x,y) -\E_{P_y}g(\cdot,y))^2 \,d\mu_{Y|X=x}(y)\, d\mu_X(x) + \pa{1+\rc{\lm}}\sE_{\lr}(g,g)\\
 &\le 
 (1+\lm) \int_{\Om_Y} \Var_{P_y}(g(\cdot,y)) d\mu_Y(y) + \pa{1+\rc{\lm}}\sE_{\lr}(g,g)
    \label{e:wpi-decomp-2}
\end{align}
where we use the calculations
\begin{align}
\nonumber
    & \pa{\E_{P_{y_2}} g(\cdot, y_2) - \E_{P_{y_1}} g(\cdot, y_1)}^2 \\
\nonumber
    & = 
    \pa{\pa{g(x,y_1) - \E_{P_{y_1}} g(\cdot, y_1)} - \pa{g(x,y_2) - \E_{P_{y_2}}g(\cdot, y_2)} + 
    \pa{g(x,y_2)-g(x,y_1)} }^2\\
\label{e:g-y1-y2}
    &\le 
    \pa{1+\lm} \pa{\pa{g(x,y_1) - \E_{P_{y_1}} g(\cdot, y_1)} - \pa{g(x,y_2) - \E_{P_{y_2}}g(\cdot, y_2)}}^2 + \pa{1+\rc{\lm}} \pa{g(x,y_2)-g(x,y_1)}^2.
\end{align}
and the following applied to $f(y) := g(x,y) - \E_{P_y}g(\cdot,y)$ for a fixed $x$.
\begin{align}
\label{e:g2-gdiff2}
    \int_{\Om_Y} f^2 \,d\mu_{Y|X=x} &=  
    \ub{\pa{\int_{\Om_Y} f \, d\mu_{Y|X=x}}^2}{\ge0} + 
    \rc 2 \int_{\Om_Y} \int_{\Om_Y} (f(y_1)-f(y_2))^2 \,d\mu_{Y|X=x}(y_1) \,d\mu_{Y|X=x}(y_2).
\end{align}
Then \eqref{e:wpi-decomp-1} and \eqref{e:wpi-decomp-2} together give
{
\allowdisplaybreaks\begin{align}
\nonumber
    &\Var_{\mu_{X,Y}}(g) \\
    \nonumber
    &\le 
    \pa{1+(1+\lm)C_Y} \int_{\Om_Y} \Var_{P_y}(g(\cdot,y)) d\mu_Y(y) 
    + C_Y \pa{1+\rc{\lm}}\sE_{\lr}(g,g) + 
    \de_Y \osc(g)^2\\
\nonumber
    &\le \pa{1+(1+\lm)C_Y}\bc{\int_{\Om_Y} [ C\sE_y(g(\cdot,y),g(\cdot,y)) + \de \osc(g)^2]\,d\mu_Y(y) + \de' \osc (g)^2 } + C_Y \pa{1+\rc{\lm}}\sE_{\lr}(g,g) + \de_Y \osc (g)^2\\
    \label{e:wpi-before-max}
    &\le C\pa{1+(1+\lm)C_Y}\int_{\Om_Y} \sE_y(g(\cdot,y),g(\cdot,y))\,d\mu_Y(y)
    + C_Y \pa{1+\rc{\lm}} \sE_{\lr}(g,g)
    + \ba{\pa{1+(1+\lm)C_Y}(\de + \de') + \de_Y}\osc(g)^2 \\
\nonumber
    &\le \max\bc{C\pa{1+(1+\lm)C_Y}, C_Y\pa{1+\rc{\lm}}} \sE_{X,Y}(g,g) + \ba{\pa{1+(1+\lm)C_Y}(\de + \de') + \de_Y}\osc(g)^2
\end{align}}
This shows the first part. To obtain the weak Poincar\'e inequality for the mixture, given $g\in L^2(\Om_X)$, view it as a function in $L^2(\Om_X\times \Om_Y)$ that is constant in $y$. We have $\sE_{\lr}(g,g) = 0$, so we can take $\lm\to 0^+$ in the above. Then \eqref{e:wpi-before-max} together with the inequality \eqref{e:dir-decomp} gives the result.
\end{prf}
We can directly apply this theorem to obtain a WPI from a WPI from the localized distributions and the proximal sampler.

\newcommand{\wpigdps}[0]{    Given a distribution $\mu$ on $\Om$ where $\Om = \{\pm 1\}^n$ or $r\cdot \bbS^{n-1}$, suppose the following hold. Let $p_t$ be the distribution of $y_t$ under \eqref{e:SL}.
    \begin{enumerate}
        \item (WPI for localized distributions) With probability $1-\de'$ over $y\sim p_{T}$, the tilted distribution $\mu_y$ satisfies the weak Poincar\'e inequality (for $g\in L^2(\Om)$)
\[
    \Var_{\mu_y}(g) \le C\loc \sE_{\mu_y}(g,g) + \de\loc \osc(g)^2,
\]
where the Dirichlet form is that of Glauber on $\{\pm 1\}^n$ or Langevin on $r\cdot\bbS^{n-1}$, respectively.
        \item (WPI for proximal sampler) 
        The proximal sampler $\Kprox{T, \iy}$ satisfies a weak Poincar\'e inequality (for $g\in L^2(\R^n)$)
        \[
\Var_{p_{T}}(g) \le C\prox \sE\prox(g,g) + \de\prox\osc(g)^2.
        \]
    \end{enumerate}
    Then $\mu$ satisfies the weak Poincar\'e inequality
    \[
\Var_\mu(g) \le      
C\loc\pa{1+C\prox} \sE_{\mu}(g,g)
+ 
\pa{\pa{1+C\prox}(\de\loc + \de') + \de\prox}\osc(g)^2.
        \]
}

\begin{corollary}[Weak Poincar\'e inequality for Glauber/Langevin from localized distributions and proximal sampler]
\label{c:wpi-gd-ps}
\wpigdps
\end{corollary}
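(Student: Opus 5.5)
The plan is to apply the second part of \pref{t:wpi-decomp} with $\Om_Y = \R^n$, $\mu_Y = p_T$, $\Om_X = \Om$, and kernel $P_y = \mu_y$ (the localized measure $\mu_{y_T}$ given $y_T = y$). Then $\mu_X = p_T P = \mu$, since $\mu_{y_t}(A)$ is a martingale with $\mu_{y_0} = \mu$. The conditional law of $Y$ given $X = \si$ is $K_{\iy\to T}(\si,\cdot)$, namely $y_T = T\si + \sqrt T\xi$; this follows from the diffusion representation \eqref{e:SL-time-change}. Consequently the kernel $M_Y(y,\cdot)$ of \eqref{e:L-Gibbs}, which draws $\si\sim\mu_y$ and then $y'\sim K_{\iy\to T}(\si,\cdot)$, is exactly $K_{T\to\iy}K_{\iy\to T} = \Kprox{T,\iy}$.

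First I will verify the Gibbs-chain hypothesis. The rate-one generator $\sL_Y = M_Y - I$ has Dirichlet form $\an{g,(I-\Kprox{T,\iy})g}_{p_T}$, which is the Dirichlet form $\sE\prox$ of the proximal sampler under the convention that a chain and its Poisson-clock process share a Dirichlet form. The second hypothesis of \pref{t:wpi-decomp} therefore holds with $C_Y = C\prox$ and $\de_Y = \de\prox$. The first hypothesis holds with $C = C\loc$, $\de = \de\loc$, and failure probability $\de'$, which is exactly assumption (1).

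Next I will check the Dirichlet form decomposition \eqref{e:dir-decomp}, namely $\sE_\mu(g,g)\ge \E_{y\sim p_T}\sE_{\mu_y}(g,g)$. I expect this to be the main point requiring care.

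For Langevin on $r\cdot\bbS^{n-1}$ the Dirichlet form is linear in the measure, $\sE_\pi(g,g) = \int \ve{\gd_{\bbS} g}^2\,d\pi$. Since $\E_y \mu_y = \mu$, equality holds.

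For Glauber the edge weight $\fc{\pi(x)\pi(x^{\opl i})}{\pi(x)+\pi(x^{\opl i})}$ is not linear in $\pi$. It is, however, jointly concave in $(\pi(x),\pi(x^{\opl i}))$, because it is half the harmonic mean, and it is $1$-homogeneous. Applying Jensen's inequality to the mixture $\mu = \E_y \mu_y$ gives, for each edge,
\[\E_y \fc{\mu_y(x)\mu_y(x^{\opl i})}{\mu_y(x)+\mu_y(x^{\opl i})}\le \fc{\mu(x)\mu(x^{\opl i})}{\mu(x)+\mu(x^{\opl i})}.\]
Summing over edges with the nonnegative factors $(g(x^{\opl i})-g(x))^2$ yields \eqref{e:dir-decomp}. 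The same argument works for any mixture, so the SL structure is not needed beyond $\E_y\mu_y=\mu$.

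With these verified, conclusion (2) of \pref{t:wpi-decomp} gives
\[\Var_\mu(g)\le C\loc(1+C\prox)\sE_\mu(g,g) + ((1+C\prox)(\de\loc+\de')+\de\prox)\osc(g)^2,\]
which is the claim. The only remaining detail is measurability and the existence of the regular conditional distributions $\mu_{Y|X}$. These exist explicitly here, since $Y\mid X=\si$ is Gaussian.
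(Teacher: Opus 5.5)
Your proposal is correct and follows the paper's proof. Like the paper, you identify the SL joint law of $(\sigma, y_T)$ with the setup of \pref{t:wpi-decomp}, recognize the Gibbs kernel $M_Y$ as $\Kprox{T,\iy}$, and invoke part (2) of that theorem. The only difference is that you verify the Dirichlet-form comparison \eqref{e:dir-decomp} yourself: by linearity in the measure for Langevin, and by joint concavity of the Glauber edge weight $\frac{ab}{a+b}$ together with Jensen for Glauber. The paper instead cites \cite[Proposition 7]{lee2024convergence} for this step.
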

\begin{prf}
    By construction of the SL process---$Y\sim p_{T}$, then $X \sim \mu_y$---the marginal law of $X$ is exactly $\mu$. 
    The Gibbs sampler given by \eqref{e:L-Gibbs} is exactly the (continuous-time version of the) proximal sampler given by $\Kprox{T, \iy}$. 
    We note that Glauber dynamics and Langevin dynamics both satisfy the Dirichlet form decomposition \eqref{e:dir-decomp} (see e.g., \cite[Proposition 7]{lee2024convergence}). 
    The result then follows from \pref{t:wpi-decomp}.
\end{prf}
The same result holds when using the proximal sampler for a distribution with a density in $\R^n$, replacing the localized distributions with $\mu_{y}^T(x) \propto \mu(x)e^{-\fc{\ve{x}^2}{2T} + \an{y,x}}$ (as this is the form of the posterior distributions).

%% file: appendix.tex
\section{Log-Sobolev inequality under SDE evolution}\label{a:lsi-under-sde}
The following is a generalization of \cite[Lemma 16]{liang2025characterizing}.
\begin{lemma}[LSI under SDE with Lipschitz drift]
\label{l:lsi-Lip-SDE}
    Suppose that $\rh_0$ satisfies a log-Sobolev inequality with constant $\al_0$. Consider the SDE
    \[
dy_t = f_t(y_t)dt + dB_t, \quad y_0\sim \rh_0,
    \]
    where $f_t$ is $L(t)$-Lipschitz, and $L(t)$ is Riemann integrable over $[0,T]$.
    Then for any $t\in [0,T]$, the distribution $\rh_t$ of $y_t$ satisfies a log-Sobolev inequality with constant 
    \[
\pa{e^{\int_0^t 2L_sds}\al_0^{-1} + \int_0^t e^{2\int_s^t L_rdr}ds}^{-1}.
    \]
\end{lemma}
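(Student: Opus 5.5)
The plan is to prove the bound in the reciprocal convention. Write $C(\rh):=1/\rls(\rh)$, so that an LSI reads $\Ent_\rh(g^2)\le 2C(\rh)\int\|\gd g\|^2\,d\rh$. The claim then becomes
\[
C(\rh_t)\le e^{2\int_0^t L_s\,ds}\,C(\rh_0)+\int_0^t e^{2\int_s^t L_r\,dr}\,ds .
\]
This is exactly the solution of the linear ODE $\dot C=2L_tC+1$ with initial value $C(\rh_0)$. I would obtain it as the continuum limit of a one-step recursion for an Euler--Maruyama discretization. The case $\al_0=\iy$ (a point mass, $C(\rh_0)=0$) is included throughout.

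\emph{Step 1 (two stability facts).} Both facts are standard.
\begin{enumerate}
\item If $\Psi:\R^n\to\R^n$ is $K$-Lipschitz, then $C(\Psi_\#\rh)\le K^2 C(\rh)$. To see this, apply the LSI for $\rh$ to $g\circ\Psi$ and use $\|\gd(g\circ\Psi)\|\le K\|\gd g\circ\Psi\|$ a.e., by Rademacher's theorem.
\item For convolution, $C(\rh*\calN(0,hI))\le C(\rh)+h$. This follows from subadditivity of entropy for the product $\rh\otimes\calN(0,hI)$ (tensorization), together with step 1 applied to the $1$-Lipschitz-in-each-factor map $(x,z)\mapsto x+z$. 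Equivalently, use the sum map with the weighted norm.
\end{enumerate}

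\emph{Step 2 (one Euler step).} Fix a partition $0=t_0<\dots<t_N=t$ with mesh $h\to0$ and steps $h_k$. Define
\[
Y_{k+1}=Y_k+h_kf_{t_k}(Y_k)+\sqrt{h_k}\,\xi_k .
\]
The map $y\mapsto y+h_kf_{t_k}(y)$ is $(1+h_kL_{t_k})$-Lipschitz. By step 1, the law $\rh^h_k$ of $Y_k$ satisfies
\[
C_{k+1}\le(1+h_kL_{t_k})^2C_k+h_k .
\]
Unrolling gives
\[
C_N\le\prod_k(1+h_kL_{t_k})^2\,C_0+\sum_k h_k\prod_{j>k}(1+h_jL_{t_j})^2 .
\]
Bound $(1+x)^2\le e^{2x}$. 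Then $\prod_{j>k}(1+h_jL_{t_j})^2\le\exp\bigl(2\sum_{j>k}h_jL_{t_j}\bigr)$, and the exponent is a Riemann sum. Riemann integrability of $L$ gives
\[
\limsup_{h\to0}C_N\le e^{2\int_0^tL}C_0+\int_0^te^{2\int_s^tL}\,ds .
\]
The sum over $k$ is itself a Riemann sum for the outer integral, with an integrand that is uniformly bounded since $L$ is bounded.

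\emph{Step 3 (passing to the limit).} This is the step I expect to be the main obstacle. It requires two things.
\begin{enumerate}
\item \emph{Convergence of the scheme.} The Euler scheme converges to $y_t$ in $L^2$, and hence $\rh^h_N\to\rh_t$ weakly. This needs the strong-error estimate for Lipschitz drifts. If $f$ is only Lipschitz in $y$ and measurable in $t$, I would first mollify $f$ in time, or replace $f_{t_k}$ by the time average over $[t_k,t_{k+1}]$; this preserves the per-step Lipschitz bound $\int_{t_k}^{t_{k+1}}L$ and still converges by Gr\"onwall.
\item \emph{Stability of the LSI.} It suffices to verify the LSI for $g\in C_b^1$ with bounded gradient and with $g^2$ bounded below by a positive constant; general $g$ follows by the usual closure and truncation. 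For such $g$, the quantities $\int g^2\log g^2\,d\rh$, $\int g^2\,d\rh$ and $\int\|\gd g\|^2\,d\rh$ are integrals of bounded continuous functions. Each therefore converges under weak convergence, and the inequality $\Ent_{\rh^h_N}(g^2)\le 2C_N\int\|\gd g\|^2\,d\rh^h_N$ passes to the limit.
\end{enumerate}
This yields the stated constant for $\rh_t$.
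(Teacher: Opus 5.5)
Your proposal is correct and is essentially the paper's proof. Both use an Euler--Maruyama discretization, and both rely on the same two stability facts: a Lipschitz pushforward, and convolution with $\calN(0,hI)$, which the paper cites from Chafa\"i. These give the recursion $C_{k+1}\le(1+hL)^2C_k+h$, which is unrolled and passed to the limit. Your Step~3 is more careful than the paper, which simply asserts $\rh_t^{(t/N)}\to\rh_t$ weakly and that the inequalities pass to the limit; your time-averaging of $f$ also removes an implicit time-regularity assumption.
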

If $\rh_0$ is a point mass, then $\al_0=\iy$ and the first term drops out.
\begin{prf}
    Consider the discretization with step size $h$,
    \begin{align*}
        X_{t+h}^{(h)} &= X_t^{(h)} + hf_t(X_t^{(h)}) + \sqrt{h}Z_t, & Z_t&\sim \calN(0,I_n).
    \end{align*}
    Let $\rh_t^{(h)}$ be the distribution of $X_t^{(h)}$. 
    Note
    \begin{align*}
        \rh_{t+h}^{(h)} &= (\Id + hf_t)_{\#} \rh_t * \mathcal N(0, hI_n).
    \end{align*}
    
    Let $C_0=\al_0^{-1}$ and $C_t^{(h)}=\rls(\rh_t^{(h)})^{-1}$. 
    Note $\Id + hf_t$ is $(1+hL(t))$-Lipschitz.
    Hence we have, using (1) the pushforward of a distribution satisfying a LSI with constant $c$ by a $L$-Lipschitz map satisfies a LSI with constant $\fc{c}{L^2}$ \cite[Remark 7]{chafai2004entropies} and (2) the inverse LSI constant of a convolution of distributions is at most the sum of their inverse LSI constants \cite[Corollary 3.1]{chafai2004entropies},
    \begin{align*}
        \rls((\Id + hf_t)_{\#} \rh_t) &\ge \fc{\rls(\rh_t)}{(1+hL(t))^2} = \rc{(1+hL(t))^2 C_t^{(h)}}\\
        \rls((\Id + hf_t)_{\#} \rh_t * \mathcal N(0, hI_n))^{-1}
        &\le (1+hL(t))^2 C_t^{(h)} + h.
    \end{align*}
    Hence
    \[
C_{kh}^{(h)}\le \prod_{j=0}^{k-1} (1+hL_{jh})^2 C_0 + h\ba{\sum_{i=1}^{k-1}\prod_{j=i}^{k-1}(1+hL_{jh})^2}.
    \]
    Noting that $\rh_{t}^{(t/N)}\to \rh_t$ weakly as $N\to \iy$, we can take the limit of the functinoal inequalities for $\rh_t^{(t/N)}$ to get 
    \[
C_t = \lim_{N\to \iy} C_t^{(t/N)}\le e^{2\int_0^t L_sds}C_0 + \int_0^t e^{2\int_s^t L_rdr}ds. \qedhere
    \]
\end{prf}

\section{Auxiliary analytic lemmata on Wiener spaces}\label{a:wiener}

\subsection{Tensor decomposition for cylindrically reweighed Wiener measures}

We first give a complete proof demonstrating that the Wiener measure $\gamma$ on $(E,\calB(E))$ reweighed by $e^{V_k(B)}/Z_k$ can be decomposed into a tensor product of measures as $\nu_k = (T_k)_{\#}\left(\tilde{\nu}_k \otimes \gamma_{<k}\right)$, where the former corresponds to the push-forward induced by a Lipschitz reweighing of the first $k$ Gaussian coordinates of $\gamma$ and the latter is the ``tail'' corresponding to an infinite-dimensional centered Gaussian process on $E$.
\begin{lemma}[Tensorization of the Lipschitz $k$-reweighed Wiener measure]\label{lem:wiener-lipschitz-reweigh-tail-decomposition}
    Let $\gamma$ be the Wiener measure on $(E,\calB(E))$ and $V$ be a $K$-Cameron--Martin Lipschitz function $V: E \to \R$. Define the measures $\nu_k, \tilde{\nu}_k$ and $\gamma_{>k}$, and the maps $P_k: E \to \calH$ and $R_k: E \to E$ as in \pref{thm:lsi-rhT-lipschitz}. Then, 
    \[
        \left(\pi_k,R_k\right)_{\#}\gamma = \gamma_k \otimes \gamma_{>k}\,,
    \]
    and
    \[
        \left(\pi_k,R_k\right)_{\#}\nu_k = \tilde{\nu}_k \otimes \gamma_{>k}\,.
    \]
\end{lemma}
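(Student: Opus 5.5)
The plan is to prove the first identity from the structure of the Paley--Wiener coordinates, and then to get the second from the first by a change of density that depends only on the first $k$ coordinates.

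\emph{First identity.} By \pref{l:paley-wiener}, the variables $X_i=X_{e_i}$ are i.i.d.\ $\calN(0,1)$ under $\ga$, so $(\pi_k)_\#\ga=\ga_k=\calN(0,\Id_k)$. It remains to show that $R_k$ is independent of $\pi_k$.
\begin{itemize}
\item Everything involved is jointly Gaussian. For $\ell\in E^*$ we have $\ell(R_k(B))=\ell(B)-\sum_{i\le k}X_i(B)\,\ell(e_i)$, and by \pref{l:paley-wiener}(2) this equals $X_{i^*\ell}(B)-\sum_i X_i(B)\an{i^*\ell,e_i}_\calH=X_{P_{H_k^\perp}i^*\ell}(B)$ in $L^2(\ga)$. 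So the family $\{\ell\circ R_k\}_{\ell\in E^*}\cup\{X_1,\dots,X_k\}$ is a centered Gaussian family.
\item The covariances vanish: by the isometry in \pref{l:paley-wiener}(1), $\E_\ga[X_i\,X_{P_{H_k^\perp}i^*\ell}]=\an{e_i,P_{H_k^\perp}i^*\ell}_\calH=0$.
\item Since $E$ is separable, the Borel $\si$-algebra is generated by countably many functionals in $E^*$. Zero covariance within a Gaussian family therefore gives independence of $\pi_k$ and $R_k$ as random elements of $\R^k$ and $E$.
\end{itemize}
Together these show $(\pi_k,R_k)_\#\ga=\ga_k\ot\ga_{>k}$, where $\ga_{>k}$ is by definition the law of $R_k$.

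\emph{Second identity.} By definition $V_k=\widehat V_k\circ\pi_k$, so $d\nu_k=Z_k^{-1}e^{\widehat V_k(\pi_k(B))}d\ga(B)$. For a bounded measurable $\Psi$ on $\R^k\times E$, the first identity gives
\[
\int\Psi(\pi_k,R_k)\,d\nu_k=Z_k^{-1}\int\!\!\int\Psi(a,r)e^{\widehat V_k(a)}\,d\ga_k(a)\,d\ga_{>k}(r).
\]
Note that $Z_k=\int e^{V_k}d\ga=\int e^{\widehat V_k}d\ga_k=\tilde Z_k$. The right-hand side is therefore $\int\Psi\,d(\tilde\nu_k\ot\ga_{>k})$, which is the claim.

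The only delicate point is well-definedness, and I expect it to be the main obstacle.
\begin{itemize}
\item $X_i$ is an It\^o integral, defined only $\ga$-a.s., and $R_k$ must be measurable. I will fix Borel versions of $X_1,\dots,X_k$. Since $e_i\in\calH\subset E$, the map $R_k$ is then Borel.
\item We need $Z_k<\iy$. This follows from Jensen's inequality, $e^{V_k}\le\E[e^V\mid\calF_k]$, combined with \pref{lem:infinite-dim-gaussian-ui}.
\item We need $\widehat V_k$ finite. This holds because it is $K$-Lipschitz, as shown in \pref{t:lsi-CM-pert}.
\end{itemize}
All identities are then understood $\ga$-a.s., which suffices for equality of the pushforward measures.
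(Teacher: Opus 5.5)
Your proposal is correct and follows essentially the same route as the paper. Independence of $\pi_k$ and $R_k$ comes from the vanishing covariances $\E_\ga[X_i\,\ell(R_k(B))]=0$, and the second identity comes from factoring the density $e^{\widehat V_k\circ\pi_k}/Z_k$ through the first coordinate, using $Z_k=\tilde Z_k$. Your explicit check that $\{X_1,\dots,X_k\}\cup\{\ell\circ R_k\}_{\ell\in E^*}$ is a \emph{jointly} Gaussian family (all being Paley--Wiener integrals), together with the countable-generation argument, fills in a step the paper leaves implicit when it cites only the marginal Gaussianity of $\ga_{>k}$.
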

\begin{prf}
    We prove the claims in sequence using the fact that $\E[\ell(B)X_i(B)] = \ell(e_i)$\footnote{\,The proof for this follows by using the It\^o isometry in conjunction with the definition $X_h(B) = \int_0^T \frac{d}{ds}h(s)dB_s$ to conclude that $\E[X_h(B)B_t]=h(t) = \ell_t(h)$ by first choosing $\ell$ to be the evaluation map, and then extending to any continuous linear functional $\ell$ using the Markov-Riesz-Kakutani representation theorem to conclude $\E[X_h(B)\ell(B)]= \ell(h)$. Finally, set $h = e_i$.}. 
    \ppart{Proof of independence for Wiener decomposition} Note that $\pi_k(B) := \left(X_1(B),\dots,X_k(B)\right)$. Since $\pi_k(B) \sim \calN(0,\Id_k)$ and \pref{lem:gamma-tail-gaussian} implies that $\gamma_{>k} := R_kB$ is also a (centered) Gaussian process on $E$, it suffices to show that $\E[X_i(B)\ell(R_k(B))] = 0$ for any continuous linear functional $\ell : E \to \R$. So,
    \allowdisplaybreaks
    \begin{align*}
        \E\left[\ell(R_k(B))X_i(B)\right] &= \E\left[\left(\ell(B) - \sum_{j=1}^kX_j(B)\ell(e_j)\right)X_i(B)\right] = \E\left[\ell(B)X_i(B)\right] - \sum_{j=1}^k \E\left[X_j(B)X_i(B)\right]\ell(e_j) \\
        &= \ell(e_i) - \ell(e_i) = 0\,.
    \end{align*}
    \ppart{Proof of independence for reweighed decomposition} By the computations in \pref{thm:lsi-rhT-lipschitz}, note that
    \[
        d\nu_k(B) := \frac{e^{V_k(B)}}{Z_k}d\gamma(B) = \frac{e^{h(\pi_k(B))}}{Z_k}d\gamma(B)\, , 
    \]
    for some $\calF(X_1,\dots,X_k)$-measurable function $h: \R^k \to \R$. This immediately implies, since $\gamma_k := (\pi_k)_{\#}\gamma$, that $Z_k = \int_E e^{h(\pi_k(B))}d\gamma(B) = \int_{\R^k} e^{h(a)}d\gamma_k(a)$. Furthermore, the following holds under the push-forward $(\pi_k)_{\#}\gamma = \gamma_k$ for any bounded measurable function $F: E \times \R^k \to \R$ 
    \allowdisplaybreaks
    \begin{align*}
        &\E_{(a,x)\sim (\pi_k,R_k)_{\#}\nu_k}\left[F(a,x)\right] = \int_{a \in \R^k}\int_{x \in E}F(a,x)d\left[(\pi_k,R_k)_{\#}\nu_k\right](a,x) \\
        &= \int_{B \in E}F(\pi_k(B), R_k(B))d\nu_k(B) = \frac{1}{Z_k}\int_{B \in E}F(\pi_k(B), R_k(B))e^{h(\pi_k(B)}d\gamma(B) \\
        &=_{\gamma = \gamma_k \ot \gamma_{>k}} \int_{B \in E}F(\pi_k(B), R_k(B))\frac{e^{h(\pi_k(B))d\gamma_k}}{Z_k}d\gamma_{>k} \\ 
        &= \int_{B \in E}F(\pi_k(B), R_k(B)) d\tilde{\nu}_k(\pi_k(B)) d\gamma_{> k}(B) = \int_{a \in \R^k}\int_{x \in E}F(a,x)d\tilde{\nu}_k(a)d\gamma_{>k}(x) \\
        &= \E_{(a,x) \sim \tilde{\nu}_k\ot \gamma_{>k}}\left[F(a,x)\right]\,.
    \end{align*}
    This immediately implies that $(\pi_k,R_k)_{\#}\nu_k = \tilde{\nu}_k \ot \gamma_{>k}$.
\end{prf}

We now prove, for completeness, that the push-forward measure $\gamma_{> k} := (R_k)_{\#}\gamma$ is a centered Gaussian process on $E$ with Cameron-Martin space $H^\perp_{k}$. Here, $\gamma$ is the Wiener measure on $(E,\calB(E))$ and $R_{k}(B) := B - \sum_{i=1}^k X_i(B)e_i$.
\begin{lemma}[Gaussian tail decomposition of the Lipschitz $k$-reweighed Wiener measure]\label{lem:gamma-tail-gaussian}
    Consider the Wiener measure $\gamma$ on $(E,\calB(E))$ where $E := C^0\left([0,T], \R^n\right)$ with the Cameron-Martin space $\calH := \left\{ B \in E \mid B(0)= 0\,\,\gamma\text{-a.s.},\, \int_0^T \left(\frac{d}{dt}B\right)^2\,dt < \infty\right\}$. Let $\{e_i\}_{i \in \N}$ be an orthonormal basis for $\calH$. Then, for any $k \in \Z$, the map $R_k: E \to E$ defined as $R_k(B) := B - \sum_{i=1}^k X_i(B)e_i$ induces a push-forward measure $\gamma_{>k} := (R_k)_{\#}\gamma$ which is a centered Gaussian process on $E$.
\end{lemma}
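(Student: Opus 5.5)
The claim is that $\ga_{>k}:=(R_k)_\#\ga$ is a centered Gaussian measure on $E$. I would prove it directly from \pref{d:gaussian-banach}: for every $\ell\in E^*$, the pushforward $\ell_\#\ga_{>k}$, i.e.\ the law of $\ell(R_k(B))$ under $B\sim\ga$, should be a centered one-dimensional Gaussian.

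\textbf{Measurability.} First I would check that $R_k$ is a measurable map $E\to E$.
\begin{itemize}
\item Each $X_i$ is a Paley--Wiener integral, hence defined $\ga$-a.s.
\item I would fix a Borel version of each $X_i$, for instance as the a.s.\ limit of Riemann--Stieltjes sums along a fixed subsequence.
\item Then $B\mapsto (X_1(B),\dots,X_k(B))$ is Borel into $\R^k$.
\item The map $(a,B)\mapsto B-\sum_i a_ie_i$ is continuous, because $e_i\in\calH\subset E$.
\end{itemize}
So $R_k$ is Borel, and it is well defined up to a $\ga$-null set. That suffices for the pushforward measure.

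\textbf{Gaussianity.} Fix $\ell\in E^*$. By linearity,
\[
\ell(R_k(B))=\ell(B)-\sum_{i=1}^k X_i(B)\,\ell(e_i).
\]
By \pref{l:paley-wiener}(2), $\ell(B)=X_{i^*\ell}(B)$ $\ga$-a.s., and each $X_i=X_{e_i}$. Since $h\mapsto X_h$ is linear,
\[
\ell(R_k(B))=X_{h_\ell}(B),\qquad h_\ell:=i^*\ell-\sum_{i=1}^k\ell(e_i)e_i\in\calH .
\]
By \pref{l:paley-wiener}(1), $X_{h_\ell}$ is a centered Gaussian with variance $\|h_\ell\|_{\calH}^2$. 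Hence $\ell_\#\ga_{>k}=\calN(0,\|h_\ell\|_\calH^2)$, which is exactly the definition of a centered Gaussian measure on $E$.

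It is worth recording that $\ell(e_i)=\an{i^*\ell,e_i}_\calH$. So $h_\ell=P_{H_k^\perp}i^*\ell$, and the covariance of $\ga_{>k}$ is $\|P_{H_k^\perp}i^*\ell\|^2$. This identifies the Cameron--Martin space of $\ga_{>k}$ as $H_k^\perp$, which is what \pref{t:lsi-CM-pert} uses; I would include it as a remark.

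\textbf{Main obstacle.} The only delicate point is the measure-theoretic one: the $X_i$ are only $L^2$ equivalence classes. The key step is to choose Borel versions so that $R_k$ is a genuine map on $E$, and to make the identity $\ell(B)=X_{i^*\ell}(B)$ hold a.s. simultaneously with that choice. Since each fixed $\ell$ involves only countably many null sets, this causes no real difficulty.
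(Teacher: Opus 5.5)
Your proposal is correct and is essentially the paper's argument: both write $\ell(R_k(B)) = X_{i^*\ell}(B)-\sum_{i\le k}X_i(B)\langle i^*\ell,e_i\rangle_{\calH} = X_{P_{H_k^\perp}i^*\ell}(B)$ and conclude from the Paley--Wiener isometry that this is $\calN(0,\|P_{H_k^\perp}i^*\ell\|_{\calH}^2)$. The paper additionally treats finite families $\ell_1,\dots,\ell_m$ and computes the covariance explicitly, while you add a measurability check for $R_k$ that the paper omits. One small fix there: Riemann--Stieltjes sums are not quite the right device, since $\dot e_i$ is only in $L^2$. A Borel version obtained from an a.s.\ convergent subsequence of step-function approximations does work.
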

\begin{prf}
    A probability measure $\mu$ on a Banach space $E$ is a centered Gaussian measure if, for every finite collection of continuous linear functions $\{\ell_i \in E^*\}_{i \in [m]}$, it is the case that $\left(\ell_1(Y),\dots,\ell_m(Y)\right) \sim \calN(0,\Sigma_m)$ when $Y \sim \mu$. So, fix $\ell \in E^*$ and observe that for all $B \sim \gamma$,
    \allowdisplaybreaks
    \begin{align*}
        \ell\left(R_k(B)\right) &= \ell\left(B - \sum_{i=1}^k X_i(B)e_i\right) = \ell(B) - \sum_{i=1}^kX_i(B)\ell(e_i) \\
        &=_{\text{Riesz representation}} X_{i^*\ell}(B)- \sum_{i=1}^k X_i(B)\an{i^*\ell, e_i}_H \\
        &= X_{i^*\ell - P_{H^k} i^*\ell}(B) = X_{P_{H^\perp_k}i^*\ell}(B) \overset{d}{=} \calN\left(0,\norm{P_{H^\perp_k}(i^*\ell)}^2_H\right)\,,
    \end{align*}
    where $i: H \hookrightarrow E$ is the continuous embedding of the Cameron--Martin space into $E$, $i^* : E^* \to H$ is its adjoint, and $P_{H^\perp_k}: H \to H^\perp_k$ is the standard orthogonal projector\footnote{\,The statement $\ell(B) = X_{i^*\ell}(B)$ is an equality in $L^2(\gamma)$.}. Now, taking the continuous linear functionals $\ell_1,\dots,\ell_m$ and any $a=(a_1,\dots,a_m) \in \R^m$ we obtain that
    \[
        \sum_{r=1}^m a_r\ell_r(R_k(B)) = \sum_{r=1}^m a_r X_{P_{H^\perp_k}i^*\ell_r}(B) = X_{P_{H^\perp_k}(\sum_r a_r i^*\ell_r)}(B) \overset{d}{=} \calN\left(0,\norm{P_{H^\perp_k}\left(\sum_r a_ri^*\ell_r\right)}^2_H\right)\,. 
    \]
    Note, furthermore, that given $B \sim \gamma$ the covariance of the tail $\gamma_{>k}$ can be computed with $Y = R_k(B) \sim \gamma_{>k}$ for any two continuous linear functionals $\ell_a,\ell_b \in E^*$ as
    \allowdisplaybreaks
    \begin{align*}
        \E_{\gamma_{> k}}\left[\ell_a(Y)\ell_b(Y)\right] &= \E_\gamma[\ell_a(R_k(B))\ell_b(R_k(B))] \\
        &= \E_\gamma\left[X_{P_{H^\perp_k}i^*\ell_a}(B)X_{P_{H^\perp_k}i^*\ell_b}(B)\right] \\
        &=_{\text{Riesz representation\,+\,$L^2$-isometry}} \E_\gamma\left[\left(\sum_{r > k}\alpha_r X_r(B)\right)\left(\sum_{r' > k}\beta_{r'}X_{r'}(B)\right)\right]\,.
    \end{align*}
    Note that finite truncations of the sums converge in $L^2(\gamma)$ to $X_{P_{H^\perp_k}i^*\ell_a}$ and $X_{P_{H^\perp_k}i^*\ell_b}$ respectively, and so the expectation and sum may be exchanged. This gives
    \allowdisplaybreaks
    \begin{align*}
        \E_\gamma\left[\left(\sum_{r > k}\alpha_r X_r(B)\right)\left(\sum_{r' > k}\beta_{r'}X_{r'}(B)\right)\right] &= \sum_{r,r' > k} \alpha_r\beta_{r'}\E\left[X_r(B)X_{r'}(B)\right] = \sum_{r > k}\alpha_r\beta_r \,,
    \end{align*}
    which implies that the covariance is $\Id - P_{H_k}$ in the $H$-basis and, therefore, diagonal.
\end{prf}

\subsection{Gaussian isoperimetry in Wiener space}
Remarkably, while analysis in the Cameron--Martin space seems too weak to conclude statements about measurable sets $A \in \calB(E)$ since $\gamma(H) = 0$ a.s., one can extend statements from this space to sets with positive density. Often, this power comes via Gaussian isoperimetry that is available with dimension-free constants on infinite-dimensional Banach spaces (see~\cite[\S 3.3]{hairer2026advanced}). In this spirit, we give a proof of uniform integrability for exponential moments of Cameron--Martin Lipschitz functions of Gaussians on abstract Wiener space using Borell's inequality~(\pref{t:borell}).

\begin{lemma}[Uniform integrability of $e^{pV}$]\label{lem:infinite-dim-gaussian-ui}
    Let $V: E \to \R$ be $K$-Lipschitz in the Cameron--Martin norm. Then, given the Wiener space $(\calE, \calB(E), \gamma)$ with Cameron--Martin space $\calH$, 
    \[
        \E_\gamma\left[e^{pV}\right] < \infty\, ,
    \]
    for every $p \ge 0$.
\end{lemma}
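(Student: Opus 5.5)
The plan is to derive sub-Gaussian concentration of $V$ around a median from Borell's inequality (\pref{t:borell}), and then integrate the resulting tail bound to get finiteness of every exponential moment. First we fix a median $M$ of $V$ under $\gamma$. Such a median exists because $V$ is $\gamma$-measurable and $\gamma$-a.s.\ finite (finiteness is part of \pref{d:cm-lipschitz}). Let $A:=\set{B}{V(B)\le M}$, so that $\gamma(A)\ge \rc2=\Phi(0)$. By \pref{t:borell} with $\al=0$, we get $\gamma(A+B_r)\ge \Phi(r)$ for every $r>0$, where $B_r$ is the Cameron--Martin ball of radius $r$.

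The key step is to turn this set enlargement into a tail bound for $V$. Take $B\in A+B_r$ and write $B=a+h$ with $a\in A$ and $\norm{h}\CM\le r$. The CM-Lipschitz property then gives $V(B)\le V(a)+K\norm{h}\CM\le M+Kr$.

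The one subtlety is that the CM-Lipschitz inequality only holds for $\gamma$-a.e.\ base point, and $A+B_r$ may not even be Borel. I would handle this by replacing $A$ at the outset with $A':=A\cap N^c$. Here $N$ is the $\gamma$-null exceptional set outside of which $|V(a+h)-V(a)|\le K\norm h\CM$ holds for all $h\in\calH$ simultaneously, as \pref{d:cm-lipschitz} is stated "for a.e.\ $B$ and every $h$". Since $\gamma(A')=\gamma(A)\ge\rc2$, Borell still applies, reading $\gamma$ as inner measure or using measurability of the enlargement as in the cited statement. We then obtain
\[
\gamma(V> M+Kr)\le 1-\Phi(r)\le e^{-r^2/2}.
\]
The lower tail is symmetric: apply the same argument to $-V$, which is also $K$-CM-Lipschitz, with $A=\set{V\ge M}$. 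This gives $\gamma(|V-M|>Kr)\le 2e^{-r^2/2}$. I expect this measure-theoretic point about null sets and the measurability of $A+B_r$ to be the only real obstacle; the rest is routine.

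Finally, for $p\ge0$ we integrate the tail:
\[
\E_\gamma e^{p(V-M)}\le 1+\int_0^\iy pe^{s}\,\gamma(V-M>s)\,ds\le 1+\int_0^\iy pe^{s}e^{-s^2/(2K^2)}\,ds<\iy.
\]
Hence $\E_\gamma e^{pV}\le e^{pM}\bigl(1+C(pK)\bigr)<\iy$. The same bound holds uniformly for all $p$ in compact sets, which is the form used for uniform integrability in \pref{t:lsi-CM-pert}. The bound with $V$ replaced by its conditional expectations $V_k$ then follows via Jensen, as argued there.
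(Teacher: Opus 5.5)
Your proposal is correct and follows the paper's proof essentially step for step: take a median, apply Borell's inequality to the sublevel set $\{V\le M\}$, use the CM-Lipschitz bound to turn the Cameron--Martin enlargement into a sub-Gaussian upper tail (and do the same for $-V$), then integrate the tail. Your handling of the exceptional null set and of the measurability of $A+B_r$ is more careful than the paper's. One small slip: the layer-cake integrand should be $p e^{ps}$, not $p e^{s}$; this does not affect finiteness, because the Gaussian tail $e^{-s^2/(2K^2)}$ dominates any exponential.
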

\begin{prf}
    We will apply \pref{t:borell} to a radius $r$ ball in the Cameron--Martin space around the set of all paths $x \in E$ that capture the median behavior of $V$. Fix $m \in \R$ so that $\P_{x\sim\gamma}\{V(x) \le m\} \ge \frac{1}{2}$ and $\P_{x\sim\gamma}\{V(x) \ge m\} \ge \frac{1}{2}$ and let $\calA := \{ x \in E \mid V(x) \le m\}$. Such a $m$ exists by the continuity of $\gamma$ and the continuous mapping theorem preserving this continuity under the action of the Lipschitz pushforward $V$. Now, let $B_\eps := \{ h \in \calH \mid \norm{h}_{\text{CM}} \le \eps \} \subset \calH$ and observe that
    \[
        \calA + B_\eps \subseteq \{ x \mid V(x) \le m + K\eps \}\,.
    \]
    Applying \pref{t:borell} in conjunction with the fact that $\phi(0) = \frac{1}{2}$ gives
    \allowdisplaybreaks
    \begin{align*}
        \P_{x\sim\gamma}\left\{V(x) \le m + K\eps\right\} \ge \P_{x\sim\gamma}\left\{x \in \calA + B_\eps\right\} \ge_{\text{\pref{t:borell}}} \phi(0 + \eps) = \phi(\eps)\,.  
    \end{align*}
    This gives that
    \[
        \P_{x\sim\gamma}\left\{V(x) - m > K\eps\right\} \le 1 - \phi(\eps) \le e^{-\eps^2/2}\,.
    \]
    Set $\eps = t/K$ for any $t > 0$ and one obtains a sub-Gaussian tail around $m$. An exactly analogous argument gives the lower tail using the set $\calA = \{ x \mid V(x) \ge m \}$. Therefore, for any $t > 0$
    \[
        \P_{x\sim\gamma}\left\{|V(x) - m| > t\right\} \le 2e^{-\frac{t^2}{2K^2}}\,.
    \]
    Now, setting $Y := V - m$, the integrability condition follows by making the sub-Gaussian tail fight the exponential growth as
    \allowdisplaybreaks
    \begin{align*}
        \E_\gamma\left[e^{pV}\right] &= e^{pm}\E_\gamma\left[e^{pY}\right] = e^{pm}\left(\E_\gamma\left[\1_{Y \le 0}e^{pY}\right] + \E_\gamma\left[\1_{Y >0}e^{pY}\right] \le 1 + \E_\gamma\left[\1_{Y > 0}e^{pY}\right]\right) \\
        &= e^{pm}\left(1 + \left(1 + p\int_0^\infty e^{pt}\P_{x\sim\gamma}\left\{Y(x) > t\right\}dt\right)\right) \le e^{pm}\left(2 + p\int_0^\infty e^{pt}e^{-\frac{t^2}{2K^2}}dt\right) \\
        &\le_{\text{HS transform}} e^{pm}\left(2 + pe^{\frac{p^2K^2}{2}}\int_{-\infty}^\infty e^{-\frac{(t-pK)^2}{2K^2}}\,dt\right) = e^{pm}\left(2 + \sqrt{2\pi}pKe^{\frac{p^2K^2}{2}}\right) < \infty\,. \qedhere
    \end{align*}
\end{prf}

\begin{remark}\label{rem:lemma-ui}
    Note that \pref{lem:infinite-dim-gaussian-ui} implies that $V \in L^q(\gamma)$ for any finite $q$ and so $\E_{y\sim\gamma}[V(y)] < \infty$. Now, observe by Jensen's inequality and the non-negativity of $|\cdot|$ that
    \[
        \left|\E_{y\sim\gamma}[V(y)] - m\right| \le \E_{y\sim\gamma}\left|V(y) - m\right| = \int_0^\infty \P_{y\sim\gamma}\left\{|V(y)-m|>t\right\}\,dt \le 2\int_{0}^\infty e^{-t^2/2K^2}\,dt \le \sqrt{2\pi}K\,.
    \]
    The proof of \pref{lem:infinite-dim-gaussian-ui} also implies that
    \[
        \E_{y\sim\gamma}\left[e^{t(V-m)}\right]\le 2e^{C_1t^2K^2 + C_2Kt}\, ,
    \]
    for absolute constants $C_1,C_2>0$. Combining the above with the bound on $\left|\E_{y\sim\gamma}[V(y)] - m\right|$ by a symmetrization trick and some algebra yields that
    \[
        \E_{\gamma}\left[e^{t(V - \E_\gamma[V])}\right] \le 2e^{CK^2t^2}\, ,
    \]
    for all $t \ge 1$ for some sufficiently large absolute constant $C>0$.
\end{remark}

\subsection{Closure under graph norm for titled measures in path-space} We prove closability for tilted measures \(d\nu^V=Z_V^{-1}e^Vd\gamma\) where \(V\) is finite and CM-Lipschitz below. For such measures, \pref{lem:infinite-dim-gaussian-ui} gives the integrability needed to transfer closability from \(\gamma\) to \(\nu^V\).

\begin{lemma}[Closability under CM-Lipschitz Gaussian tilts]
\label{l:closability-cm-tilts}
Let $V:E\to\R$ be $\ga$-measurable, $\ga$-a.s. finite, and
$K$-CM-Lipschitz. Set $d\nu^V := Z_V^{-1}e^V\,d\ga$ and $Z_V:=\int_E e^V\,d\ga$. Then $Z_V\in(0,\infty)$ and the operator
\[
    D_H:C_b^1(E)\subset L^2(\nu^V)
    \longrightarrow
    L^2(\nu^V;\calH)
\]
is closable. Hence $\calD^{1,2}(\nu^V)$ is well-defined as the domain of
its closure under the graph norm.
\end{lemma}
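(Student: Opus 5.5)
The plan has three steps: show $0<Z_V<\infty$, check closability with respect to the reference measure $\ga$, and then transfer closability to $\nu^V$ through a truncation argument.

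\emph{Step 1: the normalizing constant.} By \pref{lem:infinite-dim-gaussian-ui}, $\E_\ga[e^{pV}]<\infty$ for every $p\ge 0$, so $Z_V<\infty$. Since $V$ is finite $\ga$-a.s., $e^V>0$ $\ga$-a.s., which gives $Z_V>0$. Consequently $\ga$ and $\nu^V$ are mutually absolutely continuous, and the density $w=Z_V^{-1}e^V$ has all positive moments under $\ga$. The same lemma applied to $-V$, which is also $K$-CM-Lipschitz, shows that $w^{-1}$ has all moments too.

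\emph{Step 2: the closability criterion.} Let $F_j\in C_b^1(E)$ with $F_j\to 0$ in $L^2(\nu^V)$ and $D_HF_j\to G$ in $L^2(\nu^V;\calH)$. The goal is $G=0$ $\nu^V$-a.s. The difficulty is that convergence in $L^2(\nu^V)$ does not transfer to $L^2(\ga)$, because $w$ may be small. I will therefore work at the level of $L^1$ with a weight. By H\"older with $w^{-1}\in L^q(\ga)$,
\[
\E_\ga|F_j|^{r}\le \pa{\E_\ga |F_j|^2 w}^{r/2}\pa{\E_\ga w^{-r/(2-r)}}^{1-r/2}
\]
for any $r<2$. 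Hence $F_j\to0$ in $L^r(\ga)$ and $D_HF_j\to G$ in $L^r(\ga;\calH)$, say with $r=3/2$. This reduces the problem to closability of $D_H$ on $C_b^1(E)$ as an operator $L^r(\ga)\to L^r(\ga;\calH)$ for some $r>1$. That is the standard Malliavin statement, and I will prove it by Gaussian integration by parts.

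For every smooth cylindrical $\psi$ and every $h\in\calH$, the Cameron--Martin formula (\pref{t:cameron-martin}) gives
\[
\E_\ga[\an{D_HF_j,h}_\calH\,\psi]=\E_\ga[F_j(\psi X_h - D_h\psi)].
\]
The factor $\psi X_h-D_h\psi$ lies in every $L^{p}(\ga)$ because $X_h$ is Gaussian. Letting $j\to\infty$ gives $\E_\ga[\an{G,h}\psi]=0$. Smooth cylindrical functions are dense in $L^{r'}(\ga)$, since $\sigma(X_1,X_2,\dots)=\calB(E)$ modulo null sets. Therefore $\an{G,h}_\calH=0$ $\ga$-a.s. for each $h$ in a countable dense subset of $\calH$, so $G=0$ $\ga$-a.s., and by Step 1 also $\nu^V$-a.s.

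\emph{Step 3: defining the domain.} Closability makes the closure of the graph of $D_H$ a graph, so $\calD^{1,2}(\nu^V)$ is well defined as in \pref{d:sobolev-domains}.

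The main obstacle is Step 2, specifically the mismatch between $\nu^V$ and $\ga$: the integration-by-parts identity lives naturally under $\ga$, while the hypotheses are stated under $\nu^V$. I expect the two-sided moment bounds on $e^{\pm V}$ coming from Borell's inequality to resolve this, through the H\"older step above. An alternative would be to integrate by parts directly under $\nu^V$ using $D_HV$ from \pref{l:cm-lip-malliavin}, but the $\ga$-based route avoids needing a chain rule for $e^V$.
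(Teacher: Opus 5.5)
Your proof is correct and follows the paper's argument essentially step for step. You use the two-sided exponential moments of $V$ from \pref{lem:infinite-dim-gaussian-ui}, the H\"older transfer of the hypotheses from $L^2(\nu^V)$ to $L^r(\ga)$ for some $r\in(1,2)$, closability of $D_H$ in $L^r(\ga)$, and mutual absolute continuity to return to $\nu^V$. The only difference is that you prove the $L^r(\ga)$ closability yourself, via Gaussian integration by parts and density of smooth cylindrical functions in $L^{r'}(\ga)$, whereas the paper cites it from \cite[Proposition 5.7]{tubaro2025introduction}.
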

\begin{prf}
    Let $w:=Z_V^{-1}e^V$, so $d\nu^V=w\,d\ga$. Since $V$ is CM-Lipschitz,
    so is $-V$. By \pref{lem:infinite-dim-gaussian-ui},
    \[
        e^{qV},e^{-qV}\in L^1(\ga)\,,
    \]
    for every finite $q > 0$. In particular, $Z_V\in(0,\infty)$, $w>0$ $\ga$-a.s., and for every
    finite $q>0$,
    \[
        w^q,w^{-q}\in L^1(\ga).
    \]
    Let $F_j\in C_b^1(E)$ be a Cauchy sequence in the sense that
    \[
        F_j\to_{L^2(\nu^V)} 0\,, \qquad D_HF_j\to_{L^2(\nu^V;\calH)}G.
    \]
    We prove $G=0$. Choose $p\in(1,2)$ and note by H\"older's inequality,
    \[
    \begin{aligned}
        \|F_j\|_{L^p(\ga)}^p &= \int_E |F_j|^p\,d\ga = \int_E \bigl(|F_j|^2w\bigr)^{p/2}w^{-p/2}\,d\ga \\
        &\le \left(\int_E |F_j|^2w\,d\ga\right)^{p/2}\left(\int_E w^{-p/(2-p)}\,d\ga\right)^{(2-p)/2}.
    \end{aligned}
    \]
    The second factor is finite because $w^{-q}\in L^1(\ga)$ for all finite $q$ and, so $F_j\to_{L^p(\ga)}0$. The same estimate, with $|F_j|$ replaced by $\|D_HF_j-G\|_\calH$, gives
    \[
        D_HF_j\to_{L^p(\ga;\calH)} G\,.
    \]
    The Malliavin derivative is closable in $L^p(\ga)$ for $p>1$; see \cite[\S5.1, Proposition 5.7]{tubaro2025introduction}. Therefore, $G=0$ $\ga$-a.s. Since $\nu^V$ is also equivalent to $\ga$, $G=0$ $\nu^V$-a.s. This proves closability in $L^2(\nu^V)$.
\end{prf}

\subsection{Convolution for truncated Lipschitz functions with mollifiers} In this section we collect some standard properties of mollifiers that are well known in distribution theory and functional analysis\footnote{\,See \cite[Mollifiers, \S 4.4]{brezis2011functional} and \cite[\S 8.3]{brezis2011functional} for a collection of basic properties and explicit choice of mollifiers, as well as their analytic properties under convolution.}. The main point is to state these for mollifications of truncated Lipschitz functions that are not necessarily differentiable, and we give proofs of these properties for completeness.

\begin{lemma}[Properties of mollified Lipschitz truncations]\label{lem:mollifiers}
Let $\mu$ be a probability measure on $\left(\R^n, \calB(\R^n)\right)$, and $\eta\in C_c^\infty(\mathbb R^n)$ be the candidate mollifier such that $\eta\ge 0$,
$\mathrm{supp}\left(\eta\right)\subseteq B(0,1)$, and $\int_{\mathbb R^n}\eta(x)\,dx=1$. For $\varepsilon>0$, set
\[
    \eta_\varepsilon(x):=\varepsilon^{-n}\eta(x/\varepsilon)\,,
\]
to be the mollifier. Let $F:\mathbb R^n\to\mathbb R$ be bounded and $A$-Lipschitz, and define
\[
    F_\varepsilon:=\eta_\varepsilon*F\,.
\]
Then, the mollified Lipschitz truncation functions satisfy the following properties:
\begin{enumerate}[itemsep=0.2em]
    \item $F_\varepsilon\in C_b^\infty(\mathbb R^n)$.
    \item $\|F_\varepsilon\|_\infty\le \|F\|_\infty$.
    \item $\|F_\varepsilon\|_{\mathrm{Lip}}\le \|F\|_{\mathrm{Lip}}\le A$. 
    \item $\|F_\varepsilon-F\|_\infty\le A\varepsilon.$ In particular, $F_\varepsilon\rightarrow_{L^2(\mu)} F$.
    \item Suppose, in addition, that $\mu$ is absolutely-continuous with respect to the Lebesgue measure and $F$ is differentiable $\mu$-a.e., with pointwise gradient $G=\nabla F$ on its differentiability set, and $\|G\|_{2}\le A$ $\mu$-a.e. Then,
    \[
        \nabla F_\varepsilon\rightarrow_{L^2(\mu;\R^n)} G\,.
    \]
\end{enumerate}
\end{lemma}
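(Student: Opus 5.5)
Fix $\ep>0$ and write $F:=\eta_\ep*F$ convolved as stated, i.e.\ $F_\ep(x)=\int\eta_\ep(y)F(x-y)\,dy$. The plan is to verify the five items in order, each from the integral representation plus elementary bounds, saving (5) for last as it is the only one needing more than pointwise estimates.

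For (1)--(4): since $\eta_\ep\in C_c^\infty$ and $F$ is bounded, differentiation under the integral gives $\pl^\al F_\ep=(\pl^\al\eta_\ep)*F$, bounded by $\norm{\pl^\al\eta_\ep}_{L^1}\norm F_\infty$. This gives $F_\ep\in C_b^\infty$. Item (2) follows from $\eta_\ep\ge0$, $\int\eta_\ep=1$. For (3), write
\[
|F_\ep(x)-F_\ep(x')|\le\int\eta_\ep(y)|F(x-y)-F(x'-y)|\,dy\le A\norm{x-x'}.
\]
For (4), use $|F_\ep(x)-F(x)|\le\int\eta_\ep(y)|F(x-y)-F(x)|\,dy\le A\ep$, since $\mathrm{supp}\,\eta_\ep\subeq B(0,\ep)$. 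Uniform convergence then implies $L^2(\mu)$ convergence because $\mu$ is a probability measure.

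For (5), I will first identify $\nabla F_\ep=\eta_\ep*\nabla F$ Lebesgue-a.e., or more precisely everywhere, with $\nabla F$ the a.e.\ gradient. By Rademacher, $F$ is differentiable Lebesgue-a.e.\ and its distributional gradient equals this a.e.\ gradient $G_0$, which is bounded by $A$. Hence $\nabla F_\ep=\eta_\ep*G_0$ as a convolution of $\eta_\ep$ with the distributional derivative. Since $\mu\ll$ Lebesgue, $G=G_0$ $\mu$-a.e.

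Next, by the Lebesgue differentiation theorem, for a.e.\ $x$ (Lebesgue points of $G_0\in L^\infty_{\rm loc}$) we have $\eta_\ep*G_0(x)\to G_0(x)$, using $|\eta_\ep*G_0(x)-G_0(x)|\le\norm\eta_\infty\ep^{-n}\int_{B(x,\ep)}|G_0(z)-G_0(x)|\,dz\to0$. Absolute continuity of $\mu$ upgrades this to $\mu$-a.e.\ convergence. Finally $\norm{\nabla F_\ep}_2\le A$, so dominated convergence with the constant $A^2$ (integrable against the probability measure $\mu$) gives $\nabla F_\ep\to G$ in $L^2(\mu;\R^n)$.

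The main obstacle is the identification in (5) of the classical pointwise gradient with the weak gradient, so that the convolution formula is valid. This is where absolute continuity of $\mu$ is essential: without it, $\mu$ could charge the Lebesgue-null set where $F$ fails to be differentiable, or where Lebesgue-point convergence fails, such as a kink of the truncation. I will handle it by citing Rademacher's theorem together with the standard fact that Lipschitz functions lie in $W^{1,\infty}_{\rm loc}$ with weak gradient equal to the a.e.\ gradient \cite[\S9]{brezis2011functional}.
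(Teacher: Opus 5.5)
Your items (1)--(4) follow the paper's proof essentially verbatim. The routes differ only in (5), and yours is also correct.

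You pass through weak derivatives. Rademacher's theorem plus the $W^{1,\infty}_{\rm loc}$ identification give $\nabla F_\eps=\eta_\eps*\nabla F$. The Lebesgue differentiation theorem then gives convergence at Lebesgue points of $\nabla F$, and absolute continuity makes that set $\mu$-full.

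The paper never uses weak derivatives. The substitution $y=x-\eps z$ and $\int\partial_i\eta=0$ give
\[
\partial_iF_\eps(x)=\frac1\eps\int_{\R^n}\partial_i\eta(z)\bigl(F(x-\eps z)-F(x)\bigr)\,dz .
\]
At any point $x$ where $F$ is differentiable, dominated convergence in $z$ (with dominating function $A\norm{z}_2|\partial_i\eta(z)|$) and the integration by parts $\int z_j\partial_i\eta(z)\,dz=-\delta_{ij}$ yield $\partial_iF_\eps(x)\to G_i(x)$.

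The trade-off is as follows.
\begin{itemize}
\item The paper's argument is more elementary and gives convergence at \emph{every} differentiability point of $F$. It therefore needs only the stated $\mu$-a.e.\ differentiability and never actually uses $\mu\ll$ Lebesgue.
\item Your argument genuinely needs absolute continuity, because the set of non-Lebesgue points is only known to be Lebesgue-null. This set is not the same as the non-differentiability set: $x^2\sin(1/x)$ is differentiable at $0$, but $0$ is not a Lebesgue point of its derivative. In exchange, your route makes the $\mu$-a.e.\ differentiability hypothesis redundant, via Rademacher.
\end{itemize}

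One small fix: in the final dominated-convergence step, bound $\norm{\nabla F_\eps-G}_2^2\le 4A^2$. This uses $\norm{G}_2\le A$ $\mu$-a.e.\ as well as $\norm{\nabla F_\eps}_2\le A$.
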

\begin{prf}
    We explicitly write out the convolution of the mollified Lipschitz truncation functions and use the properties of the mollifier to prove the statements.
    \ppart{$F_\eps \in C^\infty_b(\R^n)$} Note that 
    \[
        F_\eps(x) := \eta_\eps* F = \int_{y \in \R^n}\eta_\eps(x-y)F(y)dy\,, 
    \]
    and so by the smoothness of $\eta_\eps$ in conjunction with the dominated convergence theorem applied to the dominating function $\norm{F}_\infty \norm{\partial_i \eta_\eps}_\infty \1_{A}(y)$ for some bounded set $A \subset \R^n$,
    \[
        \partial_i F_\eps(x) = \int_{y \in \R^n}\left(\partial_i \eta_\eps(x-y)\right)F(y)dy\,. 
    \]
    Given any fixed $k \in \N$, applying the above along with a triangle inequality, to any multi-index $\alpha \in [n]^k$ gives that
    \allowdisplaybreaks
    \begin{align*}
        \left|\partial^\alpha F_\eps(x)\right| &= \left|\int_{y \in \R^n}\left(\partial_\alpha \eta_\eps(x-y)\right)F(y)dy\right| \le \norm{F}_\infty \int_{y \in \R^n}  \left|\partial_\alpha \eta_\eps(x-y)\right|\,dy = \norm{F}_\infty\norm{\partial_\alpha\eta_\eps}_{L^1} \le C\,,
    \end{align*}
    since $F$ is bounded and $\eta_\eps \in C^\infty_c(\R^n)$. 
    \ppart{$\norm{F_\eps}_\infty \le \norm{F}_\infty$} Using the facts that $\eta_\eps \ge 0$ and $\int_{\R^n} \eta_\eps(x) dx = \eps^{-n}\int_{\R^n} \eta(x/\eps) dx = 1$,
    \allowdisplaybreaks
    \begin{align*}
        \left|F_\eps(x)\right| &= \left|\int_{y\in\R^n}\eta_\eps(x-y)F(y)\,dy\right| \le \norm{F}_\infty\int_{y}\eta_\eps(x-y)dy = \norm{F}_\infty\,.
    \end{align*}
    \ppart{$\norm{F_\eps}_{\mathrm{Lip}} \le \norm{F}_{\mathrm{Lip}}$} Note that
    \allowdisplaybreaks
    \begin{align*}
        \left|F_\eps(x + h) - F_\eps(x)\right| &= \left|\int_{y \in \R^n}\eta_\eps(y)\left(F(x-y+h) - F(x-y)\right)\,dy\right| \\
        &\le \int_{y\in\R^n}\eta_\eps(y)\left|F(x-y+h) - F(x-y)\right|\,dy \\
        &\le \norm{F}_{\mathrm{Lip}}\norm{h}_2 \int_{y\in\R^n}\eta_\eps(y)dy = \norm{F}_{\mathrm{Lip}}\norm{h}_2 \le A \norm{h}_2\,.
    \end{align*}
    \ppart{$\norm{F_\eps-F}_\infty \le A\eps$ and $F_\eps \rightarrow_{L^2(\mu)} F$} For the uniform convergence estimate, using the fact $\int_y\eta_\eps(y)\,dy = 1$ yields
    \allowdisplaybreaks
    \begin{align*}
        \norm{F_\eps - F}_\infty &= \sup_{x \in\R^n}|F_\eps(x) - F(x)| = \sup_{x\in\R^n}\left|\int_{y\in\R^n}\eta_\eps(y)F(x-y)\,dy - F(x)\int_{y\in\R^n}\eta_\eps(y)dy \right| \\
        &\le  \sup_{x\in\R^n} \int_{y\in\R^n}\eta_\eps(y)\left|F(x) - F(x-y)\right|\,dy \le A\int_{y\in\R^n}\norm{y}_2 \eta_\eps(y)\,dy \le \eps A\,,
    \end{align*}
    where the last bound uses the fact that $\mathrm{supp}(\eta_\eps) \subseteq B(0,\eps)$. This also immediately implies that
    \[
        \norm{F_\eps - F}^2_{L^2(\mu)} = \int_{y \in \R^n} |F_\eps(y) - F(y)|^2d\mu(y) \le \norm{F_\eps - F}^2_\infty \le A^2\eps^2\, ,
    \]
    which goes to $0$ as $\eps \to 0$, and the dominated convergence theorem applies with the dominating function $\norm{F}_\infty\1_{A}(y)$ for some bounded set $A \subseteq \R^n$.
    \ppart{$\nabla F_\eps \to_{L^2(\mu;\R^n)} G$} We will apply a change-of-variables to evaluate the directional derivative of every coordinate of $F_\eps(x)$ in every direction $z \in \R^n$ and show it converges to $G$ in the same direction by applying the dominated convergence theorem. The result then follows by another application of the dominated convergence theorem using the fact that $\sup_{x}\norm{\nabla F_\eps(x) - G(x)}^2_2 \le 2\norm{F}^2_{\mathrm{Lip}} \le 4A^2$. Specifically, for $i \in [n]$ with $y = x - \eps z$ where $x$ is in the differentiability set of $F$, we have 
    \allowdisplaybreaks
    \begin{align*}
        \partial_i F_\eps(x) &= \partial_i \int_{y}\eta_\eps(x-y)F(y)\,dy = \int_y \partial_i \eta_\eps(x-y)F(y)\,dy = \int_z \partial_i\eta_\eps(\eps z)F(x-\eps z)\,dy \\
        &= \frac{1}{\eps}\int_z \partial_i\eta(z)F(x-\eps z)\,dy =_{\int_z \partial_i\eta(z)\,dz = 0} \frac{1}{\eps}\int_z \partial_i\eta(z)\left(F(x-\eps z) - F(x)\right)\,dz\,.  
    \end{align*}
    Since $F$ is differentiable at $x$, note that $\lim_{\eps \to 0}\frac{1}{\eps}(F(x-\eps z)-F(x)) \to -\an{\nabla F(x), z} = -\an{G(x), z}$. Furthermore, since $\norm{F}_{\mathrm{Lip}} \le A$, the function $|\partial_i \eta(z)|A\norm{z}_2$ is supported on a compact set and, therefore, integrable. Consequently, by the dominated convergence theorem, the following pointwise convergence holds
    \[
        \partial_i F_\eps(x) \to -\int_z\partial_i\eta(z)\an{G(x),z}\,dz\,.  
    \]
    Once again, the facts that $\mathrm{supp}(\eta)$ is compact and $\eta(z) \in C^\infty_b(\R^n)$ imply  that $\int_z \partial_i(z_j\eta(z))\,dz = 0$ which immediately gives
    \[
        0 = \delta_{ij}\eta(z) + z_j\partial_i\eta(z)\,.
    \]
    Integrating both sides and using the fact that $\int_z \eta(z)\,dz = 1$ since $\eta$ is a mollifier yields
    \[
        \int_i z_j\partial_i\eta(z)\,dz = -\delta_{ij}\,.
    \]
    Combining the above with the linearity of the inner-product immediately gives the pointwise convergence $\partial_iF_\eps(x) \to G_i(x)$ at every coordinate $i \in [n]$. Boosting this to $L^2(\mu;\R^n)$ convergence follows by another use of the dominated convergence theorem on top of the pointwise convergence with the observation that
    \[
        \norm{\nabla F_\eps(x) - G(x)}^2_2 \le 4A^2\, ,
    \]
    at every point $x$ in the differentiability set.
\end{prf}

\begin{remark}[Convergence of mollified and truncated Lipschitz functions]
A simple corollary of \pref{lem:mollifiers} is that, if $f\in C^1(\mathbb R^n)$ is $A$-Lipschitz and $R>0$ satisfies $\mu\{|f(x)|=R\}=0$, then for
\[
    F=f_R:=\max(-R,\min(f,R)),
\]
one has
\[
    \eta_\varepsilon*f_R\to_{L^2(\mu)} f_R
\]
and
\[
    \nabla(\eta_\varepsilon*f_R)\to_{L^2(\mu;\R^n)} \mathbf 1_{\{|f|<R\}}\nabla f\,.
\]
\end{remark}